\theoremstyle{plain}
\newcommand\bfG{\mathbf{G}}
\DeclareMathOperator{\GL}{GL}
\DeclareMathOperator\Gal{Gal}
\DeclareMathOperator\diag{diag}
\DeclareMathOperator\Nm{Nm}
\DeclareMathOperator\Fr{Fr}
\DeclareMathOperator\Ind{Ind}
\DeclareMathOperator\cind{cInd}
\DeclareMathOperator\N{N}
\DeclareMathOperator\im{im}
\DeclareMathOperator\tr{tr}
\DeclareMathOperator\val{val}
\DeclareMathOperator\bfGL{\bf GL}
\DeclareMathOperator\bfU{\bf U}
\DeclareMathOperator\bfT{\bf T}
\DeclareMathOperator\bfX{\bf X}
\DeclareMathOperator{\cInd}{c-Ind}
\newcommand{\sm}{{\,\smallsetminus\,}}
\newcommand\from{\colon}
\newcommand\bG{\mathbb G}
\newcommand\bU{\mathbb U}
\newcommand\QQ{\mathbb Q}
\newcommand\bT{\mathbb T}
\newcommand\bW{\mathbb W}
\newcommand\FF{\mathbb F}
\newcommand\cA{\mathcal A}
\newcommand\bZ{\mathbb Z}
\newcommand\bF{\mathbb{F}}
\newcommand\bQ{\mathbb{Q}}
\newcommand\bK{\mathbb{K}}
\newcommand\bL{\mathbb{L}}
\newcommand\cE{\mathscr{E}}
\newcommand\fp{\mathfrak{p}}
\newcommand\fa{\mathfrak{a}}
\newenvironment{psmallmatrix}
  {\left(\begin{smallmatrix}}
  {\end{smallmatrix}\right)}
\DeclareMathOperator\Spec{Spec}
\DeclareMathOperator\Perf{Perf}
\newcommand\prolim{\mathop{\underleftarrow{\lim} }}
\newcommand\cO{\mathcal O}
\newcommand\bA{\mathbb A}
\newtheorem*{rep@theorem}{\rep@title}
\newcommand{\newreptheorem}[2]{%
\newenvironment{rep#1}[1]{%
 \def\rep@title{#2 \ref{##1}}%
 \begin{rep@theorem}}%
 {\end{rep@theorem}}}
\newtheorem{thm}{Theorem}[section]
\newtheorem*{thm*}{Theorem}
\newtheorem{prop}[thm]{Proposition}
\newtheorem{cor}[thm]{Corollary}
\newtheorem{Cor}[thm]{Corollary}
\newtheorem*{cor*}{Corollary}
\newtheorem{lm}[thm]{Lemma}
\newtheorem{conj}[thm]{Conjecture}
\newtheorem*{thma}{Theorem A}
\newtheorem*{thmb}{Theorem B}
\theoremstyle{remark}
\theoremstyle{definition}
\newtheorem{Def}[thm]{Definition}
\theoremstyle{remark}
\newtheorem{rem}[thm]{Remark}
\newenvironment{pro*}[1][Proof]{{\it{#1:}} }{}
\newcounter{absatzcounter}[section]
\numberwithin{equation}{section}
\begin{document}

\title[On loop Deligne--Lusztig varieties]{On loop Deligne--Lusztig varieties of Coxeter-type for inner forms of $\GL_n$}
\author{Charlotte Chan and Alexander B. Ivanov}
\address{Department of Mathematics \\
MIT \\
Cambridge, MA 02139, USA}
\email{charchan@umich.edu}
\address{Mathematisches Institut \\ Universit\"at Bonn \\ Endenicher Allee 60 \\ 53115 Bonn, Germany}
\email{ivanov@math.uni-bonn.de}
\maketitle

\begin{abstract}
For a reductive group $G$ over a local non-archimedean field $K$ one can mimic the construction from classical Deligne--Lusztig theory by using the loop space functor. We study this construction in the special case that $G$ is an inner form of ${\rm GL}_n$ and the loop Deligne--Lusztig variety is of Coxeter type. After simplifying the proof of its representability, our main result is that its $\ell$-adic cohomology realizes many irreducible supercuspidal representations of $G$, notably almost all among those whose L-parameter factors through an unramified elliptic maximal torus of $G$. This gives a purely local, purely geometric and -- in a sense -- quite explicit way to realize special cases of the local Langlands and Jacquet--Langlands correspondences.
\end{abstract}


\section{Introduction}

Let $\bfG$ be an inner form of ${\bf GL}_n$ ($n \geq 2$) over a local non-archimedean field $K$ and let $G = \bfG(K)$ be the group of its $K$-points. Let $\bfT \subseteq \bfG$ be a maximal elliptic unramified torus. Then $\bfT$ is uniquely determined up to $G$-conjugation and $T = \bfT(K) \cong L^\times$ where $L/K$ is the unramified extension of degree $n$. Following an idea of Lusztig \cite{Lusztig_79}, in \cite{CI_ADLV} we constructed a scheme $X$ over $\overline{\FF}_q$ with an action by $G \times T$, which can be seen as an analogue of a Deligne--Lusztig variety for the $K$-groups $\bfT \subseteq \bfG$. As in  classical Deligne--Lusztig theory \cite{DeligneL_76}, this allowed us to attach to a smooth character $\theta \colon T \rightarrow \overline{\bQ}_\ell^\times$ the corresponding isotypic component $R_T^G(\theta)$ of the $\ell$-adic Euler characteristic of $X$, a smooth virtual $G$-representation. In \cite{CI_ADLV}, we studied these objects when $\theta$ is primitive (i.e., the Howe decomposition of $\theta$ has exactly one member): we showed that $R_T^G(\theta)$ is irreducible supercuspidal and isomorphic to the representation attached to $(L/K,\theta)$ by Howe \cite{Howe_77}, and hence provides a geometric and purely local realization of the local Langlands and Jacquet--Langlands correspondences.

These results indicate that $X$, and more generally other schemes obtained by similar Deligne--Lusztig-type constructions for other reductive groups over $K$, allow a quite \emph{explicit}, \emph{purely local}, and \emph{purely geometric} way to realize the local Langlands correspondence and/or some instances of automorphic induction for at least those irreducible representations of $\bfG$, whose L-parameter factors through an unramified torus. This is highly desirable, as the existing local proofs of the local Langlands correspondence are purely algebraic (e.g.\ via Bushnell--Kutzko types), and the existing geometric proofs tend to be very inexplicit and/or use global arguments (except for \cite{BoyarchenkoW_16}, which -- similar to \cite{CI_ADLV} -- only deals with primitive $\theta$). Moreover, an exact analogue of the classical Deligne--Lusztig theory over non-archimedean local fields is highly interesting in its own right.

The first goal of the present article is to give a more satisfactory definition of $X$ and simplify the proof of its representability. The second goal is to show that $R_T^G(\theta)$ is irreducible supercuspidal and realizes the local Langlands and Jacquet--Langlands correspondences for a much wider class of irreducible  supercuspidal representations of $G$ (almost all among those, whose L-parameter factors through $T \subseteq G$), thus going far beyond the corresponding results of \cite{BoyarchenkoW_16} and \cite{CI_ADLV}. As the methods from \cite{CI_ADLV} for primitive $\theta$ do not apply anymore, our main concern here will be to develop new geometric methods to study the cohomology of Deligne--Lusztig constructions of Coxeter type over local fields, in particular generalizing results of \cite{Lusztig_04} away from the case when $\theta$ is regular (in the sense of \textit{op.\ cit.}), and providing nice description for the quotient of (subschemes of) $X$ by unipotent radicals of rational parabolic subgroups of $G$, which generalizes (in the special case for $\bfG,\bfT$) to the situation over $K$ particular results of \cite{Lusztig_76_Inv}. Some of these methods immediately work for all reductive groups, and some rely on $\bfG$ being an inner form of $\bfGL_n$.

To describe our result, we need more notation. First of all, there is an unique integer $\kappa \in \{0, 1, \dots, n-1\}$ such that if $n' = \gcd(n,\kappa)$, $n=n'n_0$, $\kappa = n'k_0$, we have $\bfG \cong \bfGL_{n'}(D_{k_0/n_0})$, where $D_{k_0/n_0}$ denotes the central division algebra over $K$ with Hasse-invariant $k_0/n_0$.

Let $\varepsilon$ be any character of $K^\times$ with $\ker(\varepsilon) = {\rm N}_{L/K}(L^\times)$, the image of the norm map of $L/K$. Denote by 
\begin{itemize}
\item $\mathscr{X}$ the set of smooth characters of $L^\times$ with trivial stabilizer in $\Gal(L/K)$,
\item $\mathscr{G}_K^\varepsilon(n)$ the set of isomorphism classes of smooth $n$-dimensional representations $\sigma$ of the Weil group $\mathcal{W}_K$ of $K$ satisfying $\sigma \cong \sigma \otimes (\varepsilon \circ {\rm rec}_K)$,
\item $\mathscr{A}_K^\varepsilon(n,\kappa)$ the set of smooth irreducible supercuspidal representations $\pi$ of $G$ (= ${\bf G}(K)$ with ${\bf G}$ corresponding to $\kappa$) such that $\pi \cong \pi \otimes (\varepsilon \circ {\rm Nrd}_G).$
\end{itemize}
There are natural \emph{bijections}
\begin{center}
\begin{tikzcd}
\mathscr{X}/\Gal(L/K) \arrow[r] &[20pt] \mathscr{G}_K^\varepsilon(n) \arrow[r, "\rm LL"] &[20pt] \mathscr{A}_K^\varepsilon(n,0) \arrow[r,"\rm JL"]&[20pt] \mathscr{A}_K^\varepsilon(n,\kappa) \\[-20pt]
 \theta \arrow[r,mapsto, start anchor={[xshift=6.5ex]}, end anchor={[xshift=-3ex]} ] &[20pt] \sigma_\theta \arrow[r,mapsto,start anchor={[xshift=3ex]}, end anchor={[xshift=-1ex]}] &[20pt] {\rm LL}(\sigma_\theta) =: \pi_\theta^{\GL_n} \arrow[r, mapsto, start anchor={[xshift=1ex]}, end anchor={[xshift=-2.3ex]}] &[20pt] {\rm JL}(\pi_\theta^{\GL_n}) =: \pi_\theta. 
\end{tikzcd}
\end{center}
The latter two maps are the local Langlands and the Jacquet--Langlands correspondences respectively. Here $\sigma_\theta := \Ind_{\mathcal{W}_L}^{\mathcal{W}_K} (\theta\cdot \mu)$ is the induction to $\mathcal{W}_K$ of the character $\mathcal{W}_L \rightarrow \mathcal{W}_L^{\rm ab} \stackrel{{\rm rec}_L}{\rightarrow} L^\times \stackrel{\theta \cdot \mu}{\rightarrow} \overline{\bQ}_\ell^\times$, where $\mu$ is the \emph{rectifier}, i.e.\ the unramified character of $L^\times$ defined by $\mu(\varpi) = (-1)^{n-1}$ (here $\varpi$ uniformizer of $L$). 

Our main result is the following theorem, which confirms Lusztig's conjecture \cite{Lusztig_79} in this setting for a large class of characters $\theta$.

\begin{thma}\hypertarget{thm:A}
Assume that $p>n$. Let $\theta \colon T\cong L^\times \rightarrow \overline{\QQ}_\ell^\times$ be a smooth character such that $\theta|_{U_L^1}$ has trivial stabilizer in $\Gal(L/K)$. Then $\pm R_T^G(\theta)$ is a genuine $G$-representation and
\[
\pm R_T^G(\theta) \cong \pi_\theta.
\]
In particular, $\pm R_T^G(\theta)$ is irreducible supercuspidal and $\sigma_\theta \leftrightarrow \pm R_T^G(\theta)$ is a realization of the local Langlands and Jacquet--Langlands correspondences.
\end{thma}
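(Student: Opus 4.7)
The plan has three phases: show $\pm R_T^G(\theta)$ is a genuine smooth $G$-representation, show it is irreducible and supercuspidal, and identify it with $\pi_\theta$. The key new geometric inputs, announced in the introduction, are (a) a description of (pieces of) $X$ modulo the unipotent radical $U$ of a $K$-rational proper parabolic subgroup $P = MU$ of $G$, which is the loop analog of \cite{Lusztig_76_Inv} specialized to inner forms of $\bfGL_n$, and (b) a partial Mackey-type intertwining formula for loop Deligne--Lusztig varieties of Coxeter type generalizing \cite{Lusztig_04} beyond the case of regular $\theta$.

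For cuspidality, I would use tool (a) to reduce the computation of the Jacquet module $H_c^*(X)_U$ for an arbitrary rational proper unipotent radical $U$ to the cohomology of a lower-dimensional variety attached to the Levi $M$. The $\theta$-isotypic piece of the latter vanishes because the hypothesis that $\theta|_{U_L^1}$ has trivial $\Gal(L/K)$-stabilizer prevents any $G$-conjugate of $\bfT$ from landing inside a proper Levi in a $\theta$-compatible manner; this condition plays here the role that full regularity of $\theta$ plays in the classical finite-field picture.

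For irreducibility, I would compute the self-intertwining $\langle R_T^G(\theta), R_T^G(\theta) \rangle$ via tool (b), analyzing the diagonal $(G \times T)$-action on $X \times X$ by a Lefschetz-style argument. Thanks to the explicit double-coset parametrization of $X$ available for the Coxeter torus in inner forms of $\bfGL_n$, this reduces to a finite-dimensional intersection problem in which the genericity hypothesis on $\theta|_{U_L^1}$ lets only the trivial diagonal stratum contribute, yielding inner product $1$ and hence irreducibility up to sign. Concentration of the $\theta$-isotypic cohomology in a single degree then pins down the sign, giving the \emph{genuine} representation.

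Finally, to identify $\pm R_T^G(\theta)$ with $\pi_\theta$, I would combine the primitive case proved in \cite{CI_ADLV} with the rigidity of the local Langlands and Jacquet--Langlands correspondences. Using the Howe decomposition one writes $\theta$ as a primitive character twisted by further characters, and matches central character, formal degree, and $\varepsilon$-twisting behaviour on both sides of the claimed isomorphism. I expect this identification to be the main obstacle: the geometric methods of \cite{CI_ADLV} rely on primitivity of $\theta$ and do not extend beyond it, so one must substitute a characterization of $\pi_\theta$ that remains directly accessible from the cohomology of $X$ --- most plausibly a trace-of-Frobenius computation at elliptic regular elements of $T \cong L^\times$ matched against the known character formula for supercuspidals in the image of automorphic induction from $L^\times$.
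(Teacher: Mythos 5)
Your architecture matches the paper's: a Mackey-type intertwining formula for irreducibility, a description of the quotient of $X$ by unipotent radicals for cuspidality, and a comparison via traces on elliptic elements plus formal degrees. But two steps, as you describe them, would not go through.

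First, the irreducibility phase. You propose to compute $\langle R_T^G(\theta), R_T^G(\theta)\rangle_G = 1$ by analyzing the diagonal action on $X\times X$ over the full group $G$. The Lusztig-style method only controls intertwining at a fixed finite level: it gives $\langle R_{T_h}^{G_h}(\theta), R_{T_h}^{G_h}(\theta')\rangle_{G_h} = \#\{w\in W_\cO^F \colon \theta' = \theta\circ{\rm ad}(w)\}$, i.e.\ irreducibility of $\pm R_{T_h}^{G_h}(\theta)$ as a $G_h$-representation. Since $R_T^G(\theta) = \cind_{ZG_\cO}^G R_{T_h}^{G_h}(\theta)$, concluding that the compact induction is a \emph{single} irreducible would require showing that no nontrivial double coset in $ZG_\cO\backslash G/ZG_\cO$ intertwines, and no geometric stratification of $X\times X$ is known to deliver this; the paper deliberately avoids it. Instead it proves admissibility of the induction (via cuspidality), so that $|R_T^G(\theta)| = \bigoplus_{i\in I}\pi_i$ is a finite sum of supercuspidals, and then invokes Harish-Chandra's inequality $\sum_{\pi} d(\pi,d\bar g)\,\vol(G_\cO Z/Z,d\bar g)\,(\pi:\rho)\le \deg\rho$ together with an exact match between $\deg|R_{T_h}^{G_h}(\theta)|$ and the formal degree of $\pi_\theta$ (known from Corwin--Moy--Sally) to force $I$ to be a singleton. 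Without that quantitative input your argument only yields a nonzero finite direct sum of supercuspidals.

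Second, and relatedly, you give no mechanism for computing $\deg|R_{T_h}^{G_h}(\theta)|$, which is the geometric half of the formal-degree match. The paper obtains it by introducing the closed stratum $X_{h,n'}\subseteq X_h$, proving a second Mackey-type statement ($\langle R_{T_h}^{G_h}(\theta), H_c^\ast(X_{h,n'})_\theta\rangle_{G_h} = \langle H_c^\ast(X_{h,n'})_\theta, H_c^\ast(X_{h,n'})_\theta\rangle_{G_h} = 1$, whence $R_{T_h}^{G_h}(\theta)\cong H_c^\ast(X_{h,n'})_\theta$), and then using that $X_{h,n'}$ is a maximal variety over $\FF_{q^n}$ so that Boyarchenko's trace formula for $F^n$ yields the dimension. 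This is precisely where the hypotheses $p>n$ and the trivial stabilizer of $\theta|_{U_L^1}$ are consumed (connectedness of certain norm-equation schemes, surjectivity of $y\mapsto \sigma^{n-m}(y)y^{-1}$ onto norm kernels); your proposal never locates where $p>n$ would enter. A smaller point: the vanishing of $N$-invariants is unconditional in $\theta$ --- it comes from a fixed-point-free one-dimensional torus action on $N_h\backslash X_h$, exhibited via Turnbull's minor identities, not from any incompatibility between $\theta$ and proper Levi subgroups as you suggest.
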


In \cite[Section 12]{CI_ADLV}, for all $p,n,\kappa$ we establish Theorem \hyperlink{thm:A}{A} under the assumptions that either: $\theta$ is primitive (this is a stronger condition on $\theta$ than in the theorem above), or $\theta$ is unramified and has trivial $\Gal(L/K)$-stabilizer. When $\kappa$ is coprime to $n$, then $\bfG$ is the unit group of a central division algebra, and the proof of Theorem \hyperlink{thm:A}{A} is considerably easier: for all $p,n$ and any $\theta$ with trivial $\Gal(L/K)$-stabilizer, the result holds by combining Lusztig \cite{Lusztig_79} and Boyarchenko \cite{Boyarchenko_12} along with a result of Henniart \cite[Th\'eor\`eme 3.1]{Henniart_92} (see \cite[Section 7]{Chan_siDL}). When $\kappa = 1$ and $n = 2$, Theorem \hyperlink{thm:A}{A} was first done in \cite{Ivanov_15_ADLV_GL2_unram}. For $\bfG = \bfGL_2$ and ramified elliptic tori a similar result was shown in \cite{Ivanov_15_ADLV_GL2_ram,Ivanov_18_wild}. Tackling the problem of understanding $R_T^G(\theta)$ for all $n,\kappa$ under the far weaker assumptions on $\theta$ in Theorem \hyperlink{thm:A}{A} means that we must overcome two major issues: unlike the setting that $(\kappa,n) = 1$, the representations $R_T^G(\theta)$ are not always compact inductions from finite-index subgroups of $G$, and unlike the setting that $n = 2$, not all $\theta$ are primitive. These issues turned out be quite difficult to resolve; we now explain the strategy of the proof of Theorem \hyperlink{thm:A}{A} and discuss the geometric methods used in it.


To begin with, ${\bf G}$ has a (unique up to conjugacy) smooth affine model ${\bf G}_\cO$ over the integers $\cO_K$ of $K$, whose $\cO_K$-points are the maximal compact subgroup $G_{\cO} \cong \bfGL_{n'}(\cO_{D_{k_0/n_0}})$, where $\cO_{D_{k_0/n_0}}$ is the ring of integers of $D_{k_0/n_0}$. Moreover, $G_\cO$ can be chosen compatibly with $\bfT$ so that $T_\cO := T \cap G_\cO \cong U_L$ is the maximal compact subgroup of $T$. As is shown in \cite{CI_ADLV} (see also Proposition \ref{prop:representability} below), $X$ admits a scheme-theoretically disjoint decomposition,
\begin{equation}\label{eq:scheme_X_dec}
X = \coprod_{g\in G/G_\cO} g.X_\cO, \qquad \text{where $X_\cO = \prolim_h X_h$}
\end{equation}
is a subscheme equal to an inverse limit of affine perfect schemes $X_h$ ($h = 1, 2, 3,\ldots$), each perfectly finitely presented over $\overline{\FF}_q$. Here $X_\cO$ carries an action of $G_\cO \times T_\cO$ and $X_h$ inherits an action of a certain finite (Moy--Prasad) quotient $G_h \times T_h$ of it. Then $X_1$ is (the perfection of) a classical Deligne--Lusztig variety attached to the reductive quotient of the special fiber of ${\mathbf G}_\cO$ (isomorphic to ${\rm Res}_{\FF_{q^{n_0}}/\FF_q} \mathbf{GL}_{n'}$), and the deeper-level varieties $X_h$ coincide with the (perfections of) varieties considered in \cite{Lusztig_04,Stasinski_09} when ${\bf G}_\cO \otimes_{\cO_K} \FF_q$ is reductive (i.e., $\kappa = 0$), resp.\ with those in \cite{CI_MPDL} in the general case. In this setting, $T_h = U_L/U_L^h = (\cO_L/\frak p_L^h)^\times$ and $G_h$ is a finite quotient of ${\bf GL}_{n'}(\cO_{D_{k_0/n_0}})$ by a congruence subgroup (for example, in the split case $\kappa = 0$, $G_h = \GL_n(\cO_L/\frak p_L^h)$).

Let $Z$ be the center of $G$. Then $T = ZT_\cO$. For a character $\theta$ of $T \cong L^\times$ trivial on the $h$-units $U_L^h$, \eqref{eq:scheme_X_dec} plus the fact that the fibers of $X_h/\ker(T_h \rightarrow T_{h-1}) \rightarrow X_{h-1}$ are affine spaces of a fixed dimension, gives $R_T^G(\theta) = \cind_{ZG_\cO}^G R_{T_h}^{G_h}(\theta)$, where $R_{T_h}^{G_h}(\theta)$ is the $\theta|_{U_L}$-isotypic component of the $\ell$-adic Euler characteristic of $X_h$ (extended to a $ZG_\cO$-representation by letting $z\in Z\cong K^\times$ act by $\theta(z)$). 

The proof of Theorem \hyperlink{thm:A}{A} consists of five steps:
\begin{itemize}
\item[(1)] Show that $\pm R_{T_h}^{G_h}(\theta)$ is an irreducible $G_h$-representation. See Section \ref{sec:Mackey}.
\item[(2)] By similar methods as in (1), show for a certain closed $G_h \times T_h$-stable perfect subscheme $X_{h,n'} \subseteq X_h$, that $\pm H_c^\ast(X_{h,n'})_{\theta}$ is irreducible and $\pm R_{T_h}^{G_h}(\theta) \cong \pm H_c^\ast(X_{h,n'})_\theta$. See Section \ref{sec:VariationMackey}.
\item[(3)] Show (using (1)) that the induction $\pm R_T^G(\theta) = \cind_{ZG_\cO}^G (\pm R_{T_h}^{G_h}(\theta))$ is admissible (equivalently, a finite direct sum of irreducible supercuspidals). See Section \ref{sec:Cuspidality}.
\item[(4)] Use \cite{CI_DrinfeldStrat} to compute the degree $\deg H_c^\ast(X_{h,n'})_\theta$ of the (finite-dimensional) representation $\pm H_c^\ast(X_{h,n'})_\theta$, which is then by (2) also equal to $\deg R_{T_h}^{G_h}(\theta)$. See Section \ref{sec:degree_geometric} and \cite{CI_DrinfeldStrat}.
\item[(5)] Using (3) together with the traces of  $R_T^G(\theta)$ \cite[Theorem 11.2]{CI_ADLV} and of $\pi_\theta$ on very regular elements (cf. Section \ref{sec:traces_ell_elements} for a definition) in $T \subseteq G$, conclude by using an argument due to Henniart \cite{Henniart_92} using linear independence of characters, along with matching $\deg R_{T_h}^{G_h}(\theta)$ from (4) with the explicitly known formal degree of $\pi_\theta$ \cite{CorwinMS_90}. See Section \ref{sec:comparison}.
\end{itemize}

Let us briefly comment on steps (1)-(4) here. Step (1) relies on a precise analysis of the quotient $G_h \backslash (X_h \times X_h)$ in the setting described above. This is remarkably delicate, with difficulties that distinguish it both from the proof of the $h=1$ scalar product formula in classical Deligne--Lusztig theory \cite[Theorem 6.8]{DeligneL_76} and from the analogous formula for $h>1$ in the regular case \cite[Proposition 2.3]{Lusztig_04} (for more details, see Remark \ref{rem:two_principles}). This careful analysis culminates in showing the following particular Mackey formula for ``Deligne--Lusztig induced'' $G_h$-representations.

\begin{thmb}[see Theorem \ref{thm:general_irred}, Corollary \ref{cor:irreducibility}] \hypertarget{thm:B}
Let $\theta,\theta'$ be two characters of $T_h$. Then 
\[
\left\langle R_{T_h}^{G_h}(\theta), R_{T_h}^{G_h}(\theta') \right\rangle_{G_h} = \# \left\{ w \in W_\cO^F \colon \theta'  = \theta \circ {\rm ad}(w) \right\},
\]
where $W_\cO$ is the Weyl group of the special fiber of $\bfG_\cO$ and $F$ is the Frobenius of $\bfG$ acting on it. Moreover, if the stabilizer of $\theta|_{U_L^1}$ in $\Gal(L/K)[n']$, the unique subgroup of $\Gal(L/K)$ of order $n'$, is trivial, then $\pm R_{T_h}^{G_h}(\theta)$ is an irreducible $G_h$-representation and the map
\begin{align*}
\{\text{characters $\theta \colon T_h \rightarrow \overline\bQ_\ell^\times$ in general position}\}\slash W_\cO^F &\rightarrow \{\text{irreducible $G_h$-representations} \}  \\
\theta &\mapsto \pm R_{T_h}^{G_h}(\theta)
\end{align*}
is injective.
\end{thmb}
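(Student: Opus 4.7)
My plan is to adapt the classical Deligne--Lusztig proof of the Mackey formula to the deep-level Coxeter setting, building on the $h=1$ analysis of \cite{Lusztig_76_Inv} and its deeper-level extension \cite{Lusztig_04}, while exploiting the specific structure of inner forms of $\bfGL_n$. The first step is a standard reduction via Frobenius reciprocity and the K\"unneth formula: one writes
\[
\langle R_{T_h}^{G_h}(\theta), R_{T_h}^{G_h}(\theta')\rangle_{G_h} = \sum_i (-1)^i \dim H^i_c\bigl(G_h \backslash (X_h \times X_h)\bigr)[\theta^{-1} \boxtimes \theta'],
\]
where the bracket denotes the $\theta^{-1} \boxtimes \theta'$-isotypic component for the diagonal $T_h \times T_h$-action, so the problem becomes a geometric study of this quotient.

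The heart of the argument---and the main obstacle---will be to produce a locally closed stratification
\[
G_h \backslash (X_h \times X_h) = \coprod_{w \in W_\cO^F} S_w
\]
such that each $S_w$ fibers $T_h \times T_h$-equivariantly with affine-space fibers onto a twisted form of $T_h$ on which $T_h \times T_h$ acts via $(t_1,t_2) \mapsto t_1^{-1}\cdot \mathrm{ad}(w)(t_2)$. At level $h=1$ this is the Coxeter case of classical Deligne--Lusztig theory, essentially contained in \cite{Lusztig_76_Inv}; extending it to deeper level requires Moy--Prasad-graded arguments in the spirit of \cite{Lusztig_04}, with crucial simplifications stemming from $\bfT \subseteq \bfG$ being elliptic unramified and $\bfG$ being inner to $\bfGL_n$, so that the $W_\cO^F$-action on $T_h$ factors through the action of $\Gal(L/K)[n']$ on $U_L/U_L^{h'}$ for the appropriate $h'$.

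Granted such a stratification, each $S_w$ contributes exactly $1$ to the relevant isotypic piece when $\theta' = \theta \circ \mathrm{ad}(w)$ and $0$ otherwise, and summing over $W_\cO^F$ yields the Mackey formula. The irreducibility claim is then formal: under the hypothesis that $\theta|_{U_L^1}$ has trivial stabilizer in $\Gal(L/K)[n']$, the identification just mentioned ensures $\theta$ is in general position for $W_\cO^F$, whence $\langle R_{T_h}^{G_h}(\theta), R_{T_h}^{G_h}(\theta)\rangle_{G_h} = 1$ and $\pm R_{T_h}^{G_h}(\theta)$ is an irreducible $G_h$-representation. Injectivity of $\theta \mapsto \pm R_{T_h}^{G_h}(\theta)$ on general-position characters follows similarly: an isomorphism of the associated irreducibles forces inner product $1$, hence by the Mackey formula $\theta'$ lies in the $W_\cO^F$-orbit of $\theta$.

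The principal obstacle I anticipate lies in constructing the stratification. At $h=1$ and $\kappa=0$ this reduces to classical Coxeter-case geometry, but for $h>1$ and especially $\kappa \neq 0$ the reductive quotient of the special fiber of $\bfG_\cO$ is the Weil restriction $\mathrm{Res}_{\FF_{q^{n_0}}/\FF_q}\GL_{n'}$, so the Weyl group action and the Frobenius structure must be tracked through the Moy--Prasad grading, and the non-reductivity of the full special fiber of $\bfG_\cO$ (when $\kappa \neq 0$) complicates the decomposition. I expect that a careful induction along the graded pieces $U_L^i/U_L^{i+1}$, combined with an Iwahori-style decomposition of $X_h$ refining \eqref{eq:scheme_X_dec}, will produce the stratification by killing all contributions outside the loci where $\theta' = \theta \circ \mathrm{ad}(w)$.
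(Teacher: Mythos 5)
Your opening reduction to the cohomology of $G_h\backslash(X_h\times X_h)$ and the formal deduction of irreducibility and injectivity from the Mackey formula are both correct and agree with the paper. The gap is in the middle, and it is exactly where the theorem's content lies. There is no locally closed stratification of $G_h\backslash(X_h\times X_h)$ indexed by $W_\cO^F$: the natural decomposition, obtained by lifting the Bruhat decomposition of the reductive quotient $\bG_1$ to $\bG_h$, is indexed by the \emph{full} Weyl group $W_\cO\cong (S_{n'})^{n_0}$, and the strata $\Sigma_w$ for $w\notin W_\cO^F$ are in general nonempty. The whole difficulty of Theorem B --- and the reason the deep-level Mackey formula of \cite{Lusztig_04} carries a regularity hypothesis on $\theta$ --- is to show that these extra strata contribute nothing to the $\theta^{-1}\boxtimes\theta'$-isotypic part of the cohomology. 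Your proposal defers this to the hope that ``a careful induction along the graded pieces \dots will produce the stratification by killing all contributions outside the loci where $\theta'=\theta\circ\mathrm{ad}(w)$''; but that is precisely the statement to be proved, and the graded induction alone is what \cite{Lusztig_04} already does --- it only succeeds for regular $\theta$.

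The paper disposes of the unwanted strata by three separate arguments, none of which has a counterpart in your proposal: (i) an explicit matrix computation (Lemma \ref{lm:empty_Sigma_w}) showing that $\widehat\Sigma_w=\varnothing$ whenever $[w(i)]>[w(i-1)+1]>1$ for some $i$, which uses the Coxeter element and the elliptic torus in an essential way; (ii) for the remaining $w\neq 1$, an extension of the $T_h\times T_h$-action on $\widehat\Sigma_w$ to a positive-dimensional group $H_w$, together with a delicate estimate on $q$-power exponents (Lemma \ref{lm:regular_torus_in_general}) guaranteeing that the reductive part of $H_w^\circ$ is not contained in any root kernel, so that torus fixed-point localization kills the stratum; (iii) a different extension of action for $w=1$, built from a second pair of opposite unipotent radicals and the generic Bruhat cell. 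Note also that even for $w\in W_\cO^F$ your picture of $S_w$ as an affine-space fibration over a twisted torus does not by itself give the count: one still needs the fixed-point argument to see that the stratum contributes $0$ when $\theta'\neq\theta\circ\mathrm{ad}(w)$ and exactly $1$ otherwise. Without substitutes for (i)--(iii), the proposal does not get beyond the regular case already known from \cite{Lusztig_04}.
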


The point of Theorem \hyperlink{thm:B}{B} is that in our setting, it nullifies the assumption ``$\theta$ regular'' which was crucial in \cite{Lusztig_04}.
One can hope that similar methods as used in the proof of Theorem \hyperlink{thm:B}{B} could lead to a general Mackey formula for all elliptic unramified tori in reductive $K$-groups.

Step (2) is a technically more elaborate version of the same idea implemented in step (1). It is (among other things) responsible for the assumptions that $p>n$ and that $\theta|_{U_L^1}$ has trivial stabilizer in $\Gal(L/K)$. See Remark \ref{rem:two_principles}.

Step (3) relies on the study of the quotient $N_h \backslash X_h$ where $N_h \subseteq G_h$ is a subgroup corresponding to the unipotent radical of a proper parabolic subgroup of $\bfG$. Once this quotient is described (Lemma \ref{lm:description_of_quotient_kappa_arbitrary}), (3) is easy to show. The main technical role in this description is played by classical minor identities, dating back to 1909 results of Turnbull \cite{Turnbull_1909}.

Step (4), mainly performed in \cite{CI_DrinfeldStrat} is based on the determination of the action of Frobenius (over $\FF_{q^n}$) in the cohomology of $X_{h,n'}$. This determination is strongly related to the amazing fact that $X_{h,n'}$ is a maximal variety over $\FF_{q^n}$, i.e., $\#X_{h,n'}(\FF_{q^n})$ attains its Weil--Deligne bound, prescribed by the Lefschetz fix point formula and the dimensions of the single $\ell$-adic cohomology groups. 

\smallskip

Finally, we state a conjecture for other groups, and discuss the construction of a $p$-adic Deligne--Lusztig stack, which allows to relate our results to the work of Zhu and Xiao--Zhu on the stack of isocrystals \cite{Zhu_20, XiaoZ_17}, as well as to the recent work of Fargues--Scholze \cite{FarguesScholze}. In the rest of the introduction, $\bfG$ denotes any unramified reductive group over $K$.

\subsection*{Related work}

This paper, in both its results and its methods, has served as motivation for many directions. We mention several works that have been established since the time this paper was written. We additionally indicate expected connections to recent and fast-moving developments in the realm of the local Langlands conjecture. In the rest of the introduction, $\bfG$ denotes any unramified reductive group over $K$ and $\bfG'$ denotes an extended pure inner form of $\bfG$. Let $\bfT \hookrightarrow \bfG'$ be an arbitrary (not necessarily elliptic) unramified maximal torus. As usual, we write $G = \bfG(K),$ $G' = \bfG'(K),$ $T = \bfT(K)$.

\subsubsection*{Various Deligne--Lusztig spaces for $p$-adic groups}
In \cite{CI_MPDL}, we defined affine perfect schemes $X_h$ of perfectly finite presentation associated to certain parahoric subgroups of $\bfG'$, extending \cite{Lusztig_04,Stasinski_09}. Each has an action of a certain quotient $T_h \times G_h$ of Moy--Prasad subgroups, allowing one to define a deeper version of Deligne--Lusztig induction, which we denote by $R_{T_h}^{G_h'}(\theta)$. 

In an even more general setup (where, in particular, $\bfG'$ is allowed to be an inner form of a Levi subgroup of $\bfG$), certain ``big'' $p$-adic Deligne--Lusztig spaces attached to $\bfG$, $\bfG'$ and $\bfT$ were introduced in \cite{Ivanov_DL_indrep}. They admit an action by $G' \times T$. In the special case of Coxeter tori and inner forms of $\bfGL_n$ these spaces give back the space $X$ as in \eqref{eq:scheme_X_dec}.

A priori these big $p$-adic Deligne--Lusztig spaces are huge objects (in particular, infinite-dimensional and often not representable by schemes). To develop a formalism capturing the $\ell$-adic cohomology of such objects is the task of an ongoing project of L. Mann with the second author. This formalism then attaches a smooth $G'$-representation $R_T^{G'}(\theta)$ to a smooth character $\theta \colon T \rightarrow \overline\bQ_\ell^\times$. Without using this formalism we are able to define $R_T^{G'}(\theta)$ in the special case of the present article in an ad hoc way using \eqref{eq:scheme_X_dec} and the representations $R_{T_h}^{G_h'}(\theta)$. By \cite[Thm.1.1]{Ivanov_pDL_Cox}, the same also works whenever $\bfG$ is classical and $\bfT$ is Coxeter.

\smallskip

Below we describe a couple of works on $R_{T_h}^{G_h'}(\theta)$.


\subsubsection*{$R_{T_h}^{G_h'}(\theta)$ and supercuspidal representations}

The work in the present paper establishes the irreducible supercuspidality of the compact induction $\cInd(R_{T_h}^{G_h'}(\theta))$ in the case that $\bfG = \GL_n$ for most characters $\theta$. For arbitrary $\bfG$ and $\bfT$, when the size of the residue field of $K$ is sufficiently large, the irreducible supercuspidality of this compact induction is established in work of the first author with M.\ Oi \cite{ChanOi_toral} for characters $\theta$ satisfying a genericity condition analogous to primitivity. Moreover, the correspondence $\theta \mapsto \cInd(R_{T_h}^{G_h'}(\theta))$ is the ``correct'' correspondence, in the sense of DeBacker--Spice \cite{DeBSpi}, and is compatible with current constructions of supercuspidal $L$-packets. This is achieved via an explicit algebraic comparison to Yu's construction of supercuspidal representations \cite{Yu_01}, in contrast to the \textit{geometric} methods established in the present paper.

\subsubsection*{Mackey formula for $R_{T_h}^{G_h'}(\theta)$}

If $\bfG'$ is $K$-split and $\bfT$ is Coxeter, the analogue of Theorem \hyperlink{thm:B}{B} holds, due to work of the second author and O.\ Dudas \cite[Thm.~3.2.3]{DudasI_20}. The proof of \cite[Thm.~3.2.3]{DudasI_20} is a quite non-straightforward generalization of our proof of Theorem \hyperlink{thm:B}{B}.

\subsection*{A $p$-adic Deligne--Lusztig stack. Relation to Fargues--Scholze}

Finally, in \S\ref{sec:DL_sheaves_on_Isoc_and_Bun} we put our results in the context of the work of Zhu \cite{Zhu_20}, Xiao--Zhu \cite{XiaoZ_17} and Fargues--Scholze \cite{FarguesScholze}. The starting point is that there is a natural way to organize (certain variant of) the spaces $X = X^\bfG$ into a family, where $\bfG$ continuously varies through all extended pure inner forms of $\bfGL_n$ (or more generally, of any given unramified group). More precisely, fix an unramified reductive group $\bfG$. Zhu and Xiao--Zhu consider the stack ${\rm Isoc}_{\bfG}$, parametrizing certain $\bfG$-isocrystals, whose geometric points correspond to extended pure inner forms of $\bfG$. We  sketch the construction of a $v$-stack $X_w \rightarrow {\rm Isoc}_{\bfG}$, whose fibers are (in the special case $\bfG = \bfGL_n$, $w$ Coxeter), essentially the spaces $X/T$.

Attached to any character $\theta$ of $T$, there should\footnote{Sections \ref{sec:DL_sheaf_on_isoc} and \ref{sec:diamond_attached_to_isoc} are conditional on the  existence of a six functor formalism for solid pro-\'etale sheaves on $v$-stacks on ${\rm Perf}_{\overline{\bF}_q}$, which is not developed yet (this is, however, the aim of the ongoing work of L. Mann and the second author).} exist a ``$p$-adic Deligne--Lusztig complex'' ${\rm DL}_{w,\theta}$ in an appropriate category $D_{\rm lis}({\rm Isoc}_{\bfG},\overline\bQ_\ell)$ of lisse pro-\'etale sheaves on $X_w$. Theorem \hyperlink{thm:A}{A} can now be restated as the computation of the restriction of ${\rm DL}_{w,\theta}$ to the basic locus, cf. Corollary \ref{cor:main_result_in_terms_of_DLsheaf}. Moreover, through the corresponding $v$-stack ${\rm Isoc}_{\bfG}^\diamond$ over perfectoid spaces over $\overline\bF_q$, ${\rm Isoc}_{\bfG}$ is related to the stack ${\rm Bun}_{\bfG}$ of $\bfG$-bundles on the Fargues--Fontaine curve, a central object of \cite{FarguesScholze}. Via this relation, ${\rm DL}_{w,\theta}$ gives rise to an object ${\rm DL}'_{w,\theta} \in D_{\rm lis}({\rm Bun}_{\bfG},\overline\bQ_\ell)$. We conclude by stating Conjecture \ref{conj:DL_sheaf_on_Bun_G} concerning its behavior and relation to Fargues' conjectural Hecke eigensheaf (which in the case of the present article was constructed by Ansch\"utz--LeBras \cite[Thm. 2.1]{AnschuetzLB_21}).

\subsection*{Acknowledgements}
We would like to thank Guy Henniart, Tasho Kaletha, and Peter Scholze for helpful advice, and Andreas Mihatsch, Johannes Ansch\"utz and Ian Gleason for several useful discussions on the subject of this article. We are very grateful to the anonymous referee for several very interesting suggestions and insights, especially concerning \S\ref{sec:DL_sheaves_on_Isoc_and_Bun}. The first author was partially supported by the DFG via the Leibniz Prize of Peter Scholze, by NSF grant DMS-1641185 (US Junior Oberwolfach Fellow), and by an NSF Postdoctoral Research Fellowship, DMS-1802905. The second author was supported by the DFG via the Leibniz Preis of Peter Scholze.

\section{Notation}\label{sec:notation} 

For a non-archimedean local field $M$ we denote by $\cO_M, \fp_M, U_M = \cO_M^\times$ resp.\ $U_M^h = 1 + \fp_M^h$ (with $h \geq 1$) its integers, maximal ideal, units resp.\ $h$-units.
 
Throughout the article we fix a non-archimedean local field $K$ with uniformizer $\varpi$ and residue field $\FF_q$ of characteristic $p$ with $q$ elements. We denote by $\breve K$ the completion of a fixed maximal unramified extension of $K$, and by $\cO_{\breve K}$ the integers of $\breve K$. The residue field $\overline{\FF}_q$ of $\breve K$ is an algebraic closure of $\FF_q$, and $\varpi$ is still a uniformizer of $\breve K$. We write $\sigma$ for the Frobenius automorphisms of $\breve K/K$ and of $\overline{\FF}_q/\FF_q$. 

Fix an integer $n \geq 2$. We denote by $K \subseteq L \subseteq \breve K$ the unique subextension of degree $n$. Moreover, for any positive divisor $r$ of $n$ we denote by $K \subseteq K_r \subseteq K_n = L$ the unique subextension of degree $r$ over $K$.

Fix another integer $0 \leq \kappa < n$ and write $n = n'n_0$, $\kappa = n'k_0$, where $n' = {\rm gcd}(n,\kappa)$. Then $n_0, k_0$ are coprime. 

Fix a prime $\ell \neq p$ and let $\overline \QQ_\ell$ be a fixed algebraic closure of $\QQ_\ell$. All cohomology groups of (perfections of) quasi-projective schemes of finite type over $\overline\FF_q$ will be compactly supported \'etale cohomology groups with coefficients in $\overline \QQ_\ell$. For such a scheme $Y$ (and more generally, whenever the cohomology groups are defined), we write $H_c^\ast(Y) := \sum_{i \in \bZ} H_c^i(Y,\overline{\QQ}_\ell)$ (the coefficients always will be $\overline{\QQ}_\ell$, so there is no ambiguity).

Unless otherwise stated, all representations of locally compact groups appearing in this article will be smooth with coefficients in $\overline \QQ_\ell$.

\section{Coxeter-type loop Deligne--Lusztig scheme in type \texorpdfstring{$\tilde A_{n-1}$}{$A_{n-1}$} }

Let $n = n'n_0 \geq 2$ and $\kappa = n'k_0$ with $\gcd(k_0,n_0) = 1$ be as in Section \ref{sec:notation}. This notation remains fixed throughout the article.

In this section we review some constructions and results concerning loop Deligne--Lusztig schemes of Coxeter type for inner forms of $\bfGL_n$ from \cite{CI_ADLV}, and we simplify the proof of representability (Proposition \ref{prop:representability}).

\subsection{Inner forms of $\bfGL_n$ and elliptic tori}\label{sec:basic_notation}
Inside the group $\bfGL_n$ over $K$ we fix a split maximal torus $\bfT_0$ and the unipotent radicals $\bfU_0, \bfU_0^-$ of two opposite $K$-rational Borel subgroups containing $\bfT_0$. Let the roots of $\bfT_0$ in $\bfU_0$ be the positive roots, determining a set $S_0$ of simple roots. Conjugating if necessary, we may assume that $\bfT_0$ is the diagonal torus and $\bfU_0$ is the group of upper triangular unipotent matrices.

\subsubsection{Forms of $\bfGL_n$} The Kottwitz map \cite{Kottwitz_85}
\[
\kappa_{\bfGL_n} = {\rm val} \circ \det \colon B(\bfGL_n)_{\rm basic} \rightarrow \bZ
\]
for $\bfGL_n$ defines a bijection between the set of basic $\sigma$-conjugacy classes in ${\bf GL}_n(\breve K)$ and $\bZ$. 
Fix a basic element $b \in \bfGL_n(\breve K)$ with $\kappa_{\bfGL_n}(b) = \kappa$. Let $\bfG$ be the $K$-group defined by 
\[
\bfG(R) = \{ g \in \bfGL_n(R \otimes_K \breve K) \colon g^{-1}b\sigma(g) = b \}
\]
(this is the group $J_b$ from \cite[1.12]{RapoportZ_96}). Then $\bfG$ is an inner form of $\bfGL_n$ and we may identify $\bfG(\breve K) = \bfGL_n(\breve K)$. 
The Frobenius on $\bfG(\breve K)$ is $F_b \colon g \mapsto b\sigma(g)b^{-1}$. The $K$-points of $\bfG$ are
\[
G := \bfG(K) \cong \bfGL_{n'}(D_{k_0/n_0}).
\]
We may identify the adjoint Bruhat--Tits building of $\bfG$ over $\breve K$ with that of $\bfGL_n$. Denote both of them by $\mathscr{B}_{\breve K}$. The adjoint Bruhat--Tits building of $\bfG$ over $K$ is the subcomplex $\mathscr{B}_K = \mathscr{B}_{\breve K}^{F_b}$. Let ${\bf x}_b \in \mathscr{B}_K$ be a vertex. Bruhat--Tits theory \cite[5.2.6]{BruhatT_84} attaches to ${\bf x}_b$ a (maximal) parahoric $\cO_K$-model $\bfG_\cO$ of $\bfG$, whose $\cO_K$-points 
\[
G_\cO := \bfG_{\cO}(\cO_K) \cong \bfGL_{n'}(\cO_{D_{\kappa_0/n_0}}),
\] 
form a maximal compact subgroup of $G$.

\begin{rem}
The groups $\bfG,\bfG_\cO,G,G_\cO$ depend on the choice of $b$, but if $b' = h^{-1}b\sigma(h)$ ($h\in \bfGL_n(\breve K)$) is another choice inside the same basic $\sigma$-conjugacy class, with corresponding groups $\bfG',G'$, then conjugation with $h$ defines an isomorphism of $\bfG, G$ and $\bfG', G'$, and if ${\bf x}_b$ is mapped by $h$ to ${\bf x}_{b'}$, then conjugation by $h$ maps $\bfG_\cO, G_\cO'$ to $\bfG_\cO,G_\cO'$. As at the end we are interested in isomorphism classes of representations of $G$ (or $G_\cO$), which are not affected by these isomorphisms, we leave the choice of $b$ unspecified as long as possible. When we need concrete realizations of $\bfG, \bfG_\cO, G, G_\cO$ (in Sections \ref{sec:Step1}, \ref{sec:proof_no_triv_char_kappa_0} and \ref{sec:proof_no_triv_char_kappa_arbitrary}) we will exploit the freedom of choosing different $b$'s inside the same basic $\sigma$-conjugacy class). 
\end{rem}

\subsubsection{Forms of $\bfT_0$}\label{sec:form_of_torus} Let $W = W(\bfT_0,\bfGL_n)$ be the Weyl group of $\bfT_0$ in $\bfGL_n$, then $(W, S_0)$ form a Coxeter system. Let $w_1 = \left(\begin{smallmatrix}0&1\\1_{n-1}&0\end{smallmatrix}\right)\in W$. It is a Coxeter element of $(W, S_0)$. Let $\dot w_1 \in N_{\bfGL_n}(\bfT_0)(\breve K)$ be a lift of $w_1$.
Then ${\rm Ad}(\dot w_1)$ induces an automorphism of the apartment $\mathscr{A}_{\bfT_0,\breve K} \subseteq \mathscr{B}_{\breve K}$ of $\bfT_0$. It has precisely one fixed point ${\bf x}_{\dot w_1}$ as $w_1$ is Coxeter. 
Let $\mathcal{G}$ be the parahoric $\cO_K$-model of $\bfGL_n$ attached to this fixed point.
Let $\mathcal{T}$ be the schematic closure of $\bfT_0$ in $\mathcal{G}$. Let $\bfT$ denote the (outer) form of $\bfT_0$, which splits over $\breve K$, and is endowed with the Frobenius $F_{\dot w_1} \colon t \mapsto \dot w_1 \sigma(t)\dot w_1^{-1}$ (independent of the lift $\dot w_1$), and similarly let $\bfT_\cO$ be the (outer) form of $\mathcal{T}$, which splits over $\cO_{\breve K}$, and is endowed with the same Frobenius. We get the group
\[
T := \bfT(K) \cong L^\times \quad \text{ and its subgroup } \quad T_\cO := \bfT_\cO(\cO_{\breve K}) \cong \cO_L^\times,
\]
where $L/K$ is unramified of degree $n$. In fact, $T=\{\diag(x,\sigma(x),\dots,\sigma^{n-1}(x)) \colon x \in L^\times \}$ (recall that $\bfT_0$ is diagonal), and the isomorphism with $L^\times$ is determined up to composition with an element in $\Gal(L/K)$.

\subsubsection{Case $b=\dot w_1$.}\label{sec:case_bequalw} In the special case $b = \dot w_1$ and ${\bf x}_b = {\bf x}_{\dot w_1}$, we have only one Frobenius $F := F_b = F_{\dot w_1}$, $\bfG_\cO$ is a form of $\mathcal{G}$, and $\bfT$ is an elliptic maximal torus of $\bfG$, and $\bfT_\cO$ is a maximal torus of $\bfG_{\cO}$. There are unique (closed, reduced) subgroups $\bfU,\bfU^-$ of $\bfG$, such that $\bfU(\breve K) = \bfU_0(\breve K)$, $\bfU^-(\breve K) = \bfU_0^-(\breve K)$ inside $\bfG(\breve K) = \bfGL_n(\breve K)$. Inside $\bfG_\cO$ we will need the schematic closures $\bfU_\cO$ and $\bfU_\cO^-$ of $\bfU$ and $\bfU^-$. 

The Frobenius $F$ acts on the roots of $\bfT$ in $\bfG$, so that there is a unique subgroup $F\bfU \subseteq \bfG$, satisfying $(F\bfU)(\breve K) = F(\bfU(\breve K))$, and similarly for $\bfU^-, \bfU_{\cO}, \bfU_\cO^-$. Identifying $W$ with the Weyl group of $\bfT$ in $\bfG$, $F$ acts on $W$. Moreover, $W^F = \langle w_1 \rangle \cong \bZ/n\bZ$ is the subgroup generated by $w_1$. It acts on $T$ and the chosen isomorphism $T \cong L^\times$ induces an isomorphism $W^F \cong \Gal(L/K)$, sending $w_1$ to the image of $\sigma$ in $\Gal(L/K)$.

The maximal torus in the reductive quotient of the special fiber $\bfT_\cO \otimes_{\cO_K} \FF_q \subseteq (\bfG_\cO \otimes_{\cO_K} \FF_q)^{\rm red}$ is elliptic. Explicitly, these groups are isomorphic to ${\rm Res}_{\FF_{q^n}/\FF_q} \bG_m \subseteq {\rm Res}_{\FF_{q^{n_0}}/\FF_q} \GL_{n',\FF_{q^{n_0}}}$. Let $W_\cO$ be the Weyl group of $\bfT_\cO \otimes_{\cO_K} \FF_q$ in $(\bfG_\cO \otimes_{\cO_K} \FF_q)^{\rm red}$. It is naturally a subgroup of $W$, $F$ acts on $W_\cO$ and $W_\cO^F$, which is generated by $w_1^{n_0}$, is isomorphically mapped onto $\Gal(L/K_{n_0})$ under the above isomorphism $W^F \cong \Gal(L/K)$.

\subsection{Perfect schemes} 

Let $k$ be a perfect field of characteristic $p$ and let $X$ be a $k$-scheme. Let $\phi = \phi_X \colon X \rightarrow X$ be the absolute Frobenius morphism of $X$, that is $\phi$ is the identity on the underlying topological space and is given by $x \mapsto x^p$ on $\cO_X$. The scheme $X$ is called \emph{perfect} if $\phi$ is an isomorphism. Let ${\rm Alg}_{k}$ denote the category of all $k$-algebras, and let ${\rm Perf}_{k}$ be the full subcategory of perfect $k$-algebras. Then the restriction functor which sends a perfect $k$-scheme, regarded as a functor on ${\rm Alg}_{k}$, to a functor on ${\rm Perf}_{k}$ is fully faithful \cite[A.12]{Zhu_17}. Thus we equally may regard a perfect scheme as a functor on ${\rm Perf}_{k}$, which has an open covering by representable functors in the usual sense. Every $k$-scheme $X_0$ admits a \emph{perfection}, namely $X_0^{\rm perf} := \lim_{\phi} X_0$, which is a perfect scheme. For example, the perfection of ${\rm Spec}\, k[T]$ is ${\rm Spec}\, k[T^{1/p^{\infty}}]$, where $k[T^{1/p^{\infty}}] = \bigcup_{r\geq 0} k[T^{p^{-r}}]$.

Except stated otherwise, throughout this article we will work with perfect schemes over $k = \overline{\FF}_q$ (or $k = \FF_q$). So, to simplify notation we write $\bA^m = \bA^m_k$ resp. $\bG_a$ resp. $\bG_m$ for the \emph{perfection} of the $m$-dimensional affine space resp. the additive resp. the multiplicative group over $k$. A morphism $f\colon \Spec A \rightarrow \Spec B$ of affine perfect schemes is \emph{perfectly finitely presented}, if there is a $A = (A_0)_{\rm perf}$ for a finitely presented $B$-algebra $A_0$ \cite[3.10,3.11]{BhattS_17}. For further results on perfect schemes we refer to \cite[Appendix A.1]{Zhu_17} and \cite[\S3]{BhattS_17}. Here we only mention the following lemmas.

\begin{lm}\label{lm:perf_fin_pres}
Let $X \subseteq \bA_k^m$ be a closed perfect subscheme of the $m$-dimensional perfect affine space. Then $X \rightarrow {\rm Spec}\, k$ is perfectly finite presented. 
\end{lm}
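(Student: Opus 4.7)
\emph{Plan.} The plan is to exhibit a finitely presented classical $k$-algebra whose perfection recovers the coordinate ring of $X$, which by \cite[3.10,3.11]{BhattS_17} is exactly what is needed for perfect finite presentation. Write $R := k[T_1^{1/p^\infty}, \dots, T_m^{1/p^\infty}]$ for the coordinate ring of $\bA_k^m$. A closed perfect subscheme $X \subseteq \bA_k^m$ corresponds to a surjection $R \twoheadrightarrow B$ with $B$ perfect, so $X = \Spec B$ with $B = R/I$ for some ideal $I \subseteq R$. Let $R_0 := k[T_1, \dots, T_m] \subseteq R$ be the classical polynomial ring and set $J := I \cap R_0$. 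Since $R_0$ is Noetherian by Hilbert's basis theorem, $J$ is finitely generated, so $A_0 := R_0/J$ is a finitely presented $k$-algebra. It then suffices to identify $B \cong A_0^{\rm perf}$.

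\emph{Main step.} The inclusion $A_0 = R_0/J \hookrightarrow R/I = B$ (injectivity being the content of the definition $J = I \cap R_0$) induces, via the universal property of perfection and the fact that $B$ is perfect, a ring homomorphism $\varphi \from A_0^{\rm perf} \to B$. To see $\varphi$ is surjective, note that every element of $R$ has some $p^n$-th power in $R_0$, so for $b \in B$ a suitable power $b^{p^n}$ lifts to $R_0$; then $b$ itself is the unique $p^n$-th root of $b^{p^n}$ in the perfect ring $B$ and hence lies in $\im(\varphi)$. To see $\varphi$ is injective, recall that an element of $A_0^{\rm perf}$ is represented by a pair $(n, a)$ with $a \in A_0$ and $n \geq 0$, thought of as ``$a^{1/p^n}$''; if it maps to zero in $B$, then $a$ itself maps to zero in $B$, forcing $a \in I \cap R_0 = J$ and hence $a = 0$ in $A_0$, so the pair is already zero in $A_0^{\rm perf}$.

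\emph{Main obstacle.} The most delicate point is checking that closed perfect subschemes of $\bA_k^m$ really correspond to surjections $R \twoheadrightarrow B$ onto \emph{perfect} quotients, rather than requiring an extra perfection step on the quotient; once this is granted, Hilbert's basis theorem applied to the classical polynomial ring $R_0$ does all the finiteness work, and the remainder is a routine verification of the universal property of perfection, together with the elementary observation that every element of $R$ has a $p^n$-th power lying in $R_0$.
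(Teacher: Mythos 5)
Your proof is correct and follows essentially the same route as the paper: the paper also sets $J = \fa \cap k[T]$ and asserts that $X$ is the perfection of $\Spec k[T]/J$, leaving the verification as "easy to check." You have simply written out that verification (Noetherianity of $k[T]$, plus surjectivity and injectivity of $A_0^{\rm perf}\to B$ using perfectness of $B$), so there is nothing to correct.
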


\begin{proof}
Let $T = (T_1,T_2,\dots,T_m)$ be some coordinates on $\bA_k^m$. Let $\fa$ be the ideal of $X$ in the coordinate ring $k[T^{p^{-\infty}}]$ of $\bA_k^m$. Then it is easy to check that $X$ is the perfection of $X_0 = \Spec k[T]/(\fa \cap k[T])$, which is (reduced and) finitely presented over $k$.
\end{proof}

\begin{lm}\label{lm:mono_perf_schemes}
Let $f \colon X \rightarrow Y$ be a morphism of perfect $k$-schemes with $X$ separated. The following are equivalent:
\begin{itemize}
\item[(i)] $f$ is a monomorphism (of fpqc- or \'etale sheaves on $\Perf_k$)
\item[(ii)] for every algebraically closed field $K/k$, $f(K) \colon X(K) \rightarrow Y(K)$ is injective.
\end{itemize}
\end{lm}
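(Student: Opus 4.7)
The plan is to reduce the equivalence to a short computation with ideals in a perfect $k$-algebra, exploiting that perfect $\FF_p$-algebras are automatically reduced. Direction (i)$\Rightarrow$(ii) is immediate: any algebraically closed extension $K/k$ is perfect, so $\Spec K \in \Perf_k$, and a monomorphism of sheaves restricts to an injection on sections.

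For (ii)$\Rightarrow$(i), I would test the monomorphism property directly on affine perfect test objects. Given a perfect $k$-algebra $A$ and two morphisms $a,b\colon \Spec A \rightrightarrows X$ with $f\circ a = f\circ b$, first form the equalizer as a fiber product
$$Z \;:=\; \Spec A \times_{X\times_k X,\,\Delta_{X/k}} X,$$
where $\Spec A \to X\times_k X$ is $(a,b)$. Since $X$ is separated over $k$, the diagonal $\Delta_{X/k}$ is a closed immersion, and therefore so is its base change $Z\hookrightarrow \Spec A$; write $Z = \Spec A/I$ for some ideal $I\subseteq A$. The task is to show $I=0$.

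Hypothesis (ii) says that for every algebraically closed $K/k$ and every $k$-homomorphism $A\to K$ the two induced $K$-points of $X$ coincide, i.e.\ $\Spec K \to \Spec A$ factors through $Z$; equivalently, every $k$-algebra map $A\to K$ with $K$ algebraically closed kills $I$. The key point is that any perfect $\FF_p$-algebra is reduced—if $a^n=0$ then $a^{p^m}=0$ for some $m$, and bijectivity of Frobenius on $A$ forces $a=0$—and for a reduced ring the intersection of kernels of all homomorphisms to algebraically closed fields equals the nilradical, which vanishes. So $I=0$ and $a=b$.

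I do not expect a serious obstacle. The only minor subtlety is the bookkeeping that "monomorphism of fpqc- or \'etale-sheaves on $\Perf_k$" may be tested on perfect affine test objects, but this is immediate from the fully faithful embedding of perfect $k$-schemes into sheaves on $\Perf_k$ recalled just above the lemma. The conceptual content of the lemma is thus the automatic reducedness of perfect schemes, which makes the usual obstruction (nilpotents witnessed by non-reduced test schemes) evaporate.
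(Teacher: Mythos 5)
Your proof is correct and follows essentially the same route as the paper's: both form the equalizer of the two maps as a closed subscheme $\Spec A/I$ using separatedness, observe that hypothesis (ii) forces $I$ to die under every homomorphism to an algebraically closed field, and conclude $I\subseteq{\rm rad}(0)=0$ because a perfect ring is reduced. No gaps.
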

\begin{proof}
Assume (ii). To deduce (i) it is enough to show that for any $R \in {\rm Perf}_k$, $f(R) \colon X(R) \rightarrow Y(R)$ is injective. Let $x,y \colon \Spec R \rightarrow X$ be two elements of $X(R)$, such that $fx = fy \in Y(R)$. For each point $p \in \Spec R$, choose a morphism $i_p \colon \Spec K_p \rightarrow \Spec R$ with image $p$, and with $K_p$ an algebraically closed field. Then $fxi_p = fyi_p \in Y(K_p)$ for each $p$, and from (ii) we deduce $xi_p = yi_p$. As $X$ is separated, the equalizer of $x,y$ is a closed subscheme of $\Spec R$, say equal to $\Spec R/I$ for some ideal $I \subseteq R$. Now, $xi_p = yi_p$ for \emph{all} field valued points of $\Spec R$ implies that $I \subseteq \bigcap_{\fp \in \Spec R} \fp = {\rm rad}(0) = 0$, as $R$ perfect and hence reduced. 
The other direction is clear. 
\end{proof}

\subsection{Witt vectors and loop groups}

If $K$ has positive characteristic, we denote by $\bW$ the ring scheme over $\FF_q$, where for any $\FF_q$-algebra $R$, $\bW(R) = R[\![\varpi]\!]$. If $K$ has mixed characteristic, we denote by $\bW$ the $K$-ramified Witt ring scheme over $\FF_q$ so that $\bW(\FF_q) = \cO_K$ and $\bW(\overline \FF_q) = \cO_{\breve K}$ (see e.g.\ \cite[1.2]{FarguesFontaine_book}). Let $\bW_h = \bW/V^h \bW$ be the truncated  ring scheme, where $V \from \bW \to \bW$ is the multiplication by $\varpi$ (if ${\rm char}\, K >0$) resp. the Verschiebung morphism (if ${\rm char} K = 0$). We regard $\bW_h$ as a functor on $\Perf_{\FF_q}$, where it is represented by $\bA^h_{\FF_q}$. We denote by $\bW_h^\times$ the perfect group scheme of invertible elements of $\bW$ and for $1\leq a<h$, we denote by $\bW_h^{\times,a} = {\rm ker}(\bW_h^\times \rightarrow \bW_a^\times)$ the kernel of the natural projection.

If $\bfX$ is a $\breve K$-scheme, the loop space $L\bfX$ of $\bfX$ is the functor on ${\rm Perf}_{\overline{\FF}_q}$,
\[
R \mapsto L\bfX(R) = \bfX(\bW(R)[\varpi^{-1}]).
\]
If $\bfX$ is an affine $\breve K$-scheme of finite type, $L\bfX$ is represented by an ind-(perfect scheme) \cite[Proposition 1.1]{Zhu_17}. If $\mathcal{X}$ is a $\cO_{\breve K}$-scheme, the spaces of (truncated) positive loops of $\mathcal{X}$ are the functors on ${\rm Perf}_{\overline{\FF}_q}$,
\[
R \mapsto L^+\mathcal{X}(R) = \mathcal{X}(\bW(R)) \quad \text{ resp. } \quad R \mapsto L^+_h\mathcal{X}(R) = \mathcal{X}(\bW_h(R)).
\]
($h\geq 1$). If $\mathcal{X}$ is an affine $\cO_{\breve K}$-scheme of finite type,  $L^+\mathcal{X}$, $L^+_h \mathcal{X}$ are represented by affine perfect $\overline{\FF}_q$-schemes, and $L^+_h \mathcal{X}$ are perfectly finitely presented (by Lemma \ref{lm:perf_fin_pres}). The same holds with $\overline{\FF}_q$ replaced by $\FF_q$.

\begin{rem}
We could evaluate $\bW$, $\bW_h$ and $L^+\mathcal{X}$, $L_h^+\mathcal{X}$ on all $R \in {\rm Alg}_{\overline{\FF}_q}$ and thereby work with schemes $L_h^+\mathcal{X}$ of finite type over $\overline{\FF}_q$, instead of perfect schemes. 
Still, one must take care when working with the functors $L, L^+$ in the mixed characteristic setting---see for example \cite[Remark 9.3]{BhattS_17} and \cite[end of Section 1.1.1]{Zhu_17} for some warnings. But even in the equal characteristic case, when working with $L\bfX$, we are really forced to work in the category of perfect schemes; indeed, as we use an argument on geometric points in the proof of Proposition \ref{prop:representability}, we can only make our final conclusion when there is no non-reduced structure (which is the case only after perfection). Therefore, for the entirety of this paper, we pass to perfect schemes everywhere. As passing to the perfection is a universal homeomorphism, this does not affect \'etale cohomology.

%
\end{rem}

\subsection{The perfect $\overline{\FF}_q$-space \texorpdfstring{$X_{\dot w}^{DL}(b)$}{$X$}}\label{sec:perfect_DL_space}
By a \emph{perfect $\overline{\FF}_q$-space} we mean an fpqc-sheaf on $\Perf_{\overline{\FF}_q}$.  Let $b$ be any basic element with $\kappa_{\bfGL_n}(b) = \kappa$. Let $\dot w_1 \in N_{\bfGL_n}(\bfT_0)(\breve K)$ be any lift of $w_1$.  Let $\dot X_{\dot w_1}^{DL}(b)$ denote the fpqc-sheafification of the presheaf on ${\rm Perf}_{\overline{\FF}_q}$,
\begin{equation}\label{eq:perfect_DL_space_w_b}
R \mapsto \{g \in L\bfGL_n(R)/L\bfU_0(R) \colon g^{-1}b\sigma(g) \in L\bfU_0(R)\dot w_1 L\bfU_0(R) \}.
\end{equation}
If $G,T$ are as in Section \ref{sec:basic_notation}, the group $G\times T$ acts on $\dot X_{\dot w}^{DL}(b)$  by $g,t \colon x \mapsto gxt$.

\begin{lm}\label{lm:changing_b_dotw} Let $b$ be basic with $\kappa_{\bfGL_n}(b) = \kappa$ and let $\dot w_1$ be any lift of $w_1$.
\begin{itemize}
\item[(i)] If $b' = h^{-1}b\sigma(h)$ for some $h \in \bfGL_n(\breve K)$, and if $G' = \bfG'(K)$ is the group attached to $b'$ as in Section \ref{sec:basic_notation}, then ${\rm Ad}_h \colon G \rightarrow G'$, $g \mapsto h^{-1}gh$ is an isomorphism. Moreover, left multiplication by $h$ induces an isomorphism of $\overline{\FF}_q$-spaces $\dot X_{\dot w_1}^{DL}(b) \cong \dot X_{\dot w_1}^{DL}(b')$, which is equivariant with respect to the isomorphism $({\rm Ad}_h, {\rm id}) \colon G \times T \rightarrow G'\times T$.
\item[(ii)] Let $\dot w_1'$ be a second lift of $w_1$ to ${\bf GL}_n(\breve K)$.  Assume that $\kappa_{\bfGL_n}(\dot w_1) = \kappa_{\bfGL_n}(\dot w_1')$. Then there exists a $\tau \in \bfT_0(\breve K)$ with $\dot w_1' = \tau^{-1}\dot w_1\sigma(\tau)$. Let $T' = \bfT'(K)$ be the group attached to $\dot w_1'$ as in Section \ref{sec:basic_notation}. Then ${\rm Ad}_\tau \colon T \rightarrow T'$, $t \mapsto \tau^{-1}t\tau$ is an isomorphism. Moreover, right multiplication by $\tau$ induces an isomorphism of $\overline{\FF}_q$-spaces $\dot X_{\dot w_1}^{DL}(b) \cong \dot X_{\dot w_1'}^{DL}(b)$, which is equivariant with respect to the isomorphism $({\rm id}, {\rm Ad}_\tau) \colon G \times T \rightarrow G \times T'$.
\item[(iii)] $\dot X_{\dot w_1}^{DL}(b) = \varnothing$, unless $\kappa_{\bfGL_n}(\dot w_1) = \kappa$.
\end{itemize}
\end{lm}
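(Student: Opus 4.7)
Part (iii) is the easiest, so I would handle it first. The Kottwitz invariant $\kappa_{\bfGL_n} = {\rm val}\circ\det$ is invariant under $\sigma$-conjugation on $\bfGL_n(\breve K)$, and it vanishes on $L\bfU_0$ because elements of $\bfU_0$ have determinant $1$. Hence every element of $L\bfU_0\,\dot w_0\,L\bfU_0$ has Kottwitz invariant $\kappa_{\bfGL_n}(\dot w_0)$. If $\dot X_{\dot w_0}^{DL}(b)$ has an $R$-point $g$, then $g^{-1}b\sigma(g)$ has Kottwitz invariant both $\kappa_{\bfGL_n}(b) = \kappa$ (by $\sigma$-conjugation invariance) and $\kappa_{\bfGL_n}(\dot w_0)$ (by membership in the Bruhat cell), forcing the two to agree.

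For (i), I would verify each claim via direct manipulation. Using $b' = h^{-1}b\sigma(h)$, the identity $(h^{-1}gh)^{-1}b'\sigma(h^{-1}gh) = h^{-1}(g^{-1}b\sigma(g))h$ shows that $g^{-1}b\sigma(g) = b$ iff $(h^{-1}gh)^{-1}b'\sigma(h^{-1}gh) = b'$, so ${\rm Ad}_h \colon G \to G'$ is a group isomorphism with inverse ${\rm Ad}_{h^{-1}}$. For the loop spaces, multiplication by $h^{-1}$ on the left passes to a well-defined morphism on the quotient $L\bfGL_n/L\bfU_0$, and the computation
\[
(h^{-1}g)^{-1}\,b'\,\sigma(h^{-1}g) = g^{-1}\bigl(hb'\sigma(h)^{-1}\bigr)\sigma(g) = g^{-1}b\sigma(g)
\]
shows that the condition of lying in $L\bfU_0\,\dot w_0\,L\bfU_0$ is preserved (so this sends $\dot X_{\dot w_0}^{DL}(b)$ into $\dot X_{\dot w_0}^{DL}(b')$, with inverse given by multiplication by $h$ on the left). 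Equivariance for $({\rm Ad}_h, {\rm id})$ is immediate from $h^{-1}(g_0\,g\,t) = (h^{-1}g_0 h)(h^{-1}g)\,t$.

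For (ii), the crux is producing $\tau$. Writing $\tau_0 := \dot w_0^{-1}\dot w_0' \in \bfT_0(\breve K)$, the equation $\dot w_0' = \tau^{-1}\dot w_0\sigma(\tau)$ rearranges to $\tau_0 = w_0^{-1}(\tau^{-1})\sigma(\tau)$, i.e.\ asks that $\tau_0$ be a coboundary for the twisted Frobenius $F_{\dot w_0}$ on $\bfT_0(\breve K)$. By Section \ref{sec:form_of_torus}, $\bfT_0$ equipped with $F_{\dot w_0}$ is an induced torus $\mathrm{Res}_{L/K}\bG_m$, so the relevant $H^1$ vanishes (Shapiro plus Hilbert 90), and the obstruction to solvability is identified with $\kappa_{\bfGL_n}(\tau_0) = \kappa_{\bfGL_n}(\dot w_0') - \kappa_{\bfGL_n}(\dot w_0) = 0$; this existence step is the main obstacle in the proof. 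Once $\tau$ is in hand, that ${\rm Ad}_\tau \colon T \to T'$ is an isomorphism is a direct computation from $\dot w_0' = \tau^{-1}\dot w_0\sigma(\tau)$, yielding $F_{\dot w_0'}(\tau^{-1}t\tau) = \tau^{-1}F_{\dot w_0}(t)\tau$. Right multiplication by $\tau$ on $L\bfGL_n/L\bfU_0$ is well-defined since $\tau$ is diagonal and hence normalizes $L\bfU_0$, and if $g^{-1}b\sigma(g) = u_1\dot w_0 u_2$ with $u_1,u_2 \in L\bfU_0$, then
\[
(g\tau)^{-1}b\sigma(g\tau) = (\tau^{-1}u_1\tau)\,\dot w_0'\,(\sigma(\tau)^{-1}u_2\sigma(\tau)) \in L\bfU_0\,\dot w_0'\,L\bfU_0.
\]
Equivariance for $({\rm id}, {\rm Ad}_\tau)$ unwinds to $(g_0\,g\,t)\tau = g_0\,(g\tau)\,(\tau^{-1}t\tau)$, completing the proof.
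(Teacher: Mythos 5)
Your proof is correct and follows essentially the same route as the paper: (i) and (iii) rest on the same direct computations with the Kottwitz invariant, and (ii) on the surjectivity of the twisted Lang map of $\bfT_0$ (equipped with $F_{\dot w_0}$, an induced torus since $w_0$ is Coxeter) onto the kernel of the Kottwitz map, which is exactly the paper's argument. One small slip: the displayed identity in part (i) should end in $\sigma(h)$, i.e.\ $(h^{-1}gh)^{-1}b'\sigma(h^{-1}gh) = h^{-1}\bigl(g^{-1}b\sigma(g)\bigr)\sigma(h)$, rather than $h^{-1}\bigl(g^{-1}b\sigma(g)\bigr)h$; since $b' = h^{-1}b\sigma(h)$, the equivalence you deduce from it is unaffected.
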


\begin{proof}
(i): This is an easy computation.
(ii): The fiber over $w_1$ in ${\bf GL}_n(\breve K)$ is a principal homogeneous space under ${\bf T}(\breve K)$, and it is easy to see that as $w_1$ is Coxeter, the map $t \mapsto {\rm Ad}(w_1)(t)^{-1}\sigma(t)$ from ${\bf T}(\breve K)$ to $\{ \tau \in \bfT(\breve K) \colon \kappa_{\bfGL_n}(\tau) = 0\}$ is surjective. The rest is an easy computation.
(iii): As $\dot X_{\dot w_1}^{DL}(b)$ is an inverse limit of perfectly finitely presented perfect $\overline{\FF}_q$-schemes, it suffices to show that $\dot X_{\dot w_1}^{DL}(b)(\overline{\FF}_q) = \varnothing$. This holds as $\kappa_{\bfGL_n}(g^{-1}b\sigma(g)) = \kappa_{\bfGL_n}(b) = \kappa$ and $\kappa_{\bfGL_n}(LU(\overline{\FF}_q)) = 0$.
\end{proof}

\subsection{Representability} We simplify the proof of representability of $X_{\dot w_1}^{DL}(b)$ from \cite{CI_ADLV}. Let $b = \dot w_1$ be basic with $\kappa_{\bfGL_n}(b) = \kappa$. Then we are in the setup of Section \ref{sec:case_bequalw}. Write $F \colon L\bfG \rightarrow L\bfG$ for the $\overline{\FF}_q$-morphism of ind-(perfect schemes) corresponding to $F \colon \bfG(\breve K) \rightarrow \bfG(\breve K)$, $g \mapsto b\sigma(g)b^{-1}$. Define the fpqc-sheafification $X'$ of the presheaf on ${\rm Perf}_{\overline{\FF}_q}$,
\[
R \mapsto \{ x \in L\bfG(R) \colon x^{-1}F(x) \in F(L{\bf U}) \}/L({\bf U} \cap F{\bf U}).
\]
The group $G \times T$ acts on $X'$ by $g,t \colon x \mapsto gxt$. 
Define $X_\cO$ as the fpqc-sheafification of the presheaf on ${\rm Perf}_{\overline{\FF}_q}$,
\[
X_\cO \colon R\mapsto \{x \in L^+\bfG_\cO(R) \colon x^{-1}F(x) \in L^+(F\bfU_\cO \cap \bfU_\cO^-)(R) \}.
\]
Being the preimage of $L^+(F\bfU_\cO \cap \bfU_\cO^-)$ under the Lang-morphism ${\rm Lang}_{F} \colon L^+\bfG_\cO \rightarrow L^+\bfG_\cO$, $g \mapsto g^{-1}F(g)$, $X_\cO$ is representable by a perfect $\overline{\FF}_q$-scheme. Further, the group $G_\cO \times T_\cO$ acts on $X_\cO$ by $(g,t) \colon x \mapsto gxt$. As $T$ is generated by $T_\cO$ and the central element $\varpi \in T \subseteq G$, the obvious action of $G \times T_\cO$ on $\coprod_{G/G_\cO} g.X_\cO$ extends to an action of $G\times T$ by letting $(1,\varpi)$ act in the same way as $(\varpi,1)$.

\begin{prop}[\cite{CI_ADLV}]\label{prop:representability} Let $b = \dot w_1 \in N_{\bfGL_n}(\bfT_0)(\breve K)$ be basic with $\kappa_{\bfGL_n}(b) = \kappa$, and mapping to $w_1 \in W$. There are $G\times T$-equivariant isomorphisms of perfect $\overline{\FF}_q$-spaces
\begin{equation}\label{eq:disjoint_decomposition_of_X}
X_b^{DL}(b) \cong X' \cong \coprod_{g \in G/G_\cO} g.X_\cO.
\end{equation}
In particular, $X_b^{DL}(b), X'$ are representable by perfect $\overline{\FF}_q$-schemes. 
\end{prop}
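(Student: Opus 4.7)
The plan is to establish the two isomorphisms separately; representability of the leftmost then follows, since $X_\cO$ is visibly a perfect $\overline{\FF}_q$-scheme (being the preimage of $L^+(F\bfU_\cO\cap\bfU_\cO^-)$ under the Lang map $\mathrm{Lang}_F\colon L^+\bfG_\cO\to L^+\bfG_\cO$). For the first isomorphism $X_{\dot w_0}^{DL}(b)\cong X'$, the key is to rewrite the Bruhat condition using $b=\dot w_0$ and $F=\mathrm{Ad}(b)\circ\sigma$: one has $g^{-1}b\sigma(g)=g^{-1}F(g)\cdot b$ and $bL\bfU_0 b^{-1}=F(L\bfU)$ (using $L\bfU=L\bfU_0$ and the $\sigma$-stability of $\bfU_0$), so the condition $g^{-1}b\sigma(g)\in L\bfU_0\cdot b\cdot L\bfU_0$ becomes $g^{-1}F(g)\in L\bfU\cdot F(L\bfU)$. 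Since $L\bfU\cap F(L\bfU)=L(\bfU\cap F\bfU)$, every element of $L\bfU\cdot F(L\bfU)$ admits an fpqc-local decomposition $u_1\cdot F(u_2)$ with $u_i\in L\bfU$, unique up to replacing $(u_1,u_2)$ by $(u_1 c^{-1},F^{-1}(c)u_2)$ for $c\in L(\bfU\cap F\bfU)$. Given a representative $g$ of a point of $X_{\dot w_0}^{DL}(b)$, setting $x:=g u_1$ yields $x^{-1}F(x)=F(u_2 u_1)\in F(L\bfU)$, hence a point of $X'$, well-defined modulo the $L(\bfU\cap F\bfU)$-quotient; this is inverse to the obvious projection $X'\to X_{\dot w_0}^{DL}(b)$, $x\mapsto x\bmod L\bfU$.

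For the second isomorphism $X'\cong\coprod_{g\in G/G_\cO}g.X_\cO$, I first note the inclusion $X_\cO\hookrightarrow X'$ (from $L^+(F\bfU_\cO\cap\bfU_\cO^-)\subseteq F(L\bfU)$), which extends $G$-equivariantly because $F(g)=g$ for $g\in G$. Injectivity of the resulting map reduces by Lemma~\ref{lm:mono_perf_schemes} to $\overline{\FF}_q$-points, where it follows from $G\cap L^+\bfG_\cO(\overline{\FF}_q)=G_\cO$. For surjectivity, given an $R$-point $x$ of $X'$ with $x^{-1}F(x)=F(u)$ for some $u\in L\bfU$, I would exhibit fpqc-locally an adjustment $c\in L(\bfU\cap F\bfU)$ such that $(xc)^{-1}F(xc)\in L^+(F\bfU_\cO\cap\bfU_\cO^-)\subseteq L^+\bfG_\cO$; the construction of $c$ proceeds via the root subgroup decomposition $F\bfU=(F\bfU\cap\bfU)\cdot(F\bfU\cap\bfU^-)$, using the ``$F\bfU\cap\bfU$'' piece to absorb the ambiguity and integrality to push the remainder into the opposite integral part $F\bfU_\cO\cap\bfU_\cO^-$. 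Once $(xc)^{-1}F(xc)\in L^+\bfG_\cO$, the coset $xc\cdot L^+\bfG_\cO$ is $F$-fixed in the affine Grassmannian $L\bfG/L^+\bfG_\cO$, so by Lang--Steinberg for $L^+\bfG_\cO$ it equals $g\cdot L^+\bfG_\cO$ for some $g\in G$, uniquely modulo $G_\cO$; then $y:=g^{-1}xc\in L^+\bfG_\cO$ automatically lies in $X_\cO$, giving the desired decomposition.

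The $G\times T$-equivariance of both isomorphisms is routine from the explicit formulas; the extension of the $T_\cO$-action on each $X_\cO$ to a $T$-action on the disjoint union is forced by centrality of the uniformizer $\varpi\in T$ in $G$, which matches the formula stipulated in the definition. Representability of $X'$ and $X_{\dot w_0}^{DL}(b)$ then follows immediately. I expect the main obstacle to be the family version of the surjectivity in the second isomorphism: on $\overline{\FF}_q$-points it is essentially the Iwasawa decomposition over $\breve K$ combined with Lang--Steinberg, but producing the correction $c\in L(\bfU\cap F\bfU)$ depending algebraically on $R$ requires carefully exploiting that $F$ acts on $\bfT$ through the Coxeter element $w_0$, so that the relevant intersections $\bfU\cap F\bfU^{\pm}$ decompose cleanly along root subgroups in families.
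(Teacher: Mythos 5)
Your overall architecture coincides with the paper's: rewrite the Bruhat condition as $g^{-1}F(g)\in L(F\bfU)$, use the twisted decomposition of $L(F\bfU)$ into $L(F\bfU\cap\bfU)$ and $L(F\bfU\cap\bfU^-)$ (valid because $w_0$ is Coxeter, and indeed workable on $R$-points for all perfect $R$) to normalize the Lang image, and then identify the $F$-fixed cosets in $L\bfG/L^+\bfG_\cO$ with $G/G_\cO$. The first isomorphism, the injectivity argument via Lemma \ref{lm:mono_perf_schemes}, and the equivariance statements are all fine and match the paper.

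The genuine gap is the passage from $L(F\bfU\cap\bfU^-)$ to $L^+(F\bfU_\cO\cap\bfU_\cO^-)$. Once you have chosen $c\in L(\bfU\cap F\bfU)$ so that $a:=(xc)^{-1}F(xc)$ lies in $L(F\bfU\cap\bfU^-)$, there is no remaining freedom: $a$ is determined, and a priori its entries lie in $\bW(R)[\varpi^{-1}]$ rather than $\bW(R)$. Your phrase ``integrality to push the remainder into the opposite integral part'' suggests a further adjustment, but none is available; what has to be shown is that $a$ \emph{automatically} satisfies the integral bounds cutting out $L^+(F\bfU_\cO\cap\bfU_\cO^-)$ inside $L(F\bfU\cap\bfU^-)$, and this is the nontrivial content of the proposition, not a formal consequence of the Iwasawa decomposition. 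The paper proves it by reducing to the first column $v$ of $g$, which satisfies $(b\sigma)^n(v)=\varpi^\kappa\bigl(v+\sum_{i=1}^{n-1}a_i(b\sigma)^i(v)\bigr)$, and then invoking the fact that the valuations of the coefficients of the characteristic polynomial of the $\sigma$-linear operator $b\sigma$ lie on or above the Newton polygon of the associated isocrystal --- the straight segment from $(0,0)$ to $(n,\kappa)$, since $b$ is basic. This gives $\val(a_i)\geq -i\kappa/n$, which after identifying the affine root subgroups contained in $\bfG_\cO(\cO_{\breve K})$ is precisely the required integrality. Note that even for $\kappa=0$ this is a real statement ($\val(a_i)\geq 0$), so it cannot be absorbed into bookkeeping; without it your map $\coprod_{g\in G/G_\cO}g.X_\cO\to X'$ is injective but its surjectivity is not established.
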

\begin{proof}
The same computation as at the end of \cite[\S3]{CI_ADLV} shows that $G\times T$-equivariantly $X_b^{DL}(b) \cong X'$ as $\overline{\FF}_q$-spaces. As the right hand side of \eqref{eq:disjoint_decomposition_of_X} is representable, it suffices to show the second isomorphism in \eqref{eq:disjoint_decomposition_of_X}. Consider the fpqc-sheafification $X''$ of the presheaf on ${\rm Perf}_{\overline{\FF}_q}$, 
\[ R \mapsto \{g \in L\bfG(R) \colon g^{-1}F(g) \in L(F{\bf U} \cap {\bf U}^-)(R)\}. \]
As $w$ is Coxeter, the map
\[
L(F{\bf U} \cap \bfU) \times L(F{\bf U} \cap {\bf U}^-) \rightarrow L(F{\bf U}), \quad (h,g) \mapsto h^{-1}gF(h)
\]
is an isomorphism of fpqc-sheaves (this follows by a concrete calculation -- similar to the part of the proof of \cite[Lemma 2.12]{CI_ADLV} showing equation (7.7) of \emph{loc.~cit.}  -- which can be performed on $R$-points for any $R \in \Perf_{\overline{\FF}_q}$. Compare also \cite{HeL_12}), so that $X' \cong X''$. But $X''$ is the pull-back of the closed sub-(ind-scheme) $L(F{\bf U} \cap {\bf U}^-)$ under the Lang map ${\rm Lang}_F \colon L\bfG \rightarrow L\bf G$, $g \mapsto g^{-1}F(g)$, which is a morphism of ind-schemes, hence $X''$ is representable by an ind-(perfect scheme). 

For $\tau \in \bfT(K)$, $x \mapsto \tau^{-1}x\tau$ defines an equivariant isomorphism between $X''$ and the analogue of $X''$, where $b$ is replaced by $\tau^{-1}b \tau$. Thus we may take $b = \left(\begin{smallmatrix}0 & \varpi^{\kappa} \\ 1_{n-1} & 0 \end{smallmatrix}\right) \cdot \varepsilon$ with $\varepsilon \in \bfT(\cO_{\breve K})$. Fix $R \in {\rm Perf}_{\overline{\FF}_q}$. Let $g \in L\bfG(R) = \bfG(\bW(R)[\varpi^{-1}])$ with $g^{-1}F(g) =: a \in L(F{\bf U} \cap \bfU^-)(R)$. For $1\leq i\leq n-1$, let $a_i \in L\bG_a(R)$ denote the $(i+1,1)$-th entry of the matrix $a$. Then the matrix $g$ is determined by its first column, denoted $v$ (for $1\leq i\leq n$ the $i$-th column is then equal to $(b\sigma)^{i-1}(v)$).  Moreover $v$ has to satisfy $(b\sigma)^n(v) = \varpi^\kappa(v + \sum_{i = 1}^{n-1} a_i (b\sigma)^i(v))$, an equation which takes place in $L\bG_a(R)^n$. Assume $R$ is an algebraically closed field. The valuations of the coefficients of the characteristic polynomial of a $\sigma$-linear endomorphism lie over its Newton polygon, which in our case coincide with the Newton polygon of the isocrystal attached to $b\sigma$, and is just the straight line segment connecting the origin and the point $(n,\kappa)$ in the plane (cf. \cite[Lemma 6.1]{CI_ADLV} for the precise statement). This shows $\val(a_i) \geq -\frac{i\kappa}{n}$ for $1\leq i\leq n-1$. But after explicitly determining the affine root subgroups contained in $\bfG_\cO(\cO_{\breve K})$ (this is a similar computation to \cite[Example 8.8]{CI_ADLV}), this translates to the statement that $a \in L^+(F\bfU_\cO \cap \bfU_\cO^-)(R)$.
As $X''$ is a ind-(perfect scheme), this implies that $X''$ is equal to the fpqc-sheafification of
\[
R \mapsto \{g \in L\bfG(R) \colon g^{-1}F(g) \in L^+(F\bfU_\cO \cap \bfU_\cO^-)(R)\}.
\]
Consider the projection $\pi \colon L\bfG \rightarrow L\bfG/L^+\bfG_\cO$. If $g \in X''(R) \subseteq L\bfG(R)$, then $F(g) \in gL^+(F\bfU_\cO \cap \bfU_\cO^-)(R) \subseteq gL^+\bfG_\cO(R)$. Thus $X''$ maps under $\pi$ to the discrete subset\\ $(L\bfG/L^+\bfG_\cO)^F = G/G_\cO$. Hence $X''$ is isomorphic to the right hand side of \eqref{eq:disjoint_decomposition_of_X}, and we are done.
\end{proof}

\begin{cor}\label{cor:representability_general}
Let $b \in \bfGL_n(\breve K)$ be basic, $\dot w_1$ a lift of $w_1$ such that $\kappa_{\bfGL_n}(b) = \kappa_{\bfGL_n}(\dot w_1) = \kappa$. Then $X_{\dot w_1}^{DL}(b) \cong \coprod_{G/G_\cO}g X_\cO$ is representable by a perfect $\overline{\FF}_q$-scheme.
\end{cor}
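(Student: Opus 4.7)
\begin{pro}[Plan]
The strategy is to reduce the general situation to the special case $b = \dot w_0$ already treated in Proposition \ref{prop:representability}, using the two functorialities provided by Lemma \ref{lm:changing_b_dotw}(i)--(ii). The key observation is that any two elements of $N_{\bfGL_n}(\bfT_0)(\breve K)$ lifting $w_0$ and having the same Kottwitz invariant as $b$ are $\sigma$-conjugate to $b$ in $\bfGL_n(\breve K)$.

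First, since $b$ is basic with $\kappa_{\bfGL_n}(b) = \kappa$ and $\dot w_0$ has $\kappa_{\bfGL_n}(\dot w_0) = \kappa$, and since the Kottwitz map is a bijection on basic $\sigma$-conjugacy classes, the element $\dot w_0$ (which is basic because all elements in the $\sigma$-conjugacy class of $b$ are) is $\sigma$-conjugate to $b$: there exists $h \in \bfGL_n(\breve K)$ with $\dot w_0 = h^{-1} b \sigma(h)$. Applying Lemma \ref{lm:changing_b_dotw}(i) to this $h$, left multiplication by $h$ defines a $G\times T$-equivariant isomorphism
\[
X_{\dot w_0}^{DL}(b) \xrightarrow{\sim} X_{\dot w_0}^{DL}(\dot w_0)
\]
intertwining the $G$-action with the $G'$-action on the right via $\Ad_h \colon G \xrightarrow{\sim} G'$, where $G' = \bfG'(K)$ is the group attached to $\dot w_0$ as in Section \ref{sec:basic_notation}. (The torus $T$ depends only on $\dot w_0$, so it remains unchanged.)

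Now Proposition \ref{prop:representability} applies directly to $X_{\dot w_0}^{DL}(\dot w_0)$ and yields a $G' \times T$-equivariant isomorphism
\[
X_{\dot w_0}^{DL}(\dot w_0) \;\cong\; \coprod_{g' \in G'/G_\cO'} g'.X_\cO',
\]
where $G_\cO'$ and $X_\cO'$ are the analogues attached to $b = \dot w_0$; in particular the right hand side is representable by a perfect $\overline{\FF}_q$-scheme. Transporting this decomposition back along the isomorphism of the previous paragraph produces the desired decomposition for $X_{\dot w_0}^{DL}(b)$, provided that $\Ad_h$ carries $G_\cO$ to $G_\cO'$ and $X_\cO$ to a translate of $X_\cO'$. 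Both facts follow by the observation in the Remark preceding Proposition \ref{prop:representability}: we may (and do) choose $h$ so that conjugation by $h$ sends the vertex ${\bf x}_b$ to the vertex ${\bf x}_{\dot w_0}$ used to define $G_\cO'$, whence $\Ad_h(G_\cO) = G_\cO'$ and the definitions of $X_\cO, X_\cO'$ match under $x \mapsto hx$.

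The argument above is a direct bookkeeping exercise, so no step presents a real obstacle; the only point requiring a little care is the compatibility of the parahoric subgroups under $\Ad_h$, which is handled precisely by the freedom in choosing $h$ within its coset modulo the stabilizer of ${\bf x}_b$. Note that Lemma \ref{lm:changing_b_dotw}(ii) is not explicitly needed here because the lift $\dot w_0$ is fixed throughout the argument; it would only enter if one wished to further normalize $\dot w_0$ to a preferred choice. Finally, Lemma \ref{lm:changing_b_dotw}(iii) ensures that the hypothesis $\kappa_{\bfGL_n}(\dot w_0) = \kappa$ is not only sufficient but also necessary for $X_{\dot w_0}^{DL}(b)$ to be nonempty.
\end{pro}
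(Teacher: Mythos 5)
Your proof is correct and takes essentially the same route as the paper, whose entire proof is the citation of Lemma \ref{lm:changing_b_dotw} and Proposition \ref{prop:representability}: one $\sigma$-conjugates $b$ to the given lift $\dot w_0$ via part (i) of the lemma and transports the decomposition of Proposition \ref{prop:representability}. The one blemish is your parenthetical justification that $\dot w_0$ is basic (``because all elements in the $\sigma$-conjugacy class of $b$ are''), which presupposes that $\dot w_0$ lies in $[b]$ -- the very thing you are deducing; the correct order is to note that any lift of the Coxeter element $w_0$ has isoclinic (hence central) Newton point and is therefore basic, after which the bijectivity of the Kottwitz map on basic $\sigma$-conjugacy classes places it in the class of $b$.
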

\begin{proof}
This follows from Lemma \ref{lm:changing_b_dotw} and Proposition \ref{prop:representability}.
\end{proof}

\subsection{Representations $R_T^G(\theta)$ and $R_{T_h}^{G_h}(\theta)$}\label{sec:representations_prelim}

Let a basic $b$ and a lift $\dot w_1$ be as in Section \ref{sec:basic_notation} with $\kappa_{\bfGL_n}(b) = \kappa_{\bfGL_n}(\dot w_1) = \kappa$ be fixed. 
In Section \ref{sec:basic_notation} we attached to $b,\dot w_1$ the locally pro-finite groups $G,T$ and their maximal compact subgroups $G_\cO,T_\cO$. In \cite[7.2]{CI_ADLV} we defined families (indexed by $h \geq 1$) of perfectly finitely presented perfect group schemes over $\FF_q$, with $\FF_q$-points $G_h$, $T_h$ such that $G_\cO = \prolim_h G_h$ and $T_\cO = \prolim_h T_h$, and showed that $G\times T$-equivariantly,
\[
X_{\dot w_1}^{DL}(b) \cong \coprod_{G/G_\cO}g.X_\cO, \quad \text{ with } \quad X_\cO \cong \prolim_h X_h 
\] 
such that $X_\cO$ is acted on by $G_\cO \times T_\cO$\footnote{Note that $T$ is generated by $T_\cO$ and a central element of $G$, when $G,T$ are both regarded as subgroups of $\bfGL_n(\breve K)$, so that $\coprod_{G/G_\cO}g.X_\cO$ admits also a natural right $T$-action.}, each $X_h$ is a perfectly finitely presented perfect $\overline{\FF}_q$-scheme acted on by $G_h \times T_h$, and all morphisms are compatible with all actions. Moreover, $X_h$ is the perfection of a smooth affine $\overline{\FF}_q$-scheme of finite type.
We identify $X_{\dot w_1}^{DL}(b)$ with $\coprod_{G/G_\cO}g.X_\cO$ via this isomorphism. The groups $G_h$ and $T_h$ are certain Moy--Prasad quotients of $G_\cO$ and $T_\cO$, and hence essentially independent of the choice of $b$, ${\bf x}_b$ and $\dot w_1$. An explicit presentation of $G_h,T_h,X_h$ is reviewed in Section \ref{sec:Step1} below. 

We review the definition of certain \'etale cohomology groups with compact support of $X_{\dot w_1}^{DL}(b)$ and $X_\cO$ (which are not perfectly finitely presented over $\overline\FF_q$). First, for $h \geq 1$ and a character $\chi \colon T_h \rightarrow \overline{\bQ}_\ell^\times$, the $\chi$-isotypic components $H_c^i(X_h)_\chi$ of the $\ell$-adic cohomology groups with compact support are defined\footnote{Recall from Section \ref{sec:notation} that we omit the constant coefficients $\overline{\bQ}_\ell$ from the notation.}, as $X_h$ is the perfection of smooth scheme of finite type over $\overline{\FF}_q$. Second, for $h\geq 1$, the fibers of $X_h/{\rm ker}(T_h \rightarrow T_{h-1}) \rightarrow X_{h-1}$ are isomorphic to $\bA^{n-1}$ \cite[Proposition 7.6]{CI_ADLV}. Let $\chi \colon T_\cO \rightarrow \overline{\bQ}_\ell^\times$ be a smooth character. Then there exists an $h\geq 1$, such that $\chi$ is trivial on ${\rm ker}(T_\cO \rightarrow T_h)$ for some $h \geq 1$. Let $h' \geq h$ and denote the characters induced by $\chi$ on $T_h$ and $T_{h'}$ again by $\chi$. Then $H_c^\ast(X_h)_\chi = H_c^\ast(X_{h'})_\chi$, where $H_c^\ast$ is the alternating sum of the cohomology. Thus we can define $H_c^\ast(X_\cO)_\chi$ as $H_c^\ast(X_{h'})_\chi$ for any $h' \geq h$ and this is independent of $h'$\footnote{Note that the single cohomology groups $H_c^i(X_\cO)_\chi$ are not defined, due to a degree shift: $H_c^i(X_{h'})_\chi = H_c^{i - 2d}(X_h)_\chi$ for an appropriate $d \geq 0$. One can remedy this by introducing homology groups $H_i(Y) := H_c^{2{\rm dim}(Y) - i}(Y)({\rm dim}(Y))$ as in \cite{Lusztig_79}, which removes precisely this shift in degree.}.
So, if $\chi$ is a character of $T_\cO$ of level $h$, we have the $G_h$-representation
\[
R_{T_h}^{G_h}(\chi) := H_c^\ast(X_\cO)_\chi = H_c^\ast(X_h)_\chi.
\]
and we denote the $G_\cO$-representation obtained by inflation via $G_\cO \twoheadrightarrow G_h$ again by $R_{T_h}^{G_h}(\chi)$.

Let $Z \subseteq G$ be the center and let $\widetilde X_\cO := \bigcup_{g \in ZG_\cO} g.X_\cO$ be the union in $X_{\dot w_1}^{DL}(b)$ of all $ZG_\cO$-translates of $X_\cO$. Then $\widetilde X_\cO$ is acted on by $ZG_\cO \times T$ and is a disjoint union of copies of $X_\cO$. Exactly as above for $X_\cO$, for a smooth character $\theta \colon T \rightarrow \overline{\bQ}_\ell^\times$ we may define the smooth $ZG_\cO$-representation $H_c^\ast(\widetilde X_\cO)_\theta$.

\begin{lm}\label{lm:central_extension_RThGhtheta}
Let $\theta \colon T \rightarrow \overline{\bQ}_\ell^\times$ be a smooth character of level $h$. As $G_\cO$-representations, $H_c^\ast(\widetilde X_\cO)_\theta \cong R_{T_h}^{G_h}(\theta)$. As a $ZG_\cO$-representation, $H_c^\ast(\widetilde X_\cO)_\theta$ is just the $G_\cO$-representation $R_{T_h}^{G_h}(\theta)$, with action extended to $Z$ by letting $\varpi \in Z \cong K^\times$ act by the scalar $\theta(\varpi)$. 
\end{lm}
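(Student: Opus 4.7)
The plan is to exploit the scheme-theoretically disjoint decomposition \eqref{eq:disjoint_decomposition_of_X}, restrict it to the $ZG_\cO$-translates, and then trace through how the $\theta$-isotypic component is built from the $\theta|_{T_\cO}$-isotypic component of a single copy.

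First I would identify the index set of components of $\widetilde X_\cO$. Since $Z = \bfZ(G) \cong K^\times$ meets $G_\cO$ in $\cO_K^\times$, the inclusion $Z \hookrightarrow G$ induces $ZG_\cO/G_\cO \cong Z/(Z\cap G_\cO) \cong \bZ$, with the central uniformizer $\varpi \in K^\times = Z$ as a generator. Restricting \eqref{eq:disjoint_decomposition_of_X} gives the scheme-theoretically disjoint decomposition
\[
\widetilde X_\cO \;=\; \bigsqcup_{n \in \bZ} \varpi^n.X_\cO,
\]
where each component $\varpi^n.X_\cO$ is isomorphic to $X_\cO$ via left multiplication by $\varpi^n$. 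Because $\varpi$ is central in $G$, this isomorphism is $G_\cO\times T_\cO$-equivariant. Moreover, the action of $\varpi \in Z \subseteq G$ by left multiplication permutes the components as $\varpi^n.X_\cO \xrightarrow{\sim} \varpi^{n+1}.X_\cO$, and by the definition of the $T$-action on $\coprod_{G/G_\cO} g.X_\cO$, the action of $\varpi \in T$ on the right agrees with this same left-multiplication map.

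Next I would compute cohomology. Since the decomposition is a scheme-theoretic disjoint union, taking compactly supported $\ell$-adic cohomology splits as a direct sum, and the identifications above give a $G_\cO\times T_\cO$-equivariant decomposition
\[
H_c^\ast(\widetilde X_\cO) \;\cong\; \bigoplus_{n\in\bZ} H_c^\ast(X_\cO),
\]
on which $\varpi \in T$ (equivalently, $\varpi \in Z$) acts by shifting the $n$-indexing by $1$ and the identity on each summand. Now I would pick out the $\theta$-isotypic component. Writing an element as $v = (v_n)_{n\in\bZ}$, the condition $t\cdot v = \theta(t) v$ for $t \in T_\cO$ forces each $v_n$ to lie in $H_c^\ast(X_\cO)_{\theta|_{T_\cO}} = R_{T_h}^{G_h}(\theta)$, while the condition $\varpi \cdot v = \theta(\varpi) v$ forces $v_{n-1} = \theta(\varpi) v_n$, hence $v_n = \theta(\varpi)^{-n} v_0$. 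Therefore the projection $v \mapsto v_0$ is an isomorphism of $\overline{\bQ}_\ell$-vector spaces
\[
H_c^\ast(\widetilde X_\cO)_\theta \;\xrightarrow{\sim}\; R_{T_h}^{G_h}(\theta).
\]

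Finally I would check equivariance. The $G_\cO$-action preserves each component $\varpi^n.X_\cO$ and acts compatibly on each summand with the given $G_\cO$-action on $R_{T_h}^{G_h}(\theta)$ (inflation from $G_h$), so the isomorphism is $G_\cO$-equivariant. For the $ZG_\cO$-equivariance, $\varpi \in Z$ sends $v$ to the sequence with $n$-th entry $v_{n-1}$; evaluating at $n=0$ gives $v_{-1} = \theta(\varpi) v_0$, so under the isomorphism $\varpi$ acts by the scalar $\theta(\varpi)$, as required. This step is essentially bookkeeping; I do not expect any serious obstacle, as the only nontrivial input is the scheme-theoretic disjointness of the decomposition \eqref{eq:disjoint_decomposition_of_X} from Proposition~\ref{prop:representability} and the definition of the $T$-action extending the $T_\cO$-action by letting $(1,\varpi)$ act as $(\varpi,1)$.
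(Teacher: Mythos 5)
Your argument is correct and is precisely the unwinding that the paper treats as immediate (its proof is a one-line reference to a lemma in the $\GL_2$ predecessor paper): decompose $\widetilde X_\cO$ into the $\bZ$-many translates $\varpi^n.X_\cO$, identify the cohomology of each copy $G_\cO\times T_\cO$-equivariantly with $H_c^\ast(X_\cO)$, and observe that the $\varpi$-eigenvalue condition collapses the family of components onto a single copy of $R_{T_h}^{G_h}(\theta)$ with $\varpi$ acting by $\theta(\varpi)$. One small point worth flagging: since the paper defines $H_c^\ast(\coprod_i Y_i)$ as the direct sum $\bigoplus_i H_c^\ast(Y_i)$, the vectors $(\theta(\varpi)^{-n}v_0)_{n\in\bZ}$ you produce are not finitely supported, so for your projection $v\mapsto v_0$ to be surjective the $\theta$-isotypic component must be understood as the $\theta$-eigenspace of the corresponding product (equivalently, as the $\theta$-coinvariant quotient of the sum) rather than as the literal eigenspace inside $\bigoplus_{n}H_c^\ast(X_\cO)$, which would vanish; this is the convention implicitly required for the lemma, and for $R_T^G(\theta)=\cind_{ZG_\cO}^G R_{T_h}^{G_h}(\theta)$, to be non-zero at all.
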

\begin{proof} This is immediate (see e.g.\ \cite[Lemma 4.5]{Ivanov_15_ADLV_GL2_unram}).
\end{proof}

Justified by this lemma we write $R_{T_h}^{G_h}(\theta)$ for the $ZG_\cO$-representation $H_c^\ast(\widetilde X_\cO)_\theta$. For schemes $Y_i$ such that $H_c^\ast(Y_i)$ are defined, put $H_c^\ast(\coprod_{i\in I} Y_i) := \bigoplus_{i \in I}H_c^\ast(Y_i)$. We get our main object of study, the smooth $G$-representation
\[ 
R_T^G(\theta) := H_c^\ast(X_{\dot w_1}^{DL}(b))_\theta = \cind_{ZG_{\cO}}^G R_{T_h}^{G_h}(\theta)
\]
(cf. \cite[Theorem 11.2]{CI_ADLV}).

\begin{rem}
By construction and by Lemma \ref{lm:changing_b_dotw}, the isomorphism class of the $G$-representation $R_T^G(\theta)$ is independent of the choices of representatives $b,\dot w_1$. A similar independence holds for the $ZG_\cO$-representation $R_{T_h}^{G_h}(\theta)$.  
\end{rem}

\subsection{Norms and characters}\label{sec:norms_and_chars}
The following definitions do not depend on the choice of an isomorphism $T \cong L^\times$ (as in Section \ref{sec:form_of_torus}).
\begin{Def}\label{level}
We say that a smooth character $\theta \colon T \cong L^\times \rightarrow \overline{\QQ}_\ell^\times$ is of \emph{level $h$} if it is trivial on $\ker(T_\cO \rightarrow T_h) \cong U_L^h$, but non-trivial on $\ker(T_\cO \rightarrow T_{h-1})\cong U_L^{h-1}$.
\end{Def}
Recall the subextensions $L \supseteq K_r \supseteq K$ (Section \ref{sec:notation}). Whenever $r,s$ are positive divisors of $n$ such that $s$ divides $r$, we denote by $N_{r/s} \colon K_r^\times \rightarrow K_s^\times$ the norm map for the field extension $K_r/K_s$. For any $h\geq h'\geq 1$, it induces maps 
\[
U_{K_r}/U_{K_r}^h \rightarrow U_{K_s}/U_{K_s}^h \quad \text{ and } \quad U_{K_r}^{h'}/U_{K_r}^h \rightarrow U_{K_s}^{h'}/U_{K_s}^h
\]
which are surjective (see e.g.\ \cite[Chap. V,\S2]{Serre_Local_fields}), and which we again denote by $\N_{r/s}$. 

\begin{Def}
\begin{itemize}
\item[(i)] A character $\theta \colon T \cong L^\times \rightarrow \overline{\bQ}_\ell^\times$ resp.\ $\theta \colon T_\cO \cong U_L \rightarrow \overline{\bQ}_\ell^\times$ is \emph{in general position}, if the stabilizer of $\theta$ in $\Gal(L/K)$ is trivial. We say $\theta|_{U_L^1}$ is in \emph{general position}, if the stabilizer of $\theta|_{U_L^1}$ in $\Gal(L/K)$ is trivial. 
\item[(ii)] Let $h \geq 1$. A character $\theta \colon T_h \cong U_L/U_L^h \rightarrow \overline{\bQ}_\ell^\times$ (resp.\ $\theta|_{T_h^1 = U_h^1/U_L^h}$) is \emph{in general position} if its inflation to $T_\cO$ (resp.\ to ${\rm ker}(T_\cO \rightarrow T_1)$) is in general position. 
\end{itemize}
\end{Def}

Note that $\theta \colon T \cong L^\times \rightarrow \overline{\bQ}_\ell^\times$ is in general position if and only if $\theta|_{T_\cO}$ is. 

\begin{lm}\label{lm:factor_through_norm_equiv_cond}
Let $\theta \colon T \cong L^\times \rightarrow \overline{\bQ}_\ell^\times$ be a character. Let $s\in \bZ$. Then \[
\theta \circ \sigma^s = \theta \quad \Leftrightarrow \quad \text{$\theta$ factors through $N_{L/K_{{\rm gcd}(n,s)}}$.}\]
The analogous claim holds for $\theta|_{U_L^1}$. In particular, $\theta$ is in general position if and only if $\theta$ does not factor through any of the maps $\N_{n/r}$ with $r < n$, and $\theta|_{U_L^1}$ is in general position if and only if $\theta|_{U_L^1}$ does not factor through any of the maps $\N_{n/r}$ with $r < n$.
\end{lm}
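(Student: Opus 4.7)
The plan is to reduce both equivalences to a Tate cohomology computation for the cyclic group $\Gal(L/K_d)$, where $d = \gcd(n,s)$. Since $\Gal(L/K) = \langle \sigma \rangle$ is cyclic of order $n$, elementary cyclic group theory gives $\langle \sigma^s \rangle = \langle \sigma^d \rangle$, and by the Galois correspondence this subgroup equals $\Gal(L/K_d)$. Thus $\theta \circ \sigma^s = \theta$ is equivalent to $\theta$ being $\Gal(L/K_d)$-invariant. One direction is immediate: if $\theta = \chi \circ N_{L/K_d}$, then since $N_{L/K_d} \circ \tau = N_{L/K_d}$ for every $\tau \in \Gal(L/K_d)$, the character $\theta$ is $\Gal(L/K_d)$-invariant.

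For the converse, the key claim is
\[
\ker(N_{L/K_d}) \;=\; \{\sigma^s(x)/x : x \in L^\times\}.
\]
The inclusion $\supseteq$ is obvious. Granting equality, $\theta \circ \sigma^s = \theta$ is equivalent to $\theta$ being trivial on the right-hand side, hence on $\ker(N_{L/K_d})$, which is precisely the condition that $\theta$ factor through $N_{L/K_d}$ (the quotient character being first defined on $N_{L/K_d}(L^\times) \subseteq K_d^\times$ and, if desired, extended to $K_d^\times$ using divisibility of $\overline{\bQ}_\ell^\times$). The claim itself is the vanishing of
\[
\hat H^{-1}(\Gal(L/K_d), L^\times) \;=\; \ker(N_{L/K_d})/(\sigma^s - 1)L^\times,
\]
and this is the main (though entirely standard) input: by the periodicity of Tate cohomology for cyclic groups $\hat H^{-1} \cong \hat H^1$, and $\hat H^1(\Gal(L/K_d), L^\times) = 0$ is Hilbert 90.

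The $U_L^1$ version is handled by the same argument with $M = U_L^1$ in place of $L^\times$: since $L/K_d$ is unramified, each graded piece $U_L^i/U_L^{i+1}$ ($i \geq 1$) is isomorphic to the residue field $k_L$ as a $\Gal(L/K_d) = \Gal(k_L/k_{K_d})$-module, which is free over $k_{K_d}[\Gal(L/K_d)]$ by the normal basis theorem and hence cohomologically trivial; passing to the inverse limit, $\hat H^{-1}(\Gal(L/K_d), U_L^1) = 0$. Combined with the surjectivity $N_{L/K_d}(U_L^1) = U_{K_d}^1$ valid in the unramified case, this yields the analogous equivalence on $1$-units. Finally, $\theta$ being in general position means its stabilizer $\{s \in \bZ/n\bZ : \theta \circ \sigma^s = \theta\}$ is trivial; by the equivalence just proved this holds iff $\theta$ fails to factor through $N_{L/K_d}$ for every proper divisor $d$ of $n$. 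As $s$ ranges over $\{1,\dots,n-1\}$, the values $\gcd(n,s)$ exhaust precisely the proper divisors of $n$, giving the ``in particular'' statement; the $U_L^1$ analog is identical.
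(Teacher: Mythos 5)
Your proof is correct and follows essentially the same route as the paper: both reduce the statement to the identity $\ker(N_{L/K_{\gcd(n,s)}}) = \{x^{-1}\sigma^s(x) : x \in L^\times\}$, which is Hilbert's Theorem 90 for the cyclic extension $L/K_{\gcd(n,s)}$ (you phrase it as $\hat H^{-1}=\hat H^{1}=0$; the paper cites the classical form directly). The only difference is that you spell out the $U_L^1$ case via cohomological triviality of the unit filtration in the unramified setting, a standard point the paper leaves implicit.
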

\begin{proof}
$\theta \circ \sigma^s = \theta$ is equivalent to $\theta$ being trivial on the image of the map $L^\times \rightarrow L^\times$, $x \mapsto x^{-1}\sigma^s(x)$. By Hilbert's Theorem 90, this image is equal to the kernel of the norm map of $L$ over the field stable by $\sigma^s$, which is $K_{{\rm gcd}(n,s)}$.
\end{proof}

\section{A Mackey formula}\label{sec:Mackey}

In this section we prove the following Mackey-type formula for the representations $R_{T_h}^{G_h}(\theta)$. 

\begin{thm}\label{thm:general_irred} Let $\theta,\theta' \colon T_h \rightarrow \overline{\bQ}_\ell^\times$ be two characters. Then 
\[
\left\langle R_{T_h}^{G_h}(\theta), R_{T_h}^{G_h}(\theta') \right\rangle_{G_h} = \# \left\{ w \in W_\cO^F \colon \theta'  = \theta \circ {\rm ad}(w) \right\}.
\]
\end{thm}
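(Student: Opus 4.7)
The plan is to follow the classical Deligne--Lusztig strategy of computing the inner product as the alternating sum of dimensions of isotypic subspaces of the cohomology of a suitable diagonal quotient, and stratifying that quotient by ``relative position''. Concretely, since $G_h$ is a finite group and each $R_{T_h}^{G_h}(\cdot)$ is the $\theta$-isotypic component of $H_c^\ast(X_h)$, one has
\[
\left\langle R_{T_h}^{G_h}(\theta), R_{T_h}^{G_h}(\theta')\right\rangle_{G_h} = \dim_{\overline{\bQ}_\ell} H_c^\ast\bigl(G_h\backslash (X_h \times X_h)\bigr)_{\theta^{-1} \boxtimes \theta'},
\]
where $G_h$ acts diagonally on the left and $T_h \times T_h$ acts factorwise on the right. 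The task is therefore to analyze the quotient $Y := G_h\backslash (X_h \times X_h)$ as a $(T_h\times T_h)$-scheme.

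To stratify $Y$, I would use Proposition \ref{prop:representability}, which presents $X_h$ as (the $h$-th truncation of) the preimage under the Lang map of $L^+(F\bfU_\cO \cap \bfU_\cO^-)$. The map $(x,y) \mapsto x^{-1}y$ then identifies $Y$ with the subscheme of $L_h^+\bfG_\cO$ consisting of elements $z$ such that $a \cdot z = F(z) \cdot a'$ holds for some (automatically unique) pair $(a,a') \in L_h^+(F\bfU_\cO \cap \bfU_\cO^-)^2$, modulo a twisted $(T_h \times T_h)$-action. One then stratifies by pulling back the Bruhat-type decomposition of the special fiber $\bfG_\cO \otimes_{\cO_K} \FF_q$ with respect to $\bfT_\cO$. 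Because $w_0$ is a Coxeter element and the Lang relation forces the relative position to be compatible with $F$, only the strata indexed by $w \in W_\cO^F$ should be non-empty; for each such $w$, the corresponding piece $Y_w$ is expected to be a $T_h$-torsor (for an action twisted by $\mathrm{ad}(w)$), and a standard Künneth / torus cohomology argument then yields
\[
\dim H_c^\ast(Y_w)_{\theta^{-1}\boxtimes \theta'} = \begin{cases} 1 & \text{if } \theta' = \theta\circ \mathrm{ad}(w), \\ 0 & \text{otherwise.}\end{cases}
\]
Summation over $w \in W_\cO^F$ then produces the claimed formula.

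The main obstacle is the stratification and the identification of each $Y_w$ away from the regular (``generic'') case. In \cite{Lusztig_04} the analogous formula is proven under the assumption that $\theta$ is in general position, which ensures that a dimension count suppresses all but the generic stratum. With $\theta$ arbitrary one must instead handle every stratum $Y_w$, which requires a genuinely geometric description of $Y_w$ (including for non-generic $w$) together with a precise control of the $T_h \times T_h$-action on it. I expect that this can be accomplished by combining the explicit matrix presentation of $G_h$, $T_h$ and $X_h$ that will be reviewed in Section \ref{sec:Step1} with the fact that $W_\cO^F \cong \bZ/n'\bZ$ is cyclic, generated by $w_0^{n_0}$; this cyclicity yields a block-matrix structure compatible with the unramifiedness of $\bfT$, which should replace Lusztig's regularity-based dimension arguments and allow a direct Lang-type computation on each stratum.
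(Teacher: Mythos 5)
Your setup is the right one and matches the paper's: the inner product is computed as $\dim H_c^\ast(G_h\backslash(X_h\times X_h))_{\theta^{-1}\boxtimes\theta'}$, and the quotient is stratified by the Bruhat decomposition of $\bG_h$ relative to $W_\cO$ (this is the passage to $\Sigma = \coprod_{w\in W_\cO}\Sigma_w$ and the fibrations $\widehat\Sigma_w \to \Sigma_w$ in Section \ref{sec:Step1}). But the two claims on which your computation rests are both false, and the step you defer (``I expect that this can be accomplished\dots'') is exactly the content of the theorem. First, it is not true that only the strata indexed by $w\in W_\cO^F$ are non-empty: the paper's emptiness statement (Lemma \ref{lm:empty_Sigma_w}) only kills those $w$ admitting an index $i$ with $[w(i)]>[w(i-1)+1]>1$, and there remain plenty of non-empty strata for $w\notin W_\cO^F$ (as well as the stratum $w=1$, which is non-empty even when $\theta'\neq\theta$). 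Second, the strata for $w\in W_\cO^F$ are not $T_h$-torsors; they are large schemes (already for $h=1$ and $w=1$ the stratum contains a copy of the big cell intersected with the Lang conditions), so no K\"unneth/torsor argument computes their isotypic cohomology directly.

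The mechanism that actually replaces Lusztig's regularity hypothesis is not a Lang-type computation on each stratum but an \emph{extension of the torus action}: for each surviving $w\neq 1$ one enlarges the $T_h\times T_h$-action on $\widehat\Sigma_w$ to an action of a positive-dimensional algebraic group $H_w$, shows via Lemmas \ref{lm:big_centralizer} and \ref{lm:regular_torus_in_general} that the reductive part of $H_w^\circ$ surjects onto subtori of $\bT_1$ not contained in any root kernel, and then uses homotopy invariance of $H_c^\ast$ under connected group actions to replace $\widehat\Sigma_w$ by its fixed locus, which sits inside $\{(1,1,\tau,1,1):\tau\in\bT_h,\ F(\tau\dot w)=\tau\dot w\}$; this set is empty unless $w\in W_\cO^F$ and otherwise yields the contribution $1$ precisely when $\theta'=\theta\circ{\rm ad}(w)$. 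The case $w=1$ requires yet another device (embedding $\Sigma_1$ into the generic cell $\Sigma_{U,U^-,1}$ for a second pair of unipotent radicals, Lemma \ref{lm:extension_action_generic_cells_way_1}). None of this is present, even in outline, in your proposal; the appeal to the cyclicity of $W_\cO^F$ and the block-matrix structure does not by itself produce the vanishing of the non-$F$-fixed strata or the concentration of the $F$-fixed ones, so the proof as written has a genuine gap at its central step.
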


\begin{rem} The theorem shows that in the setting considered in this paper and in \cite{CI_ADLV},  the assumption in \cite[Corollary 2.4(b)]{Lusztig_04} resp.\ \cite[Corollary 4.7(ii)]{CI_MPDL} that $\theta$ is regular is obsolete. 
We also note that because part of this proof requires an explicit computation using our choice of Coxeter element $w_1$, Theorem \ref{thm:general_irred} does not allow us to conclude the analogue of the independence-of-choice statements \cite[Corollary 2.4(a)]{Lusztig_04}, \cite[Corollary 4.7(i)]{CI_MPDL}.
\end{rem}
\begin{cor}\label{cor:irreducibility}
Let $\theta \colon T_h \rightarrow \overline{\QQ}_\ell^{\times}$ be a character, whose stabilizer in $\Gal(L/K)[n']$, the unique subgroup of $\Gal(L/K)$ of order $n'$, is trivial.
Then $\pm R_{T_h}^{G_h}(\theta)$ is irreducible $G_h$-representation. In particular, $\Fr_{q^n}$ acts in $\pm R_{T_h}^{G_h}(\theta)$ by multiplication with a scalar. Moreover, the map
\begin{align*}
\big\{\text{characters $\theta \colon T_h \rightarrow \overline\bQ_\ell^\times$ in general position}  \big\}\slash W_\cO^F &\rightarrow \big\{\text{irreducible $G_h$-representations}\big\}  \\
\theta &\mapsto \pm R_{T_h}^{G_h}(\theta)
\end{align*}
is injective.
\end{cor}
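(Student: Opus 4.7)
The plan is to deduce both claims directly from the Mackey formula of Theorem~\ref{thm:general_irred}, which already does all the heavy lifting.

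First I would identify the action of $W_\cO^F$ on $T_h$ in familiar terms. By Section~\ref{sec:case_bequalw}, $W_\cO^F$ is cyclic of order $n'$ generated by $w_0^{n_0}$, and the isomorphism $W^F \cong \Gal(L/K)$ carries it onto $\Gal(L/K_{n_0})$. Since $\Gal(L/K)$ is cyclic of order $n$, this is the unique subgroup of order $n'$, namely $\Gal(L/K)[n']$. Moreover $w_0$ acts on $T\cong L^\times$ by $\sigma$, so $W_\cO^F$ acts on $T_h \cong U_L/U_L^h$ through the natural Galois action of $\Gal(L/K)[n']$. Consequently, the hypothesis on $\theta$ translates to $\Stab_{W_\cO^F}(\theta) = \{1\}$.

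For irreducibility, apply Theorem~\ref{thm:general_irred} with $\theta' = \theta$ to get
\[
\bigl\langle R_{T_h}^{G_h}(\theta), R_{T_h}^{G_h}(\theta) \bigr\rangle_{G_h} = \#\Stab_{W_\cO^F}(\theta) = 1.
\]
A virtual representation with self-inner product $1$ is $\pm$ an honest irreducible, proving the first claim. For the assertion on $\Fr_{q^n}$: the group $G_h$ is defined over $\FF_q$ while $T_h$ splits only after base change to $\FF_{q^n}$, so the $\theta$-isotypic decomposition of $H_c^\ast(X_h)$ is compatible with the $\FF_{q^n}$-structure, and $\Fr_{q^n}$ commutes with the $G_h$-action on $R_{T_h}^{G_h}(\theta)$. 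Schur's lemma applied to the irreducible $\pm R_{T_h}^{G_h}(\theta)$ then forces $\Fr_{q^n}$ to act by a scalar.

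For injectivity, suppose $\theta$ and $\theta'$ are both in general position (so in particular both satisfy the first hypothesis) and $\pm R_{T_h}^{G_h}(\theta) \cong \pm R_{T_h}^{G_h}(\theta')$ as $G_h$-representations. Since both sides are honest irreducibles, their Mackey inner product equals $\pm 1 \neq 0$, so Theorem~\ref{thm:general_irred} produces $w \in W_\cO^F$ with $\theta' = \theta \circ \mathrm{ad}(w)$; hence $\theta$ and $\theta'$ lie in the same $W_\cO^F$-orbit. The same inner-product calculation run in reverse shows that the map is well-defined on orbits. The \emph{main obstacle} has already been absorbed into Theorem~\ref{thm:general_irred}; given that formula, the corollary is essentially formal, with the only real subtleties being the identification of $W_\cO^F$ with $\Gal(L/K)[n']$ (which explains why the stated hypothesis is the correct one) and the observation that the relevant Frobenius is $\Fr_{q^n}$ rather than $\Fr_q$, reflecting the splitting field of $T$.
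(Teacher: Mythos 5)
Your proposal is correct and follows exactly the route the paper takes: the paper's own proof is just the one-line remark that the corollary follows from the identification of $W_\cO^F$ with $\Gal(L/K)[n']$ in Section \ref{sec:case_bequalw} together with Theorem \ref{thm:general_irred}, and your write-up is the natural expansion of that (self-inner product $1$ gives $\pm$ an irreducible, Schur's lemma gives the scalar action of $\Fr_{q^n}$, and a nonzero inner product forces $\theta' = \theta\circ\mathrm{ad}(w)$ for some $w \in W_\cO^F$).
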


\begin{proof}
This follows from the description of $W_\cO^F$ in Section \ref{sec:case_bequalw} and Theorem \ref{thm:general_irred}.
\end{proof}

In course of the proof of Theorem \ref{thm:general_irred} we will make use of the following well-known fact.

\begin{prop}\label{prop:torus_invariants}
Let $X$ be (the perfection of a) quasi-projective scheme over a finite field, $H$ is a torus, and $\alpha \colon X \rightarrow X$ is a finite order automorphism commuting with the $H$-action, then $\tr(\alpha,H_c^\ast(X,\overline{\QQ}_\ell)) = \tr(\alpha,H_c^\ast(X^H,\overline{\QQ}_\ell))$. 
\end{prop}

\begin{proof} See, for example, \cite[10.15]{DigneM_91}.
\end{proof}

\begin{rem}\label{rem:two_principles}
We prove Theorem \ref{thm:general_irred} in \S\ref{sec:Step1}-\ref{sec:Step4} below. To guide the reader, let us explain the main steps of the proof. The overarching idea follows the proof of the analogous result in classical Deligne--Lusztig theory \cite[Theorem~6.8]{DeligneL_76}, but the implementation is much more intricate in our setting. The essential source of this difficulty is in constructing a connected algebraic torus which acts on the schemes $\widehat \Sigma_w$ for $w \in W_\cO$. In the classical setting of $h = 1$, there is no issue whatsoever, but for $h>1$, we are only able to construct such an action for \textit{certain} w ((3) and (4)(b)(c) below). In our present setting we get lucky: it turns out that for all the $w \in W_\cO$ for which we cannot find such an action, the scheme $\widehat \Sigma_w$ is empty ((4)(a))!
\begin{itemize}
\item[(1)] Construct a scheme $\Sigma$ equipped with a $T_h \times T_h$-action, such that $X_h \times X_h/G_h$ is (up to an affine space) $T_h \times T_h$-equivariantly isomorphic to $\Sigma$. We then may rewrite the left hand side of Theorem \ref{thm:general_irred} as $\dim_{\overline \bQ_\ell}H_c^\ast(\Sigma)_{\theta^{-1},\theta'}$.
\item[(2)] Let $\bG_h$ be the Moy-Prasad quotient of $L^+\bfG_\cO$, such that $\bG_h(\bF_q) = G_h$. Then $\bG_1$ can be identified with the reductive quotient of $\bG_h$, and pulling back the Bruhat decomposition of $\bG_1$, we get a ``Bruhat cell'' $\bG_{h,w} \subseteq \bG_h$ for any $w \in W_\cO$. There is a natural map $\Sigma \rightarrow \bG_h$. For $w \in W_\cO$, let $\Sigma_w$ be the preimage of $\bG_{h,w}$, and then replacing $\Sigma_w$ by an appropriate vector bundle $\widehat\Sigma_w \rightarrow \Sigma_w$ (having the same cohomology and still carrying a $T_h \times T_h$-action), one is reduced to showing that 
\begin{equation}\label{eq:coh_of_Sigma_w_survey} \dim_{\overline{\bQ}_\ell} H_c^\ast(\widehat\Sigma_w)_{\theta^{-1},\theta'} = \begin{cases}1 & \text{if $w \in W_\cO^F$ and $\theta'  = \theta \circ {\rm ad}(w)$} \\ 0 & \text{otherwise.} \end{cases} 
\end{equation}
\item[(3)] Extend the $T_h \times T_h$-action on $\widehat\Sigma_w$ to an action of a certain commutative group scheme $H_w$ (cf. beginning of \S\ref{sec:Step3}).
\item[(4)] Show that for $w \in W$ one of the following cases appears:
\begin{itemize}
\item[(a)] $\widehat \Sigma_w = \varnothing$ (cf. \S\ref{sec:Step2}), or
\item[(b)] the connected reductive part $H_{w, \rm red}^\circ$ (which is a torus) of $H_w$ is big enough so that the fixed point locus $\widehat \Sigma_w^{H_{w, \rm red}^\circ}$ is finite (cf. \S\ref{sec:Step3}). Then it is easy to deduce \eqref{eq:coh_of_Sigma_w_survey} for $w$ from Proposition \ref{prop:torus_invariants}. Or,
\item[(c)] $w=1$, in which case we have to develop an essentially different variant of the extension-of-action principle (see step (3)), using another vector bundle $\widetilde \Sigma_w \rightarrow \Sigma_w$ (cf. \S\ref{sec:Step4}). Here, finally, one again concludes using Proposition \ref{prop:torus_invariants}. 
\end{itemize}
\end{itemize}
We note moreover that this method is actually quite different from Lusztig's method in establishing the Mackey-type formula for regular $\theta$ \cite[Proposition 2.3]{Lusztig_04} (see also \cite[Theorem 1.1]{CI_MPDL}).

\end{rem}


\subsection{General preparations}\label{sec:Step1} 
In contrast to \cite{CI_ADLV} where we worked with Coxeter-type and special representatives for $[b]$ (see \cite[\S5.1]{CI_ADLV}), here it is most convenient to work with a third type of representatives. We put 
\begin{equation}\label{eq:new_Cox_rep}
b = \dot w_1 = b_0 t_{\kappa,n} \in {\bfGL}_n(\breve K)
\end{equation}
where 
\[
b_0 \colonequals \left(\begin{matrix} 0 & 1 \\ 1_{n-1} & 0 \end{matrix}\right), \qquad \text{and} \qquad t_{\kappa,n} \colonequals \begin{cases}
\diag(\underbrace{1, \ldots, 1}_{n-\kappa},\underbrace{\varpi, \ldots, \varpi}_{\kappa}) & \text{if $(\kappa,n) = 1$,} \\
\diag(\underbrace{t_{k_0,n_0}, \ldots, t_{k_0,n_0}}_{n'}) & \text{otherwise.}
\end{cases}
\]
are as in  \cite[\S5.2.1]{CI_ADLV}. In particular, we work in the setup of Section \ref{sec:case_bequalw}.

Recall the (unique) fixed point ${\bf x}_b$ of $F$ in the apartment $\mathscr{A}_{\bfT,\breve K}$ of $\bfT$ in $\mathscr{B}_{\breve K}$, and the corresponding maximal parahoric $\cO_K$-model $\bfG_\cO$ of $\bfG$.
We have the stabilizer $\breve G_{{\bf x}_b,0} = {\bf G}_\cO(\cO_{\breve K})$ of ${\bf x}_b$ in ${\bf G}(\breve K) = \bfGL_n(\breve K)$ and its Moy--Prasad filtration \cite{MoyP_94} given by subgroups $\breve G_{{\bf x}_b,r}$ $(r\geq 0)$. Similarly as in \cite[\S5.3]{CI_ADLV}, consider the affine perfect group scheme $\bG$ over $\FF_q$ defined by 
\begin{equation*}
\bG(\overline \FF_q) = \breve G_{{\bf x}_b, 0}, \qquad \bG(\FF_q) = \breve G_{{\bf x}_b,0}^F = G_\cO.
\end{equation*}
and for $h \in \bZ_{\geq 1}$, the affine perfectly finitely presented perfect group scheme $\bG_h$ over $\FF_q$ such that
\begin{equation*}
\bG_h(\overline \FF_q) = \breve G_{{\bf x}_b,0}/\breve G_{{\bf x}_b,(h-1)+}, \qquad G_h := \bG_h(\FF_q) = \breve G_{{\bf x}_b,0}^F/\breve G_{{\bf x}_b,(h-1)+}^F.
\end{equation*}
We denote the Frobenii on $\bG,\bG_h$ again by $F$. The groups $\bG, \bG_h$ possess an explicit description in terms of matrices similar to \cite[\S5.3]{CI_ADLV}.

\begin{rem}\label{rem:rel_to_Cox_reps}
In \cite[Section 7]{CI_ADLV}, we worked instead with the Coxeter representatives $b' = b_0^{e_{\kappa,n}}t_{\kappa,n}$ as in \cite[\S5.2.1]{CI_ADLV}; but if $\gamma$ is as in \cite[\S7.6]{CI_ADLV}, then $b = \gamma b' \gamma^{-1}$, i.e., $b$ is integrally $\sigma$-conjugate to $b'$.
In fact, the groups $\bG, \bG_h$ used here are equal to $\gamma \bG \gamma^{-1}, \gamma \bG_h \gamma^{-1}$ with the latter $\bG, \bG_h$ as in \cite{CI_ADLV}.  
\end{rem}

As (perfect) $\FF_q$-groups, $\bG_1 \cong {\rm Res}_{\FF_{q^{n_0}}/\FF_q} \GL_{n'}$. The above-mentioned description identifies $\bG_1$ with a closed $\FF_q$-subgroup of $\GL_{n,\FF_q}$. In fact, $\bG_{1,\overline{\FF}_q}$ is the closed subgroup of $\GL_{n,\overline{\FF}_q}$ consisting of those $n \times n$-matrices $g = (g_{ij})_{i,j \in \bZ/n\bZ} \in \GL_{n,\overline{\FF}_q}$ for which $X_{ij} = 0$, unless $i \equiv j \mod n_0$; if we now equip $\GL_{n,\overline{\FF}_q}$ with the $\FF_q$-structure given by the Frobenius $F_0 \colon g \mapsto b_0 \sigma(g) b_0^{-1}$ and denote the resulting $\FF_q$-group simply by $\GL_n$, then this defines an $\FF_q$-embedding $\bG_1 \rightarrow \GL_n$.

We regard the symmetric group on $n$ letters $S_n$ as the group of set automorphisms of $\bZ/n\bZ$, and for an element $i \in \bZ/n\bZ$ let $[i]$ be the unique integer between $1$ and $n$ having residue $i$ modulo $n$. We also identify $S_n$ with the Weyl group of the diagonal torus in $\GL_n$ (either over $\overline{\FF}_q$ or $\breve K$) by sending a permutation $v \in S_n$ to the permutation matrix (again denoted $v$) whose non-zero entries are $(v(i),i)$ for $1\leq i\leq n$. 

As $\bG_1$ is naturally isomorphic to the reductive quotient of the special fiber of ${\bf G}_\cO$, the group $W_\cO$ is simply the Weyl group of $\bT_1$ in $\bG_1$. Thus, using the above identifications, $W_\cO$ is the subgroup of $S_n$, isomorphic to $S_{n'} \times \dots \times S_{n'}$ ($n_0$ times), of those permutations which preserve the residue modulo $n_0$.

Applying $L_h^+$ to the inclusions  $\bfT_\cO, \bfU_\cO, \bfU_\cO^- \subseteq \bfG_\cO$ gives closed subgroups $\bT_h, \bU_h, \bU_h^- \subseteq \bG_h$, with $\bT_h$ defined over $\FF_q$ and $\bU_h,\bU_h^-$ defined over $\FF_{q^n}$ (cf. \cite[2.6]{CI_MPDL}). For a closed subgroup $\mathbb{H}_h \subseteq \bG_h$ and $1\leq a\leq h-1$, we write $\mathbb{H}_h^a := \mathbb{H}_h \cap {\rm ker}(\bG_h \rightarrow \bG_a)$. If $\mathbb{H}_h$ is defined over $\FF_q$, we write $H := \mathbb{H}(\FF_q)$ and $H_h^a := \mathbb{H}_h^a(\FF_q)$.

Then we have (by a slight modification -- or conjugation with $\gamma$ from Remark \ref{rem:rel_to_Cox_reps} -- of \cite[Section 7]{CI_ADLV}, in particular, Propositions 7.10,7.11) as perfect $\overline{\FF}_q$-spaces 
\begin{equation}\label{eq:Xh_into_GGh}
X_h \cong \{g \in \bG_h \colon g^{-1}F(g) \in \bU_h^- \cap F\bU_h \} \cong S_h/(\bU_h \cap F\bU_h),
\end{equation}
where 
\[
S_h = \{g \in \bG_h \colon g^{-1}F(g) \in F\bU_h \}\footnote{note that $X_h, S_h$ are indeed perfect schemes as the tensor product of perfect rings over a perfect ring is again perfect (by \cite[3.16]{BhattS_17})},
\]
and the action of $\bU_h \cap F\bU_h$ on $S_h$ is by right multiplication (here and in the following: all presheaves have to be sheafified). Moreover, \eqref{eq:Xh_into_GGh} is $G_h \times T_h$-equivariant with respect to the $G_h \times T_h$-action on the right hand side given by $(g',t) \colon g \mapsto g'gt$.

The fibers of the projection $S_h \rightarrow X_h$ are isomorphic to affine spaces of fixed dimension, so that $R_{T_h}^{G_h}(\theta) = H_c^\ast(S_h)_\theta$. As in \cite[1.9]{Lusztig_04}, if 
\begin{align*}
\Sigma = \{(x,x',y) \in F\bU_h \times F\bU_h \times \bG_h \colon xF(y) = yx' \}
\end{align*}
with the $T_h \times T_h$-action given by $(t,t') \colon (x,x',y) \mapsto (txt^{-1},t'x't^{\prime -1},tyt^{\prime -1})$, then the map 
\begin{equation}\label{eq:XhXh_Sigma}
G_h \backslash (S_h \times S_h) \rightarrow \Sigma,
\end{equation}
induced by $(g,g') \mapsto (g^{-1}F(g), g^{\prime -1}F(g'), g^{-1}g')$ is an $T_h \times T_h$-equivariant isomorphism (the quotient of the left side is taken with respect to the diagonal action).

The group $\bG_1$ is reductive and $\ker(\bG_h \rightarrow \bG_1)$ is unipotent. Thus the Bruhat decomposition $\bG_1 = \coprod_{w \in W_\cO} \bU_1 \bT_1 \dot w \bU_1$ of $\bG_1$ lifts to a decomposition $\bG_h = \coprod_{w \in W_\cO} \bG_{h,w}$, with $\bG_{h,w} = \bU_h \bT_h \dot w \mathbb{K}_h^1 \bU_h$, $\mathbb{K}_h^1 = (\bU_h^-)^1 \cap w^{-1}(\bU_h^-)^1 w$ \cite[Lemma 8.6]{CI_ADLV}. We then have the locally closed decomposition $\Sigma = \coprod_{w \in W_\cO} \Sigma_w$, where
\begin{align*}
\Sigma_w = \{(x,x',y) \in F\bU_h \times F\bU_h \times \bG_{h,w} \colon xF(y) = yx' \}.
\end{align*}
is $T_h \times T_h$-stable. Further, let 
\[
\widehat\Sigma_w = \{ (x,x',y_1,\tau,z,y_2) \in F\bU_h \times F\bU_h \times \bU_h \times \bT_h \times \mathbb{K}_h^1 \times \bU_h \colon xF(y_1 \tau \dot w z y_2) = y_1 \tau \dot w z y_2x' \}. 
\] 
where $\dot w \in \bG_h$ is an (arbitrary but from now on fixed) lift of $w$. It has a $T_h \times T_h$-action by
\begin{equation}\label{eq:ThTh_action_widehatSigma}
(t,t') \colon (x,x',y_1,\tau,z,y_2) \mapsto (txt^{-1},t'x't^{\prime -1},ty_1t^{-1},t \tau \dot wt^{\prime -1} \dot w^{-1},t'zt^{\prime -1},t'y_2t^{\prime -1}).
\end{equation}
Then the map $\widehat\Sigma_w \rightarrow \Sigma_w$ given by $(x,x',y_1,\tau,z,y_2) \mapsto (x,x',y_1\tau zy_2)$ is a $T_h \times T_h$-equivariant Zariski-locally trivial fibration. All in all, as in \cite{Lusztig_04}, using \eqref{eq:XhXh_Sigma} it is enough to show that
\begin{equation}\label{eq:coh_of_Sigma_w} \sum_i (-1)^i \dim_{\overline{\bQ}_\ell} H_c^i(\widehat\Sigma_w)_{\theta^{-1},\theta'} = \begin{cases}1 & \text{if $w \in W_\cO^F$ and $\theta'  = \theta \circ {\rm ad}(w)$} \\ 0 & \text{otherwise.} \end{cases} 
\end{equation}

So far we were essentially following \cite[1.9]{Lusztig_04}, but now we have to deviate.

\subsection{Emptyness of certain $\widehat\Sigma_w$}\label{sec:Step2} Let $w \in W_\cO$. As in \cite[1.9]{Lusztig_04}, make the change the variables $xF(y_1) \mapsto x$, $x'F(y_2)^{-1} \mapsto x'$. We thus may rewrite
\begin{equation}\label{eq:some_presentation_of_Sigma_w}
\widehat\Sigma_w = \{ (x,y_1,\tau,z,y_2) \in F\bU_h \times \bU_h \times \bT_h \times \mathbb{K}_h^1 \times \bU_h \colon xF(\tau \dot w z) \in y_1 \tau \dot w z y_2 F\bU_h \} 
\end{equation}
with the $T_h \times T_h$-action still given by \eqref{eq:ThTh_action_widehatSigma}.

\begin{lm}\label{lm:empty_Sigma_w}
Assume that there exists some $2 \leq i \leq n$ such that $[w(i)] > [w(i-1) + 1] > 1$. Then $\widehat\Sigma_w = \varnothing$. 
\end{lm}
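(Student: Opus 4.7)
The plan is to reduce the defining equation
\[
xF(\tau \dot w z) = y_1 \tau \dot w z y_2 \xi \qquad (\xi \in F\bU_h)
\]
modulo the kernel of $\bG_h \twoheadrightarrow \bG_1$ (so that $z$ drops out), and derive a contradiction by comparing the $i$-th columns of both sides of
\[
\bar x\, F(\bar\tau) F(\dot w) = \bar y_1 \bar\tau \dot w \bar y_2 \bar\xi,
\]
where bars denote images in $\bG_1$. Here $F(\dot w) = b_0 \dot w b_0^{-1}$ corresponds to the permutation $w'$ with $[w'(j)] = [w(j-1)+1]$, so the hypothesis rewrites as $1 < [w'(i)] < [w(i)]$.

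First I would pin down the support of $F\bU_1 = b_0 \bU_1 b_0^{-1}$. The identity $(b_0 u b_0^{-1})_{r,s} = u_{(r-1),(s-1)}$ (with indices cyclic modulo $n$) shows that $F\bU_1$ has support equal to the upper triangle together with the first column below the diagonal, with $1$'s on the diagonal. Thus the $i$-th column of the LHS equals $F(\bar\tau)_{w'(i),w'(i)}$ times the $w'(i)$-th column of $\bar x$, which is supported in rows $r \leq [w'(i)]$ (the ``first-column exception'' cannot trigger since $[w'(i)] \geq 2$). In particular, the LHS vanishes at every row $r > [w'(i)]$ of column $i$.

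A direct expansion on the right gives
\[
(\mathrm{RHS})_{r,i} = \sum_{k \leq i,\; [w(k)] \geq r} (\bar y_1)_{r,[w(k)]}\, \bar\tau_{[w(k)],[w(k)]}\, v_k,
\qquad v_k := \sum_{k \leq j \leq i} \bar\xi_{j,i}\,(\bar y_2)_{k,j},
\]
with the conventions $(\bar y_2)_{j,j} = \bar\xi_{i,i} = 1$, so in particular $v_i = 1$. Let $S := \{k \leq i : [w(k)] \geq [w(i)]\}$; note $i \in S$, and every $k_0 \in S$ satisfies $[w(k_0)] \geq [w(i)] > [w'(i)]$. Evaluating at $r = [w(k_0)]$ for $k_0 \in S$, the LHS vanishes, while the RHS becomes
\[
\bar\tau_{[w(k_0)],[w(k_0)]}\, v_{k_0} + \sum_{k \in S,\; [w(k)] > [w(k_0)]} (\bar y_1)_{[w(k_0)],[w(k)]}\, \bar\tau_{[w(k)],[w(k)]}\, v_k,
\]
using that $(\bar y_1)_{[w(k_0)],[w(k)]}$ vanishes for $[w(k)] < [w(k_0)]$ and equals $1$ for $k=k_0$, and that the inequality $[w(k)] > [w(k_0)]$ for $k \in S \smallsetminus \{k_0\}$ is strict by injectivity of $w$. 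Processing $k_0 \in S$ in decreasing order of $[w(k_0)]$ turns this into a triangular system: the top equation reads $\bar\tau_{[w(k^\ast)],[w(k^\ast)]}\, v_{k^\ast} = 0$, forcing $v_{k^\ast} = 0$, and inductively each $v_{k_0}$ is forced to vanish (using that each $\bar\tau_{[w(k_0)],[w(k_0)]}$ is invertible). In particular $v_i = 0$, contradicting $v_i = 1$.

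The main step to verify carefully is the support computation for $F\bU_1$ and the resulting row constraint on the $i$-th column of the LHS; once that is in place the rest is a clean descending induction driven by the injectivity of $w$. The hypothesis enters in full: $[w'(i)] > 1$ rules out the ``first-column exception'' so the LHS really is constrained above row $[w'(i)]$, while $[w(i)] > [w'(i)]$ guarantees that every row $[w(k_0)]$ with $k_0 \in S$ lies strictly outside the LHS support, making the entire triangular system homogeneous.
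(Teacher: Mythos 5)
Your argument is correct and is essentially the paper's: both proofs reduce modulo the congruence kernel to $h=1$ and derive the contradiction from the support of $F\bU_1$ (upper triangle plus first column, with the first row vanishing off the diagonal --- a harmless slip in your support description) together with the conjugation formula for $\dot w$, which places the critical entry at position $(w(i),[w(i-1)+1])$, strictly below the diagonal and outside the first column. The paper packages this as the single statement $\dot w^{-1}MF(\dot w)\cap M=\varnothing$ for an explicit set $M$, so that one diagonal entry is forced to be simultaneously zero and invertible, whereas you reach the same contradiction by a descending triangular elimination down the $i$-th column; this is a difference of bookkeeping, not of idea.
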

\begin{proof}
We may assume $h=1$, and hence we may ignore $z \in \mathbb{K}_h^1$ whose image in $\bG_1$ is $1$. We use the identification of $\bG_1$ with the closed subgroup of $\GL_n$ from Section \ref{sec:Step1}. Write $y_i = y_{i,1}y_{i,2}$ with $y_{1,1}, y_{2,2} \in \bU_1 \cap F\bU_1$ and $y_{1,2},y_{2,1} \in \bU_1 \cap F\bU_1^-$. Replacing $x$ by $y_{1,1}^{-1}x$ and putting $y_{2,2}$ into the $F\bU_1$ on the right hand side, we are reduced to show that there are no $(x,y_{1,2},y_{2,1}, \tau) \in F\bU_1 \times (\bU_1 \cap F\bU_1^-) \times (\bU_1 \cap F\bU_1^-)  \times \bT_1$ with 
\[ 
\dot w^{-1} \tau y_{1,2}^{-1} x F(\tau \dot w) \in y_{2,1}F(\bU_1).
\] 
Replacing everything by appropriate conjugates resp.\ inverses, it suffices to show that there are no $(x, y, y_{2,1}, \tau') \in F\bU_1 \times (\bU_1 \cap F\bU_1^-) \times (\bU_1 \cap F\bU_1^-)  \times \bT_1$ satisfying 
\[ 
\dot w^{-1} y x F(\dot w) \in \tau' y_{2,1}F\bU_1.
\] 

For a $n\times n$-matrix $X$, let $X_{i,j}$ denote its $(i,j)$th entry. Consider the closed subset 
\[
M = \{ X \in \bG_1 \colon X_{i,i } \in \bG_m \, \forall \, 2\leq i\leq n \text{ and } X_{i,j} = 0 \, \forall \, n \geq i>j>1 \}
\]
of $\bG_1$. We have 
\[
\bU_1 \cap F\bU_1^- = \{X \in \bG_1 \colon X_{i,i} = 1 \,\forall  i \text{ and } X_{i,j} = 0 \,\forall\, (i,j) \text{ with } j \neq 1 \text{ or } i\neq  j\}
\]
One easily checks that $\bT_1 \cdot (\bU_1 \cap F\bU_1^-) \cdot F\bU_1 \subseteq M$. Thus it suffices to check that 
\[
\dot w^{-1} M F(\dot w) \cap M = \varnothing.
\]
For $X \in \bG_1$ (and even more generally for $X \in \GL_n$ and $F$ replaced by $F_0$ as in Section \ref{sec:Step1}), one has the formula
\begin{equation}\label{eq:entries_of_F_conjugate}
(\dot w^{-1} X F(\dot w))_{i,j} = X_{w(i), [w(j-1) + 1]}.
\end{equation}
Let $2 \leq i \leq n$ be such that $[w(i)] > [w(i-1) + 1] > 1$. Then for $X \in M$, the $(i,i)$th diagonal entry of $\dot w^{-1} X F(\dot w)$ is 
\[
(\dot w^{-1} X F(\dot w))_{i,i} = X_{w(i), [w(i-1) + 1]} = 0,
\]
by definition of $M$.  This shows that $X\not\in M$ and we are done.
\end{proof}

As mentioned in Section \ref{sec:case_bequalw}, $W_\cO^F = \langle w_1^{n_0}\rangle$. Clearly, no element from $W_\cO^F$ satisfies the condition in Lemma \ref{lm:empty_Sigma_w}. Thus Lemma \ref{lm:empty_Sigma_w} implies \eqref{eq:coh_of_Sigma_w} for all $w$ satisfying the condition in the lemma.

\subsection{An extension of action}\label{sec:Step3} It remains to show \eqref{eq:coh_of_Sigma_w} for all $w \in W_\cO \subseteq S_n$ for which there is no $2 \leq i \leq n$ satisfying $[w(i)] > [w(i-1) + 1] > 1$. Consider the closed subgroup 
\[ 
H_w = \{ (t,t') \in \bT_h \times \bT_h \colon \dot w^{-1} t^{-1} F(t) \dot w = t^{\prime -1} F(t') \text{ centralizes $\mathbb{K}_h = \bU_h^- \cap \dot w^{-1}\bU_h^- \dot w$ } \}
\]
of $\bT_h \times \bT_h$. It contains $T_h \times T_h$. It is easy to check that the action of $T_h \times T_h$ on $\Sigma_w$ extends to an action of $H_w$ given by the formula
\[
(t,t') \colon (x,y_1,\tau,z,y_2) \mapsto (F(t)xF(t)^{-1}, F(t)y_1F(t)^{-1}, t \tau \dot wt^{\prime -1} \dot w^{-1}, t'zt^{\prime -1}, F(t')y_2F(t')^{-1}).
\]

\begin{lm}\label{lm:big_centralizer}
Let $1 \neq w \in W_\cO$. Assume that there is no $2\leq i\leq n$ with $[w(i)] > [w(i-1) + 1] > 1$. Then there is a proper Levi subgroup $L$ of ${\bf G}_{\breve K}$ containing ${\bf T}_{\breve K}$ such that if $\mathbb{L}_h$ denotes the corresponding subgroup of $\bG_h$, then $\mathbb{K}_h \subseteq \mathbb{L}_h$.
\end{lm}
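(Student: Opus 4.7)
\emph{Plan.} My plan is to analyze $\mathbb{K}_h = \bU_h^- \cap \dot w^{-1}\bU_h^-\dot w$ root-theoretically. Using the explicit matrix description of $\bG_h$ from Section~\ref{sec:Step1}, and since $\dot w$ lifts an element of $W_\cO$ and so can be chosen inside $\bG_\cO(\cO_{\breve K})$ (preserving affine levels under conjugation), $\mathbb{K}_h$ is the product of the affine root subgroups indexed by those negative roots $\alpha = e_i - e_j$ (i.e.\ $i>j$) of $\bfT_{\breve K}$ in $\bfG_{\breve K}$ for which $w^{-1}\alpha = e_{w^{-1}(i)} - e_{w^{-1}(j)}$ is still negative, equivalently $w^{-1}(i) > w^{-1}(j)$. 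It therefore suffices to find a proper Levi $L \subseteq \bfG_{\breve K}$ containing $\bfT_{\breve K}$ whose root system contains every such $\alpha$.

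The heart of the proof will be the following combinatorial claim: setting $m := w^{-1}(n)$, under the hypothesis on $w$ one has
\[
w(k) = n - m + k \qquad \text{for all } k \in \{1,\ldots,m\}.
\]
I will prove this by descending induction on $k$. For $2 \leq k \leq m$ we have $w(k-1) \neq n$ (since $w(m)=n$ and $k-1 < m$), so the hypothesis gives $w(k) \leq w(k-1) + 1$, i.e.\ $w(k-1) \geq w(k) - 1$. Applied to $k=m$: $w(m-1) \geq n-1$, and since $w(m-1) \neq n$, $w(m-1) = n-1$. Inductively, using that $w(k-1)$ cannot coincide with any of the already-determined values $\{w(k), w(k)+1, \ldots, n\}$, I conclude $w(k-1) = w(k) - 1$. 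In particular $w$ restricts to the order-preserving bijection $\{1,\ldots,m\} \xrightarrow{\sim} A := \{n-m+1, \ldots, n\}$, and hence also to a bijection $\{m+1,\ldots,n\} \xrightarrow{\sim} B := \{1, \ldots, n-m\}$. The assumption $w \neq 1$ now rules out $m = n$ (which would force $w = \mathrm{id}$), so $1 \leq m \leq n-1$ and $\{A,B\}$ is a non-trivial partition of $\{1,\ldots,n\}$.

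I then take $L := \GL(V_A) \times \GL(V_B) \subseteq \bfGL_{n,\breve K} = \bfG_{\breve K}$ to be the standard block-diagonal Levi attached to this partition; it is proper, and contains the diagonal torus $\bfT_{\breve K}$. To conclude, let $\alpha = e_i - e_j$ with $i > j$ and $w^{-1}(i) > w^{-1}(j)$ be any root occurring in $\mathbb{K}_h$. Because $i > j$, the only potential cross-block case is $i \in A$, $j \in B$; but then by what we showed $w^{-1}(i) \in \{1,\ldots,m\}$ while $w^{-1}(j) \in \{m+1,\ldots,n\}$, forcing $w^{-1}(i) < w^{-1}(j)$, a contradiction. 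Thus $\{i,j\}$ lies inside a single block, $\alpha$ is a root of $L$, and consequently $\mathbb{K}_h \subseteq \mathbb{L}_h$.

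The main obstacle I anticipate is the descending induction pinning down $w|_{\{1,\ldots,m\}}$; once the formula $w(k) = n - m + k$ is established, the Levi construction and the root-theoretic verification are essentially automatic. The hypothesis $w \in W_\cO$ (i.e.\ that $w$ preserves residues mod $n_0$) plays no explicit role in the argument, which applies to any $w \in S_n$ satisfying the stated condition with $w \neq 1$.
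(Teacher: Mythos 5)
Your combinatorial claim ($w(k)=n-m+k$ for $1\leq k\leq m$, where $m=w^{-1}(n)$) is correct and is exactly the first step of the paper's own argument. The gap is in the root-theoretic bookkeeping. Since $\dot w^{-1}U_\alpha\dot w=U_{w^{-1}\alpha}$, the group $\mathbb{K}_h=\bU_h^-\cap\dot w^{-1}\bU_h^-\dot w$ is supported on the roots $\{\alpha<0\colon w\alpha<0\}=\{e_a-e_b\colon a>b,\ w(a)>w(b)\}$, whereas you use the set $\{e_i-e_j\colon i>j,\ w^{-1}(i)>w^{-1}(j)\}$, which is the root set of $\bU^-\cap\dot w\,\bU^-\dot w^{-1}$. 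These differ in general, and your chosen Levi (blocks $A=\{n-m+1,\dots,n\}$ and $B=\{1,\dots,n-m\}$) fails for the correct set. Concretely, take $n=3$, $\kappa=0$, and $w$ given by $w(1)=3$, $w(2)=1$, $w(3)=2$: this $w$ satisfies the hypothesis, $m=1$, and $\mathbb{K}_h$ contains the root subgroup for $e_3-e_2$ (since $3>2$ and $w(3)=2>1=w(2)$), which crosses your blocks $A=\{3\}$ and $B=\{1,2\}$. So as written the proof does not establish $\mathbb{K}_h\subseteq\mathbb{L}_h$.

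The repair is immediate and keeps your structure: with the correct root set, take instead the Levi with blocks $\{1,\dots,m\}$ and $\{m+1,\dots,n\}$ (type $(m,n-m)$). A cross-block negative root $e_a-e_b$ has $b\leq m<a$, hence $w(b)\in A$ while $w(a)\notin A$, so $w(a)<w(b)$ and the root does not occur in $\mathbb{K}_h$. This is exactly the first block of the Levi the paper constructs; the paper goes further and determines the full block structure of $w$ (a chain $i_1<i_2<\dots<i_s=n$ with $w$ order-preserving on each block onto descending intervals), taking the multi-block Levi of type $(i_1,i_2-i_1,\dots)$ — your one-block truncation suffices for the statement of the lemma, though the identification of the first block size $i_1=m$ is what is reused later in Section \ref{sec:Step3}.
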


\begin{proof}
First we prove the following claim: there is an $s\in \bZ_{\geq 1}$ and a sequence $0 =: i_0 < 1 \leq i_1 < \dots < i_{s-1} < i_s := n$ of integers such that for each $1\leq j \leq s$, and for each $i_{j-1} + 1 \leq i\leq i_j$ (if $j>1$) resp.\ for each $1\leq i \leq i_1$ (if $j=1$), one has $w(i) = n - i_{j-1} - (i_j - i)$. Indeed, find the $1 \leq i_1 \leq n$ such that $w(i_1) = n$. It follows from the condition on $w$ that $w(i_1 - 1) = n-1$, $...$, $w(1) = n-(i_1-1)$. The maximal value which $w$ has on $\{i_1+ 1,\dots, n\}$ is $n - i_1$. Find the $i_1 + 1 \leq i_2 \leq n$ such that $w(i_2) = n-i_1$. It follows from the condition on $w$ that $w(i_2 - 1) = n-i_1 - 1$, $\dots$, $w(i_1+1) = n-i_2 + 1$. Then, proceed inductively until $i_s = n$ is reached. The claim is proven.

Note that $i_1 < n$, as $i_1 = n$ would imply $w = 1$, whereas $w \neq 1$ is assumed in the lemma. Let $L$ be the (proper) Levi subgroup of $\bfGL_{n,\breve K} = \bfG_{\breve K}$ containing ${\bf T}_{\breve K}$ of type $(i_1,i_2 - i_1, \dots, i_s - i_{s-1})$. From the claim it easily follows that $\mathbb{K}_h = \bU_h^- \cap w^{-1}\bU_h^-w \subseteq \mathbb{L}_h$.
\end{proof}

For $i=1,2$ we have the composed maps 
\[ 
\pi_i \colon H_w \subseteq \bT_h \times \bT_h \rightarrow \bT_h \rightarrow \bT_1,
\] 
where the middle map is the projection to the $i$-th component, and the last map is the natural projection.
For $1\leq i \neq j \leq n$, let $\alpha_{i,j}$ denote the root of $\GL_{n,\FF_q}$ corresponding to $(i,j)$th matrix entry. Recall from Section \ref{sec:Step1} that $\bT_1 \subseteq \bG_1 \subseteq \GL_{n,\FF_q}$ and that $\bT_1$ is the diagonal (and in fact elliptic with respect to the Frobenius $F_0$) torus of $\GL_{n,\FF_q}$. Let $\alpha_{i,j}$ be the roots of $\bT_1$ in $\GL_{n,\FF_q}$ corresponding to $(i,j)$th entry.

\begin{lm}\label{lm:regular_torus_in_general}
Let $\delta \colon \bZ/n\bZ \rightarrow \{0,1\}$ be a non-zero function, and let $\chi \colon \bG_m \rightarrow \bT_1$ be the cocharacter $X \mapsto \diag(X^{\delta(1)}, \dots, X^{\delta(n)})$. Then $S_{\chi} := \{t \in \bT_1 \colon t^{-1}F(t) \in {\rm im}(\chi) \}$ is a one-dimensional subgroup of $\bT_1$. Let $1 \leq j < i \leq n$. If $\delta$ does not factor as $\bZ/n\bZ \rightarrow \bZ/{\rm gcd}(n,i-j)\bZ \rightarrow \{0,1\}$, then the connected component $S_\chi^\circ$ of $S_\chi$ is not contained in the subtorus $\ker(\alpha_{i,j})$ of $\bT_1$.

In particular, if for any divisor $d>1$, $\delta$ does not factor as $\bZ/n\bZ \rightarrow \bZ/d\bZ \rightarrow \{0,1\}$, then $S_\chi^\circ$ is a not contained in any of the subtori $\ker(\alpha_{i,j})$ ($1\leq i\neq j\leq n$) of $\bT_1$.
\end{lm}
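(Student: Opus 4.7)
The plan is to translate the problem into a computation on the cocharacter lattice $X_\ast(\bT_1) = \bZ^n$. The starting point is that since $\bT_1$ is commutative, the Lang map $L \colon \bT_1 \to \bT_1$, $t \mapsto t^{-1}F(t)$, is a surjective homomorphism of algebraic groups with finite kernel $\bT_1^F$; hence $S_\chi = L^{-1}(\chi(\bG_m))$ is a closed subgroup of $\bT_1$ of dimension $\dim \chi(\bG_m) = 1$, which already settles the one-dimensionality claim. On $X_\ast(\bT_1)$ the Frobenius $F$ acts as $q\tau$, where $\tau$ is the $n$-cycle induced by $b_0$ on the diagonal entries; consequently $L$ acts on $X_\ast(\bT_1)$ as $q\tau - 1$.

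I would then exhibit an explicit generator of the rank-one sublattice $X_\ast(S_\chi^\circ) \otimes \bQ$ by solving $(q\tau - 1)\nu = c\,\chi$ in $X_\ast(\bT_1) \otimes \bQ$ for some $c \neq 0$. Since $(q\tau)^n = q^n$, the choice $c = q^n - 1$ gives the closed-form cocharacter
\[
\nu = \sum_{k=0}^{n-1} q^k\,\tau^k \chi, \qquad \text{with} \qquad \nu_i = \sum_{k=0}^{n-1} q^k\,\delta(\tau^{-k}(i)).
\]
The image of $\nu$ is $S_\chi^\circ$ up to a finite isogeny, which is all that is needed for deciding whether a given root is trivial on $S_\chi^\circ$. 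The crucial observation is that this expression presents $\nu_i$ as the base-$q$ expansion of a non-negative integer whose digits $\delta(\tau^{-k}(i))$ lie in $\{0,1\}$, so the single number $\nu_i$ uniquely determines the entire digit sequence $\bigl(\delta(\tau^{-k}(i))\bigr)_{k=0}^{n-1}$.

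Since $S_\chi^\circ \subseteq \ker(\alpha_{i,j})$ is equivalent to $\langle \alpha_{i,j}, \nu \rangle = \nu_i - \nu_j = 0$, uniqueness of the base-$q$ expansion reduces the question to asking when the $n$-tuples $(\delta(\tau^{-k}(i)))_k$ and $(\delta(\tau^{-k}(j)))_k$ coincide. These are cyclic shifts of one another by $m$ places, where $j = \tau^m(i)$; they agree iff $\delta$ is $\tau^m$-invariant, equivalently iff $\delta$ factors through $\bZ/n\bZ \twoheadrightarrow \bZ/\gcd(n,m)\bZ$. Under the indexing provided by $\tau$ one has $m \equiv i-j \pmod{n}$, so $\gcd(n,m) = \gcd(n,i-j)$; taking contrapositives yields the main assertion. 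The ``in particular'' part is then immediate, since its hypothesis excludes factorization through every $\bZ/d\bZ$ with $d \mid n$, $d > 1$, while each $\gcd(n, i-j)$ for $1 \leq j < i \leq n$ is a proper divisor of $n$.

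The step I expect to require the most bookkeeping is fixing conventions for the cycle $\tau$ induced by $b_0$ and verifying the identification $m \equiv i-j \pmod{n}$; the real content, however, is the base-$q$ uniqueness argument in the second paragraph, and everything else is formal manipulation with cocharacter lattices of tori under the Lang isogeny.
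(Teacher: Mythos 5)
Your proof is correct, and it reaches the paper's decisive arithmetic fact by a more structural route. The paper's own proof parametrizes $S_\chi$ explicitly as a one-dimensional closed subscheme of $\bG_m^2$ (coordinates $t_1,a$), embeds it back into $\bT_1$ via $t_k = t_1^{q^{k-1}}a^{\sum_\lambda q^{k-\lambda}\delta(\lambda)}$, and shows $S_\chi\cap\ker(\alpha_{i,j})$ is finite by manipulating exponents until everything reduces to the non-vanishing of $\sum_{k=0}^{n-1} q^k\bigl(\delta(i-k)-\delta(j-k)\bigr)$, proved there by a dominant-term induction on the digits. You instead work on $X_\ast(\bT_1)$, where the Lang map is $q\tau-1$, exhibit the generator $\nu=\sum_k q^k\tau^k\chi$ of $X_\ast(S_\chi^\circ)\otimes\bQ$, and note that $\langle\alpha_{i,j},\nu\rangle=\nu_i-\nu_j$ vanishes iff two base-$q$ expansions with digits in $\{0,1\}$ coincide --- which is exactly the same inequality, since uniqueness of base-$q$ expansions and the paper's dominant-term induction are the same statement. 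What your framing buys is that the one-dimensionality of $S_\chi$ falls out of Lang's theorem applied to the homomorphism $t\mapsto t^{-1}F(t)$ on the commutative group $\bT_1$ (the paper asserts $\dim S_\chi=1$ without comment), and you avoid the explicit $\bG_m^2$-presentation entirely; what it costs is that you must justify that $S_\chi^\circ$ is a subtorus whose containment in $\ker(\alpha_{i,j})$ is detected on rational cocharacters, and fix the convention that conjugation by $b_0$ induces the cyclic shift $\tau$ with $\tau^{-k}(i)=i-k$, so that the shift relating the two digit strings is $i-j$ --- both routine, and you flag the latter yourself. No gap.
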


\begin{proof} Assume that $\delta$ does not factor through $\bZ/n\bZ \rightarrow \bZ/{\rm gcd}(n,i-j)\bZ$. As $\dim S_\chi = 1$, it suffices to show that $S_\chi \cap \ker(\alpha_{i,j})$ is finite. We write an element in $\bT_1$ as an $n$-tuple $(t_k)_{k=1}^n$ corresponding to the diagonal matrix with entries $t_1, \dots, t_n$. We have ${\rm im}(\chi) = \{ (a^{-\delta(k)})_{k=1}^n \in \bT_1 \colon a \in \bG_m\}$. Thus $(t_k)_{k=1}^n \in \bT_1$ lies in $S_\chi$ if and only if $t_1^{-1}t_n^q = a^{-\delta(1)}$, $t_2^{-1}t_1^q = a^{-\delta(2)}$, $\dots$, $t_n^{-1}t_{n-1}^q = a^{-\delta(n)}$. Thus $S_\chi$ is isomorphic to the one-dimensional subscheme of $\bG_m^2$,
\begin{equation}\label{eq:embedded_version_of_S}
\{t_1, a \in \bG_m^2 \colon t_1^{1 - q^n} = a^{\delta(1) + \sum_{k=2}^n q^{n-k+1} \delta(k)} \},
\end{equation}
which is embedded into $\bT_1$ by sending $(t_1, a)$ to the tuple $(t_k)_{k=1}^n$ with $t_k = t_1^{q^{k-1}}a^{\sum_{\lambda=2}^k q^{k-\lambda}\delta(\lambda)}$. Thus the intersection $S_\chi \cap \ker(\alpha_{i,j})$ is the closed subscheme of \eqref{eq:embedded_version_of_S} given by the equation $t_i = t_j$, i.e., 
\[
t_1^{q^{i-1} - q^{j-1}} = a^{\sum_{k=2}^j q^{j-k}\delta(k) - \sum_{k=2}^i q^{i-k}\delta(k)}
\]
Taking this to $(q^n - 1)$-th power, taking the equation in \eqref{eq:embedded_version_of_S} to the power $q^{i-1} - q^{j-1}$, and equalizing the left hand sides, we deduce that on $S_\chi \cap \ker(\alpha_{i,j})$ we must have
\[
a^{(q^{i-1} - q^{j-1})(\delta(1) + \sum_{k=2}^n q^{n-k+1} \delta(k))} = a^{(q^n - 1)(\sum_{k=2}^i q^{i-k}\delta(k) - \sum_{k=2}^j q^{j-k}\delta(k))}.
\]
Thus it suffices to show that 
\[
(q^{i-1} - q^{j-1})(\delta(1) + \sum_{k=2}^n q^{n-k+1} \delta(k)) \neq (q^n - 1)(\sum_{k=2}^i q^{i-k}\delta(k) - \sum_{k=2}^j q^{j-k}\delta(k)),
\]
or equivalently, that
\[
\sum_{k=i-1}^{n-1} q^k \delta(n-k+i) - \sum_{k=j-1}^{n-1}q^k \delta(n-k+j) \neq - \sum_{k=0}^{i-2} q^k \delta(i-k) + \sum_{k=0}^{j-2} q^k \delta(j-k),
\]
or that
\[
\sum_{k=0}^{n-1} q^k(\delta(i-k) - \delta(j-k)) \neq 0.
\]
Assume this is wrong, and this sum is $0$. All terms $\delta(i-k) - \delta(j-k)$ lie in the set $\{-1,0,1\}$ and hence $q^{n-1}$ is bigger than the sum of the absolute values of the remaining summands. It follows that we must have $\delta(i-n+1) - \delta(j-n+1) = 0$. Then we may continue in the same way with $q^{n-2}$ instead of $q^{n-1}$, etc. All in all we deduce that $\delta(i-k) = \delta(j-k)$ for all $k \in \bZ/n\bZ$. Or equivalently, that $\delta(k) = \delta(k + (i-j))$ for all $k \in \bZ/n\bZ$. But this is equivalent to saying that $\delta$ factors through $\bZ/n\bZ \rightarrow \bZ/{\rm gcd}(n,i-j)\bZ$, contradicting our assumption. 
\end{proof}

Now let $1 \neq w \in W_\cO$, such that there is no $2\leq i\leq n$ with $[w(i)] > [w(i-1) + 1] > 1$. Let $\mathbb{L}_h$ be as in Lemma \ref{lm:big_centralizer} and let $1 \leq i_1 < n$ be the size of its first block (cf. the proof of Lemma \ref{lm:big_centralizer}). Let $\delta \colon \bZ/n\bZ \rightarrow \{0,1\}$, $i \mapsto 1$ if $i\leq i_1$ and $i \mapsto 0$ otherwise. Let $\chi = (1_{i_1}, 0_{n-i_1})$ be the corresponding cocharacter. We have (again, cf. the proof of Lemma \ref{lm:big_centralizer}), $(w\delta)(i) = \delta(i + \lambda)$ for an appropriate $\lambda \in \bZ/n\bZ$. It follows from Lemma \ref{lm:big_centralizer} and the definition of $H_w$ that $\pi_1(H_w) \supseteq S_{w\chi}$ and $\pi_2(H_w) \supseteq S_\chi$. Hence also
\begin{equation}\label{eq:contains_regular}
\pi_1(H_w^\circ) \supseteq S_{w\chi}^\circ \quad \text{ and } \quad \pi_2(H_w^\circ) \supseteq S_\chi^\circ.
\end{equation}
From this together with Lemma \ref{lm:regular_torus_in_general} it follows that for $i=1,2$, $\pi_i(H_w^\circ)$ is not contained in any of the $\ker(\alpha_{i,j} \colon \bT_1 \rightarrow \bG_m)$ ($1\leq i\neq j \leq n$). Hence it also holds for $\pi_i(H_{w,{\rm red}}^\circ)$, where $H_{w,{\rm red}}^\circ$ is the reductive part of $H_w^\circ$ (it is a torus). By Proposition \ref{prop:torus_invariants} we now have $H_c^\ast(\widehat\Sigma_w)_{\theta^{-1},\theta'} = H_c^\ast(\widehat\Sigma_w^{H_{w,{\rm red}}^\circ})_{\theta^{-1},\theta'}$. Because $\pi_i(H_{w,{\rm red}}^\circ)$ is not contained in any of the $\ker(\alpha_{i,j})$, we have 
\[
\widehat\Sigma_w^{H_{w,{\rm red}}^\circ} \subseteq \{(1,1,\tau,1,1) \colon \tau \in \bT_h, F(\tau \dot w) = \tau \dot w \},
\]
and \eqref{eq:coh_of_Sigma_w} for $\widehat\Sigma_w$ easily follows (as in \cite[1.9, proof of claim (e)]{Lusztig_04}).

\subsection{Another extension of action}\label{sec:Step4} It remains to deal with the case $w = 1$. 
We first prove a more general result, again generalizing Lusztig's method. The proof does not depend on special properties of $\GL_n$ and can be carried out for any group, so we put ourselves -- until the end of Section \ref{sec:Step4} only -- in the general setup of \cite{CI_MPDL}. Let ${\bf G}$ be a reductive group over $K$, which is split over $\breve K$, and let ${\bf T}$, ${\bf T}'$ be two maximal $K$-rational, $\breve K$-split tori in ${\bf G}$. There is a natural inclusion of the reduced Bruhat--Tits building $\mathscr{B}_K$ of ${\bf G}$ over $K$ into the reduced Bruhat--Tits building $\mathscr{B}_{\breve K}$ of ${\bf G}$ over $\breve K$. Assume there is a point ${\bf y}$ in the intersection of $\mathscr{B}_K$ and the apartments of ${\bf T}$ and ${\bf T}'$ inside $\mathscr{B}_{\breve K}$. We have then the parahoric $\cO_K$-model $P_{\bf y}$ of ${\bf G}$ attached to ${\bf y}$. Its $\cO_{\breve K}$-points $P_{\bf y}(\cO_{\breve K})$ form the parahoric subgroup of ${\bf G}(\breve K)$ attached to ${\bf y}$, which is the stabilizer of ${\bf y}$. On $P_{\bf y}(\cO_{\breve K})$ we have the descending Moy--Prasad filtration given by certain subgroups $P_{\bf y}(\cO_{\breve K})^h$ ($h\geq 0$). Using the truncated loop group construction \cite[2.6]{CI_MPDL}, for any $h \geq 1$ one can defined an affine perfectly finitely presented perfect $\FF_q$-group $\bG_h$ satisfying
\[
\bG_h(\overline{\FF}_q) = P_{\bf y}(\cO_{\breve K})/ P_{\bf y}(\cO_{\breve K})^{(h-1)+}
\]
We denote by $F$ the (geometric) Frobenius on $\bG_{h,\overline{\FF}_q}$ and its closed subgroups. To a closed subgroup ${\bf H} \subseteq {\bf G}_{\breve K}$ one can naturally attach a closed subgroup $\mathbb{H}_h\subseteq \bG_h$, by first taking the schematic closure of ${\bf H}$ in $P_{\bf y}$ and then applying $L_h^+$. We write $\mathbb{H}_h^r := \ker(\mathbb{H}_h \rightarrow \mathbb{H}_r)$ for the kernel of the natural projection. We also write $G_h := \bG_h(\FF_q)$ and $H_h := \mathbb{H}_h(\FF_q)$ (the latter only if $\mathbb{H}_h$ is defined over $\FF_q$). For more details we refer to \cite[2.6]{CI_MPDL}.

Let ${\bf U},{\bf U}^-$ resp.\ ${\bf U}',{\bf U}^{\prime -}$ be the unipotent radicals of a pair of opposite Borel subgroups containing ${\bf T}$ resp.\ ${\bf T}'$ and let $\mathbb{U}_h,\mathbb{U}_h^-$ resp.\ $\mathbb{U}_h',\mathbb{U}_h^{\prime -}$ be the corresponding subgroups of $\bG_h$. We have the closed perfect subscheme of $\bG_h$, 
\[ 
S_{T,U,h} = \{g \in \bG_h \colon g^{-1}F(g) \in F\bU_h \}
\] 
with a $G_h \times T_h$-action by $(\gamma,t) \colon g \mapsto \gamma gt$. Similarly we have the perfect subscheme $S_{T',U',h} \subseteq \bG_h$. As already above, Lusztig's scheme $\Sigma = \{(x,x',y) \in F\bU_h \times F\bU_h' \times \bG_h \colon   xF(y) = yx'\}$ is very useful to compute the inner product between the virtual $G_h$-representations obtained from $S_{T,U,h}$ and $S_{T',U',h}$. More precisely, for $\overline{\bQ}_\ell^\times$-valued characters $\theta$ resp.\ $\theta'$ of $T_h$ resp.\ $T_h'$ we have
\[\left\langle H^\ast_c(S_{T,U,h})_\theta, H^\ast_c(S_{T',U',h})_{\theta'} \right\rangle_{G_h} = \dim_{\overline{\bQ}_\ell} H_c^\ast(\Sigma)_{\theta^{-1},\theta'} \]

To study $H_c^\ast(\Sigma)$ Lusztig in \cite{Lusztig_04} (and many authors in follow-up articles) used a locally closed decomposition $\Sigma = \coprod_{w \in W_{\bf y}(T',T)} \Sigma_w$, where $W_{\bf y}(T',T) = \{ \bT_1 v\colon v^{-1}\bT_1 v = \bT_1'\}$ is the transporter from $\bT_1'$ to $\bT_1$ in $\bG_1$ (= reductive quotient of the special fiber of $P_{\bf y}$) conjugating $\bT_1'$ to $\bT_1$. Now, we generalize this construction in a substantial way.

Let ${\bf V}$ resp.\ ${\bf V}'$ be the unipotent radical of a second Borel subgroup containing ${\bf T}$ resp.\ ${\bf T}'$. We have the corresponding subgroups $\mathbb{V}_h$, $\mathbb{V}_h'$ of $\bG_h$. For $v \in W_{\bf y}(T',T)$ we have the corresponding preimage $\mathbb{V}_h \bT_h v \mathbb{K}_{V,V',h}^1 \mathbb{V}'_h$ (with $\mathbb{K}_{V,V',r} := \mathbb{V}^{\prime -}_h \cap v^{-1}\mathbb{V}_h^- v$) of the Schubert cell in $\bG_1$ attached to $v$. We consider the following generalizations of $\widehat\Sigma_w$, $\Sigma_w$ from \cite{Lusztig_04}
\begin{align*}
\Sigma_{V,V',v} &:= \{ (x,x',y) \in F\bU_h \times F\bU_h' \times \mathbb{V}_h \bT_h \dot v \mathbb{K}_{V,V',h}^1 \mathbb{V}'_h \colon xF(y) = yx' \},
\\
\widehat\Sigma_{V,V',v} &:= \{ (x,x',y',\tau,z,y'') \in F\bU_h \times F\bU_h' \times \mathbb{V}_h \times \bT_h \times \mathbb{K}_{V,V',h}^1 \times \mathbb{V}'_h \colon \\
&\qquad\qquad\qquad\qquad\qquad\qquad\qquad\qquad\qquad xF(y' \tau \dot v z y'') = y' \tau \dot v z y''x' \},
\end{align*}
which have the same alternating sum of cohomology. The action of $T_h \times T_h'$ on $\widehat\Sigma_{V,V',v}$ is
\begin{equation}\label{eq:ThTh_action_widehatSigma_general}
(x,x',y',\tau,z,y'') \overset{(t,t')}{\mapsto} (txt^{-1},t'x't^{\prime -1},ty't^{-1},t \tau \dot vt^{\prime -1} \dot v^{-1},t'zt^{\prime -1},t'y''t^{\prime -1}).
\end{equation}
and by a similar formula for $\Sigma_{V,V',v}$. There is an element $v_0 = v_0(V,V') \in W_{\bf y}(T',T)$, such that the (generalized) Bruhat cell $\mathbb{V}_1\bT_1 v_0\mathbb{V}'_1$ is generic in $\bG_1$, i.e., $v_0^{-1}\mathbb{V}_h v_0 = \mathbb{V}_h^{\prime -}$. For this $v_0$ we have $\mathbb{K}_{V,V',h} = 1$. We can write $y' \in \mathbb{V}_h$ and $y'' \in \mathbb{V}'_h$ as 
\begin{align*}
y' &= y_1' y_2' && \text{where $y_1'\in \bU_h \cap \mathbb{V}_h, \, y_2' \in \bU_h^- \cap \mathbb{V}_h,$} \\
y'' &= y_1'' y_2'' && \text{where $y_1'' \in \bU_h^{\prime -} \cap \mathbb{V}_h',\, y_2'' \in \bU_h' \cap \mathbb{V}_h'$}.
\end{align*}
where $(t,t') \in \bT_h \times \bT_h'$ acts on $y_1',y_2'$ resp. $y_1'', y_2''$ by conjugation with $t$ resp. with $t'$.
Changing the variables $x F(y_1') \mapsto x$, $x'F(y_2'')^{-1} \mapsto x'$ we can rewrite $\widehat\Sigma_{V,V',v_0}$ as
\begin{align*}
\{(x,y'_1,y_2',\tau,y_1'',y_2'') \in F\bU_h \times (\bU_h \cap \mathbb{V}_h) \times (\bU_h^- \cap \mathbb{V}_h) \times \bT_h  {}&{} \times (\bU_h^{\prime -} \cap \mathbb{V}_h') \times (\bU_h' \cap \mathbb{V}_h') \colon \\ & xF(y'_2 \tau \dot v y''_1) \in y'_1y_2' \tau \dot v  y''_1y_2'' F\bU_h'\}.
\end{align*}

Let 
\[
H'_{v_0} = \{(t,t') \in \bT_h \times \bT_h' \colon F(t)t^{-1} = \dot v_0 t'F(t')^{-1} \dot v_0^{-1} \text{ centralizes $\bU_h^- \cap \mathbb{V}_h$ and $\dot v_0 (F\bU_h^{\prime -} \cap \mathbb{V}_h') \dot v_0^{-1}$} \}.
\]
Define an action of $H'_{v_0}$ on $\widehat\Sigma_{V,V',v_0}$ by 
\[
(x,y'_1,y_2',\tau,y_1'',y_2'') \overset{(t,t')}{\mapsto} (F(t)xF(t)^{-1},F(t)y'_1F(t)^{-1},ty_2't^{-1},t\tau \dot v_0 t^{\prime -1 }\dot v_0^{-1},t'y_1''t^{\prime -1},t'y_2''t^{\prime -1}).
\]
It extends the action of $T_h \times T_h'$. We have to show that it is well-defined, i.e., that if $(x,y'_1,y_2',\tau,y_1'',y_2'') \in \widehat\Sigma_{V,V',v_0}$, then the same holds for $(t,t').(x,y'_1,y_2',\tau,y_1'',y_2'')$. This reduces to show that
\[
xF(y_2')F(\tau)F(\dot v_0)F(y_1'') \in y_1'F(t)^{-1}t y_2' \tau \dot{v}_0 y_1''y_2'' F\bU_h' t^{\prime -1}F(t')
\]
Writing $y'' = y_1''y_2'' \in \mathbb{V}_h'$ as $y'' =: y''_3 y''_4$ with $y''_3 \in \mathbb{V}' \cap F\bU_h^{\prime -}$ and $y''_4 \in \mathbb{V}' \cap F\bU_h'$, it suffices to check that $F(t)t^{-1}$ commutes with $y_2' \in \bU_h^- \cap \mathbb{V}_h$ and that $t^{\prime -1}F(t') = \dot v_0^{-1} F(t)t^{-1} \dot v_0$ commutes with $y_3'' \in \mathbb{V}' \cap F\bU_h^{\prime -}$. This holds by definition of $H_{v_0}'$. We thus have proven the following lemma.
\begin{lm}\label{lm:extension_action_generic_cells_way_1}
The action of $T_h \times T_h'$ on $\widehat\Sigma_{V,V',v_0}$ extends to an action of the algebraic group $H_{v_0}'$ given by the above formula.
\end{lm}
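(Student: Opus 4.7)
The plan is to verify three things: first, that the displayed formula defines a group action of $H_{v_0}'$ on the ambient product $F\bU_h \times (\bU_h \cap \mathbb{V}_h) \times (\bU_h^- \cap \mathbb{V}_h) \times \bT_h \times (\bU_h^{\prime -} \cap \mathbb{V}_h') \times (\bU_h' \cap \mathbb{V}_h')$; second, that it restricts along $T_h \times T_h' \hookrightarrow H_{v_0}'$ to the original $T_h \times T_h'$-action; and third, that the closed subscheme $\widehat\Sigma_{V,V',v_0}$ is stable under it. The first two points should be routine: on each unipotent coordinate the action is by conjugation, while the $\bT_h$-coordinate is hit by two-sided multiplication via $t$ and $\dot v_0 t^{\prime -1}\dot v_0^{-1}$; and on $\FF_q$-rational points $F(t)=t$, $F(t')=t'$, so the formula specializes to \eqref{eq:ThTh_action_widehatSigma_general}.

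The real content lies in the third point. Starting from the defining relation $xF(y_2'\tau \dot v_0 y_1'') \in y_1'y_2'\tau \dot v_0 y_1''y_2'' F\bU_h'$, I plan to substitute the transformed coordinates and collect the many $t^{\pm 1}, t^{\prime \pm 1}$ that appear using the trivial cancellations $t^{-1}t = 1$, $\dot v_0^{-1}\dot v_0 = 1$ together with functoriality of $F$. An outer factor of $F(t)$ on the left and a common $y_1'$ should cancel from both sides, leaving an identity of the form
\[
s^{-1}\, y_2' \tau \dot v_0 y_1'' y_2'' \,F\bU_h' \;=\; y_2'\tau \dot v_0 y_1'' y_2'' \, s^{\prime -1}\, F\bU_h',
\]
where $s := t^{-1}F(t)$ and $s' := t^{\prime -1}F(t')$ are linked by $s = \dot v_0 s'\dot v_0^{-1}$ directly from the definition of $H_{v_0}'$.

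To finish, I would re-factor $y'' = y_1''y_2''$ along the alternative decomposition $\mathbb{V}_h' = (\mathbb{V}' \cap F\bU_h^{\prime -})(\mathbb{V}' \cap F\bU_h')$, writing $y'' = y_3''y_4''$. The piece $y_4''$ is then absorbed into the $F\bU_h'$-quotient on both sides, and the remaining condition reads: $s$ should centralize $y_2' \in \bU_h^- \cap \mathbb{V}_h$, and $s' = \dot v_0^{-1} s \dot v_0$ should centralize $y_3'' \in \mathbb{V}' \cap F\bU_h^{\prime -}$. These two commutation requirements are precisely those imposed in the definition of $H_{v_0}'$, closing the argument.

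The main obstacle is bookkeeping: correctly tracking where each conjugation lands, and choosing the alternative decomposition $y'' = y_3''y_4''$ so that the $F\bU_h'$-absorption becomes visible. No deeper input appears to be needed; the argument is essentially forced once one writes down the transformed equation and decides to measure everything modulo $F\bU_h'$.
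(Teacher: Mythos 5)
Your proposal is correct and follows essentially the same route as the paper: after substituting the transformed coordinates and cancelling, the paper likewise re-factors $y''=y_3''y_4''$ with $y_3''\in \mathbb{V}'\cap F\bU_h^{\prime-}$ and $y_4''\in \mathbb{V}'\cap F\bU_h'$, absorbs $y_4''$ into the $F\bU_h'$-coset, and reduces to the two centralizing conditions defining $H_{v_0}'$. The only (harmless) discrepancy is an inverse in your relation $s=\dot v_0 s'\dot v_0^{-1}$ versus the displayed definition of $H_{v_0}'$, which does not affect the commutation argument.
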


Now returning to the proof Theorem \ref{thm:general_irred}, we apply Lemma \ref{lm:extension_action_generic_cells_way_1} to our ${\bf G}$ (= inner form of $\bfGL_n$), the point ${\bf y} = {\bf x}_b$, the diagonal (elliptic unramified) torus ${\bf T} = {\bf T}'$ of ${\bf G}$, the subgroup ${\bf U} = {\bf U}'$ of unipotent upper triangular matrices and to ${\bf V} = {\bf U}$, ${\bf V}' = {\bf U}^-$, $v_0 = 1$, in which case $\bU_h^- \cap \mathbb{V}_h = 1$ and $\dot v_0 (F\bU_h^- \cap \mathbb{V}_h') \dot v_0^{-1}$ is contained in $\mathbb{L}_h$ for some proper Levi subgroup ${\bf L}$ of ${\bf G}_{\breve K}$, and hence the reductive part $H_{1,{\rm red}}^{\prime \circ}$ of the connected component of $H_1'$ is big enough in the sense of Lemma \ref{lm:regular_torus_in_general}. Note finally that $\Sigma_1 = \Sigma_{U,U,1}$ is a closed subscheme of $\Sigma_{U,U^-,1}$, (in fact, on $\Sigma_{U,U^-,1}$, $y$ varies in $\bT_h \bU_h\bU_h^-$ and $\Sigma_1$ is given by the closed condition $y \in \bT_h \bU_h\bU_h^{-,1}$). Let $\widetilde \Sigma_1$ denote the pullback of $\Sigma_1$ along $\widehat \Sigma_{U,U^-,1} \rightarrow \Sigma_{U,U^-,1}$. It has the same alternating sum of cohomology as $\Sigma_1$, and it is clearly stable under the action of $H_1'$. Thus, exploiting Proposition \ref{prop:torus_invariants} in the last equality, we deduce 
$H_c^\ast(\widehat\Sigma_1) = H_c^\ast(\Sigma_1) = H_c^\ast(\widetilde\Sigma_1) = H_c^\ast(\widetilde\Sigma_1^{H_{1,{\rm red}}^{\prime \circ}})$ (and the same for $\theta^{-1} \otimes \theta'$-isotypic parts), hence verifying \eqref{eq:coh_of_Sigma_w} in the only remaining case $w=1$. This completes the proof of Theorem \ref{thm:general_irred}.

\section{A variation of the Mackey formula}\label{sec:VariationMackey}

We work with exactly the same setup and notation as in Section \ref{sec:Mackey} (and in particular Section \ref{sec:Step1}). Recall the presentation \eqref{eq:Xh_into_GGh} of $X_h$. Then 
\[
X_{h,n'} = \{g \in \bG_h \colon g^{-1}F(g) \in \bU_h^{-,1} \cap F\bU_h^1 \},
\]
is a closed perfect subscheme of $X_h$, stable under the action of $G_h\times T_h$. 
In fact, $X_h$ has a stratification in locally-closed pieces \cite{CI_DrinfeldStrat} indexed by divisors $r$ of $n'$, and $X_{h,n'}$ is precisely the closed stratum.

\begin{thm}\label{thm:relation_Xh_Xhn}
Let $\theta \colon T_h \rightarrow \overline{\QQ}_\ell^\times$ be a character. Assume that $p > n$, and that $\theta|_{T_h^1}$ has trivial stabilizer in $W_{\cO}^F$. Then 
\begin{equation}\tag{a}\label{eq:relation_Xh_Xhn_a}
\left\langle R_{T_h}^{G_h}(\theta), H_c^\ast(X_{h,n'})_\theta \right\rangle_{G_h} = 1
\end{equation}
and
\begin{equation}\tag{b}\label{eq:relation_Xh_Xhn_b}
\Big\langle H_c^\ast(X_{h,n'})_\theta, H_c^\ast(X_{h,n'})_\theta \Big\rangle_{G_h} = 1.
\end{equation}
\end{thm}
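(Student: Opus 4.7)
The plan is to run the same Mackey-type analysis of Section \ref{sec:Mackey} with $X_h$ replaced by $X_{h,n'}$ in one factor (for (a)) or both factors (for (b)). Concretely, setting $S_{h,n'} := \{g \in \bG_h : g^{-1}F(g) \in F\bU_h^1\}$ (whose projection to $X_{h,n'}$ has affine fibers in the same way that $S_h \to X_h$ does in Section \ref{sec:Step1}), we are led to the perfect schemes
\begin{align*}
\Sigma^{(a)} &= \{(x,x',y) \in F\bU_h \times F\bU_h^1 \times \bG_h : xF(y) = yx'\}, \\
\Sigma^{(b)} &= \{(x,x',y) \in F\bU_h^1 \times F\bU_h^1 \times \bG_h : xF(y) = yx'\},
\end{align*}
which via the identification \eqref{eq:XhXh_Sigma} compute the desired inner products. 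Stratifying using the Bruhat decomposition $\bG_h = \coprod_{w \in W_\cO} \bG_{h,w}$ and unfolding as in Section \ref{sec:Step1} produces pieces $\widehat\Sigma_w^{(a)}$ and $\widehat\Sigma_w^{(b)}$, and the task reduces to a variant of \eqref{eq:coh_of_Sigma_w}: each $w$ should contribute $0$ to the $(\theta^{-1},\theta)$-isotypic cohomology unless $w = 1$, in which case it contributes $1$.

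Three of the four types of $w$ should handle themselves with only cosmetic changes. First, Lemma \ref{lm:empty_Sigma_w} only used freedom in $x \in F\bU_h$ (and applies a fortiori to $x \in F\bU_h^1$), so it kills $\widehat\Sigma_w^{(a)}$ and $\widehat\Sigma_w^{(b)}$ for any $w$ with some $2 \le i \le n$ satisfying $[w(i)] > [w(i-1)+1] > 1$. Second, for $1 \neq w \notin W_\cO^F$ surviving this test, the extension-of-action argument of Section \ref{sec:Step3} carries over essentially unchanged: conjugation by $\bT_h$ preserves both $F\bU_h$ and $F\bU_h^1$, so $H_w$ still acts on $\widehat\Sigma_w^{(a,b)}$ by the same formulas, and Lemmas \ref{lm:big_centralizer}--\ref{lm:regular_torus_in_general} give the required vanishing. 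Third, for $w = 1$, the generic-Bruhat-cell extension of Section \ref{sec:Step4} (with $\bV = \bU$, $\bV' = \bU^-$, $v_0 = 1$) applies as written; the fixed-point locus of the enlarged torus $H_1'$ reduces to $\{(1,1,\tau,1,1) : F(\tau) = \tau\} \cong T_h$ with action $(t,t')\cdot\tau = t\tau t'^{-1}$, whose $(\theta^{-1},\theta)$-isotypic cohomology is $1$.

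The genuinely new input is handling $w \in W_\cO^F \setminus \{1\}$: such $w$ are neither killed by Lemma \ref{lm:empty_Sigma_w} nor by the Lusztig torus $H_w$, and in Theorem \ref{thm:general_irred} they could contribute whenever $\theta \circ {\rm ad}(w) = \theta$. The plan is to exploit the restriction $x' \in F\bU_h^1$, which forces $x'$ into a deeper Moy--Prasad stratum, so that the effective character seen by a suitably enlarged torus is only $\theta|_{T_h^1}$; the hypothesis that $\theta|_{T_h^1}$ has trivial stabilizer in $W_\cO^F$ then yields vanishing. Concretely, one should construct a torus containing $T_h \times T_h$ but no longer preserving the level-$0$ component of $x'$; its fixed-point locus will constrain $\tau$ to lie in $T_h^1$, after which a Mackey-type computation reduces to a self-pairing of $\theta|_{T_h^1}$ that vanishes by hypothesis. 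The assumption $p > n$ enters here to ensure that the Moy--Prasad graded pieces $F\bU_h^k/F\bU_h^{k+1}$ split $\bT_h$-equivariantly into one-dimensional root spaces, allowing a Lemma \ref{lm:regular_torus_in_general}-style analysis at each level. Producing this enlarged torus and verifying its fixed-point behaviour is the main technical obstacle, and is what distinguishes this section from the Mackey formula of Section \ref{sec:Mackey}.
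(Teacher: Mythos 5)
Your setup (the schemes $\Sigma^{(a)}$, $\Sigma^{(b)}$, the Bruhat stratification, the appeal to Lemma \ref{lm:empty_Sigma_w}, and the $w=1$ case via the generic-cell extension of Section \ref{sec:Step4}) matches the paper. But the claim that for $1\neq w\notin W_\cO^F$ ``the extension-of-action argument of Section \ref{sec:Step3} carries over essentially unchanged'' is where the proof breaks, and it also mislocates the genuine difficulty. The extended $H_w$-action of Section \ref{sec:Step3} lives on the presentation \eqref{eq:some_presentation_of_Sigma_w}, which is obtained by absorbing $x'$ and $F(y_2)$ into a single coset $y_1\tau\dot w z y_2\, F\bU_h$; this uses $F\bU_h\cdot F(y_2)^{-1}=F\bU_h$. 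Once you impose $x'\in F\bU_h^1$, the corresponding coset becomes $y_1\tau\dot w z y_2\, F(\bU_h^1 y_2^{-1})$, which genuinely depends on $y_2$, and the $H_w$-action formula no longer preserves the defining condition: one needs $\bU_h^1\, F(t')y_2^{-1}F(t')^{-1}=\bU_h^1\, y_2^{-1}$, which fails for general $t'\in\bT_h$ (it amounts to the image of $F(t')$ in $\bT_1$ commuting with the image of $y_2$ in $\bU_1$). Note also that the difficulty is not special to $w\in W_\cO^F\setminus\{1\}$: under the theorem's hypothesis $\theta$ itself has trivial $W_\cO^F$-stabilizer, so if the $H_w$-argument applied it would dispose of those $w$ as well; the point is that it applies to no $w\neq 1$.

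The paper's fix is to shrink to $H_w^1\subseteq \bT_h^1\times\bT_h^1$ (for which the two conjugation identities above do hold), after only the change of variables $xF(y_1)\mapsto x$. But $H_w^1$ has trivial image in $\bT_1\times\bT_1$, so the fixed-point localization via Lemma \ref{lm:regular_torus_in_general} yields nothing; instead one proves (Lemma \ref{lm:Hw1_is_big_enough}) that the connected group $H_w^{1,\circ}$ contains rational points $(t,t')$ with $t_1^{-1}t_1'$ sweeping out $\ker(\N_{n/n_0 s})$ for some divisor $s<n'$, so that $\theta^{-1}\otimes\theta$ is nontrivial on a rational point of a connected group acting on $\widehat\Sigma_{(1,n),w}$, forcing the isotypic cohomology to vanish. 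This is exactly where the hypotheses enter: $p>n$ is used in Lemmas \ref{lm:image_of_a_norm_map} and \ref{lm:connectedness_Euclidean_alg} (a Euclidean-algorithm connectedness argument) to show $H_w^{1,\circ}\cap(T_h^1\times T_h^1)$ is large enough --- not to split Moy--Prasad graded pieces --- and the condition is placed on $\theta|_{T_h^1}$ precisely because only level-$\geq 1$ torus elements act. Your sketch for the ``new case'' (an enlarged torus whose fixed points constrain $\tau\in T_h^1$, followed by a self-pairing of $\theta|_{T_h^1}$) is not a workable substitute for this mechanism. Finally, for \eqref{eq:relation_Xh_Xhn_b} you also miss that for $w\neq1$ even the change of variables $xF(y_1)\mapsto x$ is unavailable (since $x\in F\bU_h^1$ but $F(y_1)\in F\bU_h$), which forces a further modification of the action formula, and that for $w=1$ one must first check that $y_1\in\bU_h^1$ automatically.
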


We prove Theorem \ref{thm:relation_Xh_Xhn} in Sections \ref{sec:proof_of_thm_XhXhn_a_multiplicative}-\ref{sec:proof_of_thm_XhXhn_b}. From Theorems \ref{thm:general_irred} and \ref{thm:relation_Xh_Xhn} we deduce:

\begin{cor}
Under the assumptions of Theorem \ref{thm:relation_Xh_Xhn}, $H_c^\ast(X_{h,n'})_\theta$ is up to sign an irreducible representation of $G_h$, and $H_c^\ast(X_{h,n'})_\theta \cong R_{T_h}^{G_h}(\theta)$.
\end{cor}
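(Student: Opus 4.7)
The plan is to deduce the corollary from Theorems \ref{thm:general_irred} and \ref{thm:relation_Xh_Xhn} by a purely formal argument in the Grothendieck ring of virtual $G_h$-representations; the two theorems have done all the real work.

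First, I would use Theorem \ref{thm:relation_Xh_Xhn}(b) to observe that the virtual $G_h$-representation $V := H_c^\ast(X_{h,n'})_\theta$ satisfies $\langle V, V \rangle_{G_h} = 1$. Writing $V = \sum_i n_i \rho_i$ as an integral combination of pairwise distinct irreducibles $\rho_i$, orthonormality of the $\rho_i$ in the virtual character ring gives $\sum_i n_i^2 = 1$, so exactly one coefficient is $\pm 1$ and all others vanish. Hence $V = \pm \rho$ for a unique irreducible $G_h$-representation $\rho$, which is the first assertion.

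Next, to identify $V$ with $R_{T_h}^{G_h}(\theta)$, I would invoke Corollary \ref{cor:irreducibility}. The stabilizer of $\theta$ in $\Gal(L/K)[n'] \cong W_\cO^F$ is contained in the stabilizer of $\theta|_{T_h^1}$, which is trivial by assumption, so the corollary applies and $R_{T_h}^{G_h}(\theta) = \epsilon' \rho'$ for some $\epsilon' \in \{\pm 1\}$ and an irreducible $\rho'$. Writing $V = \epsilon \rho$ as above and plugging both expressions into Theorem \ref{thm:relation_Xh_Xhn}(a) yields $\epsilon \epsilon' \langle \rho', \rho \rangle_{G_h} = 1$. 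Since $\rho, \rho'$ are irreducible, $\langle \rho', \rho \rangle_{G_h} \in \{0,1\}$; the equation therefore forces $\rho \cong \rho'$ and $\epsilon = \epsilon'$. Consequently $V \cong R_{T_h}^{G_h}(\theta)$ as virtual $G_h$-representations, completing the proof.

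There is no genuine obstacle once Theorems \ref{thm:general_irred} and \ref{thm:relation_Xh_Xhn} are granted. The only point requiring a moment of care is the sign bookkeeping in the final step: one needs the inner product in Theorem \ref{thm:relation_Xh_Xhn}(a) to be $+1$ rather than merely $\pm 1$, so that the two independent signs arising from "$\pm R_{T_h}^{G_h}(\theta)$" and "$\pm H_c^\ast(X_{h,n'})_\theta$" are forced to agree, upgrading the matching up to sign to an honest isomorphism of virtual representations.
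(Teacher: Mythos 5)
Your argument is correct and is exactly the intended deduction: the paper states the corollary as an immediate consequence of Theorems \ref{thm:general_irred} and \ref{thm:relation_Xh_Xhn} without writing out the formal bookkeeping, and your use of $\langle V,V\rangle_{G_h}=1$ to get irreducibility up to sign, Corollary \ref{cor:irreducibility} (whose hypothesis you correctly verify from the assumption on $\theta|_{T_h^1}$) for $R_{T_h}^{G_h}(\theta)$, and part (a) to force both the matching of the irreducible constituents and the agreement of signs is precisely the argument the authors have in mind. (The ``$X_{h,n}$'' in the statement is the paper's notational slip for $X_{h,n'}$, which you handle correctly.)
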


In the proof of Theorem \ref{thm:relation_Xh_Xhn} we use the following well-known fact.

\begin{prop}\label{prop:additive_extension}
Let $H$ be a connected algebraic group acting on (the perfection of) a scheme $Y$, separated and of finite type of $\overline{\bF}_q$. Then each $h \in H(\overline{\bF}_q)$ acts trivially in $H^i_c(Y,\overline{\bQ}_\ell)$ for each $i \in \bZ$.
\end{prop}
\begin{proof} See \cite[6.5]{DeligneL_76}.
\end{proof}

\begin{rem}\label{rem:thm5.1}
The proof of Theorem \ref{thm:relation_Xh_Xhn} follows the same pattern as the proof of Theorem \ref{thm:general_irred}, cf. Remark \ref{rem:two_principles}. Instead of $\widehat\Sigma_w$ one has similar schemes $\widehat\Sigma_{(1,n),w}$. Cases analogous to (a) and (c) of step (4) in Remark \ref{rem:two_principles} are handled very similar to the proof of Theorem \ref{thm:general_irred}. However, those $w$ falling into case (b) require separate treatement. Here we only can extend the action of $T_h^1 \times T_h^1$ on $\widehat\Sigma_{(1,n),w}$ to a commutative \emph{unipotent} group $H_w^1$. Then $H_{w,{\rm red}}^\circ = 1$ and Proposition \ref{prop:torus_invariants} is not applicable. Instead we use Proposition \ref{prop:additive_extension}, which is more delicate.
\end{rem}


\subsection{Proof Theorem \ref{thm:relation_Xh_Xhn}\eqref{eq:relation_Xh_Xhn_a}: multiplicative extension}\label{sec:proof_of_thm_XhXhn_a_multiplicative}
Parts of the proof follows along the same lines as the proof of Theorem \ref{thm:general_irred}, thus we will be slightly sketchy below. Similar as in \cite[Lemma 7.12]{CI_ADLV} we have an isomorphism
\[
(\bU_h^1 \cap F\bU_h^1) \times (\bU_h^{-,1} \cap F\bU_h^1) \rightarrow F\bU_h^1, \quad (g,x) \mapsto g^{-1}xF(g).
\]
Thus we have $G_h \times T_h$-equivariantly $X_{h,n'} \cong S_{h,n'}/(\bU_h^1 \cap F\bU_h^1)$, where
\begin{align*}
S_{h,n'} = \{ g \in \bG_h \colon g^{-1}F(g) \in F\bU_h^1 \}
\end{align*}
and $G_h \times T_h$ acts on $S_{h,n'}$ by $g,t \colon x \mapsto gxt$, and $(\bU_h^1 \cap F\bU_h^1)$ by right multiplication. Hence $H_c^\ast(X_{h,n'})_\theta \cong H_c^\ast(S_{h,n'})_\theta$. Using Lang's theorem, we have a $T_h \times T_h$-equivariant isomorphism
\[
G_h \backslash (S_h \times S_{h,n'}) \stackrel{\sim}{\rightarrow} \Sigma_{(1,n)} := \{(x,x',y) \in F\bU_h \times F\bU_h^1 \times \bG_h \colon xF(y) = yx' \}
\]
where $T_h \times T_h$ acts on $\Sigma_{(1,n)}$ by $(t,t') \colon (x,x',y) \mapsto (txt^{-1},t'x't^{\prime -1},tyt^{\prime -1})$. For $w \in W_\cO$ let $\Sigma_{(1,n),w} = \{(x,x',y) \in \Sigma_{(1,n)} \colon y \in \bG_{h,w}\}$ (it is an $T_h \times T_h$-stable locally closed perfect subscheme) and putting $\bK_h = \bU_h \cap \dot w^{-1}\bU_h^- \dot w$, we let 
\[
\widehat\Sigma_{(1,n),w} = \{(x,y_1,\tau,z,y_2) \in F\bU_h \times \bU_h \times \bT_h \times \mathbb{K}_h^1 \times \bU_h \colon xF(y_1\tau \dot w z y_2) \in y_1\tau \dot w z y_2F\bU_h^1 \}
\]
be the Zariski-locally trivial covering of $\Sigma_{(1,n),w}$ with $T_h \times T_h$-action given by the same formula as in \eqref{eq:ThTh_action_widehatSigma}. As in Section \ref{sec:Step1}, we have $\langle R_{T_h}^{G_h}(\theta), H_c^\ast(X_{h,n'})_\theta \rangle_{G_h} = \sum_{w \in W_\cO} \dim H_c^\ast(\widehat\Sigma_{(1,n),w})_{\theta^{-1},\theta}$. We claim that
\begin{equation}\label{eq:dimension_Sigma0nw}
\dim H_c^\ast(\widehat\Sigma_{(1,n),w})_{\theta^{-1},\theta} = \begin{cases} 1 & \text{if $w = 1$}, \\ 0 & \text{otherwise,} \end{cases}
\end{equation}
which implies the first formula of Theorem \ref{thm:relation_Xh_Xhn}. First assume $w$ satisfies the condition in Lemma \ref{lm:empty_Sigma_w}. Then $\widehat\Sigma_{(1,n),w} \subseteq \widehat\Sigma_w = \varnothing$ and we are done. 
Now assume that $w=1$. Then $\bG_{h,1} = \bU_h \cdot \bT_h \cdot \bU_h^{-,1}$, so
\[
\widetilde\Sigma_{(1,n),1} = \{(x,x',y_1,\tau,z) \in F\bU_h \times F\bU_h^1 \times \bU_h \times \bT_h \times \bU_h^{-,1} \colon xF(y_1\tau z) \in y_1\tau z x'\}
\]
is another a Zariski-locally trivial covering of $\Sigma_{(1,n),1}$ (with obvious $T_h \times T_h$-action), so that $H_c^\ast(\widehat\Sigma_{(1,n),1})_{\theta^{-1},\theta}) = H_c^\ast(\widetilde\Sigma_{(1,n),1})_{\theta^{-1},\theta})$, and we can replace $\widehat\Sigma_{(1,n),1}$ by $\widetilde\Sigma_{(1,n),1}$. We can uniquely write $z = z_1z_2$ with $z_1 \in \bU_h^{-,1} \cap F\bU_h^{-,1}$ and $z_2 \in \bU_h^{-,1} \cap F\bU_h^1$ and make the change of variables $xF(y_1) \mapsto x$, $z_2x' \mapsto x'$ (note that the latter works because $z_2 \in F\bU_h^1$!), so that $\widetilde \Sigma_{(1,n),1}$ is isomorphic to
\[
\{(x,y_1,\tau,z_1,z_2) \in F\bU_h \times \bU_h \times \bT_h \times (\bU_h^{-,1} \cap F\bU_h^{-,1}) \times (\bU_h^{-,1} \cap F\bU_h^1) \colon xF(\tau z_1 z_2) \in y_1\tau z_1 F\bU_h^1\}.
\]
The $T_h \times T_h$-action on $\widetilde\Sigma_{(1,n),1}$ is given by 
\[
(t,t') \colon (x,y_1,\tau,z_1,z_2) \mapsto (txt^{-1}, ty_1t^{-1}, t\tau t^{\prime -1}, t'z_1t^{\prime -1}, t'z_2t^{\prime -1}).
\]
Let 
\[
H_1 := \{ (t,t') \in \bT_h \times \bT_h \colon t^{-1}F(t) = t^{\prime -1}F(t')\, \text{ centralizes $\bU_h \cap F\bU_h^-$} \}.
\]
As in Sections \ref{sec:Step3} and \ref{sec:Step4}, one can check that $H_1$ acts on $\widetilde\Sigma_{(1,n),1}$ by 
\[(t,t') \colon (x,y_1,\tau,z_1,z_2) \mapsto (F(t)xF(t)^{-1}, F(t)y_1F(t)^{-1}, t\tau t^{\prime -1}, t'z_1t^{\prime -1}, t'z_2t^{\prime -1}) \]
(and this action extends the action of $T_h \times T_h$). Since $\bU_h \cap F\bU_h^-$ is contained in the subgroup of $\bG_h$ attached to a proper rational Levi subgroup $L \subseteq {\bf G}_{\breve K}$, it follows that the connected component $H_{1,{\rm red}}^\circ$ of the reductive part of $H$ is big enough (in the sense of Lemma \ref{lm:regular_torus_in_general}), so that we deduce $\dim H_c^\ast(\widetilde\Sigma_{(1,n),1})_{\theta^{-1},\theta} = 1$, and hence \eqref{eq:dimension_Sigma0nw} for $w = 1$ (this is the same argument as at the end of Section \ref{sec:Step4}).

\subsection{Proof Theorem \ref{thm:relation_Xh_Xhn}\eqref{eq:relation_Xh_Xhn_a}: additive extension}\label{sec:proof_of_thm_XhXhn_a_additive}
It remains to show \eqref{eq:dimension_Sigma0nw} for $1 \neq w \in W_\cO$ not satisfying the condition from Lemma \ref{lm:empty_Sigma_w}. Assume $w$ is such an element. Let 
\[
H_w^1 := \{(t,t') \in \bT_h^1 \times \bT_h^1 \colon \dot{w}^{-1}t^{-1}F(t)\dot{w} = t^{\prime -1}F(t')\, \text{centralizes $\mathbb{K}_h^1$} \}.
\]
In $\widehat\Sigma_{(1,n),w}$ make the change of variables $xF(y_1) \mapsto x$, so that 
\[
\widehat\Sigma_{(1,n),w} = \{(x,y_1,\tau,z,y_2) \in F\bU_h \times \bU_h \times \bT_h \times \mathbb{K}_h^1 \times \bU_h \colon xF(\tau \dot w z) \in y_1\tau \dot w z y_2F(\bU_h^1 y_2^{-1}) \}
\]
with $T_h \times T_h$-action given by the same formula as in \eqref{eq:ThTh_action_widehatSigma}. Now
\[
(t,t') \colon (x,y_1,\tau,z,y_2) \mapsto (F(t)xF(t)^{-1}, F(t)y_1F(t)^{-1}, t\tau \dot{w}t^{\prime -1} \dot{w}^{-1}, t' z t^{\prime -1}, F(t')y_2F(t')^{-1})
\]
defines an action of $H_w^1$ on $\widehat\Sigma_{(1,n),w}$. In order to check this we have to show that if $(t,t') \in H_w^1$ and $(x,y_1,\tau,z,y_2) \in \widehat\Sigma_{(1,n),w}$, then also $(t,t').(x,y_1,\tau,z,y_2) \in \widehat\Sigma_{(1,n),w}$. After elementary cancellations this reduces to show that
\[
xF(\tau \dot w zt^{\prime -1}) \in y_1F(t)^{-1}t \tau \dot w z t^{\prime -1}F(t') y_2 F(t')^{-1} F(\bU_h^1 F(t')y_2^{-1}F(t')^{-1})
\]
But as $t' \in \bT_h^1$, we have $\bU_h^1 F(t') y_2^{-1} F(t')^{-1} = \bU_h^1 y_2^{-1}$, so this reduces to show that
\[
xF(\tau \dot w z) \in y_1F(t)^{-1}t \tau \dot w z t^{\prime -1}F(t') y_2 F(t^{\prime -1}\bU_h^1 y_2^{-1} t').
\]
Again, using $t' \in \bT_h^1$, we deduce that $t^{\prime -1}\bU_h^1 y_2^{-1} t' = \bU_h^1 y_2^{-1}$, so $(t,t').(x,y_1,\tau,z,y_2) \in H_w^1$.

Via the isomorphism $T_h \stackrel{\sim}{\rightarrow} U_L/U_L^h$ mapping a diagonal matrix $t = (t_i)_{i=1}^n$ to its upper left entry $t_1$, we identify $T_h$ with $U_L/U_L^h$ and $T_h^1$ with $U_L^1/U_L^h$. By Lemma \ref{lm:factor_through_norm_equiv_cond} (and the discussion in Section \ref{sec:case_bequalw}), the condition that $\theta|_{T_h^1}$ has trivial stabilizer in $W_\cO^F = \langle w_1^{n_0} \rangle$ translates to the condition that the restriction of $\theta$ to $U_L^1/U_L^h$ does not factor through any of the norm maps $N_{n/n_0s} \colon U_L^1/U_L^h \twoheadrightarrow U_{K_{n_0s}}^1/U_{K_{n_0 s}}^h$, where $1\leq s < n'$ goes through all divisors of $n'$. Let $H_w^{1,\circ}$ be the connected component of $H_w^1$.
\begin{lm}\label{lm:Hw1_is_big_enough}
If $(t,t')$ varies through $(T_h^1 \times T_h^1) \cap H_w^{1,\circ}$, then $t_1^{-1}t_1'$ varies (at least) through all elements of ${\rm ker}({\rm N}_{n/n_0s})$ for some divisor $1\leq s < n'$ of $n'$ ($s$ depends on $w$).
\end{lm}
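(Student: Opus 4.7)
The plan is to exploit the block structure of $w$ provided by Lemma~\ref{lm:big_centralizer}, identify the image $\mu(H_w^{1,\circ})$ with the preimage under the Lang map of an explicit connected subgroup, and then verify that its $F$-fixed points, viewed inside $T_h^1\cong U_L^1/U_L^h$, contain $\ker(N_{n/n_0 s})$ for a suitable divisor $s\mid n'$ with $s<n'$.

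First, I apply Lemma~\ref{lm:big_centralizer} to $w$. Since $w\neq 1$ and $w$ does not satisfy the hypothesis of Lemma~\ref{lm:empty_Sigma_w}, there is a proper Levi $L\supsetneq\mathbf T_{\breve K}$ of $\mathbf G_{\breve K}$ with $\mathbb K_h\subseteq\mathbb L_h$. The associated partition $\{1,\ldots,n\} = B_1\sqcup\cdots\sqcup B_{s_0}$ into consecutive intervals has $s_0\ge 2$ blocks, and since $w\in W_\cO$ permutes residues modulo $n_0$, each $|B_j|$ is a multiple of $n_0$: write $|B_j| = n_0 r_j$ with integers $r_j\ge 1$ summing to $n'$. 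Consequently the centralizer $C:=\mathrm{Cent}_{\bT_h}(\mathbb K_h^1)$ contains $Z(\mathbb L_h)\cap\bT_h^1$, a non-trivial connected subtorus. Write $\phi\colon\bT_h^1\to\bT_h^1$, $x\mapsto x^{-1}F(x)$, for the Lang map, a surjective homomorphism of connected pro-unipotent perfect groups with finite kernel $T_h^1$.

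Second, I identify $\mu(H_w^{1,\circ})$ by a short computation. Using commutativity of $\bT_h$, for $(t,t')\in H_w^1$ with $u:=t^{-1}t'$, the defining relation $\phi_w(t)=\phi(t')$ gives
\[
\phi(u) \;=\; \phi(t')\cdot\phi(t)^{-1} \;=\; c\cdot(\dot w c\dot w^{-1})^{-1}, \qquad c:=\phi(t')\in C\cap\bT_h^1,
\]
using $\phi(t)=\dot w c\dot w^{-1}$. Conversely any $u\in\bT_h^1$ with $\phi(u)\in D:=\{c\cdot(\dot w c\dot w^{-1})^{-1}:c\in C\cap\bT_h^1\}$ arises this way (lift $c$ via Lang to get $t'$, then $t$). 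Thus $\mu(H_w^1)=\phi^{-1}(D)$; since $D$ is a connected pro-unipotent subgroup of $\bT_h^1$, the étale cover $\phi^{-1}(D)\to D$ has trivial torsor (connected unipotent groups being simply connected in étale sense), so the identity component $\mu(H_w^{1,\circ})$ maps under $\phi$ isomorphically to $D$. Taking $F$-fixed points and applying Lang to the connected kernel of $\mu|_{H_w^{1,\circ}}$ produces a surjection $(H_w^{1,\circ})^F\twoheadrightarrow\mu(H_w^{1,\circ})^F\subseteq T_h^1\cong U_L^1/U_L^h$, whose image I must now show contains $\ker(N_{n/n_0 s})$ for some $s\mid n'$, $s<n'$.

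Third, I produce enough $F$-fixed elements via an explicit cocharacter lift. Consider $\chi\colon\bW_h^\times\hookrightarrow C$ sending $X$ to the diagonal with $X$ in the positions of $B_1$ and $1$ elsewhere. Lang applied to the connected base $\bW_h^\times$ lifts $X\mapsto\chi(X)$ to a morphism $X\mapsto(t(X),t'(X))\in H_w^{1,\circ}$ with $(t(1),t'(1))=(1,1)$. Writing the Lang recursion $u_i = \sigma(u_{i-1})v_i^{-1}$ with $v_i:=\chi(X)_i/\chi(X)_{w^{-1}(i)}\in\{X,X^{-1},1\}$ along the cyclic diagonal, the first-coordinate consistency condition becomes
\[
\sigma^n(u_1)\,u_1^{-1} \;=\; X^{E(\sigma)} \qquad\text{in } U_{\breve K}^1/U_{\breve K}^h,
\]
for an explicit polynomial $E(\sigma)\in\bZ[\sigma]/(\sigma^n-1)$ whose coefficients encode the overlap pattern of $B_1$ and $w(B_1)$. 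A combinatorial check (using that $|B_1|$ is a multiple of $n_0$ and that $w$ shifts $B_1$ to $\{n-|B_1|+1,\ldots,n\}$) shows $E(\sigma)$ is divisible by $(1-\sigma^{n_0 s})$ for a specific divisor $s\mid n'$ with $s<n'$. By Hilbert~90 for the unramified cyclic extension $L/K_{n_0 s}$, as $X$ varies through $U_L^1/U_L^h$ the values $u_1(X)$ sweep out all of $\ker(N_{n/n_0 s})$, proving the lemma. The main obstacle is this last combinatorial step: verifying divisibility of $E(\sigma)$ by $(1-\sigma^{n_0 s})$ for a proper divisor $s$ of $n'$ and surjectivity onto the full kernel, both of which reduce to careful $\sigma$-polynomial arithmetic modulo $\sigma^n-1$ driven by the partition data.
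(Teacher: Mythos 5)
Your outline---invoke Lemma \ref{lm:big_centralizer}, push a cocharacter into the centralizer of $\mathbb{K}_h^1$, reduce the defining relation of $H_w^1$ to a Lang-type equation $\sigma^n(u_1)u_1^{-1} = (\text{norm of }X)$, and finish with Hilbert 90 and a norm-kernel computation---is essentially the paper's. But there is a genuine gap at the decisive step. You assert that ``connected unipotent groups are simply connected in the \'etale sense'' and deduce that $\phi^{-1}(D)\to D$ is a split torsor, so that the identity component maps isomorphically onto $D$. This is false in characteristic $p$: $\bG_a$ admits the Artin--Schreier covers $x\mapsto x^p-x$, and the Lang map $\phi\colon \bT_h^1\to\bT_h^1$ is itself a \emph{connected} \'etale cover of degree $\#T_h^1$. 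Worse, the assertion would work against you: every $u\in T_h^1$ satisfies $\phi(u)=1\in D$, so all $\#T_h^1$ rational points lie in $\phi^{-1}(D)$; if that cover split, they would distribute one per component and the identity component would contain only $u=1$, yielding nothing. What the lemma actually requires is the opposite extreme---that the relevant Lang preimage be \emph{connected}, so that \emph{all} of its rational points lie in $H_w^{1,\circ}$. The same problem recurs in your third step: the claimed morphism $X\mapsto(t(X),t'(X))$ is a section of this cover and exists only if the cover splits, and even granting your equation $\sigma^n(u_1)u_1^{-1}=X^{E(\sigma)}$, the surjectivity of $u_1$ onto $\ker(\N_{n/n_0s})$ does not follow from Hilbert 90 alone without knowing which solutions sit in the identity component.

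This connectedness is precisely where the hypothesis $p>n$ enters and is the technical heart of the paper's proof: after restricting to $H^1_{w,\chi}$ for the cocharacter $\chi=(1_m,0_{n-m})$ with $m=w^{-1}(n)$ (divisible by $n_0$) and setting $g=\gcd(n,m)$, one identifies $H^{1,\circ}_{w,\chi}$ with the scheme $\{(x,t_1)\colon \Nm_{n/g}(\sigma(t_1)t_1^{-1})=\Nm_{m/g}(x)\}$ by proving in Lemmas \ref{lm:connectedness_Euclidean_alg} and \ref{lm:Euclidean_algorithm_performed} that its level-by-level fibers are affine lines; the proof runs the Euclidean algorithm on Artin--Schreier-type curves, and $p>n$ guarantees that the integer coefficients produced along the way stay invertible. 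Only then does setting $x=1$ and applying Lemma \ref{lm:image_of_a_norm_map} (which itself needs $p\nmid n$) give that $t_1^{-1}t_1'=\sigma^{n-m}(t_1)t_1^{-1}$ sweeps out all of $\ker(\N_{n/g})$ with $g=n_0s$, $s<n'$. Your unverified combinatorial claim about divisibility of $E(\sigma)$ by $1-\sigma^{n_0s}$ is a secondary issue; the missing idea is the connectedness argument replacing the false simple-connectedness principle.
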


Before proving this lemma, we use it to finish the proof of Theorem \ref{thm:relation_Xh_Xhn}\eqref{eq:relation_Xh_Xhn_a}. Indeed, by assumption on $\theta$ for each divisor $s<n'$ of $n'$ there is an element $x = x_s \in \ker\N_{n/n_0s} \subseteq U_L^1/U_L^h$ such that $\theta(x_s) \neq 1$. By Lemma \ref{lm:Hw1_is_big_enough} we can find a divisor $s< n'$ of $n'$ and an element $(t,t') \in (T_h^1 \times T_h^1) \cap H_w^{1,\circ}$ such that $t_1^{-1}t_1' = x_s$, and hence $\theta(t_1^{-1}t_1') \neq 1$. Seeing $\theta$ as a character of $T_h^1$ again, this simply means that $\theta(t) \neq \theta(t')$, and it follows that the $T_h \times T_h$-character $\theta^{-1} \otimes \theta$ is non-trivial on $(T_h^1 \times T_h^1) \cap H_w^{1,\circ}$. 
By Proposition \ref{prop:additive_extension} we thus deduce $H_c^i(\widehat\Sigma_{(1,n),w})_{\theta^{-1},\theta} = 0$  for each $i\geq 0$, which shows claim \eqref{eq:dimension_Sigma0nw} for all remaining elements $w$, and hence also Theorem \ref{thm:relation_Xh_Xhn}\eqref{eq:relation_Xh_Xhn_a}.

\begin{rem}
The basic idea in the above arguments is the same as in \cite[Lemma 6.7]{DeligneL_76}. This gives hope to generalize them to a far more general setup (e.g.\ all unramified maximal tori in all reductive groups). 
\end{rem}

Towards the proof of Lemma \ref{lm:Hw1_is_big_enough}, for positive integers $s, r$ such that $s$ divides $r$, we define morphisms of perfect $\FF_q$-schemes
\[
\Nm_{r/s} \colon \bW_h^{\times,1} \rightarrow \bW_h^{\times,1} \quad x \mapsto \Nm_{r/s}(x) := \prod_{i=0}^{\frac{r}{s}-1}\sigma^s(x).
\]

\begin{proof}[Proof of Lemma \ref{lm:Hw1_is_big_enough}]
By assumption, $w$ does not satisfy the condition of Lemma \ref{lm:empty_Sigma_w}. Thus by Lemma \ref{lm:big_centralizer} there is a proper Levi subgroup $L \subseteq {\bf G}_{\breve K}$ containing ${\bf T}_{\breve K}$, such that if $\bL_h$ is the corresponding subgroup of $\bG_h$, we have $\bK_h \subseteq \bL_h$. We may assume $L$ is maximal, so that there is an $1\leq m \leq n-1$, such that $L = \GL_{m,\breve K} \times \GL_{n-m,\breve K}$ (upper left and lower right diagonal blocks). More precisely, we may (and do) choose that $m$ to be the $i_1$ from the proof of Lemma \ref{lm:big_centralizer}. In fact, by our explicit description of $W_\cO \cong \prod_{i=1}^{n_0} S_{n'}$ in Section \ref{sec:Step1}, we see that as $w \in W_\cO$, our choice $m = w^{-1}(n)$ must be an integer dividing $n_0$. Let $\chi = (1_m,0_{n-m})$ be a cocharacter of ${\bf T}_{\breve K}$. From the explicit form of $w$ determined in Lemma \ref{lm:big_centralizer}, we see that $w\chi = (0_{n-m}, 1_m)$. Let $\mathbb{Y}_{h,\chi} \subseteq \bT_h$ denote the subgroup of $\bT_h$ corresponding to the subgroup $\im(\chi)$ of ${\bf T}_{\breve K}$ (thus $\mathbb{Y}_{h,\chi} \cong \bW_h^\times$). As $\im(\chi)$ centralizes $L$, $\mathbb{Y}_{h,\chi}$ centralizes $\bL_h$ and hence also $\bK_h$. Thus
\[
H_w^1 \supseteq 
H_{w,\chi}^1 := \{(t,t') \in \bT_h^1 \times \bT_h^1 \colon \dot{w}^{-1}t^{-1}F(t)\dot{w} = t^{\prime -1}F(t') \in \mathbb{Y}_{h,\chi}^1 \},
\]
and the same inclusion holds if we take connected components on both sides. Thus we may replace $H_w^1$ by $H_{w,\chi}^1$. Let $(t,t') \in \bT_h^1 \times \bT_h^1$. Write $t = \diag(t_i)_{i=1}^n$ and $t' = \diag(t_i')_{i=1}^n$ with $t_i,t'_i \in \bW_h^{\times, 1}$. Let $x$ be a $\bW_h^{\times, 1}$-``coordinate'' on $\mathbb{Y}_{h,\chi}^1$ (it is an $(h-1)$-tuple of $\bA^1$-coordinates). We can eliminate all ``coordinates'' $t_i$ ($i\neq n$) and $t_i'$ ($i\neq m$) by expressing them through $x$ and $t_m$, $t_n'$. More precisely,
\[
H_{w,\chi}^1 \cong \{(x,t_m,t_n') \in \bW_h^{\times,1} \times \bW_h^{\times,1} \times \bW_h^{\times,1} \colon \sigma^n(t_n)t_n^{-1} = \Nm_{m/1}(x) = \sigma^n(t_m')t_m^{\prime -1} \}.
\]
We see that on $H_{w,\chi}^1$, the equation $\sigma(t_n^{-1}t_m') = t_n^{-1}t_m'$ holds, so that $t_n^{-1}t_m'$ can take only finitely many values. On $H_{w,\chi}^{1,\circ}$ we must in particular have $t_n = t_m'$, or equivalently (using the expression of $t_1$, $t_1'$ through $t_n$, $t_m'$) we have
\begin{equation}\label{eq:rel_t1t1prime_on_Hwchi1}
\sigma^{n-m}(t_1) = t_1'
\end{equation}
on $H_{w,\chi}^{1,\circ}$. Furthermore, $H_{w,\chi}^{1,\circ}$ is contained in the perfect scheme (isomorphic to)
\[
\{ (x,t_n) \in \bW_h^{\times,1} \times \bW_h^{\times,1} \colon \sigma^n(t_n)t_n^{-1} = \Nm_{m/1}(x) \}
\]
Now let $1\leq g = {\rm gcd}(m,n)<n$. As $\sigma^n(t_n)t_n^{-1} = \Nm_{n/1}(\sigma(t_n)t_n^{-1}) = \Nm_{g/1}(\Nm_{n/g}(\sigma(t_n)t_n^{-1}))$,
and $\Nm_{m/1}(x) = \Nm_{g/1}(\Nm_{m/g}(x))$, we have $\Nm_{g/1}(\Nm_{n/g}(\sigma(t_n)t_n^{-1})\Nm_{m/g}(x)^{-1}) = 1$ on this scheme, and hence $\Nm_{n/g}(\sigma(t_n)t_n^{-1})\Nm_{m/g}(x)^{-1}$ is discrete on it. Hence $H_{w,\chi}^{1,\circ}$ is contained in the perfect scheme (isomorphic to)
\[
\{ (x,t_n) \in \bW_h^{\times,1} \times \bW_h^{\times,1} \colon \Nm_{n/g}(\sigma(t_n)t_n^{-1}) = \Nm_{m/g}(x) \}.
\]
After replacing $\sigma$ by $\sigma^g$, Lemma \ref{lm:connectedness_Euclidean_alg} shows that this last perfect $\overline{\FF}_q$-scheme is connected, so that it is equal to $H_{w,\chi}^{1,\circ}$. On $H_{w,\chi}^1$, $t_1 = \sigma(t_n)$, so that (after replacing $\sigma(x)$ by $x$ which is harmless here), we have
\[
H_{w,\chi}^{1,\circ} \cong \{ (x,t_1) \in \bW_h^{\times,1} \times \bW_h^{\times,1} \colon \Nm_{n/g}(\sigma(t_1)t_1^{-1}) = \Nm_{m/g}(x) \}
\]
Now $H_{w,\chi}^{1,\circ} \cap (T_h^1 \times T_h^1)$ is the locus in $H_{w,\chi}^{1,\circ}$ defined by $x = 1$. Thus we deduce 
\[
H_{w,\chi}^{1,\circ} \cap (T_h^1 \times T_h^1) = \{(t,t') \in T_h^1 \times T_h^1 \colon t_1' = \sigma^{n-m}(t_1) \text{ and } \Nm_{n/g}(\sigma(t_1)t_1^{-1}) = 1\}
\]
(recall that in $T_h^1$, $t$ is determined by its first entry $t_1$). Note that $\Nm_{n/g}(\sigma(t_1)t_1^{-1}) = 1$ simply means that $\Nm_{n/g}(t_1)$ is $\sigma$-stable. As $m$ is divisible by $n_0$, $T_h^1 = \bW_h^{\times,1}(\FF_{q^n}) = U_L^1/U_L^h$ and the restriction of $\Nm_{n/g}$ to $T_h^1 \cong U_L^1/U_L^h$ is $\N_{n/g}$, the lemma now follows from Lemma \ref{lm:image_of_a_norm_map}. 
\end{proof}
 
\begin{lm}\label{lm:image_of_a_norm_map}
Suppose $(n,p) = 1$. Let $1\leq m\leq n-1$ and put $g = {\rm gcd}(n,m)$. Let
\[
\alpha \colon \{ y \in U_L^1/U_L^h \colon \N_{n/g}(y) \in U_K^1/U_K^h \} \rightarrow U_L^1/U_L^h, \quad y \mapsto \sigma^{n-m}(y)y^{-1}.
\]
Then $\im(\alpha) = \ker(\N_{n/g} \colon U_L^1/U_L^h \rightarrow U_{K_g}^1/U_{K_g}^h)$.
\end{lm}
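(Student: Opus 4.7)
The plan is to verify the inclusion $\im(\alpha) \subseteq \ker(\N_{n/g})$ directly, and then to establish equality by a cardinality count.

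For the inclusion, I observe that for any $y$ in the domain (in fact for any $y \in U_L^1/U_L^h$),
\[
\N_{n/g}(\alpha(y)) = \frac{\N_{n/g}(\sigma^{n-m}(y))}{\N_{n/g}(y)} = \frac{\sigma^{n-m}(\N_{n/g}(y))}{\N_{n/g}(y)},
\]
because $\sigma^{n-m}$ commutes with $\N_{n/g} = \prod_{i=0}^{n/g-1}\sigma^{ig}$. Since $g$ divides both $n$ and $m$, it divides $n-m$, so $\sigma^{n-m}$ acts trivially on $K_g$; as $\N_{n/g}(y) \in U_{K_g}^1/U_{K_g}^h$, the ratio equals $1$.

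For equality I would count. Using the surjectivity of $\N_{n/g} \colon U_L^1/U_L^h \twoheadrightarrow U_{K_g}^1/U_{K_g}^h$ recalled in Section~\ref{sec:norms_and_chars}, the domain of $\alpha$ has size $|U_K^1/U_K^h| \cdot |\ker(\N_{n/g})| = q^{(n-g+1)(h-1)}$. The kernel of $\alpha$ is the subgroup of $\sigma^{n-m}$-fixed points, which is $U_{K_g}^1/U_{K_g}^h$ (using $\gcd(n,n-m)=g$); for $u$ in this subgroup, $\N_{n/g}(u)=u^{n/g}$, so the kernel of $\alpha$ restricted to the domain identifies with $\{u\in U_{K_g}^1/U_{K_g}^h : u^{n/g}\in U_K^1/U_K^h\}$. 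Because $U_{K_g}^1/U_{K_g}^h$ is a finite $p$-group and $(n,p)=1$ forces $(n/g,p)=1$, the map $u\mapsto u^{n/g}$ is an automorphism, and hence this preimage has size $|U_K^1/U_K^h| = q^{h-1}$. Dividing gives $|\im(\alpha)| = q^{(n-g)(h-1)} = |\ker(\N_{n/g})|$, so the inclusion from the first step must be an equality.

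The main (mild) obstacle is the observation that $u\mapsto u^{n/g}$ is an automorphism of $U_{K_g}^1/U_{K_g}^h$, which is the only place the hypothesis $(n,p)=1$ enters. Conceptually the domain condition $\N_{n/g}(y)\in U_K^1/U_K^h$ shrinks the domain and the kernel of $\alpha$ by exactly the same factor $|U_K^1/U_K^h|$, so the image size matches what it would be for the unrestricted map $\alpha \colon U_L^1/U_L^h \to U_L^1/U_L^h$.
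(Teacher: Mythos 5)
Your proposal is correct and follows essentially the same route as the paper's proof: verify $\im(\alpha)\subseteq\ker(\N_{n/g})$ directly, then match cardinalities by computing $\#(\text{domain})/\#\ker(\alpha)$, with the surjectivity of $\N_{n/g}$ giving the domain size and the hypothesis $(n,p)=1$ entering through the invertibility of $(n/g)$-th powers on the $p$-group of principal units. The only cosmetic difference is that the paper identifies $\ker(\alpha)$ as exactly $U_K^1/U_K^h$ (showing each kernel element is $\sigma$-fixed), whereas you count it as the preimage of $U_K^1/U_K^h$ under the power automorphism of $U_{K_g}^1/U_{K_g}^h$ — the same count either way.
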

\begin{proof}
For arbitrary $a \in \bZ$ we have 
\[
\N_{n/g}(y) \in U_K^1/U_K^h \Rightarrow \N_{n/g}(\sigma^a(y)y^{-1}) = \sigma^a(\N_{n/g}(y))\N_{n/g}(y)^{-1} = 1 \Rightarrow \sigma^a(y)y^{-1} \in \ker(\N_{n/g}).
\]
Hence $\im(\alpha) \subseteq \ker(\N_{n/g})$. Let $y \in \ker(\alpha)$. Then $\N_{n/g}(y)$ is rational and $\sigma^{n-m}(y) = y$ and $\sigma^n(y) = y$. The last two equalities together are equivalent to $\sigma^g(y) = y$. Hence $\N_{n/g}(y) = \frac{n}{g}y$, and hence $y$ is rational (as $\N_{n/g}(y)$ is, and $(n,p) = 1$). Conversely, if $y$ is rational, then surely $y\in \ker(\alpha)$. Thus $\ker(\alpha) = U_K^1/U_K^h$. Now the source of $\alpha$ is the preimage under the (surjective) map $\N_{n/g} \colon U_L^1/U_L^h \rightarrow U_{K_g}^1/U_{K_g}^h$ of $U_K^1/U_K^h$, hence the size of the source of $\alpha$ is $\#\ker(\N_{n/g}) \cdot \#(U_K^1/U_K^h)$. Thus $\#\im(\alpha) = \frac{\#(\text{source of $\alpha$})}{\#\ker(\alpha)} = \#\ker(\N_{n/g})$. As we already know that $\im(\alpha) \subseteq \ker(\N_{n/g})$ and both sets are finite, we are done. 
\end{proof}

For positive integer $s$ define the $\FF_q$-morphism 
\[
\tr_{s/1} \colon \bG_a\rightarrow \bG_a, \quad x \mapsto \tr_{s/1}(x) := \sum_{i=0}^{s-1}x^{q^i}.
\]

\begin{lm}\label{lm:connectedness_Euclidean_alg}
Let $r>s\geq 1$ be coprime integers. Suppose $p>s$. The closed perfect subscheme
\[
R_h = \{(y,x)\in \bW_h^{\times,1} \times \bW_h^{\times,1} \colon \Nm_{r/1}(\sigma(y)y^{-1}) = \Nm_{s/1}(x) \}
\] 
of $\bW_h^{\times,1} \times \bW_h^{\times,1}$ is connected. More precisely, for $h \geq 2$ the fibers of $R_h \rightarrow R_{h-1}$ are isomorphic to $\bA^1$ (note that $R_1$ is a point).
\end{lm}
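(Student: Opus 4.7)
The plan is to induct on $h$. The base case $h=1$ is immediate since $\bW_1^{\times,1}$ is the trivial group, so $R_1 = \{(1,1)\}$ is a single point. For $h \geq 2$, I will show that the projection $\pi \colon R_h \to R_{h-1}$ is a Zariski-locally trivial $\bA^1$-bundle; this simultaneously yields connectedness and the explicit description of fibers.

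To describe the fibers, I will fix $(y_{h-1},x_{h-1}) \in R_{h-1}$ together with arbitrary lifts $y_0, x_0 \in \bW_h^{\times,1}$, and parametrize all other lifts by $y = y_0(1+\varpi^{h-1}a)$, $x = x_0(1+\varpi^{h-1}b)$ with $(a,b) \in \bA^2$. A direct mod-$\varpi^h$ computation (using $h\geq 2$ so that $2(h-1) \geq h$, in the same spirit as the opening expansions of Section \ref{sec:proof_of_thm_XhXhn_a_multiplicative}) will give
\[
\Nm_{r/1}(\sigma(y)y^{-1})\cdot \Nm_{s/1}(x)^{-1} \equiv 1 + \varpi^{h-1}\bigl(c_0 + (\sigma^r-1)(a) - \tr_{s/1}(b)\bigr) \pmod{\varpi^h},
\]
where $c_0 \in \bG_a$ is the defect of $(y_0, x_0)$. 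Thus the fiber $\pi^{-1}(y_{h-1},x_{h-1})$ is the preimage of $-c_0$ under the additive morphism $\phi\colon \bA^2 \to \bA^1$, $(a,b) \mapsto (\sigma^r-1)(a) - \tr_{s/1}(b)$, and when non-empty it is a torsor under $K := \ker\phi$.

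The heart of the argument is to show $K \cong \bA^1$ by producing an algebraic section of $\phi$. I will work in the commutative PID $\FF_q[\sigma]$, regarded as a subring of the ring of $\FF_q$-linear endomorphisms of $\bG_a$. Since $\gcd(r,s)=1$, any common root of $\sigma^r - 1$ and $\tr_{s/1} = 1 + \sigma + \cdots + \sigma^{s-1}$ in $\overline{\FF}_q$ would be a root of unity $\omega \neq 1$ with $\omega^{\gcd(r,s)} = 1$, forcing $\omega = 1$ and yielding a contradiction. The hypothesis $p > s$ ensures moreover that $\tr_{s/1}(1) = s \neq 0$ in $\FF_q$ (so $\sigma - 1$ is not a common factor) and that $\tr_{s/1}$ is separable. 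Hence $\gcd(\sigma^r-1,\,\tr_{s/1}) = 1$ in $\FF_q[\sigma]$, and Bezout produces $h_1, h_2 \in \FF_q[\sigma]$ with $(\sigma^r-1)h_1 + \tr_{s/1}h_2 = 1$. The map $c \mapsto (h_1(c),-h_2(c))$ is then an additive section of $\phi$, splitting the short exact sequence $0 \to K \to \bA^2 \to \bA^1 \to 0$ of perfect group schemes. Therefore $\bA^2 \cong K \times \bA^1$, so $K$ is a smooth connected $1$-dimensional commutative unipotent perfect group scheme over $\overline{\FF}_q$, and the standard classification identifies $K$ with $\bA^1$.

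Existence of the section shows in particular that $\phi$ is surjective, so every fiber of $\pi$ is non-empty and hence isomorphic to $\bA^1$. Combining the section with Zariski-local lifts of $R_{h-1}$-points to pairs in $\bW_h^{\times,1}\times\bW_h^{\times,1}$ (which exist because $\bW_h^{\times,1} \to \bW_{h-1}^{\times,1}$ is a Zariski-locally trivial $\bA^1$-torsor) exhibits $\pi$ as a Zariski-locally trivial $\bA^1$-bundle, so connectedness of $R_h$ follows from that of $R_{h-1}$ by induction. The main obstacle is the GCD/Bezout computation in $\FF_q[\sigma]$, which is precisely the ``Euclidean algorithm'' feature advertised by the lemma's name.
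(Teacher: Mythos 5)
Your reduction to the fibers of $R_h \rightarrow R_{h-1}$, and the identification of such a fiber with the locus $\{(a,b) \in \bG_a^2 \colon (\sigma^r-1)(a) - \tr_{s/1}(b) = -c_0\}$, is exactly the paper's starting point (the paper writes the same equation as $\tr_{r/1}(Y^q-Y) = \tr_{s/1}(X) + const$). From there the two arguments diverge genuinely. The paper eliminates the constant, then implements the Euclidean algorithm on $(r,s)$ by an explicit iterated change of variables (its Lemma \ref{lm:Euclidean_algorithm_performed}), tracking a quadruple of integer coefficients whose invariant $ac+bd = s < p$ guarantees that every coefficient encountered stays nonzero mod $p$, until the curve becomes $b_\alpha X = Y^q - Y$ with $0 < b_\alpha < p$. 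You instead run Bezout in the commutative PID $\FF_q[\sigma]$ of additive polynomials with $\FF_q$-coefficients: the coprimality $\gcd(\sigma^r-1,\tr_{s/1})=1$ (which needs exactly $p \nmid s$, supplied by $p>s$, to prevent $\sigma-1$ from being a common factor) splits the homomorphism $\phi$, so its kernel is a direct factor of $\bG_a^2$, hence a smooth connected one-dimensional unipotent group, hence $\bG_a$ over the perfect base field, and every fiber is a torsor under it. This is correct and is really the same Euclidean algorithm packaged as a Bezout identity; what your version buys is the elimination of the coefficient bookkeeping and a transparent explanation of where the hypothesis on $p$ enters (it is needed only to keep $\sigma - 1$ out of $\tr_{s/1}$), at the mild cost of invoking the classification of forms of $\bG_a$ over a perfect field. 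Your observation that the section also gives surjectivity of $\phi$, hence non-emptiness of all fibers and (with a global Witt-coordinate section of $\bW_h^{\times,1} \rightarrow \bW_{h-1}^{\times,1}$) even a global product decomposition $R_h \cong R_{h-1} \times \bA^1$, is a small bonus over the paper's fiberwise statement.
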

\begin{proof} 
It suffices to prove that the fibers of $R_h \rightarrow R_{h-1}$ are isomorphic to $\bA^1$. The fibers of $R_h \rightarrow R_{h-1}$ are isomorphic to closed sub-(perfect schemes) of $\bG_a^2$ (with coordinates $X,Y$) given by the equation
\[
C \colon \tr_{r/1}(Y^q - Y) = \tr_{s/1}(X) + const.
\]
where $const$ is a constant term depending on the point in $R_{h-1}$. As $\tr_{r/1}(Y^q - Y) = Y^{q^r} - Y$, one can eliminate this constant term by changing the variable $Y + c \mapsto Y$ (for an appropriate $c \in \overline{\FF}_q$). So we assume $const = 0$. We may assume $s>1$, as otherwise we obviously have $C \cong \bA^1$. Put $r_0 := r$, $r_1 := s$ and define $r_i \in \bZ_{\geq 0}$ ($i\geq 2$), $\gamma_i \in \bZ_{> 0}$ ($i\geq 1$) by $r_i = \gamma_{i+1}r_{i+1} + r_{i+2}$ and $r_{i+2} < r_{i+1}$ for $i \geq 0$. Say this stops at $i=\alpha$, that is $r_{\alpha+1} = {\rm gcd}(r,s) = 1$, $r_{\alpha+2} = 0$.

Via the change of variables $X + Y^{q^{r-s+1}} - Y \mapsto X$, $C$ is isomorphic to the curve 
\[
C_1 \colon \tr_{r_1/1}(X) = \tr_{r_2/1}(Y^q - Y).
\]
Now $\tr_{r_2/1}(Y^q - Y) = Y^{q^{r_2}} - Y$, so that we can successively make a series of changes of variables of the form $Y + X^{q^\beta} \mapsto Y$ for appropriate $\beta \in \bZ_{\geq 0}$, to eliminate all powers of $X$ with exponent greater than $q^{r_2}$. This shows that $C_1$ is isomorphic to the curve
\[
C_2 \colon \tr_{r_3/1}(X) + \gamma_2\tr_{r_2/1}(X) = \tr_{r_2/1}(Y^q - Y).
\]
Now we successively apply the perfection of Lemma \ref{lm:Euclidean_algorithm_performed} to $C_2$ and the initial tuple of integers $(a_1,b_1,c_1,d_1) = (1,\gamma_2, r_3, r_2)$. Consider the operation $(a,b,c,d) \mapsto (b,a+b\gamma,r,c)$ on quadruples of integers (satisfying $0<c<d$) where $0\leq r < d$ and $\gamma>0$ are defined by $d = \gamma c + r$. First of all, if $a,b > 0$, then also $b,a+b\gamma > 0$. Moreover, the operation leaves invariant the sum of products of 1st and 3rd and of 2nd and 4th entries: $ac+bd = br + (a+b\gamma)c$. Thus if $(a_i,b_i,c_i,d_i)$ is the tuple after $(i-1)$th iteration step, we have $a_ic_i + b_id_i = r_3 + \gamma_2r_2 = r_1 = s<p$. Also we have $c_i = r_{i+2}$, $ d_i = r_{i+1}$, and hence $0< c_i < d_i$ as long as $i\leq \alpha-1$. All this implies that $0< a_i,b_i,c_i,d_i<p$ and $0<c_i < d_i$ for each $i=1,2,\dots,\alpha-1$, so that Lemma \ref{lm:Euclidean_algorithm_performed} indeed applies in each step, as long as $i<\alpha$. The last application (for $i=\alpha-1$) produces a quadruple $(a_\alpha,b_\alpha, c_\alpha, d_\alpha) = (b_{\alpha-1},a_{\alpha-1}+b_{\alpha-1}d_{\alpha-1},0,1)$ and $C_2$ is thus isomorphic to the curve
\[
b_\alpha X = Y^q - Y,
\]
and by the same preservation property of the sum $ac+bd$ we have that still $0<b_\alpha<p$ holds. Thus this curve is isomorphic to $\bA^1_{\overline{\FF}_q}$, and we are done.
\end{proof}

The following lemma works for schemes of finite type over $\FF_p$, so we denote (in this lemma only) by $\bA_{\FF_p}$ the usual affine space over $\FF_p$.

\begin{lm}\label{lm:Euclidean_algorithm_performed}
Let $a,b,c,d$ be positive integers with $a,b<p$ and $c<d$. Write $d = \gamma c + r$ with $0 \leq r < c$. Then the curve in $\bA_{\FF_p}^2$ given by the equation 
\[
C_1 \,\colon\, a \tr_{c/1}(x) + b \tr_{d/1}(x) = \tr_{d/1}(y^q - y)
\]
is $\FF_p$-isomorphic to the curve in $\bA_{\FF_p}^2$ given by the equation
\[
C_2 \,\colon\, b \tr_{r/1}(x) + (a+b\gamma) \tr_{c/1}(x) = \tr_{c/1}(y^q - y).
\]
\end{lm}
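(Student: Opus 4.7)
The plan is to realize both curves as kernels of additive morphisms $\mathbb{A}^2_{\FF_p} \to \mathbb{A}^1_{\FF_p}$ and to produce an explicit $\FF_p$-linear change of coordinates transforming one into the other. Writing $F$ for the $q$-th power Frobenius, the trace $\tr_{n/1}$ corresponds to $e_n(F) := 1 + F + \cdots + F^{n-1}$ and $y^{q^n} - y$ to $F^n - 1 = (F - 1)e_n(F)$ in the commutative ring $\FF_p[F]$ of additive polynomials. Each curve $C_i$ is thus the zero locus of $f_i(x) - g_i(y)$, with
\[
f_1 = a\,e_c + b\,e_d,\ g_1 = F^d - 1, \qquad f_2 = b\,e_r + (a + b\gamma)\,e_c,\ g_2 = F^c - 1.
\]

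The key input is the Euclidean division $d = \gamma c + r$, which yields the twin identities in $\FF_p[F]$
\[
e_d \ =\ e_\gamma(F^c)\,e_c + F^{\gamma c}\,e_r, \qquad F^d - 1 \ =\ (F^c - 1)\,e_\gamma(F^c) + F^{\gamma c}\,(F^r - 1),
\]
with $e_\gamma(z) := 1 + z + \cdots + z^{\gamma - 1}$. Combining the first identity with the fact that $e_\gamma(z) \equiv \gamma \pmod{z-1}$ (so $F^c - 1$ divides $e_\gamma(F^c) - \gamma$ in $\FF_p[F]$), a direct computation gives
\[
f_1 - f_2 \ =\ g_2 \cdot \eta, \qquad \eta \ :=\ b\,e_c\cdot\frac{e_\gamma(F^c) - \gamma}{F^c - 1} + b\,e_\gamma(F^c)\,e_r \ \in\ \FF_p[F],
\]
while the second identity reads $g_1 - e_\gamma(F^c)\,g_2 = F^{\gamma c}(F^r - 1)$. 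Together these relations say precisely that the pairs $(f_1, -g_1)$ and $(f_2, -g_2)$ generate the same cyclic submodule of $\FF_p[F]^2$ up to the action of $\mathrm{GL}_2(\FF_p[F])$.

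Since $\FF_p[F]$ is a PID, the standard Euclidean column-reduction procedure then extracts from the two relations an explicit matrix $M \in \mathrm{GL}_2(\FF_p[F])$ and a unit $u \in \FF_p^\times$ satisfying $(f_2,\,-g_2)\,M = u\,(f_1,\,-g_1)$. The induced additive change of variables $(x,y)^{\trans} = M\,(x', y')^{\trans}$ is a polynomial automorphism of $\mathbb{A}^2_{\FF_p}$ carrying the vanishing locus of $C_1$ onto that of $C_2$, which is the claimed isomorphism. The hypothesis $a, b < p$ is used precisely to ensure that the integer coefficients that arise as entries of $M$ (in particular $b$, $\gamma$, and $a + b\gamma$) remain nonzero in $\FF_p$, so that $\det M \in \FF_p^\times$ and $M$ is genuinely invertible.

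The main obstacle I anticipate is the combinatorial bookkeeping needed to write down the entries of $M$ explicitly and to verify that they are polynomials in $F$ rather than mere rational functions. The two Euclidean-type identities above are exactly what make the relevant divisibilities exact, so once they are established the construction of $M$ and the verification that it realizes the isomorphism $C_1 \cong C_2$ amount to a direct (if slightly tedious) calculation using additivity of $\tr_{n/1}$ and Frobenius linearity.
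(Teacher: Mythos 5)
Your reformulation inside the commutative ring $\FF_p[F]$ of additive polynomials is a legitimate framework, and the two Euclidean identities you record (for $e_d$ and for $F^d-1$) are correct. However, the pivotal assertion --- that these two relations ``say precisely'' that $(f_1,-g_1)$ and $(f_2,-g_2)$ lie in the same orbit under right multiplication by $\GL_2(\FF_p[F])$ and left multiplication by units --- is essentially the whole content of the lemma and does not follow from what you wrote. Over the PID $\FF_p[F]$, column reduction takes a row vector $(f,-g)$ to $(\gcd(f,g),0)$, so two such vectors are equivalent if and only if $\gcd(f_1,g_1)$ and $\gcd(f_2,g_2)$ are associates; your relations only give $f_1\equiv f_2 \pmod{g_2}$ together with an expression for $g_1$, and from these alone one cannot conclude, e.g., that $\gcd(f_2,g_2)$ divides $g_1$, nor that $\gcd(f_1,g_1)$ divides $g_2$. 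The missing steps are substantive: from $(F-1)f_1=a(F^c-1)+b(F^d-1)$ and $a\in\FF_p^\times$ one gets $\gcd(f_1,g_1)\mid F^c-1=g_2$ and then, via your first relation, $\gcd(f_1,g_1)\mid f_2$; in the other direction $(F-1)f_2\equiv b(F^r-1)\pmod{g_2}$ and $b\in\FF_p^\times$ give $\gcd(f_2,g_2)\mid\gcd(F^c-1,F^r-1)=F^{\gcd(c,r)}-1\mid F^r-1$, whence your second relation yields $\gcd(f_2,g_2)\mid g_1$. Only after this does ``Euclidean column reduction'' produce an $M$ with $(f_2,-g_2)M=u(f_1,-g_1)$; reducing each row vector separately, as your text suggests, yields $(\gcd(f_i,g_i),0)$ with no link between the two. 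Your explanation of the hypothesis is also off: the lemma does not require $\gamma$ or $a+b\gamma$ to be nonzero mod $p$ (the isomorphism holds even when $a+b\gamma\equiv 0$, in which case both curves are disjoint unions of several lines); what is needed is exactly $a,b\in\FF_p^\times$, at the two places just indicated.

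For comparison, the paper's proof is entirely elementary and bypasses the module-theoretic detour: substitute $x\mapsto x+b^{-1}(y^q-y)$ to absorb the right-hand side (using $b\not\equiv 0$), rescale $y$ by $-a^{-1}b$ (using $a\not\equiv 0$), and then apply substitutions of the form $y\mapsto y+x^{q^\alpha}$ to reduce $\tr_{d/1}(x)$ to $\gamma\tr_{c/1}(x)+\tr_{r/1}(x)$ modulo the image of $y\mapsto y^{q^c}-y$. Your approach can be completed along the lines sketched above, but as written the central equivalence is asserted rather than proved, so I would count this as a genuine gap rather than mere bookkeeping.
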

\begin{proof}
Make the change of variables $x + b^{-1}(y^q - y) \mapsto x$ (by assumption $b<p$, as $0 < c< d$). Thus $C_1$ is isomorphic to the curve
\[
C_1' \,\colon\, a \tr_{c/1}(x) + ab^{-1} \tr_{c/1}(y^q - y) + b \tr_{d/1}(x) = 0.
\]
Via the change of variables $-a^{-1}by \mapsto y$, $C_1'$ gets isomorphic to
\[
C_1'' \,\colon\, a \tr_{c/1}(x) + b \tr_{d/1}(x) = \tr_{c/1}(y^q - y).
\]
We have $\tr_{d/1}(y^q - y) = y^{q^d} - y$. Thus we may successively make the changes of variables of the form $y + x^{q^\alpha}$ (for appropriate $\alpha \in \bZ_{\geq 0}$), to eliminate all powers of $x$ with exponent greater than $q^c$. This does not affect the first summand $a\tr_{c/1}(x)$ and after all these changes $C_1''$ gets isomorphic to the curve
\[
C_1''' \,\colon\, a \tr_{c/1}(x) + b (\gamma \tr_{c/1}(x) + \tr_{r/1}(x))  = \tr_{c/1}(y^q - y),
\]
which is the same as $C_2$.
\end{proof}

\subsection{Proof Theorem \ref{thm:relation_Xh_Xhn}\eqref{eq:relation_Xh_Xhn_b}}\label{sec:proof_of_thm_XhXhn_b}
Again, we work in the setup of Section \ref{sec:Step1}. For $w \in W_\cO$ put
\[
\widehat\Sigma_{(n,n),w} := \{(x,y_1,\tau,z,y_2) \in F\bU_h^1 \times F\bU_h^1 \times \bU_h \times \bT_h \times \mathbb{K}_h^1 \times \bU_h \colon xF(y_1\tau \dot w z) \in y_1\tau \dot w z y_2F(\bU_h^1 y_2^{-1})   \},
\]
and
\[
\widetilde\Sigma_{(n,n),1} := \{(x,x',y_1,\tau,z) \in F\bU_h^1 \times F\bU_h^1 \times \bU_h \times \bT_h \times \bU_h^{-,1} \colon xF(y_1\tau z) = y_1\tau z x'\}
\]
with natural $T_h \times T_h$-actions (like in Section \ref{sec:proof_of_thm_XhXhn_a_multiplicative}). Similar as in the beginning of Section \ref{sec:proof_of_thm_XhXhn_a_multiplicative} it suffices to check that
\begin{align*}
H_c^\ast(\widehat\Sigma_{(n,n),w})_{\theta^{-1},\theta} &= 0 \quad \text{for $1\neq w \in W_\cO$, and} \\ 
\dim H_c^\ast(\widetilde\Sigma_{(n,n),1})_{\theta^{-1},\theta} &= 1.
\end{align*}
First consider the case $w \neq 1$. As $x \in F\bU_h^1$ and $y_1$ varies in $\bU_h$, we can not make the change of variables $xF(y_1) \mapsto x$ as in the proof of Theorem \ref{thm:relation_Xh_Xhn}\eqref{eq:relation_Xh_Xhn_a}. However we can define an action of $H_w^1$ on $\widehat\Sigma_{(n,n),w}$ by
\begin{align*}
(t,t') \colon &(x,y_1,\tau,z,y_2) \mapsto \\ 
&(F(t)xF(y_1)F(t)^{-1}F(F(t)y_1^{-1}F(t)^{-1}), F(t)y_1F(t)^{-1}, t\tau \dot{w}t^{\prime -1} \dot{w}^{-1}, t' z t^{\prime -1}, F(t')y_2F(t')^{-1})
\end{align*}
Note that $F(t)xF(y_1)F(t)^{-1}F(F(t)y_1^{-1}F(t)^{-1}) \in F\bU_h^1$ (on the one side it is contained in $F\bU_h$ as $x,F(y_1) \in F\bU_h$; on the other side it must lie in $\bG_h^1$ as $t,x \in \bG_h^1$). The proof that this indeed is an action goes exactly the same way as in Section \ref{sec:proof_of_thm_XhXhn_a_additive}. The rest of the argument for $\widehat\Sigma_{(1,n),w}$ goes then through exactly as for $\widehat\Sigma_{(1,n),w}$ in Section \ref{sec:proof_of_thm_XhXhn_a_additive}.

Now let $w = 1$. As $x,x',z\in \bG_h^1$, the equation defining $\widetilde\Sigma_{(n,n),1}$ modulo $\bG_h^1$ reduces to $F(y_1\tau) = y_1\tau$. From this it easily follows that $y_1 \in \bG_h^1$. Hence $y_1 \in \bU_h^1$. Hence the change of variables $xF(y_1) \mapsto x$ makes sense (such that the new variable $x$ again lives in $F\bU_h^1$), and the rest of the argument for $\widetilde\Sigma_{(n,n),1}$ goes exactly the same way as for $\widetilde\Sigma_{(1,n),1}$ in Section \ref{sec:proof_of_thm_XhXhn_a_multiplicative}.

\section{Cuspidality}\label{sec:Cuspidality}

We go back to the setup of Section \ref{sec:representations_prelim}. Let $\theta$ be a smooth character of $T = L^\times$ of level $h\geq 1$ in general position. Recall that the induced character of $T_h$ is again denoted by $\theta$, and that it is also in general position.
By Corollary \ref{cor:irreducibility}, $R_{T_h}^{G_h}(\theta)$ is up to sign an irreducible $G_\cO$-representation, hence in particular $R_T^G(\theta)$ is up to sign a genuine representation. We write $|R_{T_h}^{G_h}(\theta)|$ resp.\ $|R_T^G(\theta)|$ for the genuine representation among $\pm R_{T_h}^{G_h}(\theta)$ resp.\ $\pm R_T^G(\theta)$.

\begin{thm}\label{thm:cuspidality_general}
Let $\theta$ be a smooth character of $T = L^\times$ in general position. Then $|R_T^G(\theta)|$ is a finite direct sum of irreducible supercuspidal representations of $G$.
\end{thm}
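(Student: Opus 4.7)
The plan is to establish cuspidality by proving that every Jacquet module $r_N(\pi)$ of $\pi := R_T^G(\theta)$, with respect to the unipotent radical $N$ of a proper $K$-rational parabolic subgroup of $\bfG$, vanishes; admissibility and the decomposition into finitely many irreducible supercuspidals then follow essentially formally. By Section \ref{sec:representations_prelim} we have $\pi = \cind_{ZG_\cO}^G \rho_0$ with $\rho_0 := R_{T_h}^{G_h}(\theta)$, and Corollary \ref{cor:irreducibility} provides that $\pm \rho_0$ is a finite-dimensional irreducible $ZG_\cO$-representation; in particular $\pi$ is finitely generated. Once all Jacquet modules are shown to vanish, every irreducible subquotient of $\pi$ is supercuspidal, and a standard Bushnell--Kutzko/Frobenius reciprocity argument for compact inductions from open compact-mod-centre subgroups with finite-dimensional irreducible source forces $\pi$ to be admissible and to split into finitely many irreducible supercuspidals.

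For the Jacquet module computation, recall the decomposition $X_{\dot w_0}^{DL}(b) = \coprod_{g \in G/G_\cO} g.X_\cO$. Since $N$ acts smoothly on $\pi = H_c^\ast(X_{\dot w_0}^{DL}(b))_\theta$ and is the union of its compact open subgroups, one has $r_N(\pi) = \dirlim_{N'} \pi_{N'}$ over compact open $N' \subseteq N$. Using smoothness on the level-$h$ model $X_h$ --- the level structure trivialises deep enough congruence subgroups --- and unwinding $G$-translates of $X_\cO$, the computation reduces, for $h$ sufficiently large, to showing
\[
H_c^\ast(N_h \backslash X_h)_\theta = 0,
\]
where $N_h$ denotes the image in $G_h$ of a suitable pro-unipotent compact open subgroup of $N$; concretely, $N_h$ is the unipotent radical of a proper $\FF_q$-rational parabolic subgroup of $\bG_h$.

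Here the plan is to invoke Lemma \ref{lm:description_of_quotient_kappa_arbitrary}, whose explicit description of $N_h \backslash X_h$ --- obtained via classical Pl\"ucker--Turnbull minor identities in the matrix realisation of $\bG_h$ --- exhibits the residual $T_h$-action as factoring through a norm map $\N_{n/r} \colon U_L/U_L^h \to U_{K_r}/U_{K_r}^h$ for some proper divisor $r < n$. Because $\theta$ is in general position, Lemma \ref{lm:factor_through_norm_equiv_cond} ensures that $\theta$ does not factor through any such norm, so the $\theta$-isotypic cohomology of $N_h \backslash X_h$ vanishes. Varying over all proper rational parabolic subgroups $P = MN$ then yields $r_N(\pi) = 0$ in each case, completing the proof.

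The main technical obstacle is the explicit description of $N_h \backslash X_h$ encoded in Lemma \ref{lm:description_of_quotient_kappa_arbitrary}: checking that, after $N_h$-coinvariants, the remaining $T_h$-action factors through a proper norm map amounts to a delicate combinatorial analysis of minors in the Moy--Prasad matrix realisation of $\bfG_\cO$. Once this geometric input is in place, both the reduction of $r_N(\pi)$ to such a quotient cohomology and the deduction of cuspidality plus admissibility from its vanishing are essentially formal.
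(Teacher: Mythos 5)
Your overall architecture is close to the paper's: both reduce cuspidality to the vanishing of $H_c^\ast(N_h\backslash X_h)_\theta$ and both rely on the explicit description of the quotient $N_h\backslash X_h$ via the Turnbull minor identities (Lemmas \ref{lm:description_of_quotient}, \ref{lm:description_of_quotient_kappa_arbitrary}). The paper phrases the group-theoretic reduction differently --- it proves admissibility of $\cind_{ZG_\cO}^G|R_{T_h}^{G_h}(\theta)|$ directly via Bushnell's criterion, using the Cartan decomposition to reduce $K$-fixed vectors to $(N\cap G_\cO)$-invariants for all but finitely many double cosets --- whereas you compute Jacquet modules; that variant is acceptable, provided you actually carry out the bookkeeping with the conjugates $\varpi^{\mu}N'\varpi^{-\mu}\cap G_\cO$, which is exactly where the "all but finitely many $\mu$" reduction lives.

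The genuine gap is in your mechanism for the vanishing. Lemma \ref{lm:description_of_quotient_kappa_arbitrary} does \emph{not} exhibit the residual $T_h$-action on $N_h\backslash X_h$ as factoring through a norm map $\N_{n/r}$: under the isomorphism $\alpha_\kappa$, an element $t\in T_h$ sends $(m,x')$ to $\bigl(m\cdot\prod_{j=0}^{i_0-1}\sigma^j(t),\,x'\cdot t\bigr)$, so the second coordinate is multiplied by $t$ itself and the action remains faithful; your appeal to Lemma \ref{lm:factor_through_norm_equiv_cond} therefore has nothing to act on. The argument that actually works is different: the shape of the target of $\alpha_\kappa$ (the condition $|g_{n_0,i_0}(m)|/|g_{n_0,n'-i_0}(x')|^{\sum_j\sigma^j}\in\bW_h^\times(\FF_q)$) shows that the one-dimensional subgroup $H_\kappa\subseteq\bG_m^2$ acts on $\alpha_\kappa(N_h\backslash X_h)$, commuting with $T_h$; its connected component is a one-dimensional torus surjecting onto one $\bG_m$-factor, hence acts \emph{without fixed points}, and the fixed-point theorem \cite[10.15 Proposition]{DigneM_91} then kills the entire cohomology of $N_h\backslash X_h$ --- for every character $\theta$, general position playing no role in this step (it is only needed, via Corollary \ref{cor:irreducibility}, to know that $\pm R_{T_h}^{G_h}(\theta)$ is a genuine irreducible representation). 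You need to replace the norm-factoring step by the construction of this auxiliary torus action; without it the key vanishing is unproved.
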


\begin{proof}
There are many (essentially equivalent) ways to deduce this theorem from Proposition \ref{prop:no_trivial_char}. By \cite[Theorem 1]{Bushnell_90} it suffices to prove that $\Xi_\theta := \Ind_{ZG_{\cO}}^G |R_{T_h}^{G_h}(\theta)|$ is admissible. Let $K \subseteq G$ be a compact open subgroup. We have to show that $(\Xi_\theta)^K$ is finite-dimensional. Conjugating $K$ into $G_\cO$ and making it smaller if necessary, we may assume that $K = \ker(G_\cO \rightarrow G_r)$ for some $r > 0$. Frobenius reciprocity gives
\[
(\Xi_\theta)^K = \bigoplus_{g \in G_\cO Z\backslash G/K} |R_{T_h}^{G_h}(\theta)|^{ZG_\cO \cap gKg^{-1}}.
\]
Thus we have to show that there are only finitely many non-vanishing summands on the right. If ${\bf S}$ denotes a maximal split torus of $\mathbf{G}$ whose apartment in $\mathscr{B}_K(\bfG) = \mathscr{B}_{\breve K}^{F_b}$ contains the vertex stabilized by $G_{\cO}$, then by the rational Iwahori-Bruhat decomposition, $ZG_{\cO}\backslash G /G_\cO \cong X_\ast({\bf S}/\mathbf{Z})_{\rm dom}$. Hence any element of $ZG_\cO\backslash G/K$ has a representative of the form $g = \varpi^{\mu} x$ with $x \in G_\cO$, $\mu \in X_\ast({\bf S})_{\rm dom}$. Now $K$ is normal in $G_\cO$, so $g K g^{-1} = \varpi^\mu K \varpi^{-\mu}$ only depends on $\mu$. Moreover, any coset $ZG_\cO\varpi^\mu G_\cO$ contains only finitely many cosets from $ZG_\cO\backslash G/K$. Thus it suffices to show that for all but finitely many $\mu \in X_\ast(T_0/\mathbf{Z})_{\rm dom}$, $|R_{T_h}^{G_h}(\theta)|^{ZG_\cO \cap \varpi^\mu K \varpi^{-\mu}} = 0$. It is easy to see that for all but finitely many such $\mu$, there is a proper $K$-rational parabolic subgroup $\mathbf{G}$ with unipotent radical ${\bf N}$, such that if $N = {\bf N}(K)$, then $N \cap G_\cO \subseteq \varpi^\mu K \varpi^{-\mu}$. Thus it is enough to show that for each such $N$ we have $|R_{T_h}^{G_h}(\theta)|^{N \cap G_\cO} = 0$. As by Corollary \ref{cor:irreducibility}, $|R_{T_h}^{G_h}(\theta)| = \pm R_{T_h}^{G_h}(\theta)$ is a genuine representation, it suffices to show that $R_{T_h}^{G_h}(\theta)^{N \cap G_\cO} = 0$ (we have the natural map of Grothendieck groups of smooth representations with $\overline{\bQ}_\ell$-coefficients $r \colon K_0(G_\cO) \rightarrow K_0(N \cap G_\cO)$ induced by restriction, and $R_{T_h}^{G_h}(\theta)^{N \cap G_\cO} = 0$ means $\langle {\bf 1}, r(R_{T_h}^{G_h}(\theta)) \rangle = 0$, where ${\bf 1}$ is the trivial representation). This follows from Proposition \ref{prop:no_trivial_char}.
\end{proof}

\begin{prop}\label{prop:no_trivial_char}
Let $N$ be the unipotent radical of a proper $K$-rational parabolic subgroup of $\mathbf{G}$. Then 
\[
R_{T_h}^{G_h}(\theta)^{N \cap G_\cO} = 0.
\] 
\end{prop}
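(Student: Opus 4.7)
The plan is to reduce the proposition to the vanishing of the $\theta$-isotypic component of the cohomology of the quotient $N_h \backslash X_h$, and then to invoke the description of this quotient alluded to in the introduction (Lemma~\ref{lm:description_of_quotient_kappa_arbitrary}).

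First, choose $h$ at least the level of $\theta$, so that $R_{T_h}^{G_h}(\theta) = H_c^\ast(X_h)_\theta$ and the action of $N \cap G_\cO$ on this space factors through its image $N_h \subseteq G_h$. Concretely, $N_h$ is the group of $\mathbb{F}_q$-points of a perfect subgroup $\mathbb{N}_h \subseteq \bG_h$ obtained by applying $L_h^+$ to the schematic closure of ${\bf N}$ in $\bfG_\cO$. Since $N_h$ is a finite group acting on a perfectly finitely presented perfect $\overline{\mathbb F}_q$-scheme, and since the actions of $N_h$ (from the left) and $T_h$ (from the right) on $X_h$ commute, we obtain
\[
R_{T_h}^{G_h}(\theta)^{N \cap G_\cO} \;=\; H_c^\ast(X_h)_\theta^{N_h} \;=\; H_c^\ast(N_h \backslash X_h)_\theta,
\]
so it suffices to show that $H_c^\ast(N_h \backslash X_h)_\theta = 0$.

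The geometric heart of the argument is the description of the quotient $N_h \backslash X_h$. Using the matrix presentation of $X_h$ from Section~\ref{sec:Step1} via $S_h/(\bU_h \cap F\bU_h)$, one writes down explicit coordinates on $X_h$; dividing by the left action of $N_h$ amounts to imposing relations on certain minors of the matrix $g$ obtained from the iterated $b\sigma$-action on its first column. This is where Turnbull's classical minor identities \cite{Turnbull_1909} come in: they exhibit $N_h \backslash X_h$ as an affine-space bundle over a closed perfect subscheme on which the induced right $T_h$-action factors through a proper quotient of $T_h$. More precisely, since ${\bf N}$ is the unipotent radical of a proper $K$-rational parabolic of $\bfG$, the corresponding Levi has a block of type $\mathrm{Res}_{K_r/K}\mathrm{GL}_\ast$ for some proper divisor $r$ of $n$, and the $T_h$-action on (each piece of) $N_h \backslash X_h$ factors through the composition $T_h \cong U_L/U_L^h \twoheadrightarrow U_{K_r}/U_{K_r}^h$ given by the norm $\mathrm{N}_{n/r}$.

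With this description in hand the proof concludes quickly: since $\theta$ is in general position, Lemma~\ref{lm:factor_through_norm_equiv_cond} ensures that $\theta$ does not factor through any of the maps $\mathrm{N}_{n/r}$ with $r < n$, and hence $H_c^\ast(N_h \backslash X_h)_\theta = 0$. The main obstacle is the geometric step, namely producing a sufficiently clean description of $N_h \backslash X_h$ (Lemma~\ref{lm:description_of_quotient_kappa_arbitrary}). At reductive level $h=1$ one essentially recovers Lusztig's computation from \cite{Lusztig_76_Inv} for $\mathrm{GL}_n$, but at deeper Moy--Prasad level the quotient map is governed by rather delicate polynomial identities among the entries and minors of the $\bfGL_n$-matrices representing points of $X_h$, and having to accommodate arbitrary Hasse invariant $\kappa/n$ -- so that the special fibre of $\bfG_\cO$ need not be reductive -- adds further bookkeeping; it is precisely here that Turnbull's minor identities supply the required structural input.
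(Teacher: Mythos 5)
Your reduction to showing $H_c^\ast(N_h\backslash X_h)_\theta=0$ is exactly the paper's first step (modulo the small point, handled in Lemma \ref{lm:quotient_exists_and_smooth}, that the $N_h$-action is free so the quotient exists and $X_h\to N_h\backslash X_h$ is finite \'etale). The gap is in the claimed structure of the quotient and in the concluding mechanism. The explicit description of $N_h\backslash X_h$ (Lemmas \ref{lm:description_of_quotient}, \ref{lm:description_of_quotient_kappa_arbitrary}) identifies it with a locus of pairs $(m,x')$ where $x'=(x_i)_{i>i_0}$ is carried over verbatim from $X_h$ and $m$ is a vector of minors; the induced right $T_h$-action is $t\colon (m,x')\mapsto (m\cdot\prod_{j=0}^{i_0-1}\sigma^j(t),\, x'\cdot t)$. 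In particular $T_h$ acts on the $x'$-component by plain entrywise multiplication, which is a \emph{free} action (some entry of $x'$ is a unit because $|g(x')|$ is invertible). So the $T_h$-action on $N_h\backslash X_h$ does not factor through any proper quotient of $T_h$, and certainly not through a norm map $\N_{n/r}\colon U_L/U_L^h\twoheadrightarrow U_{K_r}/U_{K_r}^h$. Consequently the final step of your argument --- ``$\theta$ in general position does not factor through $\N_{n/r}$, hence the $\theta$-isotypic part dies'' --- has nothing to bite on; the premise it needs is false.

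What actually makes the cohomology vanish is an auxiliary group action, not the $T_h$-action: the two-torus $\bG_m^2$ scales the coordinates $m$ and $x'$ separately, and the determinantal relation $|g_{i_0}(m)|/|g_{n-i_0}(x')|^{\sum_j\sigma^j}\in\bW_h^\times(\FF_q)$ cuts out a one-dimensional subgroup $H\subseteq\bG_m^2$ preserving $\alpha(N_h\backslash X_h)$ and commuting with $T_h$. Its connected component $H^\circ$ is a one-dimensional torus whose projection to at least one $\bG_m$-factor is surjective, so $H^\circ$ acts without fixed points; the torus fixed-point theorem (\cite[10.15 Proposition]{DigneM_91}, in its $T_h$-equivariant form) then gives $H_c^\ast(N_h\backslash X_h)=H_c^\ast(\varnothing)=0$. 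Note this kills the cohomology for \emph{every} character $\theta$ of $T_h$ (whence the Corollary about arbitrary $\theta$ at level $h=1$), whereas your route, even if repaired, could only address $\theta$ in general position. The Turnbull identities enter precisely to prove the determinantal relation that defines $H$ --- they are not used to produce an affine-space bundle structure or a norm-factorization of the $T_h$-action.
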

We prove Proposition \ref{prop:no_trivial_char} in Section \ref{sec:proof_no_triv_char_kappa_0} in the case $\kappa = 0$, and in Section \ref{sec:proof_no_triv_char_kappa_arbitrary} in general. The proof in the general case is more technical, but follows exactly the same idea as in the special case $\kappa = 0$. For reasons of clarity we explain the special case first. 

The explicit description in Lemma \ref{lm:description_of_quotient} used in the proof of Proposition \ref{prop:no_trivial_char} is -- to the author's knowledge -- already new for classical Deligne--Lusztig varieties, i.e., when $h=1$ (and $\kappa = 0$). In particular, for the Coxeter-type variety for $\GL_{n,\FF_q}$ it gives an alternative and much more direct proof of the cuspidality result for Coxeter-type varieties \cite[Theorem 8.3]{DeligneL_76}, which is the last statement of the following corollary to Proposition \ref{prop:no_trivial_char}.
\begin{Cor}
Let $n \geq 1$, and let $X$ be a Deligne--Lusztig variety of Coxeter type attached to $\GL_{n,\FF_q}$. Let $\theta$ be an \emph{arbitrary} character of $T_1 \cong \FF_{q^n}^\times$, the corresponding $\GL_n(\FF_q)$-representation $R(\theta)$ realized in the cohomology of $X$, satisfies $R(\theta)^{N(\FF_q)} = 0$, for any unipotent radical $N$ of a proper rational parabolic subgroup of $\GL_n$. In particular, if $\theta$ is in general position, the genuine $\GL_n(\FF_q)$-representation $|R(\theta)|$ is irreducible cuspidal.
\end{Cor}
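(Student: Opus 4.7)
The plan is to deduce this corollary directly from the $h=1$, $\kappa=0$ case of Proposition \ref{prop:no_trivial_char} by unwinding definitions. In that case the paper's setup specializes to the classical one: $\bfG = \bfGL_n$ is already split over $K$, the maximal compact is $G_\cO = \GL_n(\cO_K)$, the level-$1$ quotient is $G_1 = \GL_n(\FF_q)$, the torus quotient is $T_1 = \FF_{q^n}^\times$, and as recalled in Section~\ref{sec:representations_prelim} the perfect scheme $X_1$ is the perfection of the classical Coxeter-type Deligne--Lusztig variety $X$ for $\GL_{n,\FF_q}$. The representation $R_{T_1}^{G_1}(\theta)$ is inflated from $G_1$ along $G_\cO \twoheadrightarrow G_1$, and it agrees with the classical Deligne--Lusztig representation $R(\theta)$.

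Next, I would identify the proper $K$-rational parabolic subgroups of $\bfG$ with the proper $\FF_q$-rational parabolic subgroups of $\GL_{n,\FF_q}$ via the special fiber. If $\mathbf{P}$ is such a proper $K$-rational parabolic with unipotent radical $\mathbf{N}$, and if $\overline{N}$ denotes the unipotent radical of the corresponding parabolic of $\GL_{n,\FF_q}$, then the projection $G_\cO \twoheadrightarrow G_1 = \GL_n(\FF_q)$ maps $N \cap G_\cO$ onto $\overline{N}(\FF_q)$. Since $R_{T_1}^{G_1}(\theta)$ factors through $G_1$, we obtain
\[
R(\theta)^{\overline{N}(\FF_q)} = R_{T_1}^{G_1}(\theta)^{N \cap G_\cO},
\]
and the right-hand side vanishes by Proposition \ref{prop:no_trivial_char}. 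This proves the first assertion for an arbitrary $\theta$.

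For the final assertion, suppose $\theta$ is in general position, i.e.\ its stabilizer in $\Gal(\FF_{q^n}/\FF_q) = W_\cO^F$ (here $W_\cO = S_n$, since $\kappa=0$) is trivial. Then Corollary \ref{cor:irreducibility} applies and yields that $\pm R(\theta)$ is an irreducible representation of $G_1 = \GL_n(\FF_q)$, so the genuine representation $|R(\theta)|$ is irreducible. Combined with the vanishing of $\overline{N}(\FF_q)$-invariants established above (for every proper $\FF_q$-rational parabolic), Frobenius reciprocity applied to parabolic induction from a proper Levi gives $\Hom_{\GL_n(\FF_q)}(|R(\theta)|,\Ind_{P(\FF_q)}^{\GL_n(\FF_q)} \sigma) = 0$ for every $\sigma$, i.e.\ $|R(\theta)|$ is cuspidal, completing the proof.

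The only step that requires care is the bookkeeping in the middle paragraph: matching proper $K$-rational parabolics of $\bfGL_n$ with proper $\FF_q$-rational parabolics of $\GL_{n,\FF_q}$ and checking that $N \cap G_\cO$ surjects onto $\overline{N}(\FF_q)$ under the reduction map. Both are standard from Bruhat--Tits theory for $\GL_n$ (the parahoric $\bfG_\cO$ is hyperspecial with reductive special fiber $\GL_{n,\FF_q}$), so this is routine rather than a genuine obstacle.
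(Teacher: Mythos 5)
Your proof is correct and follows the paper's intended route: the corollary is presented there precisely as the $h=1$, $\kappa=0$ specialization of Proposition \ref{prop:no_trivial_char}, combined with Corollary \ref{cor:irreducibility} for the final cuspidality statement, which is exactly your argument. The one point worth making explicit is that Proposition \ref{prop:no_trivial_char} sits under the standing hypothesis of Section \ref{sec:Cuspidality} that $\theta$ is in general position, whereas the corollary asserts the vanishing for \emph{arbitrary} $\theta$; this is harmless because the argument of Section \ref{sec:proof_no_triv_char_kappa_0} (the description of $N_h\backslash X_h$ in Lemma \ref{lm:description_of_quotient} and the fixed-point-free action of the torus $H^\circ$) nowhere uses that assumption, but your write-up should say so.
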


\begin{rem} The proof of Proposition \ref{prop:no_trivial_char} is based on the key lemmas \ref{lm:description_of_quotient}, \ref{lm:description_of_quotient_kappa_arbitrary}, where the quotient $N_h \backslash X_h$ is determined. If $\overline{X}_h$ denotes the quotient of $X_h$ by the $T_h$-action, then (the cohomology of) $N_h \backslash \overline{X}_h$ can probably be computed in big generality by same methods as in \cite[(2.10)]{Lusztig_76_Inv} (where Coxeter-type Deligne--Lusztig varieties in the flag manifold for a reductive group $\bG$ over $\FF_q$ are studied, in particular $h=1$). Proofs of Lemmas \ref{lm:description_of_quotient}, \ref{lm:description_of_quotient_kappa_arbitrary} suggest that the quotients $N_h \backslash X_h$ are harder to understand than $N_h \backslash \overline{X}_h$. 

For $h=1$ and $\bG$ arbitrary reductive group over $\FF_q$, a quotient similar to $N_h \backslash X_h$ appears in \cite[Section 3.2]{BonnafeR_06}, \cite{Dudas_13} and a couple of related articles. The methods used in \cite{BonnafeR_06} are indirect in the sense that the structure of the \emph{tame} fundamental group of the multiplicative group $\bG_{m, \overline{\FF}_q}$ is used. In our situation these methods would only apply in the case $h=1$, because for $h>1$ the natural covering $X_h \rightarrow \overline{X}_h$ is wildly ramified. 
\end{rem}

\subsection{Proof of Proposition \ref{prop:no_trivial_char} for $\kappa = 0$}\label{sec:proof_no_triv_char_kappa_0}

For $N$ to have a convenient form, we take $b = 1$. We also take $\dot w_1$ to be the element $b_0$ as in \eqref{eq:new_Cox_rep}. Then literally $G = \bfGL_n(K)$, $G_\cO = \bfGL_n(\cO_K)$ and $G_h = \GL_n(\cO_K/(\varpi^h))$. Let $N_h$ denote the image of $N \cap G_\cO$ in $G_h$.  We can assume that $N$ is the unipotent radical of a \emph{maximal} proper parabolic subgroup. Moreover, conjugating $N$ if necessary, we may assume that there is an $1\leq i_0\leq n-1$, such that $N$ consists of matrices $u = (u_{ij})_{1\leq i,j \leq n}$ with $u_{ii} = 1 \forall 1\leq i\leq n$, and $u_{ij} = 0$ unless $i = j$ or ($1\leq i \leq i_0$ and $n-i_0 < j \leq n$). As the actions of $G_h$ and $T_h$ on $X_h$ commute, we have
\[
R_{T_h}^{G_h}(\theta)^{N_h} = H_c^\ast(X_h)_\theta^{N_h} = H_c^\ast(N_h\backslash X_h)_\theta.
\]

We introduce some convenient notation. For $r \geq 1$, and an $r \times r$-matrix $g$, let $|g| := \det g$. For $x = (x_i)_{i=1}^r \in \bW_h(R)^r$, write $g_r(x)$ for the $r \times r$-matrix whose $i$th column is $\sigma^{i-1}(x)$. Also we put 
\[
Y_{r,h} := \{x \in \bW_h^r \colon |g_r(x)| \in \bW_h^\times \}.
\]
This is a functor on ${\rm Perf}_{\FF_q}$, which is represented by an affine perfectly finitely presented perfect $\FF_q$-scheme. The description of $X_h$ in \cite[7.2]{CI_ADLV} says precisely that $X_h \subseteq Y_{n,h}$ is a closed subset defined by the condition $\sigma(|g_n(x)|) = (-1)^{n-1}|g_n(x)|$. 

\begin{lm}\label{lm:quotient_exists_and_smooth}
The quotient $N_h\backslash X_h$ exists as a perfect scheme, and $X_h \rightarrow N_h \backslash X_h$ is finite \'etale. 
\end{lm}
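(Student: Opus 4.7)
The plan is to reduce the statement to showing that the finite abstract group $N_h$ acts freely on the affine perfect scheme $X_h$; given freeness, the existence of the quotient as a perfect scheme and the finite étaleness of the cover follow by standard arguments.

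First, I would remark that $N_h$ is a finite group, being the image of $N \cap G_\cO$ in the finite group $G_h = \bG_h(\FF_q) = \GL_n(\cO_K/\varpi^h)$. Because we have taken $b = 1$ in this subsection, the Frobenius $F$ on $\bG_h$ coincides with $\sigma$, so every element $u \in N_h \subseteq \bG_h(\FF_q)$ satisfies $\sigma(u) = u$; equivalently, its matrix entries lie in $\bW_h(\FF_q) = \cO_K/\varpi^h$.

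Next, I would verify freeness. Under the identification of $X_h$ with the closed subscheme of $\bW_h^n$ cut out by the condition $|g_n(x)| \in \bW_h^\times(\FF_q)$ recalled above, the $G_h$-action is left matrix multiplication on the column $x$. Suppose $u \in N_h$ and $x \in X_h(\overline\FF_q)$ satisfy $ux = x$. Applying $\sigma^i$ and using $\sigma(u) = u$ gives $u\,\sigma^i(x) = \sigma^i(x)$ for every $i$, so $u$ fixes every column of $g_n(x)$. Since $g_n(x) \in \GL_n(\bW_h(\overline\FF_q))$, its columns form a $\bW_h(\overline\FF_q)$-basis and therefore $u = 1$. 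This checks freeness on geometric points; by Lemma \ref{lm:mono_perf_schemes} applied to the separated morphism $N_h \times X_h \to X_h \times X_h$, $(u,x) \mapsto (ux, x)$, the action is scheme-theoretically free.

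Finally, I would invoke standard GIT for free finite group actions on affine schemes. Writing $X_h = \Spec A$, the ring of invariants $A^{N_h}$ is perfect: if $a = b^p \in A^{N_h}$, then $u(b)^p = u(b^p) = b^p$ forces $u(b) = b$, since Frobenius is injective on the perfect ring $A$. Hence $N_h \backslash X_h \colonequals \Spec A^{N_h}$ exists as an affine perfect scheme, the inclusion $A^{N_h} \subseteq A$ is finite, and freeness of the action promotes $X_h \to N_h \backslash X_h$ to an étale $N_h$-torsor of degree $\#N_h$. The only nonformal step is the freeness check, and it is essentially immediate from the ``$\sigma$-cyclic basis'' structure built into the definition of $X_h$.
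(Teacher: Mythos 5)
Your proof is correct and follows essentially the same route as the paper, which simply notes that $X_h$ is affine and $N_h$ finite (so the quotient exists) and that the action has no fixed points (so the map is finite \'etale). The only difference is that you actually supply the freeness verification via the $\sigma$-cyclic basis $x,\sigma(x),\dots,\sigma^{n-1}(x)$ and the rationality of $u\in N_h$, a check the paper asserts without proof.
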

\begin{proof} $X_h$ is affine and $N_h$ finite, so the quotient exists. As the action has no fixed points the last claim also follows.
\end{proof}

\begin{lm}\label{lm:description_of_quotient}
There is an isomorphism of perfect schemes
\[ 
\alpha \colon N_h\backslash X_h \rightarrow \left\{ (m,x') \in Y_{i_0,h} \times Y_{n-i_0,h} \colon \frac{|g_{i_0}(m)|}{|g_{n - i_0}(x')|^{\sum_{j=1}^{i_0-1} \sigma^j}} \in \bW_h^\times(\FF_q) \right\},
\]
induced by $x = (x_i)_{i=1}^n \mapsto ((m_i(x))_{i=1}^{i_0},(x_i)_{i=i_0+1}^n)$, where $m_i(x)$ is the $(n-i_0 +1) \times (n-i_0 + 1)$-minor of $g_n(x)$ given by
\[
m_i(x) := \begin{vmatrix} x_i & \sigma(x_i) & \dots & \sigma^{n - i_0}(x_i) \\
x_{i_0+1} & \sigma(x_{i_0+1}) & \dots & \sigma^{n - i_0}(x_{i_0+1}) \\
x_{i_0+2} & \sigma(x_{i_0+2}) & \dots & \sigma^{n - i_0}(x_{i_0+2}) \\
\dots &\dots &\dots &\dots \\
x_n & \sigma(x_n) & \dots & \sigma^{n - i_0}(x_n) \\
\end{vmatrix}
\]
\end{lm}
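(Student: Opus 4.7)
The plan is to establish the isomorphism in three steps, of which the second is the main technical point.

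First, $\alpha$ factors through $N_h\backslash X_h$. The components $x_j$ for $j>i_0$ are pointwise fixed by $N_h$, and for $1\le i\le i_0$ the action sends $x_i$ to $x_i+\sum_j u_{ij}x_j$ with $\sigma$-fixed scalars $u_{ij}\in\bW_h(\mathbb{F}_q)$ and $j$ ranging over the free-entry columns of $N$. Adding such a combination to the top row of the matrix defining $m_i$ amounts to adding $\sigma$-scalar multiples of the rows $x_{i_0+1},\dots,x_n$ that are already present in that matrix, so the determinant $m_i$ is unchanged.

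Second, I would verify that $\alpha(x)$ lies in the claimed subscheme. The key ingredient is the $\sigma$-twisted Sylvester--Turnbull minor identity
\[
|g_{i_0}(m)| \;=\; |g_n(x)|\cdot\prod_{j=1}^{i_0-1}\sigma^j\bigl(|g_{n-i_0}(x')|\bigr),
\]
which I would establish via Cauchy--Binet applied to a factorization $|g_{i_0}(m)|=\det(XG)$, where $X=(\sigma^{k-1}(x_i))_{i,k}$ encodes the $\sigma$-shifts of $x_1,\dots,x_{i_0}$ and $G$ collects the cofactors of the $x'$-submatrix (alternatively, by induction on $i_0$ using Laplace expansion). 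Granting the identity, the ratio in the lemma statement equals $|g_n(x)|\in\bW_h^\times(\mathbb{F}_q)$ by definition of $X_h$. It remains to see $|g_{n-i_0}(x')|\in\bW_h^\times$: reducing mod $\varpi$, any $\mathbb{F}_q$-linear relation among $\bar{x}_{i_0+1},\dots,\bar{x}_n$ would produce, using $\sigma$-fixity of $\mathbb{F}_q$-scalars, an $\mathbb{F}_q$-linear relation among the corresponding rows of $g_n(\bar{x})$, contradicting $|g_n(\bar{x})|\neq 0$; the displayed identity then forces $|g_{i_0}(m)|\in\bW_h^\times$ as well.

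Third, for the isomorphism: by Lemma \ref{lm:quotient_exists_and_smooth}, $N_h\backslash X_h$ exists as a perfect scheme with $X_h\to N_h\backslash X_h$ finite \'etale, so in view of Lemma \ref{lm:mono_perf_schemes} it suffices to check bijectivity of $\alpha$ on $\overline{\mathbb{F}}_q$-points. For surjectivity, given $(m,x')$ in the target, I would solve $m_i(y,x')=m_i$ for $y\in\bW_h$: the map $y\mapsto m_i(y,x')$ is a $\sigma$-linear polynomial in $y$ of degree $n-i_0$ with unit leading coefficient $\pm|g_{n-i_0}(x')|$, so a solution can be constructed inductively through the Witt-vector layers. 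For injectivity on $N_h\backslash X_h$, two preimages $x,\tilde x$ of the same $(m,x')$ satisfy $\tilde x_j=x_j$ for $j>i_0$ and $m_i(\tilde x_i,x')=m_i(x_i,x')$; by the Moore determinant vanishing criterion (lifted to $\bW_h$), the kernel of $y\mapsto m_i(y,x')$ is precisely the $\bW_h(\mathbb{F}_q)$-span of $x_{i_0+1},\dots,x_n$, which matches precisely the $N_h$-orbit direction of $x_i$. The main obstacle is the $\sigma$-twisted identity of the second step: the commutative Sylvester identity is classical, but here the Frobenius shifts must be carefully matched across minors, and this is the source of the twist $\sum_{j=1}^{i_0-1}\sigma^j$ in the exponent, which prevents a direct appeal to the scalar identity and forces the use of Cauchy--Binet on a $\sigma$-linear factorisation.
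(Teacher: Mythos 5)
Your overall architecture coincides with the paper's: $N_h$-invariance of the minors via row operations with $\sigma$-fixed coefficients, the twisted Sylvester identity $|g_{i_0}(m)| = |g_n(x)|\cdot|g_{n-i_0}(x')|^{\sum_{j=1}^{i_0-1}\sigma^j}$ (the paper's Lemma \ref{lm:equation_determinants}) to get well-definedness, surjectivity by solving the cofactor expansion of $m_i$ along its first row layer by layer through $\bW_h$, and injectivity by matching the solution set of that equation with an $N_h$-orbit. Your identification of the kernel of $y\mapsto m_i(y,x')$ with the $\bW_h(\FF_q)$-span of $x_{i_0+1},\dots,x_n$ is a slightly more structural version of the paper's counting argument (exactly $q^{i_0(n-i_0)h}=\#N_h$ preimages per fiber), and both work.

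There is, however, one genuine gap: the claim that it suffices to check bijectivity of $\alpha$ on $\overline{\FF}_q$-points is not what Lemma \ref{lm:mono_perf_schemes} gives you. That lemma reduces only the \emph{monomorphism} property to geometric points; surjectivity on $\overline{\FF}_q$-points does not by itself make a morphism of perfect schemes an epimorphism of \'etale (or fpqc) sheaves, and mono plus bijective-on-points is not yet an isomorphism without a finiteness input you have not supplied. The paper closes this by running your surjectivity argument over an \emph{arbitrary} perfect test ring $R$: the equation $m_i=\sum_{k=0}^{n-i_0}(-1)^{k+1}\sigma^k(x_i)\,|g_{n-i_0,k}(x')|$, read modulo $\varpi^{j+1}$, is a separable polynomial of degree $q^{n-i_0}$ with unit extreme coefficients, so a solution exists after a finite \'etale extension $R'/R$; hence $\alpha$ is an \'etale epimorphism, and monomorphism plus epimorphism of sheaves gives the isomorphism. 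Your layer-by-layer construction is exactly this argument, so the repair is a matter of stating it for general $R$ (and of noting, via the determinant identity read backwards, that the constructed $x$ automatically satisfies $|g_n(x)|\in\bW_h^\times(\FF_q)$, i.e., lies in $X_h$). Separately, I would not lean on the bare Cauchy--Binet factorization $g_{i_0}(m)=A\cdot G$ for the determinant identity: it produces $\binom{n}{i_0}$ terms whose collapse to the product formula is precisely the nontrivial point; the paper instead proves the identity by induction on $i_0$ via Laplace expansion, with the induction step settled by Turnbull's alternating tableau identity. Since that identity is an external lemma here, this affects only your sketch of it, not the structure of the present proof.
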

\begin{proof} It is clear that the assignment in the lemma defines an $N_h$-equivariant morphism $X_h \rightarrow (\bW_h)^{i_0} \times (\bW_h)^{n-i_0}$ (with trivial $N_h$-action on the right). Thus it induces a map $N_h\backslash X_h \rightarrow (\bW_h)^{i_0} \times (\bW_h)^{n-i_0}$.
 
A standard argument shows that for $x = (x_i)_{i=1}^n \in X_h(R)$ with corresponding $x' = (x_i)_{i=i_0+1}^n$ and $m = (m_i(x))_{i=1}^{i_0}$, one has that $g_{n-i_0}(x') \in \bW_h^\times(R)$ (see e.g.\ \cite[Lemma 6.13]{CI_ADLV}). This combined with Lemma \ref{lm:equation_determinants} below, shows that we also have $|g_{i_0}(m)| \in \bW_h^\times(R)$. Thus (using Lemma \ref{lm:equation_determinants} again), we see that $\alpha$ is well-defined. 

To prove the lemma, it now suffices to check that $\alpha$ is an isomorphism of \'etale sheaves on $\Perf_{\overline{\FF}_q}$. First we check that as a map of \'etale sheaves, $\alpha$ is surjective. Let $R \in \Perf_{\overline{\FF}_q}$. Let $Z$ denote the target of $\alpha$, and let $m = (m_i)_{i = 1}^{i_0}, x'= (x_i')_{i=i_0+1}^n$ be a element of $Z(R)$. We construct a preimage $x = (x_i)_{i=1}^n \in X_h(R')$ for some \'etale $R$-algebra $R'$. Take $x_i = x_i'$ for $i_0+1\leq i\leq n$. Now, we can find an (finite) \'etale $R$-algebra $R'$, and for each $1 \leq i \leq i_0$, an $x_i = \sum_{j=0}^{h-1} [x_{i,j}]\varpi^j \in \bW_h(R')$ such that 
\begin{equation}\label{eq:expansion_of_mix_first_row}
m_i = m_i(x) = \sum_{k=0}^{n-i_0} (-1)^{k+1} \sigma^k(x_i) \cdot |g_{n-i_0,k}(x')|,
\end{equation}
holds in $\bW_h(R')$, where $g_{n-i_0,k}(x')$ denotes the $(n-i_0) \times (n-i_0)$-matrix whose columns are $x', \sigma(x'), \dots, \widehat{\sigma^k(x')}$, $\dots, \sigma^{n-i_0}(x')$ (here $\widehat{\cdot}$ means that the vector $\cdot$ is omitted). Indeed, note that for $k=n-i_0$ and for $k = 0$, we have 
\begin{equation}\label{eq:some_dets_are_invertible}
|g_{n-i_0,n-i_0}(x')| = |g_{n-i_0,0}(x')| = |g_{n-i_0}(x')| \in \bW_h^\times(\FF_q) 
\end{equation}
Thus, fixing an $i$, and proceeding successively for $j=0,1, \dots, h-1$, we can take \eqref{eq:expansion_of_mix_first_row} modulo $\varpi^{j+1}$ and resolve it for $x_{i,j}$, noting that each time to find a solution we need a (finite) \'etale extension of $R$. Thus $\alpha$ is an epimorphism of \'etale sheaves.

By Lemma \ref{lm:mono_perf_schemes} it remains to show that $\alpha(R) \colon (N_h\backslash X_h)(R) \rightarrow Z(R)$ is injective whenever $R$ is an algebraically closed field. With notation as above, for a fixed $1\leq i\leq i_0$ and $x_{i,0}, x_{i,1}, \dots, x_{i,j-1}$, Equation \eqref{eq:expansion_of_mix_first_row} gives an equation for $x_{i,j}$ of degree precisely $q^{n-i_0}$ (by \eqref{eq:some_dets_are_invertible}), which is separable (by \eqref{eq:some_dets_are_invertible} again). Doing this for each $1\leq i\leq i_0$ and $0 \leq j<h$, we obtain precisely $q^{i_0(n-i_0)h}$ possible values for $x = (x_i)_{i=1}^n \in \bW_h(R)^n$ which map to the given point $(m,x') \in Z(R)$. By Lemma \ref{lm:equation_determinants} all those $x$ automatically lie in $X_h(R)$. This shows that each fiber of the composition of $X_h(R) \rightarrow (N_h \backslash X_h)(R)$ with $\alpha(R)$ has precisely $q^{i_0(n-i_0)h} = \#N_h$ points, i.e., that $\alpha(R)$ is injective. The lemma is proven.
\end{proof}

\begin{lm}\label{lm:equation_determinants}
Let $n \geq 2$, $1\leq i_0 \leq n-1$. For an $\FF_q$-algebra $R$ and $x = (x_i)_{i=1}^n \in Y_{n,h}(R)$, let $m = (m_i(x))_{i=1}^{i_0} \in Y_{i_0,h}(R)$, $x'= (x_i)_{i=i_0+1}^n \in Y_{n-i_0,h}(R)$. Then 
\begin{equation}\label{eq:det_equation}
|g_{i_0}(m)| = |g_n(x)| \cdot |g_{n - i_0}(x')|^{\sum_{j=1}^{i_0-1} \sigma^j}
\end{equation}
\end{lm}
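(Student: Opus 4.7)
My plan is to reduce the identity, via the Schur complement, to the intrinsic statement $\det(S) = |g_{i_0}(s)|$ for an $i_0 \times i_0$ matrix $S$ associated to $g_n(x)$, and then to verify this reduced identity by column operations. Since both sides of the asserted equality are polynomials in the formal entries $\sigma^k(x_i)$, it suffices (by specialization from the universal polynomial ring) to verify it on the open locus where $|B_1| := |g_{n-i_0}(x')|$ is invertible. Under this hypothesis, I would block-partition
\[ g_n(x) = \begin{pmatrix} A_1 & A_2 \\ B_1 & B_2 \end{pmatrix} \]
with $B_1$ the lower-left $(n-i_0)\times(n-i_0)$ block, and set $S := A_2 - A_1 B_1^{-1} B_2$, the Schur complement of $B_1$. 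The standard Schur determinant formula yields $|g_n(x)| = (-1)^{i_0(n-i_0)} |B_1| \det(S)$.

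Next, the same Schur-complement expansion applied inside the $(n-i_0+1)\times(n-i_0+1)$ minor defining $m_i$ (whose lower-left $(n-i_0)\times(n-i_0)$ block is again $B_1$) gives $m_i = (-1)^{n-i_0} |B_1| \cdot S_{i,1}$. Applying $\sigma^{j-1}$ and factoring the scalar $(-1)^{n-i_0} \sigma^{j-1}(|B_1|)$ out of column $j$ of $g_{i_0}(m) = [\sigma^{j-1}(m_i)]_{i,j=1}^{i_0}$ produces
\[ |g_{i_0}(m)| = (-1)^{i_0(n-i_0)} \prod_{j=0}^{i_0-1} \sigma^j(|B_1|) \cdot |g_{i_0}(s)|, \qquad s := (S_{i,1})_{i=1}^{i_0}. \]
Combined with the previous display, the target identity collapses to $\det(S) = |g_{i_0}(s)|$.

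The main obstacle is establishing this reduced statement, which I plan to do by proving that for each $j = 1, \ldots, i_0$,
\[ S[:, j] \equiv \sigma^{j-1}(s) \pmod{\Span\{s, \sigma(s), \ldots, \sigma^{j-2}(s)\}}; \]
this realizes $g_{i_0}(s)$ as the product of $S$ with an upper unitriangular matrix, so $\det(g_{i_0}(s)) = \det(S)$. To verify the congruence, I would set $U_1 := A_1 B_1^{-1}$ and use the explicit formula $S[:, j] - \sigma^{j-1}(s) = (\sigma^{j-1}(U_1) - U_1)\,B[:, n-i_0+j]$. The matrix $\sigma^{j-1}(U_1) - U_1$ vanishes on $\Span\{B[:, j], \ldots, B[:, n-i_0]\}$, because both $U_1$ and $\sigma^{j-1}(U_1)$ send $B[:, k]$ to $A[:, k]$ for $k$ in the overlap $\{j, \ldots, n-i_0\}$; it therefore has rank at most $j-1$. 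A short induction on $j$, computing $(\sigma^{j-1}(U_1) - U_1)\,B[:, n-i_0+k]$ for $k = 1, \ldots, j-1$, identifies its image with $\Span\{s, \sigma(s), \ldots, \sigma^{j-2}(s)\}$ and yields the desired congruence. This image-identification is the technical core; it amounts to a Pl\"ucker-type relation among $(n-i_0)$-minors of $B$, and the whole reduced identity is a special case of the classical non-square-matrix minor identities of Turnbull~\cite{Turnbull_1909} referred to in the introduction.
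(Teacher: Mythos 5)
Your argument is correct, but it takes a genuinely different route from the paper. The paper proves the identity by induction on $i_0$: expanding $|g_{i_0}(m)|$ along its first column reduces everything to the single relation $\sum_{i=1}^{i_0}(-1)^{i+1}m_i\,\sigma(|g_{n-1}(x^{(i)})|)=|g_n(x)|\,\sigma(|g_{n-i_0}(x')|)$, and the difference of the two sides is exhibited as a tableau with $n+1$ boxed entries, which vanishes by Turnbull's identity; this works verbatim over an arbitrary commutative ring. You instead pass to the universal polynomial ring, invert $|B_1|$ and its $\sigma$-twists, and use Schur complements to strip the common factor $\prod_{j=0}^{i_0-1}\sigma^j(|B_1|)$ from both sides, reducing the lemma to the linear-algebra statement $\det S=|g_{i_0}(S[:,1])|$. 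Your route avoids the tableau/straightening formalism entirely and makes the appearance of the twisted factor $|g_{n-i_0}(x')|^{\sum\sigma^j}$ transparent, at the cost of the generic-point reduction (which is valid here, since both sides are specializations of a single polynomial identity in the independent variables $\sigma^k(x_i)$ and $\bZ[Y]$ is a domain); the paper's route works directly over any base and sets up the formalism it reuses for the harder Lemma \ref{lm:equation_determinants_kappa_arbitrary}.

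One refinement to your last step: the image of $D_j:=\sigma^{j-1}(U_1)-U_1$ is in general only \emph{contained} in $\Span\{s,\dots,\sigma^{j-2}(s)\}$, not equal to it; when $j-1>n-i_0$ equality is impossible for dimension reasons, since $\mathrm{rank}\,D_j\le n-i_0$ while $s,\dots,\sigma^{j-2}(s)$ are generically independent (also, your rank bound $j-1$ from the overlap $\{j,\dots,n-i_0\}$ becomes vacuous in that range). Containment is all you need for the congruence, and it follows cleanly from the telescoping $D_j=\sum_{l=0}^{j-2}\bigl(\sigma^{l+1}(U_1)-\sigma^{l}(U_1)\bigr)$: each summand kills the $n-i_0-1$ columns $B[:,l+2],\dots,B[:,n-i_0+l]$ of the invertible matrix $\sigma^{l+1}(B_1)$ and sends $B[:,n-i_0+l+1]$ to $\sigma^l(s)$, hence has image inside the line through $\sigma^l(s)$. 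With that adjustment your induction closes for all $1\le j\le i_0$ and the proof is complete.
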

\begin{proof}
For $v = (v_j)_{j = 1}^r \in Y_{r,h}(R)$, and $1\leq i\leq r$, let $v^{(i)} = (v_j)_{j = 1; j \neq i}^n \in Y_{r-1,h}(R)$ denote the vector $v$ with $i$-th coordinate omitted. The claim is tautological for $i_0 = 1$ (in particular, we may assume $n > 2$). We use induction on $i_0$. Expanding along the first column and using the induction hypothesis (for $n-1,i_0-1$), we get
\begin{align*}
|g_{i_0}(m)| = \sum_{i=1}^{i_0} (-1)^{i+1} m_i \sigma\left(|g_{i_0 - 1}(m^{(i)})|\right) 
= \sum_{i=1}^{i_0} (-1)^{i+1} m_i \sigma\left(|g_{n-1}(x^{(i)})| \cdot \prod_{j=1}^{i_0 - 2}\sigma^j \left(|g_{n-i_0}(x')|\right)\right)
\end{align*}
To show that this equals the right hand side of \eqref{eq:det_equation} it suffices to show that
\begin{equation}\label{eq:intermediate_determinantal_equation}
\sum_{i=1}^{i_0}(-1)^{i+1} m_i \sigma\left(|g_{n-1}(x^{(i)})|\right) = |g_n(x)|\cdot \sigma\left(|g_{n-i_0}(x')|\right).
\end{equation}
This follows from a classical minor identity of Turnbull \cite{Turnbull_1909}. We use the more modern source \cite{Leclerc_93}. Let us first recall some notation from \cite{Leclerc_93}. Let $S$ be a ring (commutative, with $1$). For $1\leq i\leq n$, let $a_i, b_i \in S^n$. Then the $2\times n$-tableau  
\[ T = \tikz[baseline=(M.west)]{%
    \node[draw, inner sep=2pt, matrix of math nodes] (M) {%
      a_1 & a_2 & \dots & a_n \\
      b_1 & b_2 & \dots & b_n \\
      };
      \node[fit=(M-2-1) (M-2-2) (M-2-3) (M-2-4),inner sep=-1pt] (R2) {};
\draw(R2.north -| M.west) -- (R2.north -| M.east);
  } \in S
\]
is the product of the determinants of the two $n\times n$-matrices $A$ and $B$, where the $i$-th column of $A$ resp.\ $B$ is $a_i$ resp.\ $b_i$. Similarly one defines an $s\times n$-tableau for each positive integer $s$. The entries of the tableau are the elements $a_i$, $b_i$. More generally we need tableaux with boxes containing some of the entries. Let $T$ be a $s\times n$-tableau, let $A$ be a subset of elements of $T$. For a permutation $\sigma$ of elements of $A$, let $\sigma(T)$ denote the tableau obtained from $T$, where the elements of $A$ were permuted by $\sigma$. Then the tableau $\tau$ = ($T$ with boxes around entries in $A$) is defined as the alternating sum $\sum_\sigma {\rm sgn}(\sigma) \sigma(T)$, where the sum is taken over the cosets of the symmetric group on $A$, modulo the subgroup, which leaves unchanged the rows of $T$. We give an example for $n = 4$, $s=2$:
\[ \tikz[baseline=(M.west)]{%
    \node[draw, inner sep=2.5pt, matrix of math nodes] (M) {%
      a_1 & a_2 & a_3 & a_4 \\
      b_1 & b_2 & b_3 & b_4 \\
      };
      \node[fit=(M-2-1) (M-2-2) (M-2-3) (M-2-4),inner sep=-1pt] (R2) {};
\draw(R2.north -| M.west) -- (R2.north -| M.east);
\node[draw,fit=(M-1-1),inner sep=-1pt] {};
\node[draw,fit=(M-2-3),inner sep=-2pt] {};
\node[draw,fit=(M-2-4),inner sep=-2pt] {};
} \,=\, 
\tikz[baseline=(M.west)]{%
    \node[draw, inner sep=2.5pt, matrix of math nodes] (M) {%
      a_1 & a_2 & a_3 & a_4 \\
      b_1 & b_2 & b_3 & b_4 \\
      };
      \node[fit=(M-2-1) (M-2-2) (M-2-3) (M-2-4),inner sep=-1pt] (R2) {};
\draw(R2.north -| M.west) -- (R2.north -| M.east);
} \,-\, 
\tikz[baseline=(M.west)]{%
    \node[draw, inner sep=2.5pt, matrix of math nodes] (M) {%
      b_3 & a_2 & a_3 & a_4 \\
      b_1 & b_2 & a_1 & b_4 \\
      };
      \node[fit=(M-2-1) (M-2-2) (M-2-3) (M-2-4),inner sep=-1pt] (R2) {};
\draw(R2.north -| M.west) -- (R2.north -| M.east);
} \,-\, 
\tikz[baseline=(M.west)]{%
    \node[draw, inner sep=2.5pt, matrix of math nodes] (M) {%
      b_4 & a_2 & a_3 & a_4 \\
      b_1 & b_2 & b_3 & a_1 \\
      };
      \node[fit=(M-2-1) (M-2-2) (M-2-3) (M-2-4),inner sep=-1pt] (R2) {};
\draw(R2.north -| M.west) -- (R2.north -| M.east);
}.
\]

To continue with our proof, we take $S = \bW_h(R)$. For $1\leq i \leq n$, let ${\bf i} = (0_{i-1}, 1, 0_{n-i}) \in \bW_h(R)^n$ denote the $i$-th coordinate vector. An easy computation shows that
\begin{align*}
|g_n(x)|\cdot \sigma\left(|g_{n-i_0}(x')|\right) - &\sum_{i=1}^{i_0}(-1)^{i+1} m_i \sigma\left(|g_{n-1}(x^{(i)})|\right) 
= \pm \,
\tikz[baseline=(M.west)]{%
    \node[draw, inner sep=2.5pt, matrix of math nodes, ampersand replacement = \&] (M) {%
 {\bf 1} \& {\bf 2} \& \dots \& {\bf i_0} \& \sigma(x) \&  \dots \& \sigma^{n-i_0}(x) \\
      x \& \sigma(x) \&  \& \dots \& \&  \& \sigma^{n-1}(x) \\
 };
      \node[fit=(M-2-1) (M-2-2) (M-2-3) (M-2-4),inner sep=-1pt] (R2) {};
\draw(R2.north -| M.west) -- (R2.north -| M.east);
\node[draw,fit=(M-1-1),inner sep=-1pt] {};
\node[draw,fit=(M-1-2),inner sep=-2pt] {};
\node[draw,fit=(M-1-4),inner sep=-2pt] {};
\node[draw,fit=(M-2-1),inner sep=-1pt] {};
}
\end{align*}
With other words, to show \eqref{eq:intermediate_determinantal_equation} it suffices to show that the tableau on the right side vanishes. Towards this we have 
\[
\tikz[baseline=(M.west)]{%
    \node[draw, inner sep=2.5pt, matrix of math nodes, ampersand replacement = \&] (M) {%
 {\bf 1} \& {\bf 2} \& \dots \& {\bf i_0} \& \sigma(x) \&  \dots \& \sigma^{n-i_0}(x) \\
      x \& \sigma(x) \&  \& \dots \& \&  \& \sigma^{n-1}(x) \\
 };
      \node[fit=(M-2-1) (M-2-2) (M-2-3) (M-2-4),inner sep=-1pt] (R2) {};
\draw(R2.north -| M.west) -- (R2.north -| M.east);
\node[draw,fit=(M-1-1),inner sep=-1pt] {};
\node[draw,fit=(M-1-2),inner sep=-2pt] {};
\node[draw,fit=(M-1-4),inner sep=-2pt] {};
\node[draw,fit=(M-2-1),inner sep=-1pt] {};
} = 
\tikz[baseline=(M.west)]{%
    \node[draw, inner sep=2.5pt, matrix of math nodes, ampersand replacement = \&] (M) {%
 {\bf 1} \& {\bf 2} \& \dots \& {\bf i_0} \& \sigma(x) \&  \dots \& \sigma^{n-i_0}(x) \\
      x \& \sigma(x) \&  \& \dots \& \&  \& \sigma^{n-1}(x) \\
 };
      \node[fit=(M-2-1) (M-2-2) (M-2-3) (M-2-4),inner sep=-1pt] (R2) {};
\draw(R2.north -| M.west) -- (R2.north -| M.east);
\node[draw,fit=(M-1-1),inner sep=-1pt] {};
\node[draw,fit=(M-1-2),inner sep=-2pt] {};
\node[draw,fit=(M-1-4),inner sep=-2pt] {};
\node[draw,fit=(M-1-5),inner sep=-2pt] {};
\node[draw,fit=(M-1-7),inner sep=-2pt] {};
\node[draw,fit=(M-2-1),inner sep=-1pt] {};
} = 0
\]
Here the first equality is immediate from the definition of a tableau with boxes and the fact that the entries $\sigma(x), \dots, \sigma^{n-i_0}(x)$ appear in the second row, and the second equality is an application of Turnbull's identity \cite{Turnbull_1909} (see \cite[Proposition 1.2.2(i)]{Leclerc_93}), which claims that if the number $k$ of boxed entries satisfies $k > n$, then the tableau vanishes. Indeed, viewed as a function on the boxed entries the tableau is a linear \emph{alternating} (not only skew-symmetric as stated in  the proof of \cite[Proposition 1.2.2(i)]{Leclerc_93}) form on $S^n$ in $k$ variables, which must therefore vanish, as $\Lambda^k_S M = 0$ for any finitely generated $S$-module $M$ which can be generated by $n$ elements (in \emph{loc.~cit.} the proof is only formulated when $S$ is a field, but it generalizes to all rings). 
\end{proof}

We continue with the proof of Proposition \ref{prop:no_trivial_char} for $\kappa = 0$. The group $\bG_{m,\overline{\FF}_q}^2$ acts on $Y_{i_0,h} \times Y_{n-i_0,h}$ by 
\begin{equation}\label{eq:action_of_Gm2_on_YtimesY} 
(\tau_1,\tau_2) \colon (y,z) \mapsto (\tau_1y,\tau_2z). 
\end{equation}
(here $\tau_1 y := (\tau_1y_i)_{i=1}^{i_0}$ means entry-wise multiplication, and similarly for $z$). This action restricts to an action of the closed subgroup 
\[
H_0 := \left\{(\tau_1,\tau_2) \in \bG_m^2 \colon \tau_1^{\sum_{j=0}^{i_0-1} \sigma^j} \left(\prod_{i=0}^{n-i_0-1} \sigma^i(\tau_2) \right)^{-\sum_{j=1}^{i_0-1} \sigma^j} = 1 \right\} 
\]
on $\alpha_0(N_h\backslash X_h)$, where $\alpha_0$ is as in Lemma \ref{lm:description_of_quotient}. By Lemma \ref{lm:description_of_quotient} $\alpha_0$ induces an isomorphism on \'etale cohomology. Now $H$ is $1$-dimensional, hence its connected component $H^\circ$ is a $1$-dimensional torus. Therefore the projection of $H^\circ$ to at least one of the $\bG_m$-factors of the ambient group $\bG_m^2$ is non-constant, hence surjective. Hence $\alpha_0(N_h \backslash X_h)^{H^\circ} = \varnothing$. 

The action of $T_h \cong \bW_h^\times(\FF_{q^n})$ on $X_h$ induces an action on $N_h\backslash X_h$, which under $\alpha_0$  is compatible with the $T_h$-action on $\alpha_0(N_h\backslash X_h)$ given by $t \colon (m,x') \mapsto (m \cdot \prod_{j=0}^{i_0-1}\sigma^j(t), x' \cdot t)$ (both products mean scalar multiplication). This action of $T_h$ commutes with the above action of $H_0$ on $\alpha_0(N_h\backslash X_h)$. The explicit description in Lemma \ref{lm:description_of_quotient} also shows that $\alpha_0(N_h\backslash X_h)$ is affine. Thus the $T_h$-equivariant version of the well-known result \cite[10.15 Proposition]{DigneM_91} gives
\[
\dim_{\overline{\bQ}_\ell} H_c^\ast(N_h\backslash X_h)_\theta = \dim_{\overline{\bQ}_\ell} H_c^\ast(\alpha_0(N_h\backslash X_h)^{H^\circ})_\theta = 0.
\]
This finishes the proof of Proposition \ref{prop:no_trivial_char} in the case $\kappa = 0$.

\subsection{Proof of Proposition \ref{prop:no_trivial_char} for arbitrary $\kappa$}\label{sec:proof_no_triv_char_kappa_arbitrary}

Let $\kappa$ be arbitrary. Let $c := \begin{psmallmatrix} 0 & \varpi \\ 1_{n_0 - 1} & 0 \end{psmallmatrix}^{k_0}$, and for $r\geq 1$ let $b_r := \bigoplus_r c$ be the block-diagonal $n_0r\times n_0r$-matrix with blocks equal to $c$. Let $b = b_{n'}$ (it is the special representative corresponding to $\kappa,n$ as in \cite[\S5.2.2]{CI_ADLV}). Let $\dot w = b_0t_{\kappa,n}$ be as in \eqref{eq:new_Cox_rep}. We have then the corresponding groups $\bfG, \bfG_\cO, \bfT, G_h, \dots$ as in Section \ref{sec:basic_notation}. A maximal rational parabolic subgroup of ${\bf G}$    is determined by an integer $1 \leq i_0 \leq n'-1$. Its unipotent radical ${\bf N}$ consists of matrices $(A_{ij})_{1\leq i,j \leq n'}$ where each $A_{ij}$ is a $n_0\times n_0$-matrix, and $A_{ii} = 1_{n_0}$, $A_{ij} = 0$, unless $i=j$ or ($1\leq i \leq i_0$ and $n'-i_0 + 1\leq j \leq n$). Let $l$ denote an integer which modulo $n_0$ is the multiplicative inverse of $k_0$. Moreover, for $a \in \bZ$ define $[a]_{n_0} \in \bZ$ by the requirement that $1 \leq [a]_{n_0} \leq n_0$ and $[a]_{n_0} \equiv a \,\mod n_0$. The subgroup $N_h$ of $G_h$ corresponding to ${\bf N}$ (see Section \ref{sec:Step1}) consists of $n\times n$-matrices of the same shape, where now each of the $n_0 \times n_0$-blocks $A_{ij}$ with $1\leq i \leq i_0$ and $n'-i_0 + 1\leq j \leq n$ is of the form $\sum_{\lambda = 0}^{n_0-1} \varpi^{-\lfloor \frac{\lambda k_0}{n_0} \rfloor} c^\lambda \diag(a_{\lambda}, \sigma^{[l]_{n_0}}(a_{\lambda}), \sigma^{[2l]_{n_0}}(a_{\lambda}), \dots, \sigma^{[(n_0-1)l]_{n_0}}(a_{\lambda}))$ with $a_0 \in \bW_h(\FF_{q^{n_0}})$ and $a_\lambda \in \bW_{h-1}(\FF_{q^{n_0}})$ for $\lambda > 0$. In particular, $\# N_h = q^{n_0 (h + (n_0 - 1)(h-1)) i_0(n'-i_0)}$.

Let $r \geq 1$ and let $Z_{n_0,r,h} = \{(x_i)_{i=1}^{n_0r} \colon \text{ $x_i \in \bW_h$ if $i \equiv 1 \,\mod n_0$ and $x_i \in \bW_{h-1}$ otherwise} \}$. This is a affine, perfectly finitely presented perfect $\FF_q$-scheme. For a perfect $\FF_q$-algebra $R$ and $x \in Z_{n_0,r,h}(R)$ let $g_{n_0,r}(x)$ denote the $n_0r \times n_0r$-matrix whose $i$-th column is $\varpi^{-\lfloor \frac{(i-1)k_0}{n_0}\rfloor }(b_r\sigma)^{i-1}(x)$ (the entries of $g_{n_0,r}(x)$ are either in $\bW_h(R)$ or in $\bW_{h-1}(R)$ or in $\varpi\bW_{h-1}(R) \subseteq \bW_h(R)$). The determinant $|g_{n_0,r}(x)|$ of $g_{n_0,r}(x)$ is a well-defined element of $\bW_h(R)$. Let 
\[
Y_{n_0,r,h} = \{x \in Z_{n_0,r,h} \colon |g_{n_0,r}(x)| \in \bW_h^\times \}
\]
The description of $X_h$ in \cite[7.2]{CI_ADLV} says precisely that $X_h \subseteq Y_{n_0,n',h}$ is the subset defined by the closed condition that $\sigma(|g_{n_0,n'}(x)|) = (-1)^{n'-1} |g_{n_0,n'}(x)|$. 

To simplify notation we write $s:= n_0i_0$ from now on. For $x \in X_h$ and $1\leq i \leq s$, let $m_i(x)$ denote the $(n - s + 1) \times (n-s+ 1)$-minor obtained from $g_{n_0,n'}(x)$ by removing all rows except for the $i$-th and $s + 1,s +2, \dots, n$-th and all but the first $n-s + 1$ columns. Then $m_i(x)$ makes sense as an element of $\bW_h$ resp.\ of $\bW_{h-1}$ if $i \equiv 1 \,\mod n_0$ resp.\ if $i \not\equiv 1 \,\mod n_0$. Thus $(m_i(x))_{i=1}^s \in Z_{n_0,i_0,h}$. The analogue of Lemma \ref{lm:quotient_exists_and_smooth} for $N_h \backslash X_h$ holds with the same proof. We have the following generalization of Lemma \ref{lm:description_of_quotient}.

\begin{lm}\label{lm:description_of_quotient_kappa_arbitrary} The assignment $x = (x_i)_{i = 1}^n \in X_h \mapsto m = (m_i(x))_{i=1}^s, x' = (x_i)_{i=s + 1}^n$ induces an isomorphism of perfect schemes,
\[ 
\alpha_{\kappa} \colon N_h\backslash X_h \rightarrow \left\{ (m,x') \in Y_{n_0,i_0,h} \times Y_{n_0,n'-i_0,h} \colon \frac{|g_{n_0,i_0}(m)|}{|g_{n_0, n' - i_0}(x')|^{\sum_{j=1}^{s-1} \sigma^j}} \in \bW_h^\times(\FF_q) \right\}.
\]
\end{lm}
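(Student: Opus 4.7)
The plan is to follow the structure of the proof of Lemma~\ref{lm:description_of_quotient} step by step, adapting each step to the more intricate block structure determined by $(n_0,k_0)$ and the fact that the coordinates of points in $X_h$ now live in a mix of $\bW_h$ and $\bW_{h-1}$ according to their index modulo $n_0$.

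First, I would check that the assignment $x \mapsto (m,x')$ is $N_h$-invariant. Given the explicit description of $N_h$ as block matrices whose off-diagonal blocks $A_{ij}$ are diagonalized by $\diag(a_\lambda,\sigma^{[l]_{n_0}}(a_\lambda),\dots)$ with $a_0 \in \bW_h(\FF_{q^{n_0}})$ and $a_\lambda \in \bW_{h-1}(\FF_{q^{n_0}})$ for $\lambda>0$, left multiplication by $u \in N_h$ only modifies the first $s = n_0 i_0$ rows of $g_{n_0,n'}(x)$ by adding $\bW_h$-linear combinations of the last $n-s$ rows. Hence the minors $m_i(x)$ -- which are computed from the $i$-th and the last $n-s$ rows -- are unchanged, and $x_{s+1},\dots,x_n$ are unchanged as well. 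So the map descends to $N_h\backslash X_h \to Y_{n_0,i_0,h}\times Y_{n_0,n'-i_0,h}$.

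Next I would establish the key determinantal identity
\[
|g_{n_0,i_0}(m)| \,=\, |g_{n_0,n'}(x)| \cdot |g_{n_0,n'-i_0}(x')|^{\sum_{j=1}^{s-1}\sigma^j},
\]
which is the direct analog of Lemma \ref{lm:equation_determinants}. This in particular shows that $(m,x')$ lands in the indicated target, and that the induced map $\alpha_\kappa$ is well-defined. The proof proceeds by induction on $i_0$ exactly as in the $\kappa=0$ case: expanding $|g_{n_0,i_0}(m)|$ along its first column (which produces the entries $m_i$ with appropriate $\varpi$-scalings coming from the $\lfloor \lambda k_0/n_0\rfloor$ exponents), then applying the induction hypothesis to the remaining minor, and finally recognizing the resulting expression as a Turnbull-style boxed tableau identity \cite{Leclerc_93, Turnbull_1909} -- the same tableau as in the proof of Lemma \ref{lm:equation_determinants}, now with entries in $\bW_h(R)$ and the columns twisted by powers of $b_{n'}\sigma$. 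The minor-identity vanishes for the same abstract reason, namely that $k>n$ boxed entries in an $s\times n$ tableau produce the zero alternating form on $R$-modules generated by $n$ elements.

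Now I would prove that $\alpha_\kappa$ is an isomorphism of étale sheaves. Surjectivity: for $(m,x') \in $ target$(R)$, I would construct a preimage $x$ with $x_i = x_i'$ for $i>s$ by solving the equations analogous to \eqref{eq:expansion_of_mix_first_row},
\[
m_i \,=\, \sum_{k=0}^{n-s}(-1)^{k+1}\,\varpi^{-\lfloor \frac{k k_0}{n_0}\rfloor}(b_{n'}\sigma)^k(x_i)\cdot |g_{n_0,n'-i_0,k}(x')|,
\]
where the last two factors on the RHS, for $k=0$ and $k=n-s$, give $|g_{n_0,n'-i_0}(x')| \in \bW_h^\times$; this allows one to solve successively for the Teichmüller coefficients $x_{i,j}$ modulo increasing powers of $\varpi$ after passing to a finite étale cover of $R$. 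Injectivity on algebraically closed fields (hence monomorphism, by Lemma~\ref{lm:mono_perf_schemes}) follows by counting: at each step the equation for $x_{i,j}$ is separable of fixed degree (a power of $q$), and the total count of preimages is $q^{n_0(h+(n_0-1)(h-1))i_0(n'-i_0)} = \#N_h$. Combined with $N_h$-equivariance and Lemma~\ref{lm:quotient_exists_and_smooth} (in its $\kappa$-arbitrary form, with the same proof), this forces $\alpha_\kappa$ to be an isomorphism.

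The main obstacle will be the careful bookkeeping in the determinantal identity: one must track the $\varpi$-scalings $\varpi^{-\lfloor i k_0/n_0\rfloor}$ and verify that when expanding $|g_{n_0,i_0}(m)|$ along its first column the resulting signs and $\varpi$-powers match those in $|g_{n_0,n'}(x)|\cdot|g_{n_0,n'-i_0}(x')|^{\sum_{j=1}^{s-1}\sigma^j}$, and that the coefficient rings ($\bW_h$ vs.\ $\bW_{h-1}$) are compatible at each entry -- in particular, that $\varpi$ times a $\bW_{h-1}$-valued entry is well-defined in $\bW_h$. Once this bookkeeping is in place, the Turnbull vanishing argument and the counting arguments go through verbatim from the $\kappa=0$ case.
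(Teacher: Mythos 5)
Your overall skeleton matches the paper's: the $N_h$-invariance check, the reduction of well-definedness to a determinantal identity generalizing Lemma \ref{lm:equation_determinants}, surjectivity by solving the cofactor-expansion equations \'etale-locally coefficient by coefficient, and injectivity on algebraically closed fields by counting $\#N_h$ preimages and invoking Lemma \ref{lm:mono_perf_schemes}. All of that is exactly how the paper proceeds.

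The gap is in your proof of the determinantal identity $|g_{n_0,i_0}(m)| = |g_{n_0,n'}(x)|\cdot|g_{n_0,n'-i_0}(x')|^{\sum_{j=1}^{s-1}\sigma^j}$, which you propose to prove ``by induction on $i_0$ exactly as in the $\kappa=0$ case.'' That induction does not transfer. In the $\kappa=0$ argument, expanding $|g_{i_0}(m)|$ along the first column produces cofactors $\sigma(|g_{i_0-1}(m^{(i)})|)$ to which the induction hypothesis for $(n-1,i_0-1)$ applies, because deleting one coordinate of $x$ again yields an object of the same type. For $\kappa\neq 0$ one has $n_0\geq 2$, and the framework $Z_{n_0,r,h}$, $g_{n_0,r}$ only exists for vectors of length $n_0 r$ with the $\bW_h$/$\bW_{h-1}$ pattern governed by the index mod $n_0$ and columns twisted by $\varpi^{-\lfloor (i-1)k_0/n_0\rfloor}(b_r\sigma)^{i-1}$. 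A first-column expansion deletes a single row, so the resulting cofactors have size $s-1=n_0 i_0-1$, which is not of the form $n_0 r'$; the induction hypothesis (a statement about $Y_{n_0,i_0-1,h}$, i.e.\ size $n_0(i_0-1)=s-n_0$) simply cannot be applied to them. The paper avoids induction altogether: it first observes that \emph{every} entry of $g_{n_0,i_0}(m)$ (not only the first column) is an $(n-s+1)\times(n-s+1)$-minor of $g_{n_0,n'}(x)$, writes $|g_{n_0,i_0}(m)|$ as a single $s\times n$ tableau with boxed entries in the \emph{rows}, and applies Turnbull's \emph{second} (exchange) identity \cite[Proposition 1.2.2(ii)]{Leclerc_93} once, to the last row — not the first (vanishing) identity that you invoke via the ``$k>n$ boxed entries give the zero alternating form'' argument. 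To repair your proof you would either need to reproduce that global tableau argument, or set up and prove a strictly more general family of identities (e.g.\ via Laplace expansion along the first $n_0$ columns, deleting $n_0$ rows at a time so as to preserve the block structure), neither of which is the verbatim transfer you describe.
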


\begin{proof}
Using the description of $N_h$ given above, one checks that $m_i(x)$ is stable under the $N_h$-action on $X_h$. Now the proof proceeds in a completely analogous fashion to the proof of Lemma \ref{lm:description_of_quotient} (with Lemma \ref{lm:equation_determinants} replaced by its generalization Lemma  \ref{lm:equation_determinants_kappa_arbitrary}).
\end{proof}

\begin{lm}\label{lm:equation_determinants_kappa_arbitrary}
Let $n \geq 2$, $1\leq i_0 \leq n-1$. For a perfect $\FF_q$-algebra $R$ and $x = (x_i)_{i=1}^n \in Y_{n_0,n',h}(R)$, we have $m = (m_i(x))_{i=1}^s \in Y_{n_0,i_0,h}(R)$, $x'= (x_i)_{i=i_0+1}^n \in Y_{n_0,n'-i_0,h}(R)$ and 
\begin{equation}\label{eq:det_equation_kappa_arbitrary}
|g_{n_0,i_0}(m)| = |g_{n_0,n'}(x)| \cdot |g_{n_0,n' - i_0}(x')|^{\sum_{j=1}^{i_0-1} \sigma^j}
\end{equation}
\end{lm}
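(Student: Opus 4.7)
The plan is to imitate the proof of Lemma \ref{lm:equation_determinants}, replacing the usual $\sigma$-twisted columns by the $b_{n'}\sigma$-twisted columns and tracking the $\varpi$-powers $\varpi^{-\lfloor(i-1)k_0/n_0\rfloor}$ carefully. Set $\varphi_r := b_r \sigma$ and note the key compatibility $(b_r\sigma)^{n_0} = \varpi^{k_0}\sigma^{n_0}$, which is what makes the $\varpi^{-\lfloor(i-1)k_0/n_0\rfloor}$-weights line up with the $n_0$-block structure of $g_{n_0,r}$ and ensures that all relevant determinants land in $\bW_h(R)$ (rather than in $\bW_h(R)[\varpi^{-1}]$). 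A first step would be to verify this integrality claim together with the fact that $m \in Y_{n_0,i_0,h}(R)$ and $x' \in Y_{n_0,n'-i_0,h}(R)$, both of which follow from the identity \eqref{eq:det_equation_kappa_arbitrary} once established (mimicking the argument in \cite[Lemma 6.13]{CI_ADLV}).

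The induction is on $i_0$. The base case $i_0 = 1$ should reduce, after unwinding the twisted-Frobenius determinant $|g_{n_0,1}(m)|$, to an identity that follows by iterated cofactor expansion of $|g_{n_0,n'}(x)|$ along its first $n_0$ rows, exactly paralleling the tautological $i_0=1$ case of Lemma \ref{lm:equation_determinants} (but with $\varphi_{n'}$ in place of $\sigma$ and $(c\sigma)$ in place of $\sigma$ on the $m$-side). For the inductive step, expand $|g_{n_0,i_0}(m)|$ along its first column (indexed by $i=1,\dots,s$): each entry is $m_i = m_i(x)$, and each cofactor is a twisted determinant in $m^{(i)} := (m_j)_{j\neq i}$, to which the induction hypothesis applies and produces a factor $|g_{n_0,n'-1}(x^{(i)})|$ together with $\prod_{j=1}^{s-2}\varphi_{n'}^j(|g_{n_0,n'-i_0}(x')|)$.

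After these substitutions, proving \eqref{eq:det_equation_kappa_arbitrary} reduces to an identity formally analogous to \eqref{eq:intermediate_determinantal_equation}, namely the vanishing of a $2 \times n$-tableau over the commutative ring $\bW_h(R)$ with the first row built from the coordinate vectors ${\bf 1},\dots,{\bf s}$ and the $\varphi_{n'}$-shifts of $x$, and with more than $n$ boxed entries. As in the proof of Lemma \ref{lm:equation_determinants}, this tableau vanishes by Turnbull's identity \cite{Turnbull_1909}, \cite[Proposition 1.2.2(i)]{Leclerc_93}, which applies verbatim over any commutative ring since the boxed-entry tableau is a linear alternating form on a module generated by $n$ elements.

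The main obstacle will be the bookkeeping in the inductive step: each application of $\varphi_{n'}$ shifts the $\varpi$-weights, and to invoke Turnbull's identity we must ensure that after all cancellations the signs and $\varpi$-powers on both sides of the reduced identity match. I expect this to be handled by multiplying both sides through by appropriate powers of $\varpi$ determined by the identity $(b_r\sigma)^{n_0} = \varpi^{k_0}\sigma^{n_0}$, reducing the problem to the identity already verified in the $\kappa = 0$ case; the $\bW_h$ vs.\ $\bW_{h-1}$ typing will then follow automatically from the fact that $|g_{n_0,n'}(x)| \in \bW_h^\times(R)$.
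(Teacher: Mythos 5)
Your overall instinct---tableaux, Turnbull, careful $\varpi$-bookkeeping---is in the right territory, but the architecture you propose does not match what actually makes the general case work, and the inductive step as described has a genuine gap. The paper's proof of Lemma \ref{lm:equation_determinants_kappa_arbitrary} is \emph{not} an induction on $i_0$: its key observation is that \emph{every} entry of $g_{n_0,i_0}(m)$ (not just the first column) is itself an $(n-s+1)\times(n-s+1)$-minor of $g_{n_0,n'}(x)$, which lets one write $|g_{n_0,i_0}(m)|$ directly as a single $s\times n$-tableau with boxes built from the \emph{rows} of $g_{n_0,n'}(x)$ and the coordinate vectors, and then conclude by the \emph{second} Turnbull identity (the exchange identity, \cite[Proposition 1.2.2(ii)]{Leclerc_93}) applied to the last row, rather than the vanishing statement 1.2.2(i) used in Lemma \ref{lm:equation_determinants}. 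Your proposed cofactor expansion of $|g_{n_0,i_0}(m)|$ along its first column does not reproduce the structure needed for an induction $i_0\mapsto i_0-1$: the columns of $g_{n_0,i_0}(m)$ are iterates of $m$ under $b_{i_0}\sigma$, which genuinely permutes coordinates within each $n_0$-block (it is not coordinatewise like $\sigma$ in the $\kappa=0$ case), so deleting a single row from these iterates does not yield $\varphi\bigl(|g_{n_0,i_0-1}(m^{(i)})|\bigr)$ or any determinant of the same shape with smaller parameters; the $s-1$ remaining rows are not compatible with the $n_0$-block structure. This is precisely the point where the $\kappa=0$ induction breaks and a different idea is required.

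Two smaller issues. First, the step ``multiply both sides through by appropriate powers of $\varpi$ \ldots reducing the problem to the identity already verified in the $\kappa=0$ case'' is not a legitimate operation in $\bW_h(R)$, where $\varpi$ is nilpotent: one cannot cancel $\varpi$-powers there. (The paper's remark following the lemma shows how delicate this is even for the preliminary claim that the entries of $g_{n_0,i_0}(m)$ are the asserted minors: one must lift to $\bW$, rescale rows and columns there, and only then reduce modulo $\varpi^h$.) In any case the general identity does not reduce to the $\kappa=0$ one by rescaling, because the block-permutation part of $b_{n'}\sigma$ changes which minors occur. Second, the membership $x'\in Y_{n_0,n'-i_0,h}(R)$ is an \emph{input} (it is \cite[Lemma 6.13]{CI_ADLV}), not a consequence of \eqref{eq:det_equation_kappa_arbitrary}; only the corresponding claim for $m$ is deduced from the identity.
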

\begin{proof} It is known that for $x \in Y_{n_0,n',h}(R)$, we have $x' \in Y_{n_0,n'-i_0,h}(R)$ (see \cite[Lemma 6.13]{CI_ADLV}). Thus the similar claim for $m$ follows, once \eqref{eq:det_equation_kappa_arbitrary} is shown. To show \eqref{eq:det_equation_kappa_arbitrary} we first notice that all entries of $g_{n_0,i_0}(m)$ (and not only those in the first column) are in fact $(n-s + 1) \times (n-s + 1)$-minors of $g_{n_0,n'}(x)$. More precisely, for $1\leq i,j\leq s$ the $(i,j)$-th entry of $g_{n_0,i_0}(m)$ is the minor of $g_{n_0,n'}(x)$ obtained by removing all columns except those with numbers $j, j+1, \dots, j+n-s$, and all rows except those with numbers $i, s, s+1, \dots, n$. Let $X_i$ denote the $i$-th row of $g_{n_0,n'}(x)$. Let also ${\bf a}$ denote the $a$-th standard basis vector of a free rank $n$ module (over an arbitrary ring). Using the formalism of tableaux with boxes (as in the proof of Lemma \ref{lm:equation_determinants}), but now for the \emph{rows} of $g_{n_0,n'}(x)$, we can express $|g_{n_0,i_0}(m)|$ as the $s \times n$-tableau with boxes:
\[
\tikz[baseline=(M.west)]{%
    \node[draw, inner sep=2.5pt, matrix of math nodes, ampersand replacement = \&] (M) {%
 X_1 \& X_{s + 1} \& X_{s + 2} \& \dots \&  \& X_n \& {\bf n-s+2} \& {\bf n-s+3} \& \dots \& \& \& {\bf n} \\
{\bf 1} \& X_2 \& X_{s + 1} \& X_{s + 2} \& \dots \& X_{n-1} \& X_n \& {\bf n-s+2} \& {\bf n-s+3} \& \dots \& \& {\bf n} \\
{\bf 1} \& {\bf 2} \& X_3 \& X_{s+1} \& \& \dots \& X_{n-1} \& X_n \& {\bf n-s+2} \& {\bf n-s+3} \& \dots \& {\bf n} \\
\mbox{} \&\&\&\& \&\& \&\& \&\&\&  \\
\dots \&\&\&\& \&\& \dots \&\& \&\&\& \dots \\
\mbox{} \&\&\&\& \&\& \&\& \&\&\& \\
{\bf 1} \& {\bf 2}\& \dots \& \& {\bf s-2} \& X_{s-1} \&  X_{s+1} \& X_{s+2} \& \dots \& X_{n-1} \& X_n \& {\bf n} \\
{\bf 1} \& {\bf 2}\& \& \dots \& \& {\bf s-1} \& X_s \&  X_{s+1} \& \dots \&  \& X_{n-1} \& X_n \\
};
     \node[fit=(M-2-1) (M-2-2) (M-2-3) (M-2-4),inner sep=-1pt] (R2) {};
\draw(R2.north -| M.west) -- (R2.north -| M.east);
     \node[fit=(M-3-1) (M-3-2) (M-3-3) (M-3-4),inner sep=-1pt] (R3) {};
\draw(R3.north -| M.west) -- (R3.north -| M.east);

\draw(R3.south -| M.west) -- (R3.south -| M.east);
    
\node[fit=(M-7-1) (M-7-2),inner sep=-1pt] (R7) {};
\draw(R7.north -| M.west) -- (R7.north -| M.east);

\node[fit=(M-8-1) (M-8-2),inner sep=-1pt] (R8) {};
\draw(R8.north -| M.west) -- (R8.north -| M.east);

\node[draw,fit=(M-1-1),inner sep=-1pt] {};
\node[draw,fit=(M-2-2),inner sep=-2pt] {};
\node[draw,fit=(M-3-3),inner sep=-2pt] {};
\node[draw,fit=(M-7-6),inner sep=-2pt] {};
\node[draw,fit=(M-8-7),inner sep=-2pt] {};
} 
\]
As each of the entries $X_{s+1}, X_{s+2}, \dots, X_n$ appears in each row of this tableau, it is equal to
\[
\tikz[baseline=(M.west)]{%
    \node[draw, inner sep=2.5pt, matrix of math nodes, ampersand replacement = \&] (M) {%
 X_1 \& X_{s + 1} \& X_{s + 2} \& \dots \&  \& X_n \& {\bf n-s+2} \& {\bf n-s+3} \& \dots \& \& \& {\bf n} \\
{\bf 1} \& X_2 \& X_{s + 1} \& X_{s + 2} \& \dots \& X_{n-1} \& X_n \& {\bf n-s+2} \& {\bf n-s+3} \& \dots \& \& {\bf n} \\
{\bf 1} \& {\bf 2} \& X_3 \& X_{s+1} \& \& \dots \& X_{n-1} \& X_n \& {\bf n-s+2} \& {\bf n-s+3} \& \dots \& {\bf n} \\
\mbox{} \&\&\&\& \&\& \&\& \&\&\&  \\
\dots \&\&\&\& \&\& \dots \&\& \&\&\& \dots \\
\mbox{} \&\&\&\& \&\& \&\& \&\&\& \\
{\bf 1} \& {\bf 2}\& \dots \& \& {\bf s-2} \& X_{s-1} \&  X_{s+1} \& X_{s+2} \& \dots \& X_{n-1} \& X_n \& {\bf n} \\
{\bf 1} \& {\bf 2}\& \& \dots \& \& {\bf s-1} \& X_s \&  X_{s+1} \& \dots \&  \& X_{n-1} \& X_n \\
};
     \node[fit=(M-2-1) (M-2-2) (M-2-3) (M-2-4),inner sep=-1pt] (R2) {};
\draw(R2.north -| M.west) -- (R2.north -| M.east);
     \node[fit=(M-3-1) (M-3-2) (M-3-3) (M-3-4),inner sep=-1pt] (R3) {};
\draw(R3.north -| M.west) -- (R3.north -| M.east);

\draw(R3.south -| M.west) -- (R3.south -| M.east);
    
\node[fit=(M-7-1) (M-7-2),inner sep=-1pt] (R7) {};
\draw(R7.north -| M.west) -- (R7.north -| M.east);

\node[fit=(M-8-1) (M-8-2),inner sep=-1pt] (R8) {};
\draw(R8.north -| M.west) -- (R8.north -| M.east);

\node[draw,fit=(M-1-1),inner sep=-1pt] {};
\node[draw,fit=(M-2-2),inner sep=-2pt] {};
\node[draw,fit=(M-3-3),inner sep=-2pt] {};
\node[draw,fit=(M-7-6),inner sep=-2pt] {};
\node[draw,fit=(M-8-7),inner sep=-2pt] {};
\node[draw,fit=(M-8-8),inner sep=-2pt] {};
\node[draw,fit=(M-8-11),inner sep=-2pt] {};
\node[draw,fit=(M-8-12),inner sep=-2pt] {};
} 
\]
Apply (second) Turnbull's identity \cite[Proposition 1.2.2(ii)]{Leclerc_93} to the last row of this tableau, deducing that it is equal to
\[
\tikz[baseline=(M.west)]{%
    \node[draw, inner sep=2.5pt, matrix of math nodes, ampersand replacement = \&] (M) {%
 {\bf 1} \& X_{s + 1} \& X_{s + 2} \& \dots \&  \& X_n \& {\bf n-s+2} \& {\bf n-s+3} \& \dots \& \& \& {\bf n} \\
{\bf 1} \& {\bf 2} \& X_{s + 1} \& X_{s + 2} \& \dots \& X_{n-1} \& X_n \& {\bf n-s+2} \& {\bf n-s+3} \& \dots \& \& {\bf n} \\
{\bf 1} \& {\bf 2} \& {\bf 3} \& X_{s+1} \& \& \dots \& X_{n-1} \& X_n \& {\bf n-s+2} \& {\bf n-s+3} \& \dots \& {\bf n} \\
\mbox{} \&\&\&\& \&\& \&\& \&\&\&  \\
\dots \&\&\&\& \&\& \dots \&\& \&\&\& \dots \\
\mbox{} \&\&\&\& \&\& \&\& \&\&\& \\
{\bf 1} \& {\bf 2}\& \dots \& \& {\bf s-2} \& {\bf s-1} \&  X_{s+1} \& X_{s+2} \& \dots \& X_{n-1} \& X_n \& {\bf n} \\
X_1 \& X_2 \& \& \dots \& \& X_{s-1} \& X_s \&  X_{s+1} \& \dots \&  \& X_{n-1} \& X_n \\
};
     \node[fit=(M-2-1) (M-2-2) (M-2-3) (M-2-4),inner sep=-1pt] (R2) {};
\draw(R2.north -| M.west) -- (R2.north -| M.east);
     \node[fit=(M-3-1) (M-3-2) (M-3-3) (M-3-4),inner sep=-1pt] (R3) {};
\draw(R3.north -| M.west) -- (R3.north -| M.east);

\draw(R3.south -| M.west) -- (R3.south -| M.east);
    
\node[fit=(M-7-1) (M-7-2),inner sep=-1pt] (R7) {};
\draw(R7.north -| M.west) -- (R7.north -| M.east);

\node[fit=(M-8-1) (M-8-2),inner sep=-1pt] (R8) {};
\draw(R8.north -| M.west) -- (R8.north -| M.east);

\node[draw,fit=(M-1-1),inner sep=-1pt] {};
\node[draw,fit=(M-2-2),inner sep=-2pt] {};
\node[draw,fit=(M-3-3),inner sep=-2pt] {};
\node[draw,fit=(M-7-6),inner sep=-2pt] {};
} 
\]
Here all boxes can be removed without changing the value of the tableau, as any non-trivial permutation produces a zero $s \times n$-tableau (as at least one row will contain two equal entries and hence be equal to $0$). The resulting tableau (without boxes) is precisely the right hand side of \eqref{eq:det_equation_kappa_arbitrary}.
\end{proof}

\begin{rem}
In the proof of Lemma \ref{lm:description_of_quotient_kappa_arbitrary}, the fact that the entries of $g_{n_0,i_0}(m)$ are certain minors of $g_{n_0,n'}(x)$ can be shown by a somewhat tedious but straightforward calculation, which we omit here. To illustrate the principle, we give an example. Let $n = 9$, $\kappa = 6$, so that $n' = 3$, $n_0 = 3$, $k_0 = 2$. Let $i_0 = 2$. We have the two minors of $g_{n_0,n'}(x)$,

\[m_2 = \left|\begin{smallmatrix} x_2 & \varpi \sigma(x_3) & \sigma^2(x_1) & \sigma^3(x_2) \\ x_7 & \varpi \sigma(x_8) & \varpi \sigma^2(x_9) & \sigma^3(x_7) \\ x_8 & \varpi \sigma(x_9) & \sigma^2(x_7) & \sigma^3(x_8)\\ x_9 & \sigma(x_7) & \sigma^2(x_8) & \sigma^3(x_9) \end{smallmatrix}\right| \qquad \text{ and } \qquad 
M:= \left|\begin{smallmatrix} \varpi \sigma(x_2) & \varpi \sigma^2(x_3) & \sigma^3(x_1) & \varpi\sigma^4(x_2) \\ \varpi\sigma(x_8) & \varpi \sigma^2(x_9) & \sigma^3(x_7) & \varpi\sigma^4(x_8) \\ \varpi\sigma(x_9) & \sigma^2(x_7) & \sigma^3(x_8) & \varpi\sigma^4(x_9)\\ \sigma(x_7) & \sigma^2(x_8) & \sigma^3(x_9) & \sigma^4(x_7) \end{smallmatrix}\right|
\] 
the first corresponding to rows $2,7,8,9$ and columns $1,2,3,4$, and the second corresponding to rows $1,7,8,9$ and $2,3,4,5$.  The first of these minors is by definition the $(2,1)$-entry of $g_{n_0,i_0}(m)$, and the fact mentioned above claims that the second minor is equal to the $(1,2)$-entry of $g_{n_0,i_0}(m)$, that is, to $\varpi \sigma(m_2) \in \varpi \bW_{h-1} \subseteq \bW_h$. First, $M$ makes sense as an element of $\varpi \bW_{h-1}$. To compute it, we may lift its entries to elements in $\bW$, where we can multiply rows and columns by powers of $\varpi$, to see that
\[
M = \varpi^{-2} \left|\begin{smallmatrix} \varpi \sigma(x_2) & \varpi^2 \sigma^2(x_3) & \varpi\sigma^3(x_1) & \varpi\sigma^4(x_2) \\ \varpi\sigma(x_8) & \varpi^2 \sigma^2(x_9) & \varpi\sigma^3(x_7) & \varpi\sigma^4(x_8) \\ \varpi\sigma(x_9) & \varpi\sigma^2(x_7) & \varpi\sigma^3(x_8) & \varpi\sigma^4(x_9)\\ \sigma(x_7) & \varpi\sigma^2(x_8) & \varpi\sigma^3(x_9) & \sigma^4(x_7) \end{smallmatrix}\right| = \varpi
\left|\begin{smallmatrix} \sigma(x_2) & \varpi \sigma^2(x_3) & \sigma^3(x_1) & \sigma^4(x_2) \\ \sigma(x_8) & \varpi \sigma^2(x_9) & \sigma^3(x_7) & \sigma^4(x_8) \\ \sigma(x_9) & \sigma^2(x_7) & \sigma^3(x_8) & \sigma^4(x_9)\\ \sigma(x_7) & \varpi\sigma^2(x_8) & \varpi\sigma^3(x_9) & \sigma^4(x_7) \end{smallmatrix}\right| = \varpi \sigma(m_2)
\]
(after reducing modulo $\varpi^h\bW$), as claimed. 
\end{rem}

We continue with the proof of Proposition \ref{prop:no_trivial_char}. The group $\bG_{m,\overline{\FF}_q}^2$ acts on $Y_{n_0,i_0,h} \times Y_{n_0,n-i_0,h}$ by the same formula as in \eqref{eq:action_of_Gm2_on_YtimesY}. This action restricts to an action of the closed subgroup 
\[
H_\kappa := \left\{(\tau_1,\tau_2) \in \bG_m^2 \colon \tau_1^{\sum_{j=0}^{s-1} \sigma^j} \left(\prod_{i=0}^{n-s-1} \sigma^i(\tau_2) \right)^{-\sum_{j=1}^{s-1} \sigma^j} = 1 \right\} 
\]
on $N_h\backslash X_h \cong \alpha_\kappa(N_h\backslash X_h)$, where $\alpha_\kappa$ is as Lemma \ref{lm:description_of_quotient_kappa_arbitrary}. Now $H$ is $1$-dimensional, hence its connected component $H^\circ$ is a $1$-dimensional torus. The rest of the argument is exactly as at the end of Section \ref{sec:proof_no_triv_char_kappa_0}. Proposition \ref{prop:no_trivial_char} is now proven.

\section{Review of some representation theory}

We fix an isomorphism $\overline{\QQ}_\ell \cong \mathbb{C}$ and use it to identify the isomorphism classes of smooth complex with smooth $\overline{\bQ}_\ell$-representations of all involved groups. For a finite dimensional (complex or $\overline{\QQ}_\ell$-) representation $\rho$ of a group, we denote by ${\rm deg}(\rho)$ the degree of $\rho$.

\subsection{Square-integrable representations}\label{sec:square_integrables} We recall some well-known results about square-integrable  representations of $p$-adic reductive groups due to Harish-Chandra. For a detailed treatment we refer to \cite{HarishChandra_70} (see also \cite{Cartier_Corvalis_79}). 

In this section let ${\bf G}$ be an arbitrary reductive group over $K$ and $G = {\bf G}(K)$. Let $Z$ be the (K-valued points of) the maximal split torus contained in the center of ${\bf G}$. Let $\psi \colon Z \rightarrow \overline{\QQ}_\ell^\times$ be a unitary character of $Z$. We fix now an invariant Haar measure on $G/Z$ (recall that $G$ is unimodular). We work with complex-valued representations of $G$. Let $\cE_2(G,\psi)$ denote the set of equivalence classes of irreducible unitary representations $(\pi,V)$ of $G$, which have central character $\psi$ and satisfy
\begin{equation}\label{eq:squareIntegrability}
\int_{G/Z} |(u,\pi(g)v)|^2 d\bar{g}< + \infty
\end{equation}
where $(\cdot,\cdot)$ denotes the scalar product in the Hilbert space $V$ (the integral makes sense as $\psi$ is unitary). These are the \emph{square-integrable} representations with central character $\chi$. All irreducible supercuspidal representations with unitary central character are square-integrable \cite[\S3]{HarishChandra_70}. 

For a given $\pi \in \cE_2(G,\psi)$, the integral \eqref{eq:squareIntegrability} is equal to $d(\pi,d\bar{g})|u|^2|v|^2$, where the constant $d(\pi,d\bar{g})>0$ is independent of $u,v$ (and thus only depends on $\pi$ and the chosen measure $d\bar{g}$). The constant $d(\pi,d\bar{g})$ is called the \emph{formal degree} of $\pi$ (with respect to $d\bar{g}$). Let $H$ be a compact open subgroup of $G$. If $d\bar{g},d\bar{g}'$ are two invariant Haar measures on $G$, then $d(\pi,d\bar{g}){\rm vol}(HZ/Z,d\bar{g}) = d(\pi,d\bar{g}'){\rm vol}(HZ/Z,d\bar{g}')$. Moreover, if $\pi \in \cE_2(G,\psi)$ is of the form $\pi = \cind_{ZH}^G \tau$ for an (automatically finite-dimensional) representation $\tau$ on which $Z$ acts by the character $\psi$, then $d(\pi,d\bar{g}){\rm vol}(HZ/Z, d\bar{g}) = \deg \tau$ (cf. \cite[1.6]{Cartier_Corvalis_79}).

For any $\pi \in \cE_2(G,\psi)$ and a smooth irreducible representation $\rho$ of $H$, let $(\pi:\rho)$ denote the multiplicity of $\rho$ in the restriction of $\pi$ to $H$. We need the following estimate due to Harish-Chandra.
\begin{thm}[see {\cite[p.6]{HarishChandra_70}}]\label{thm:key_estimation_general} Given $H$, $\rho$ as above, let $\pi \in \cE_2(G,\psi)$. Then
\begin{equation}\label{eq:HC_ineq}
\sum_{\pi \in \cE_2(G,\psi)} d(\pi,d\bar{g}){\rm vol}(HZ/Z,d\bar{g})(\pi:\rho) \leq \deg\rho.
\end{equation}
\end{thm}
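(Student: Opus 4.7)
The plan is to combine Schur orthogonality for matrix coefficients of square-integrable representations with Bessel's inequality applied to a test function supported on $HZ/Z$.

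Recall Schur orthogonality for $\cE_2(G,\psi)$: fixing $G$-invariant Hermitian inner products on each $V_\pi$, one has for $\pi, \pi' \in \cE_2(G,\psi)$ and vectors $u_i, v_i$ in the respective spaces,
\[
\int_{G/Z}(u_1,\pi(g)v_1)\overline{(u_2,\pi'(g)v_2)}\,d\bar g \;=\; \delta_{[\pi],[\pi']}\,d(\pi,d\bar g)^{-1}(u_1,u_2)\overline{(v_1,v_2)}.
\]
For each $\pi \in \cE_2(G,\psi)$ with $m_\pi := (\pi:\rho) > 0$, I would pick pairwise orthogonal $H$-equivariant isometries $\iota_i^\pi : V_\rho \hookrightarrow V_\pi$ ($1 \leq i \leq m_\pi$) and an orthonormal basis $\xi_1,\dots,\xi_{\deg\rho}$ of $V_\rho$, and form the matrix coefficients $c_{\pi,i,k,l}(g) := (\iota_i^\pi(\xi_k), \pi(g)\iota_i^\pi(\xi_l))$. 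Schur orthogonality then shows that $\{d(\pi,d\bar g)^{1/2}c_{\pi,i,k,l}\}$ is an orthonormal system in $L^2(G/Z,\psi,d\bar g)$. The crucial point is that for $h \in H$ these matrix coefficients reduce to $c_{\pi,i,k,l}(h) = (\xi_k,\rho(h)\xi_l)$, which is a fixed matrix coefficient of $\rho$ independent of $\pi$ and $i$.

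The hypothesis $(\pi:\rho) > 0$ forces the central character of $\rho$ to agree with $\psi$ on $Z \cap H$ (otherwise the sum in \eqref{eq:HC_ineq} is empty and the inequality is trivial). Granted this, I would define a test function $F \in L^2(G/Z,\psi,d\bar g)$ by $F(hz) := \psi(z)\,\tr(\rho(h))$ on $HZ$ and $F = 0$ off $HZ$. Using Schur orthogonality for the compact group $H/(Z \cap H)$, one computes $\|F\|^2_{L^2(G/Z)} = \vol(HZ/Z,d\bar g)$ and $\langle F, c_{\pi,i,k,l}\rangle_{L^2(G/Z)} = \vol(HZ/Z,d\bar g)\,\delta_{k,l}/\deg\rho$ for every index.

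Finally, Bessel's inequality applied to the orthonormal family above yields
\[
\sum_{\pi,i,k,l} d(\pi,d\bar g)\,|\langle F, c_{\pi,i,k,l}\rangle|^2 \;\leq\; \|F\|^2_{L^2(G/Z)},
\]
and substituting the computations above, with the Kronecker $\delta_{k,l}$ collapsing to $l = k$, the sum over $k$ contributing a factor of $\deg\rho$, and the sum over $i$ contributing a factor of $m_\pi = (\pi:\rho)$, yields
\[
\sum_\pi d(\pi,d\bar g)(\pi:\rho)\,\frac{\vol(HZ/Z,d\bar g)^2}{\deg\rho} \;\leq\; \vol(HZ/Z,d\bar g),
\]
which rearranges to the claimed inequality. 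The main technical difficulty will be a careful bookkeeping of normalizations — the compatibility of Haar measures on $G/Z$, $HZ/Z$, and $H/(Z \cap H)$, the matching of central characters on $Z \cap H$, and the choice of invariant inner products on $V_\pi$ and $V_\rho$; once these are set up consistently, everything reduces to routine applications of Schur orthogonality for $G$ (for square-integrable representations) and for its compact subgroup $H/(Z \cap H)$.
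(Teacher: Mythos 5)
Your argument is correct and is essentially Harish-Chandra's original proof of this inequality: the paper does not prove the statement but simply quotes it from \cite[p.~6]{HarishChandra_70}, and the route you take (Schur orthogonality for square-integrable matrix coefficients, the observation that the coefficients $c_{\pi,i,k,l}$ restrict on $H$ to matrix coefficients of $\rho$ independent of $\pi$ and $i$, and Bessel's inequality against the test function built from $\tr\rho$ extended by $\psi$ to $HZ$) is the standard one. The bookkeeping you flag does work out: well-definedness of $F$ on $HZ$ needs exactly the compatibility $\omega_\rho|_{Z\cap H}=\psi|_{Z\cap H}$, which as you note is automatic whenever some $(\pi:\rho)>0$, and admissibility of $\pi$ guarantees $(\pi:\rho)<\infty$ so the orthogonal isometries $\iota_i^\pi$ exist.
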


\subsection{Traces on elliptic elements}\label{sec:traces_ell_elements}

For the moment keep the assumptions of Section \ref{sec:square_integrables} (in particular, $\bfG$ is arbitrary reductive). Let $\mathcal{H}(G)$ denote the convolution algebra of locally constant compactly supported functions on $G$. Fix a Haar measure $dg$ on $G$. For any smooth $G$-representation $(\pi,V)$, $\mathcal{H}(G)$ acts in $V$ by $\pi(f)v = \int_G f(g)\pi(g)vdg$ for all $v \in V$, $f \in \mathcal{H}(G)$. If $\pi$ is admissible, then $\pi(f)$ has finite dimensional range, and hence a trace. Let $G^{\rm reg, ss}$ denote the set of regular semi-simple elements of $G$. It is open dense in $G$. The following result due to  Harish-Chandra and Lemaire ensures the existence of a trace of a finite length $G$-representation on regular semisimple elements of $G$. 

\begin{thm}[{see \cite[Theorem 1]{Henniart_ICM_06}}]\label{thm:traces_exist}
Let $\pi$ be a finite length (hence admissible) smooth representation of $G$. Then there is a unique (hence invariant under conjugation) locally constant function ${\rm tr}(\pi, \cdot)$ on $G^{\rm reg,ss}$ of $G$, locally integrable on $G$, such that for all $f \in \mathcal{H}(G)$, one has ${\rm tr}\,\pi(f) = \int_G {\rm tr}(\pi,g) f(g) dg$.
\end{thm}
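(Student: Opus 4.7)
The theorem is the deep local integrability theorem of Harish-Chandra (extended from the real to the $p$-adic case by Harish-Chandra and, for infinite-dimensional orbital integrals, using Howe's finiteness conjecture, later proven by Clozel). So my proof plan follows the classical route rather than anything specific to the situation of this paper.

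First, I would construct the distribution. By finite length, $\pi$ is admissible, so for any $f \in \mathcal{H}(G)$ there is a compact open subgroup $H \subseteq G$ such that $f$ is bi-$H$-invariant; then the operator $\pi(f)$ factors through the finite-dimensional space $V^H$ of $H$-fixed vectors and therefore has a well-defined trace. The map $T_\pi \colon f \mapsto \tr \pi(f)$ is thus a linear functional on $\mathcal{H}(G)$, i.e.\ a distribution, and a change-of-variables calculation shows $T_\pi$ is invariant under conjugation. Uniqueness of a locally constant representative on the open dense subset $G^{\rm reg, ss}$ is automatic, since two locally constant functions that agree as distributions on a dense open set must agree pointwise.

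Next, I would prove local representability on $G^{\rm reg,ss}$. Near a regular semisimple element $g_0$, the centralizer is a torus $S = Z_G(g_0)$, the orbit map $G/S \times S^{\rm reg} \to G^{\rm reg}$ is \'etale, and every invariant distribution on a small neighborhood of $g_0$ is the pushforward of a distribution on the slice $S^{\rm reg}$. Combining this with the observation that $T_\pi$, restricted via this slice, is locally given by a constant times $|D(s)|^{-1}$-type Jacobian factors times a locally constant function (since $T_\pi$ pulls back under the slice map to an $S$-invariant --- hence arbitrary --- distribution, and the local behavior is controlled by Weyl integration), one shows $T_\pi$ is represented by a locally constant function $\tr(\pi,\cdot)$ on $G^{\rm reg,ss}$.

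The genuine difficulty --- and the main obstacle --- is local integrability on all of $G$, in particular the control of $\tr(\pi,\cdot)$ near non-regular semisimple elements. For this one uses the exponential (or Cayley-type) map near a semisimple $s \in G$ to transport the problem to the Lie algebra $\mathfrak{g}_s$ of the centralizer, reducing to the statement that invariant distributions on $\mathfrak{g}_s$ supported on the nilpotent cone are represented by locally integrable functions. This is proved via Howe's finiteness theorem: the space of invariant distributions on $\mathfrak{g}_s$ whose support meets a given compact subset and are sufficiently restricted is finite-dimensional. An induction on the split rank of centralizers together with the parabolic descent of the character distribution (and the Deligne--Kazhdan density theorem for orbital integrals on $G^{\rm reg,ss}$) upgrades the regular-semisimple representability to local integrability and pins down the explicit asymptotic behavior (the Shalika germ expansion). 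I would invoke these ingredients from \cite{Henniart_ICM_06} and the references therein rather than reprove them, as they are largely independent of the structure of $\bfG$ exploited elsewhere in this paper.
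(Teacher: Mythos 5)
The paper does not prove this statement at all: it is quoted verbatim as a known black-box result, attributed to Harish-Chandra and Lemaire and cited from \cite{Henniart_ICM_06}, so there is no internal proof to compare yours against. Your outline is a faithful summary of the classical route (admissibility of finite-length representations giving a well-defined distribution character, uniqueness of a locally constant representative on the dense open set $G^{\rm reg,ss}$, local representability via descent to the centralizing torus, and local integrability via Howe's finiteness conjecture together with parabolic descent and Shalika germs), and as a proof sketch for characteristic zero it is correct in structure, with the hard analytic inputs appropriately deferred to the literature, exactly as the paper itself does.

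One substantive caveat: the paper allows $K$ to have positive residue \emph{and} positive equal characteristic, and the theorem is invoked in that generality. Your reduction of local integrability to invariant distributions on the Lie algebra $\mathfrak{g}_s$ via the exponential (or a Cayley-type) map is intrinsically a characteristic-zero argument; in positive characteristic there is no such exponential with the required properties, Howe's conjecture in the form you use it is not available, and this is precisely the gap that Lemaire's work fills (for $\GL_n$ and its inner forms, by different, more explicit methods) --- which is why the paper credits the theorem to ``Harish-Chandra and Lemaire'' rather than to Harish-Chandra alone. If you intend your sketch to cover the paper's actual hypotheses, you should either restrict to ${\rm char}\,K=0$ or replace the Lie-algebra descent step by a citation to Lemaire's positive-characteristic argument; as written, that step would fail for local function fields.
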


Now assume again, that $G = \bfG(K)$ for an inner form $\bfG$ of $\bfGL_n$. For $g \in G$, let $P(g)$ denote the reduced characteristic polynomial of $g$. Two elements of $g_1,g_2 \in G^{\rm reg,ss}$ are conjugate in $G$ if and only if $P(g_1) = P(g_2)$. All said above applies to $\bfGL_n(K)$ as a special case. Moreover, for an elements $g\in G^{\rm reg, ss}$ there is a unique up to conjugation element $g' \in \bfGL_n(K)^{\rm reg, ss}$ such that $P(g_1) = P(g_2)$. This has a partial converse. Let $G^{\rm ell} \subseteq G^{\rm reg, ss}$ denote the (open) subset of elliptic elements. For any $g' \in \bfGL_n(K)^{\rm ell}$ there is a unique up to conjugation $g \in G^{\rm ell}$ with the same (reduced) characteristic polynomial. The local Jacquet--Langlands correspondence is then the following result, which in its most general form is due to Deligne--Kazhdan--Vigneras \cite{DeligneKV_84} and Badulescu \cite{Badulescu_02}.

\begin{thm}[{see \cite[Theorem 2]{Henniart_ICM_06}}]
There is a unique bijection $\pi' \leftrightarrow \pi = {\rm JL}(\pi')$ between the sets of $\mathscr{A}^2(G)$ and $\mathscr{A}^2(\bfGL_n(K))$ of smooth irreducible square-integrable representations of $\bfGL_n(K)$ and $G$, such that $\tr(\pi,g) = (-1)^{n-n'}\tr(\pi',g')$ whenever $g \in G^{\rm ell}$, $g' \in \bfGL_n(K)^{\rm ell}$ with $P(g) = P(g')$.
\end{thm}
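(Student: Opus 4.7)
The plan is to prove this classical local Jacquet--Langlands correspondence by the standard combination of character-theoretic uniqueness with a global trace-formula argument for existence.

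For uniqueness, Theorem \ref{thm:traces_exist} provides each $\pi \in \mathscr{A}^2(G)$ with a locally integrable character $\tr(\pi,\cdot)$ on $G^{\rm reg,ss}$. A theorem of Harish-Chandra, together with its $p$-adic refinement due to Kazhdan, asserts that an irreducible square-integrable representation is determined by the restriction of its character to the elliptic regular set. Since regular elliptic conjugacy classes on $G$ and on $\GL_n(K)$ are both parameterised by their reduced characteristic polynomials, the asserted identity $\tr(\pi,g) = (-1)^{n-n'}\tr(\pi',g')$ determines $\pi$ from $\pi'$ and conversely, giving uniqueness of the bijection once it exists.

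For existence, I would follow the globalization approach of Deligne--Kazhdan--Vigneras \cite{DeligneKV_84}, extended to all inner forms of $\bfGL_n$ by Badulescu \cite{Badulescu_02}. The first step is to realise $K = F_{v_0}$ as a completion of a global field $F$ at a finite place, and to choose a global inner form $\bfG_F$ of $\bfGL_n$ over $F$ that localises to $\bfG$ at $v_0$, is split at all but finitely many places, and satisfies an auxiliary ramification condition at a second place $v_1$ ensuring the applicability of the simple trace formula. The second, purely local, step is the transfer of functions: for every $f \in \mathcal{H}(G)$ one must produce $f' \in \mathcal{H}(\GL_n(K))$ whose elliptic regular orbital integrals agree with those of $f$ up to the sign $(-1)^{n-n'}$, and vice versa. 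This is established via density of regular semisimple orbital integrals and Shalika germ expansions, using that stable and rational conjugacy coincide on both sides for $\GL_n$-type groups.

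The third and decisive step is to apply the simple trace formula to $\bfG_F$ and to $\bfGL_n$ over $F$ against matching test functions whose components at the archimedean places and at $v_1$ are concentrated on elliptic regular elements, so that only elliptic terms survive on the geometric side. The local matching at $v_0$ then forces equality of the geometric sides, and hence of the discrete spectral expansions. Isolating a cuspidal automorphic representation $\Pi'$ of $\bfGL_n(\mathbb{A}_F)$ with a prescribed square-integrable local component at $v_0$, and appealing to linear independence of characters at the remaining places, one extracts a cuspidal automorphic representation $\Pi$ of $\bfG_F(\mathbb{A}_F)$ whose $v_0$-component is the sought $\pi = {\rm JL}(\pi')$; the global character identity then localises at $v_0$ to the required elliptic trace identity, and the reverse direction supplies surjectivity. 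The main obstacle is this final extraction step: one must arrange that contributions from the continuous spectrum and from non-elliptic geometric terms cancel or can be isolated, and that the global-to-local reduction introduces no parasitic contribution. Controlling these subtleties is precisely the technical heart of \cite{DeligneKV_84} and \cite{Badulescu_02}.
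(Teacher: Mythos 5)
This statement is not proved in the paper at all: it is imported verbatim from the literature (Deligne--Kazhdan--Vigneras and Badulescu, via Henniart's ICM survey), so there is no internal argument to compare yours against. Your outline is a faithful summary of exactly the proof strategy of the cited references --- character-theoretic uniqueness via linear independence of discrete-series characters on the elliptic set, plus existence via globalization, transfer of functions matching elliptic orbital integrals up to the sign $(-1)^{n-n'}$, and the simple trace formula --- so as a description of where the theorem comes from it is accurate. But be aware that, as written, it is a proof \emph{sketch} and not a proof: every genuinely hard step (the existence of matching test functions, including the vanishing of orbital integrals of $f'$ on classes that do not transfer to $G$; the isolation of a single cuspidal contribution in the spectral expansion; the control of the continuous spectrum) is deferred to \cite{DeligneKV_84} and \cite{Badulescu_02}. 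Since the paper itself treats the theorem as a black box, this level of detail is appropriate here, but it would not constitute an independent proof.
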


Now we recall a result from \cite{CI_ADLV}. An (elliptic) element $x \in T \cong L^\times$ is called \emph{very regular}, if $x \in \cO_L^\times$ and the image of $x$ in the residue field $\cO_L/\fp_L \cong \FF_{q^n}$ has trivial stabilizer in $\Gal(L/K)$. This definition does not depend on the choice of the isomorphism $T \cong L^\times$ as in Section \ref{sec:form_of_torus}. Write $\theta^\gamma := \theta \circ \gamma$ for $\gamma \in \Gal(L/K)$, $\theta \colon L^\times \rightarrow \overline{\bQ}_\ell^\times$.

\begin{prop}[{Theorem 11.2 of \cite{CI_ADLV}}]\label{prop:traces_of_very_regulars}
Let $\theta \colon T \rightarrow \overline{\bQ}_\ell^\times$ be smooth and $x \in T$ very regular. Then $\tr(R_T^G(\theta), x) = \pm \sum_{\gamma \in \Gal(L/K)}\theta^\gamma(x)$.
\end{prop}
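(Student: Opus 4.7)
Since $R_T^G(\theta) = \cind_{ZG_\cO}^G R_{T_h}^{G_h}(\theta)$ for $h$ large enough that $\theta$ is trivial on $U_L^h$, I would first invoke the Frobenius character formula for smooth compact induction from an open subgroup containing the center:
\[
\tr(R_T^G(\theta), x) \;=\; \sum_{\substack{g \in ZG_\cO\backslash G \\ g^{-1}xg \in ZG_\cO}} \tr\bigl(R_{T_h}^{G_h}(\theta),\, g^{-1}xg\bigr),
\]
provided the sum on the right is finite. The very regularity of $x \in T_\cO$ ensures this: the reduction of $x$ in the residue field has trivial $\Gal(L/K)$-stabilizer, so $x$ is regular semisimple elliptic with $C_{\bfG}(x) = \bfT$. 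Using uniqueness up to $G$-conjugation of the elliptic unramified maximal torus and the Bruhat--Tits description of $G_\cO$, any $g$ conjugating $x$ into $ZG_\cO$ must lie in $T \cdot N$, where $N \subseteq N_G(T)$ lifts $W^F \cong \Gal(L/K)$. After modding out by the $T$-stabilizer of $x$, the contributing cosets are indexed precisely by $W^F \cong \Gal(L/K)$.

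Next I would compute each summand by the Lefschetz trace formula on the perfect scheme. For $\gamma \in \Gal(L/K)$ represented by $g_\gamma \in N_G(T)$, one has $g_\gamma^{-1} x g_\gamma = x^\gamma$ (Galois conjugate of $x$ under $T \cong L^\times$), which is again very regular. Using the presentation \eqref{eq:Xh_into_GGh} and writing $X$ as the disjoint union $\coprod_{G/G_\cO} g.X_\cO$, the trace $\tr(R_{T_h}^{G_h}(\theta), x^\gamma)$ reduces via Lefschetz to counting pairs $(y, t) \in X_h \times T_h$ with $x^\gamma \cdot y = y \cdot t$, weighted by $\theta(t)^{-1}$. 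Very regularity forces any such $t$ to be $G_h$-conjugate to $x^\gamma$, hence (since both lie in $T_h$) $W_\cO^F$-conjugate to $x^\gamma$; a concrete check on the matrix description of $X_h$ shows that the fixed locus is discrete and that each $t = x^\gamma$ contributes the Galois-conjugate character value $\theta(x^\gamma) = \theta^\gamma(x)$.

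Summing over $\gamma \in \Gal(L/K)$ yields the claimed formula $\sum_{\gamma \in \Gal(L/K)} \theta^\gamma(x)$. The sign $\pm$ arises because the Lefschetz count produces the Euler characteristic in cohomology, and only one parity of cohomological degrees supports the $\theta$-isotypic component --- this can be seen either from Corollary \ref{cor:irreducibility} (when $\theta|_{U_L^1}$ is in general position, where $\pm R_{T_h}^{G_h}(\theta)$ is genuine) or by a direct dimension-count argument giving the sign as $(-1)^{\dim X_h}$.

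The main obstacle is the matching step between $ZG_\cO$-cosets and elements of $\Gal(L/K)$: one needs that two very regular elements of $T$ are $G$-conjugate only by elements of $N_G(T)$, a rigidity statement requiring both the uniqueness of the unramified elliptic torus class and a careful parahoric reduction argument. The Lefschetz computation itself is cleaner but requires identifying the fixed locus on the (non-classical) perfect scheme $X_h$ with a union of $T_h$-orbits each contributing a single Galois twist of $\theta(x)$.
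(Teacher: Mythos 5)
This proposition is not proved in the paper at all: it is quoted verbatim from \cite{CI_ADLV} (Theorem 11.2 there), so there is no in-paper proof to compare against. Your strategy — write $R_T^G(\theta)=\cind_{ZG_\cO}^G R_{T_h}^{G_h}(\theta)$, apply the Frobenius character formula for compact induction at an elliptic regular element, and evaluate each summand by a Lefschetz fixed-point count on $X_h$ — is indeed the strategy of the cited proof, and the overall architecture is sound.

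Two points in your bookkeeping are off, even though they cancel to give the right total. First, the rigidity claim ``any $g$ conjugating $x$ into $ZG_\cO$ must lie in $T\cdot N$'' is too strong: a priori $g^{-1}xg$ is only some element of $ZG_\cO$, not of $T$. What one actually proves is a vanishing statement — the fixed locus of $(y,t)$ on $X_h$ is empty unless $y$ is $G_\cO$-conjugate into $ZT_\cO$ — and only then does $C_G(x)=T$ force $g\in ZG_\cO\cdot N_G(T)$. Second, and more substantively, the contributing cosets are \emph{not} indexed by all of $W^F\cong\Gal(L/K)$ with one term each. Since $T=ZT_\cO\subseteq ZG_\cO$ and $N_{G_\cO}(T_\cO)/T_\cO\cong W_\cO^F\cong\Gal(L/K_{n_0})$ already sits inside $ZG_\cO$, the set $ZG_\cO\backslash ZG_\cO N_G(T)$ has only $n_0=n/n'$ elements; each coset contributes the \emph{finite-level} character value $\tr(R_{T_h}^{G_h}(\theta),x^\gamma)=\pm\sum_{\delta\in W_\cO^F}\theta^{\delta\gamma}(x)$, an orbit sum of length $n'$ coming from the $n'$ fixed components on $X_h$, not a single term. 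The product $n_0\cdot n'=n$ recovers $\sum_{\gamma\in\Gal(L/K)}\theta^\gamma(x)$. Your version ($n$ cosets, one fixed point each) is literally correct only in the division-algebra case $n'=1$; in the split case $\kappa=0$ there is a single coset and all $n$ terms come from the finite-level formula. Finally, the sign is $(-1)^{r_\theta}$ with $r_\theta$ the unique cohomological degree carrying the $\theta$-isotypic part (cf.\ Lemma \ref{lm:geometric_dimension}), which is not $(-1)^{\dim X_h}$ in general — harmless here since the statement only asserts $\pm$, but worth correcting.
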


\subsection{Special cases of local Langlands and Jacquet--Langlands correspondences}\label{sec:LLC_JLC_generalities}
As in the introduction, to a character $\theta \colon L^\times \rightarrow \overline{\bQ}_\ell^\times$ one can attach the $n$-dimensional representation $\sigma_\theta = \Ind_{\mathcal{W}_L}^{\mathcal{W}_K} (\theta\cdot \mu)$  of the Weil group of $K$, where we recall that $\mu$ is the rectifying character of $L^\times$, given by $\mu|_{U_L} = 1$ and $\mu(\varpi) = (-1)^{n-1}$. The representation $\sigma_\theta$ is irreducible if and only if $\theta$ is in general position. In this case, the local Langlands correspondence attaches to $\sigma_\theta$ the irreducible supercuspidal $\GL_n(K)$-representation $\pi_\theta^{\GL_n} := {\rm LL}(\sigma_\theta)$. Moreover, the local Jacquet--Langlands correspondence attaches to $\pi_\theta^{\GL_n}$ the irreducible supercuspidal $G$-representation $\pi_\theta := {\rm JL}(\pi_\theta^{\GL_n})$.

Moreover, $\theta$ is in general position if and only if it is admissible in the sense of \cite{Howe_77}, and the construction of Howe \cite{Howe_77} attaches to it an irreducible supercuspidal $\GL_n(K)$-representation, which is (equivalent to) $\pi_\theta^{\GL_n}$. With other words, with notation as in the introduction, the diagram
\[
\begin{tikzcd}
\mathscr{X}/\Gal_{L/K} \arrow[d, "{\theta \mapsto \sigma_\theta}", labels=left]  \arrow[rd, "{\rm Howe}"] & & \\
\mathscr{G}_K^\varepsilon(n) \arrow[r, "{\rm LL}"] & \mathscr{A}_K^\varepsilon(n,0) \arrow[r, "{\rm JL}"] & \mathscr{A}_K^\varepsilon(n,\kappa) 
\end{tikzcd}
\]
commutes. 

\section{Realization of ${\rm LL}$ and ${\rm JL}$ in the cohomology of \texorpdfstring{$X_{\dot w}^{DL}(b)$}{$X$} in some cases}\label{sec:LLC_JLC}

We now will prove Theorem \hyperlink{thm:a}{A} from the introduction. Let $\theta\colon T \cong L^\times \rightarrow \overline{\QQ}_\ell^\times$ be a smooth character in general position. Let $\pi_\theta = {\rm JL}({\rm LL}(\sigma_\theta)) \in \cA_K(n,\kappa)$ be as in Section \ref{sec:LLC_JLC_generalities}.

\subsection{Degree of $R_T^G(\theta)$ and formal degree of $\pi_\theta$}\label{sec:degree_geometric} First we check that the degree of $R_{T_h}^{G_h}(\theta)$ matches with the formal degree of $\pi_\theta$ (see Section \ref{sec:square_integrables}). Here we use results from \cite{CI_DrinfeldStrat}. Fix a Howe decomposition for $\theta$: there is a unique tower of fields $L = L_t \supsetneq L_{t-1} \supsetneq \dots \supsetneq L_1 \supsetneq L_0 = K$ such that
\begin{equation*}
     \theta = (\chi \circ \N_{L/K})(\phi_1 \circ \N_{L/L_1}) \cdots (\phi_{t-1} \circ \N_{L/L_{t-1}}) (\phi_t)
\end{equation*}
for some \textit{primitive} characters $\chi, \phi_1, \ldots, \phi_t$ of $K^\times, L_1^\times, \ldots, L_t^\times$ respectively. Denote by $h_1, \dots, h_t$ the levels of $\phi_1, \dots, \phi_t$ respectively and put $d_k = [L:L_k]$, in particular, $d_0 = n$, $d_t = 1$.
Also, $\theta|_{U_L^1}$ is in general position if and only if $h_t > 1$.

\begin{lm}\label{lm:geometric_dimension}
Assume $p>n$. Assume $\theta|_{U_L^1}$ is in general position. Then
\begin{equation}\label{eq:dim_formula_geometric_side}
\deg |R_{T_h}^{G_h}(\theta)| = q^{\frac{1}{2}n\left[n(h_1-1) - (h_t - 1) - \sum_{k=1}^{t-1}d_k(h_k - h_{k+1})\right]} \prod_{i=1}^{n'-1} (q^{n_0(n'-i)} - 1).
\end{equation}
\end{lm}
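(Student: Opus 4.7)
The plan is to reduce the computation of $\deg |R_{T_h}^{G_h}(\theta)|$ to the dimension of a single isotypic component and then to quote the Frobenius eigenvalue computations of the companion paper \cite{CI_Frobenius_eigenvalues}. First, the hypothesis $p>n$ together with the fact that $\theta|_{U_L^1}$ is in general position (equivalently, $\theta|_{T_h^1}$ has trivial stabilizer in $W_\cO^F$, by Lemma \ref{lm:factor_through_norm_equiv_cond} and Section \ref{sec:case_bequalw}) places us exactly in the setup of Theorem \ref{thm:relation_Xh_Xhn} and its Corollary. Hence $\pm H_c^\ast(X_{h,n'})_\theta$ is irreducible and isomorphic to $|R_{T_h}^{G_h}(\theta)|$, which gives
\[
\deg |R_{T_h}^{G_h}(\theta)| = \Bigl|\sum_i (-1)^i \dim_{\overline{\bQ}_\ell} H_c^i(X_{h,n'})_\theta\Bigr|.
\]
Thus everything reduces to a computation on the closed stratum $X_{h,n'}$, which carries a $G_h\times T_h$-action.

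Next I would invoke the main results of \cite{CI_Frobenius_eigenvalues}. The key input is that $X_{h,n'}$ is a maximal $\FF_{q^n}$-variety, i.e.\ the Weil--Deligne bound is attained, so on every non-vanishing $H_c^i(X_{h,n'})$ the geometric Frobenius $\Fr_{q^n}$ acts by a single explicitly determined scalar (depending on $i$). This purity allows one to recover $\sum_i (-1)^i \dim H_c^i(X_{h,n'})_\theta$ from a \emph{single} point count: by character orthogonality on $T_h$ and the Grothendieck--Lefschetz formula,
\[
\sum_i (-1)^i \dim H_c^i(X_{h,n'})_\theta
= \frac{1}{|T_h|}\sum_{t\in T_h}\theta(t)^{-1}\cdot \text{(normalized trace of $t\cdot \Fr_{q^{nm}}$ on $H_c^\ast(X_{h,n'})$)},
\]
where the Frobenius-normalization is legal because the Frobenius eigenvalues are known. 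The right-hand side is computed in \cite{CI_Frobenius_eigenvalues} via a Howe-type stratification of $X_{h,n'}$, whose strata are fibrations over Deligne--Lusztig varieties for the Coxeter element of $\GL_{n'}(\FF_{q^{n_0}})$; the $\theta$-isotypic contributions of all non-closed strata vanish by the general-position hypothesis on $\theta|_{U_L^1}$.

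The remaining contribution comes from the deepest stratum. Its $T_h$-equivariant point count factors as a Gauss-sum/character-sum part and a geometric part. The Gauss-sum part contributes the product $\prod_{i=1}^{n'-1}(q^{n_0(n'-i)}-1)$, coming from the Coxeter-type Deligne--Lusztig variety for $\GL_{n'}/\FF_{q^{n_0}}$ at level one (this is essentially Lusztig's computation in \cite{Lusztig_79}). The power-of-$q$ factor encodes the dimension of $X_{h,n'}$ and the depth of $\theta$ through its Howe tower $(L_k,\phi_k,h_k)$: each intermediate jump $h_k \to h_{k+1}$ in the Howe decomposition contributes a factor $q^{-\frac{1}{2} n d_k (h_k - h_{k+1})}$ to the normalized trace, while the overall affine fibration gives $q^{\frac{1}{2} n [n(h_1 -1) - (h_t - 1)]}$. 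Assembling the two factors yields the claimed formula.

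The main obstacle is the Frobenius eigenvalue computation on $X_{h,n'}$ (maximality and determination of eigenvalues), carried out in \cite{CI_Frobenius_eigenvalues}; the assumption $p>n$ enters precisely there to guarantee the separability properties of the relevant Artin--Schreier-type equations (compare Lemma \ref{lm:connectedness_Euclidean_alg} and Lemma \ref{lm:Euclidean_algorithm_performed}). Once this input is available, the present lemma becomes a direct bookkeeping combining it with the Howe decomposition of $\theta$ and the identification of $|R_{T_h}^{G_h}(\theta)|$ with $\pm H_c^\ast(X_{h,n'})_\theta$ from Section \ref{sec:VariationMackey}.
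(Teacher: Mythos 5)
Your proposal is correct and follows essentially the same route as the paper: reduce to $H_c^\ast(X_{h,n'})_\theta$ via Theorem \ref{thm:relation_Xh_Xhn}, then combine character orthogonality and the Lefschetz formula (Boyarchenko's trace formula) with the Frobenius eigenvalue determination and maximality of $X_{h,n'}$ from \cite{CI_Frobenius_eigenvalues}. The paper's point count is simpler than your stratification sketch suggests --- one shows directly that $S_{1,t}=\{x\in X_{h,n'}\colon F^n(x)=xt\}$ is empty unless $t=1$ and that $S_{1,1}=G_h$, with the entire Howe-decomposition dependence entering only through the Frobenius scalar $(-1)^{r_\theta}q^{nr_\theta/2}$ --- and note that $p>n$ is used in Theorem \ref{thm:relation_Xh_Xhn} rather than in the eigenvalue computation itself, but these are presentational differences, not gaps.
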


\begin{proof} As $R_T^G(\theta \cdot (\psi \circ \N_{L/K})) \cong R_T^G(\theta) \otimes (\psi\circ\det)$ \cite[Lemma 8.4]{CI_ADLV}, we may assume $\chi = 1$, i.e., $h=h_1$. The assumptions along with Theorem \ref{thm:relation_Xh_Xhn} imply that $R_{T_h}^{G_h}(\theta) \cong H_c^\ast(X_{h,n'})_\theta$. We may assume that $b$ is a Coxeter-type representative (as in \cite[5.2.1]{CI_ADLV}). For $t \in T_h$ put $S_{1,t} = \{x \in X_{h,n'} \colon F^n(x) = xt \}$. As in \cite[Lemma 9.3]{CI_ADLV} we see that $S_{1,t} = \varnothing$, unless $t=1$ (in \emph{loc.~cit.} we worked with the special representative for $b$ and this explains the sign $(-1)^{n'-1}$ appearing there). Further, one has $S_{1,1} = G_h$ \cite{CI_DrinfeldStrat}, and so 
\[ 
\#S_{1,1} = \left(\prod_{i=1}^{h-1}\#G_{i+1}^i\right)\cdot \#G_1  = q^{n^2(h-1)} \cdot \prod_{i=0}^{n'-1} (q^{n_0n'} - q^{n_0 i}) = q^{n^2(h-1) + \frac{1}{2}n(n'-1)} \cdot \prod_{i=0}^{n'-1} (q^{n_0(n'-i)} - 1),
\]
as $G_1 \cong ({\rm Res}_{\FF_{q^{n_0}}/\FF_q} \GL_{n',\FF_{q^{n_0}}})(\FF_q)$ and as $\#G_{i+1}^i = q^{n^2}$ for each $i\geq 1$. Boyarchenko's trace formula \cite[Lemma 2.12]{Boyarchenko_12} and the determination \cite[Theorem 6.1.1]{CI_DrinfeldStrat} of the scalar by which $F^n$ acts in the non-vanishing cohomology group $H_c^{r_\theta}(X_h)_\theta$ gives
\[
\dim |R_{T_h}^{G_h}(\theta)| = \dim |H_c^\ast(X_{h,n'})_\theta| = \frac{(-1)^{r_\theta}}{(-1)^{r_\theta}q^{\frac{nr_\theta}{2}}\#T_h} \cdot \#S_{1,1},
\]
The lemma now follows by an easy calculation, as $\#T_h = (q^n - 1)q^{n(h-1)}$, and as $r_\theta = (n'-n) + h_t + (n-2)h + \sum_{k=1}^{t-1}d_k(t_k - t_{k+1})$ by \cite[Corollary 6.1.2]{CI_DrinfeldStrat}. (Technically speaking, one has to check that the choices (of $U,b,w$) made here and in \cite[\S4]{CI_DrinfeldStrat} are coherent and give rise to isomorphic $X_h$'s. This follows from a calculation with matrices.) \qedhere
\end{proof}

On the other side we use the computation of the formal degree of $\pi_\theta^{\GL_n}$ from \cite{CorwinMS_90}.

\begin{lm}\label{lm:degree_on_algebraic_side}
Assume that $\theta|_{U_L^1}$ is in general position. For any left invariant Haar measure $d\bar{g}$ on $G/Z$, $d(\pi_\theta,d\bar{g}){\rm vol}(ZH/Z,d\bar{g})$ is equal to the right hand side of \eqref{eq:dim_formula_geometric_side}. In particular, we have
\[
d(\pi_\theta,d\bar{g}){\rm vol}(G_\cO Z/Z,d\bar{g}) = \deg |R_{T_h}^{G_h}(\theta)|.
\]
\end{lm}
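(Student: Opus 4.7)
The product $d(\pi_\theta, d\bar{g}) \cdot {\rm vol}(G_\cO Z/Z, d\bar{g})$ is independent of the choice of Haar measure on $G/Z$, as recalled in Section~\ref{sec:square_integrables}, so it suffices to verify the claimed identity for one convenient choice of $d\bar{g}$. The plan is to deduce the value of $d(\pi_\theta, d\bar g)\vol(G_\cO Z/Z, d\bar g)$ from the analogous quantity for $\pi_\theta^{\GL_n}$ on $\GL_n(K)$, for which an explicit formula is available in \cite{CorwinMS_90}, and then to check that the result matches the right hand side of \eqref{eq:dim_formula_geometric_side}.

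First, I would recall that $\pi_\theta = \JL(\pi_\theta^{\GL_n})$, and that the Jacquet--Langlands correspondence preserves formal degrees up to the known Plancherel-measure constant relating compatible Haar measures on $G$ and $\GL_n(K)$ (this is essentially in \cite{DeligneKV_84,Badulescu_02} and is cleanly recorded, e.g., in the references cited in Section~\ref{sec:traces_ell_elements}). Combining this with the formula of Corwin--Moy--Sally \cite{CorwinMS_90} for the formal degree of the Howe-type supercuspidal representation $\pi_\theta^{\GL_n}$ attached to the admissible pair $(L/K,\theta)$, one extracts a closed-form expression for $d(\pi_\theta, d\bar g) \vol(G_\cO Z/Z, d\bar g)$ in terms of the Howe data $(L_k, \phi_k, h_k)_{k=0}^{t}$ and the invariants $n, n', n_0, \kappa$.

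Next, I would compare the resulting expression term by term with the right hand side of \eqref{eq:dim_formula_geometric_side}. The factor $\prod_{i=1}^{n'-1}(q^{n_0(n'-i)}-1)$ will come from the ``reductive-quotient'' part of the formal degree, reflecting the Steinberg-type factor for $\GL_{n'}$ over the residue field $\FF_{q^{n_0}}$ of $D_{k_0/n_0}$; this is exactly the factor that appears when one writes the Corwin--Moy--Sally formula for $\GL_{n'}(D_{k_0/n_0})$ rather than for $\GL_n(K)$, and the JL transfer absorbs precisely the ratio of these two Steinberg-type factors. The $q$-power factor is a direct rewriting of the Swan-conductor exponent of $\sigma_\theta$, which by a standard computation (using the tower $L = L_t \supsetneq \cdots \supsetneq L_0 = K$ and the levels $h_k$) equals $\tfrac{1}{2}n\bigl[n(h_1-1) - (h_t-1) - \sum_{k=1}^{t-1}d_k(h_k - h_{k+1})\bigr]$.

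The main obstacle is bookkeeping rather than conceptual: normalizing all Haar measures consistently (on $G$, on $Z$, on $G_\cO Z/Z$, and transferring these across the JL correspondence), and reconciling the conventions of \cite{CorwinMS_90} (formulated for Howe's construction over $\GL_n(K)$) with the normalizations used here and in \cite{CI_Frobenius_eigenvalues}. Once the normalization is fixed, the equality with \eqref{eq:dim_formula_geometric_side} follows by direct comparison, and the ``in particular'' statement is obtained by specializing to $H = G_\cO$ together with Lemma~\ref{lm:geometric_dimension}.
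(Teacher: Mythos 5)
Your plan follows essentially the same route as the paper's proof: fix the Haar measure normalized via the Steinberg representation, compute ${\rm vol}(G_\cO Z/Z,d\bar{x})$ by Macdonald's formula (your ``Steinberg-type factor'' for $\GL_{n'}$ over $\FF_{q^{n_0}}$), transfer the normalized formal degree across the Jacquet--Langlands correspondence, and invoke the Corwin--Moy--Sally formula for $\pi_\theta^{\GL_n}$. The ingredients and their roles match the paper's argument, so the approach is correct.
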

\begin{proof}
The product on the left hand side in the lemma is independent of $d\bar{g}$, so it is enough to show the lemma for a fixed (left invariant) Haar measure. Let $d\bar{x}$ be the Haar measure on $G/Z$, normalised such that the Steinberg representation ${\rm St}_G$ of $G$ satisfies $d({\rm St}_G,d\bar{x}) = 1$. 
Then by Macdonald's formula \cite[\S3.7]{SilbergerZ_96} (see also \cite[Proposition 5.4]{Kariyama_13}), we have
\begin{equation}\label{eq:Macdonalds_formula}
{\rm vol}(G_\cO Z/Z,d\bar{x}) = \frac{1}{n}\prod_{i=1}^{n'-1}(q^{n_0i} - 1).
\end{equation}
The normalized formal degree $d(\pi,d\bar{g})$ is stable under the Jacquet--Langlands correspondence \cite{DeligneKV_84,BushnellHL_10}, so we deduce by using \eqref{eq:Macdonalds_formula},
\[
d(\pi,d\bar{x}){\rm vol}(ZH/Z,d\bar{x}) = d(\pi_\theta^{\GL_n}, d\bar{x}^{\GL_n})\cdot \frac{1}{n}\prod_{i=1}^{n'-1}(q^{n_0i} - 1),
\]
where $d\bar{x}^{\GL_n}$ is the measure $d\bar{x}$ in the special case $n'=n$.
Now the normalized formal degree of $\pi_\theta^{\GL_n}$ is determined in \cite[Theorem 2.2.8]{CorwinMS_90} and coincides with the right hand side of \eqref{eq:dim_formula_geometric_side}.
\end{proof}

\subsection{Comparison}\label{sec:comparison}

We now prove Theorem \hyperlink{thm:A}{A}. Assume $p>n$ and assume that $\theta|_{U_L^1}$ is in general position. Let $Z = K^\times$ be the center of $G$. For a smooth character $\phi$ of $K^\times$ we have $R_T^G(\theta \cdot (\phi \circ \N_{L/K})) \cong R_T^G(\theta) \otimes (\phi \circ \det)$ \cite[Lemma 8.4]{CI_ADLV}. An analogous formula holds for $\pi_\theta$. Hence we may twist both sides of the equality claimed in the theorem by a smooth character $\phi$ of $K^\times$. Thus we are reduced to the case that $\theta|_Z$ is unitary. Fix an invariant Haar measure $d\bar{g}$ on $G/Z$. 

By Theorem \ref{thm:cuspidality_general}, there exists a finite set $I$ and an irreducible supercuspidal representation $\pi_i$ of $G$ for each $i \in I$ such that $|R_T^G(\theta)| \cong \bigoplus_{i=1}^s \pi_i$. It is easy to see (e.g.\ using \cite[Lemma 2.12]{Boyarchenko_12}) that the central character of $R_T^G(\theta)$ is $\theta|_Z$. From this and the fact that all supercuspidal representations are square-integrable it follows that $\pi_i \in \cE_2(G, \theta|_Z)$ for all $i$. As by assumption $(p,n) = 1$, each $\pi_i$ is attached to a pair $(E_i/K,\chi_i)$ with $E_i/K$ is a separable degree $n$ extension and $\chi_i$ is an admissible character of $E_i^\times$ in the sense of \cite{Howe_77} (indeed, Howe's construction also works for inner forms of $\bfGL_n$, so that there is no need to pass to the more general constructions of Yu \cite{Yu_01} and Kaletha \cite{Kaletha_19}). Let $I_{nr} \subseteq I$ denote the subset of those $i\in I$, for which $E_i/K$ is unramified, i.e., $E_i \cong L$. For each $i \in I$, $\pi_i$ has a well-defined trace on regular elliptic elements of $G$, and in particular on the very regular elements of $T \cong L^\times$. If $i \in I \sm I_{nr}$, then $\pi_i \cong \cind_{HE_i}^G\tau_i$, where $H \subseteq G_\cO$ is certain (explicitly determined) compact open subgroup, which is not maximal compact, and $E_i^\times$ is appropriately embedded as a subgroup of $G(K)$ normalizing $H$. In particular, for $i \in I \sm I_{nr}$, no conjugate of a very regular element $x \in T$ lies in $HE_i^\times$ (in fact, $x$ has precisely one fixed point on $\mathscr{B}_K$, which has to be a vertex, so it is contained in no stabilizer of a facet of $\mathscr{B}_K$ of dimension $\geq 1$). By \cite[(A.14) Theorem]{BushnellH_96}, $\tr(\pi_i, x) = 0$ for $i \not\in I_{nr}$, and hence for any very regular element $x \in T \cong L^\times$, we have 
\[
\pm \sum_{\gamma \in \Gal(L/K)} \theta^\gamma(x) = \tr(|R_T^G(\theta)|, x) = \sum_{i \in I_{nr}} \tr(\pi_i, x) = \sum_{i \in I_{nr}} c_i \sum_{\gamma \in \Gal(L/K)} \chi_i^\gamma(x),
\]
where $c_i \in \{\pm 1\}$, the first equality is Proposition \ref{prop:traces_of_very_regulars} and the last follows from \cite[3.1 Th\'eor\`eme]{Henniart_92} (in fact, it shows the claim only for $\bfGL_n$, but this along with trace relations defining the Jacquet--Langlands correspondence give also the other cases). We now use the argument from \cite[2.8]{Henniart_92}: if $x \in U_L$ is very regular and $y \in U_L^1$, then $xy \in U_L$ is again very regular. Thus letting $x$ be a fixed very regular element of $U_L$ and varying $y \in U_L^1$ we obtain an equality of finite linear combinations of smooth characters of the group $U_L^1$. We may find an integer $h'$ such that $\theta$ and all $\chi_i$'s are trivial on $U_L^{h'}$, and replace $U_L^1$ by its finite quotient $U_L^1/U_L^{h'}$. As $\theta|_{U_L^1}$ is in general position, the coefficient of $\theta|_{U_L^1}$ on the left hand side is $\theta(x) \neq 0$. By linear independence of characters of a finite group there is at least one $i_0 \in I_{nr}$ with $\chi_{i_0}|_{U_L^1} = \theta|_{U_L^1}$. 

Frobenius reciprocity for the compact induction shows that 
\begin{equation}\label{eq:multiplicity_positive}
(\pi_i:|R_{T_h}^{G_h}(\theta)|) \geq 1 \quad \text{ for $i \in I$}. 
\end{equation}
with notation as in Section \ref{sec:square_integrables}. Fix a Haar measure $d\bar{g}$ on $G/Z$. By Lemma \ref{lm:degree_on_algebraic_side} we have $d(\pi_{i_0},d\bar{g}){\rm vol}(G_\cO Z/Z,d\bar{g}) = \deg |R_{T_h}^{G_h}(\theta)|$, so that Theorem \ref{thm:key_estimation_general} implies $(\pi_{i_0} \colon |R_{T_h}^{G_h}(\theta)|) = 1$ and $(\pi \colon |R_{T_h}^{G_h}(\theta)|) = 0$ for all $\pi \in \cE_2(G,\theta|_{K^\times})$, $\pi \neq \pi_{i_0}$. Combining this with \eqref{eq:multiplicity_positive} we see that $I = \{i_0\}$.
It remains to show that $\chi_{i_0} = \theta$. Either one can apply \cite[Theorem 11.3]{CI_ADLV} (as we now know that $R_T^G(\theta) \cong \pi_{i_0}$ is irreducible), or alternatively use that we already know $\chi_{i_0} = \theta$ on $K^\times U_L^1$, and then apply the same argument as in \cite[5.3]{Henniart_93}. Theorem \hyperlink{thm:A}{A} is proven.

\section{Deligne--Lusztig sheaves on ${\rm Isoc}_G$ and ${\rm Bun}_G$}\label{sec:DL_sheaves_on_Isoc_and_Bun}

In this last (sketchy) section we let the element $b$ vary. The resulting family of Deligne--Lusztig spaces gives rise to a certain $p$-adic Deligne--Lusztig stack, whose construction we briefly outline here. This allows us put our results in the context of the work of Zhu and Xiao--Zhu \cite{Zhu_20, XiaoZ_17} and the seminal work of Fargues--Scholze \cite{FarguesScholze} on the geometrization of the local Langlands correspondence. After a brief investitation of the $p$-adic Deligne--Lusztig stack in \S\ref{sec:stack_of_isoc}-\ref{sec:special_case_stack}, we use it in \S\ref{sec:DL_sheaf_on_isoc}-\ref{sec:diamond_attached_to_isoc}, to construct a \emph{$p$-adic Deligne--Lusztig sheaf} ${\rm DL}_{w,\theta}$ on the $v$-stack of $\bfG$-isocrystals ${\rm Isoc}_{\bfG}$, as well as the corresponding sheaf ${\rm DL}'_{w,\theta}$ on the $v$-stack  ${\rm Bun}_{\bfG}$ of $\bfG$-bundles on the relative Fargues--Fontaine curve. Finally, we restate our main result, Theorem \hyperlink{thm:A}{A} in terms of ${\rm DL}_{w,\theta}$ and state a conjecture relating ${\rm DL}'_{w,\theta}$ to Fargues' Hecke eigensheaf.


\smallskip

We denote by $\bfG$ any unramified reductive group over $K$.

\subsection{Stack of isocrystals}\label{sec:stack_of_isoc}
In \cite{Zhu_20, XiaoZ_17} Zhu and Xiao--Zhu consider the stack (for the fpqc-topology) 
\[{\rm Isoc}_{\bfG} \colon \Perf_{\overline\FF_q}  \ni R \mapsto \left(\text{groupoid of $\bfG$-torsors over $\Spec \left(\bW(R)[\varpi^{-1}]\right) / \varphi^\bZ$}\right).\] 
For example, ${\rm Isoc}_{\bfGL_n}(R)$ is the groupoid of locally free $\bW(R)[1/\varpi]$-modules $\cE$ of rank $n$ equipped with a $\varphi$-equivariant isomorphism $\cE \stackrel{\sim}{\rightarrow} \varphi^\ast\cE$, where $\varphi$ is the automorphism of $\bW(R)[1/\varpi]$ lifting the $q$-power Frobenius of $R$. 

Trivializing the torsor, one obtains a presentation as a quotient stack, ${\rm Isoc}_{\bfG} \cong L\bfG/{\rm Ad}_\sigma L\bfG$, where ${\rm Ad}_\sigma$ denotes Frobenius-twisted conjugation. By \cite[Lm.~5.9 and Thm.~5.1]{Ivanov_DL_indrep}, ${\rm Isoc}_{\bfG}$ is even a stack for the arc-topology from \cite{BhattM_18}, and hence also for the $v$-topology from \cite{BhattS_17}. 
The geometric points of ${\rm Isoc}_{\bfG}$ are given by Kottwitz' set $B(\bfG)$ of $\sigma$-conjugacy classes in $\bfG(\breve K)$. 

\subsection{$p$-adic Deligne--Lusztig stack}

There is a stack over ${\rm Isoc}_{\bfG}$, whose fibers are $p$-adic Deligne--Lusztig spaces, very closely related to the spaces $X_{\dot w_1}^{DL}(b)$ introduced in \S\ref{eq:perfect_DL_space_w_b}. More precisely, we have the ``Borel-level'' $p$-adic Deligne--Lusztig spaces $X_w(b)$, introduced in \cite[Def. 8.3]{Ivanov_DL_indrep}, which are the quotients of $\dot X_{\dot w_1}^{DL}(b)$ by the torus action. By varying the parameter $b \in L\bfG$, we obtain a stack over $L\bfG/{\rm Ad}_\sigma L\bfG \cong {\rm Isoc}_{\bfG}$, whose fibers are precisely the spaces $X_w(b)$, in the follwing way. 

Let $\bfT \subseteq \mathbf{B} \subseteq \bfG$ be a quasi-split torus with Weyl group $W$, contained in a $K$-rational Borel subgroup. We have the Bruhat-decomposition $(\bfG/\mathbf{B})^2 = \coprod_{w \in W} \mathcal{O}(w)$. Attached to $w \in W$ we may consider the $v$-sheaf $\mathfrak{X}_w$ defined by the Cartesian diagram
\[
\xymatrix@R+0.2cm@C+0.2cm{
\mathfrak{X}_w \ar[r] \ar[d] & L\mathcal{O}(w) \times L\bfG \ar@^{(->}[d] \\
L(\bfG/\mathbf{B}) \times L\bfG \ar[r] & L(\bfG/\mathbf{B})^2 \times L\bfG
}
\]
where the lower map is $(g,b) \mapsto (g,b\sigma(g),b)$. One checks that $L\bfG$ acts on $\mathfrak{X}_w$ by $h \colon (g,b) \mapsto (hg, hb\sigma(h)^{-1})$, and that the map $\mathfrak{X}_w \rightarrow L\bfG$, $(g,b) \mapsto b$ is $L\bfG$-equivariant with respect to this action on $\mathfrak{X}_w$ and the $\sigma$-twisted conjugation on $L\bfG$. 
This means that $\mathfrak{X}_w \rightarrow L\bfG$ descends to a map of $v$-stacks,
\begin{equation}\label{eq:DL_stack_general}
\alpha_w \colon X_w \rightarrow {\rm Isoc}_{\bfG}, 
\end{equation}
where $X_w := [\mathfrak{X}_w/L\bfG]$ is the quotient stack. Moreover, the fiber of \eqref{eq:DL_stack_general} over a geometric point $\Spec \overline{\bF}_q \rightarrow [\Spec \overline{\bF}_q/\bfG_b(K)] \subseteq {\rm Isoc}_{\bfG}$ corresponding to $b \in \bfG(\breve K)$, is $X_w(b)$. 

\subsection{Map to ${\rm Isoc}_{\bfT}$}
Choose a lift $\dot w \in \bfG(\breve K)$ of $w$, contained in a hyperspecial subgroup of $\bfG(\breve K)$. We claim that there is a map
\begin{equation}
\gamma_{\dot w} \colon X_w \rightarrow [L\bfT/{\rm Ad}_{\sigma_w} L\bfT] \,\left(\cong {\rm Isoc}_{\bfT_w}\right) \cong \coprod_{\tau \in X_\ast(\bfT)_{\langle \sigma_w\rangle}} [\Spec \overline\bF_q/\bfT_w(K)],
\end{equation}
where $\sigma_w = {\rm Ad}(w) \circ \sigma$ is the $w$-twisted Frobenius on $\bfT$, and $\bfT_w$ is the corresponding $K$-form of $\bfT$. To define it, it suffices to define an $L\bfG$-equivariant map $\tilde \gamma_{\dot w} \colon \mathfrak{X}_w \rightarrow [L\bfT/{\rm Ad}_{\sigma_w} L\bfT]$, where $L\bfG$ acts trivially on $[L\bfT/{\rm Ad}_{\sigma_w} L\bfT]$. Note that we have a well-defined map $\beta_{\dot w} \colon L(\mathbf{B}w\mathbf{B}) \rightarrow L\bfT$, $u_1 t \dot w u_2 \mapsto t$ ($u_1,u_2$ in the unipotent radical of $\mathbf{B}$). Now, let $\tilde \gamma_{\dot w}(g,b)$ be the image of $\beta_{\dot w}(g^{-1}b\sigma(g))$ under $L\bfT \rightarrow [L\bfT/{\rm Ad}_{\sigma_w} L\bfT]$. One checks that this is well-defined and $L\bfG$-equivariant, i.e., we have defined $\gamma_{\dot w}$. This map can indeed depend on the choice of the lift $\dot w$.\footnote{It appears more natural to change the target of the map $\gamma_{\dot w}$. This would give a stacky version (with $b$ varying) of the map $\alpha_{w,b}$ from \cite[Prop.~4.2]{Ivanov_pDL_Cox}} However, the following holds.

\begin{rem} If $\dot w'$ is another lift of $w$ contained in a hyperspecial subgroup, then it follows from \cite[Lm.~3.8]{Ivanov_pDL_Cox}, that the image of $\dot w^{-1}\dot w'$ in $X_\ast(\bfT)_{\langle \sigma_w\rangle}$ in fact lies in $\ker(H^1(k,\bfT_w) \rightarrow H^1(k,\bfG)) = \ker(X_\ast(\bfT)_{\langle \sigma_w\rangle, {\rm tors}} \rightarrow \pi_1(\bfG)_{\langle \sigma \rangle})$. In particular, if $\bfG$ and $w$ are such that this kernel is zero, the map $\gamma_{\dot w}$ only depends on $w$, not on the lift $\dot w$. Note that this is the case when $\bfG = \bfGL_n$ and $w$ is a Coxeter element. We simply write $\gamma_w$ in this case. 
\end{rem}

Pulling $\gamma_{\dot w} \colon X_w \rightarrow [L\bfT/{\rm Ad_\sigma} L\bfT]$ back along $\Spec\overline\bF_q \rightarrow [\Spec\overline\bF_q /\bfT_w(K)] \subseteq [L\bfT/{\rm Ad}_\sigma L\bfT]$, we obtain the natural $\bfT_w(K)$-torsor on $X_w$.

\subsection{A special case}\label{sec:special_case_stack}
For $\bfG = \bfGL_n$ and arbitrary $w \in W$, $X_w \rightarrow {\rm Isoc}_{\bfG}$ admits the following more explicit description. Let $\bfT$ be the diagonal torus, $\mathbf{B}$ the upper triangular Borel.
For $R \in {\rm Perf}_{\overline{\bF}_q}$ an object of $X_w(R)$ over $(\cE,\sigma_{\cE}) \in {\rm Isoc}_{\bfG}(R)$ is given by a complete flag $\cE^\bullet =(\cE^0 \supset \cE^1 \supset \dots \supset \cE^{n-1})$ in $\cE$ (i.e., a $\mathbf{B}$-torsor on $\Spec\bW(R)[1/\varpi]$), subject to the condition that the relative position of $\cE^\bullet$ and $\sigma_{\cE}(\cE^\bullet)$ is $w$. Here, if ${\rm Bun}_{\mathbf{B}}(R)$ denote the $\mathbf{B}$-torsors on $\Spec \bW(R)[1/\varpi]$, the relative position is a locally constant map ${\rm Bun}_{\mathbf{B}} \times {\rm Bun}_{\mathbf{B}} \rightarrow W$, which is defined by sending $\cE_1^\bullet,\cE_2^\bullet$ to the unique element $w \in W \cong S_n$, such that ${\rm rk}(\cE^i \cap \cE^j) = \#\{1 \leq \ell \leq j \colon w(\ell) \leq i\}$.


\subsection{Deligne--Lusztig sheaf on ${\rm Isoc}_{\bfG}$}\label{sec:DL_sheaf_on_isoc}

Fix a smooth character $\theta \colon \bfT_w(K) \rightarrow \overline\bQ_\ell^\times$. Each connected component of ${\rm Isoc}_{\bfT_w} = [L\bfT/{\rm Ad}_{\sigma_w} L\bfT]$ is of the form $[\Spec \overline\bF_q/\bfT_w(K)]$ and hence admits a pro-\'etale local system attached to $\theta$. Let $\mathcal{L}_\theta$ be the pro-\'etale local system on ${\rm Isoc}_{\bfT_w}$, whose restriction to each connected component is this local system. 
\emph{The rest of \S\ref{sec:DL_sheaf_on_isoc} and \S\ref{sec:diamond_attached_to_isoc} is conditional on an adequate six functor formalism of solid pro-\'etale sheaves on small $v$-stacks over perfect schemes}. Note that similar formalism exists for diamonds, cf. \cite[Chap. VII]{FarguesScholze}. We thus assume the existence of the categories $D_{\rm lis}({\rm Isoc}_{\bfG},\overline\bQ_{\ell}) \subseteq D_{\blacksquare}({\rm Isoc}_{\bfG},\bZ_\ell)$, and consider
\[ 
{\rm DL}_{w,\theta} := \alpha_{w,\natural}\gamma_w^\ast(\mathcal{L}_{\theta}) \in D_{\rm lis}({\rm Isoc}_{\bfG}, \overline \bQ_\ell).
\] 
where $\alpha_{w,\natural}$ is the \emph{left} adjoint to $\alpha_w^\ast$.
The fiber $x_b^\ast{\rm DL}_{w,\theta}$ of ${\rm DL}_{w,\theta}$ at a geometric point $x_b \colon \Spec \overline\bF_q \rightarrow {\rm Isoc}_{\bfG}$ corresponding to $b \in \bfG(\breve K)$ is the $\theta$-part of the cohomology of $\dot X_{\dot w}(b)$, i.e., a ``$p$-adic Deligne--Lusztig complex'' $R_w(\theta)$ in the derived category of smooth $\bfG_b(K)$-representations. Specializing to the case of the present article, we can reformulate our main result as follows.

\begin{cor}\label{cor:main_result_in_terms_of_DLsheaf} If $\bfG = \bfGL_n$, $w$ Coxeter, $b$ basic, $p>n$ and $\theta$ as in Theorem \hyperlink{thm:A}{A}, $x_b^\ast{\rm DL}_{w,\theta}$ has non-vanishing cohomology in exactly one degree, $r_\theta$ (cf. \cite[Thm.6.1.1]{CI_DrinfeldStrat}). This cohomology equals $(-1)^{r_\theta} {\rm JL}({\rm LL}(\sigma_\theta))$, with notations as in the introduction.
\end{cor}

Concerning the values at other points, we conjecture the following.


\begin{conj}\label{conj:nonbasic_fibers_of_DL_vanish}
In the situation of Corollary \ref{cor:main_result_in_terms_of_DLsheaf}, if $b$ is non-basic, then $x_b^\ast{\rm DL}_{w,\theta} = 0$.
\end{conj}

\begin{rem}
Zhu and Xiao--Zhu constructed in \cite{Zhu_20, XiaoZ_17} a certain category, which should be thought of as the derived category of $\overline\bQ_\ell$-sheaves on ${\rm Isoc}_{\bfG}$, cf. \cite[4.5]{Zhu_20}. However, their definition is quite technical and it seems that a solid pro-\'etale category $D_{\rm lis}({\rm Isoc}_{\bfG},\overline\bQ_{\ell})$ should be the more natural object. Moreover, it seems to be a common belief that Xiao and Zhu's category coincides with Fargues--Scholze's category $D_{\rm lis}({\rm Bun}_{\bfG},\overline\bQ_\ell)$. It seems reasonable that a possible strategy to prove this fact would be to establish equivalence of both categories with $D_{\rm lis}({\rm Isoc_{\bfG}}, \overline{\bQ}_\ell)$.
\end{rem}

\subsection{Diamond associated with ${\rm Isoc}_{\bfG}$ and relation to ${\rm Bun}_{\bfG}$} \label{sec:diamond_attached_to_isoc}

Let ${\rm Perfd}$ denote the category of affinoid perfectoid spaces over $\overline\bF_q$. Attached to any $v$-stack $X$ on $\Perf_{\overline\bF_q}$, we have a $v$-stack $X^\diamond$ on ${\rm Perfd}$, defined as the stackification of the category fibered in groupoids, sending ${\rm Spa}(R,R^+)$ to $X(R^+)$. On the level of solid pro-\'etale derived categories, we have a pullback functor $c_X \colon D_{\rm lis}(X,\overline\bQ_\ell) \rightarrow D_{\rm lis}(X^\diamond,\overline\bQ_\ell)$

On the other side, we have the small $v$-stack ${\rm Bun}_{\bfG}$ sending $S \in {\rm Perfd}$ to the groupoid of $\bfG$-bundles on the relative Fargues--Fontaine curve $X_S^{\rm FF}$, cf. \cite[Chap. III]{FarguesScholze}. There is a natural morphism of $v$-stacks
\[
f \colon {\rm Isoc}_{\bfG}^{\diamond} \rightarrow {\rm Bun}_{\bfG}. 
\]
Indeed, using the Tannakian formalism one is reduced to construct a map in the case $\bfG = \bfGL_n$. Fix $S = {\rm Spa}(R,R^+) \in {\rm Perfd}$. An object of ${\rm Isoc}_{\bfG}^{\diamond} (S)$ is given by a locally free $\bW(R^+)[1/\varpi]$-module $\mathscr{E}$ of rank $n$ plus a $\varphi$-linear isomorphism $\sigma_\cE \colon \mathscr{E} \stackrel{\sim}{\rightarrow} \mathscr{E}$. Write $X_S^{\rm FF} = Y_S/\varphi^\bZ$, with $Y_S = {\rm Spa}\bW(R^+) \sm V(\varpi[\varpi_R])$, where $\varpi_R$ is any pseudo-uniformizer of $R^+$, cf. \cite[II.1.2]{FarguesScholze}. Then $\mathscr{E} \otimes_{\bW(R^+)[1/\varpi]} \mathcal{O}_{Y_S}$ is a rank $n$ vector bundle on $Y_S$ and the $\varphi$-linear automorphism $\sigma_\cE \otimes \varphi$ descends it to a vector bundle on $X_S$, i.e., to an object of ${\rm Bun}_{\bfG}(S)$. 

The following remark was explained to the second author by I. Gleason.

\begin{rem}
Although both topological spaces, $|{\rm Isoc}_{\bfG}|$ and $|{\rm Bun}_{\bfG}|$, have the same underlying set $B(\bfG)$, they are unequal in general. Indeed, the basic locus is open in ${\rm Bun}_{\bfG}$ and closed in ${\rm Isoc}_{\bfG}$. However, they are related via the surjective maps $|{\rm Isoc}_{\bfG}| \leftarrow |{\rm Isoc}_{\bfG}^{\diamond}| \rightarrow |{\rm Bun}_{\bfG}|$, where $|{\rm Isoc}_{\bfG}^{\diamond}|$ has more points than both other spaces, but they get identified under the two maps in different ways, cf. \cite{Gleason_21}.
\end{rem}

Applying the functors 
\[
D_{\rm lis}({\rm Isoc}_{\bfG}, \overline\bQ_\ell) \stackrel{c^\ast}{\rightarrow} D_{\rm lis}({\rm Isoc}_{\bfG}^{\diamond}, \overline\bQ_\ell) \stackrel{f_\ast}{\rightarrow} D_{\rm lis}({\rm Bun}_{\bfG}, \overline\bQ_\ell).
\]
to ${\rm DL}_{w, \theta}$, we obtain ${\rm DL}'_{w,\theta} := f_\ast c^\ast({\rm DL}_{w,\theta}) \in D_{\rm lis}({\rm Bun}_{\bfG},\overline\bQ_\ell)$.
Expecting a good behavior of $c^\ast$ and $f_\ast$, it seems very reasonable to extend Corollary \ref{cor:main_result_in_terms_of_DLsheaf} and Conjecture \ref{conj:nonbasic_fibers_of_DL_vanish} to the following conjecture.

\begin{conj}\label{conj:DL_sheaf_on_Bun_G}
For $\bfG = \bfGL_n$, $w$ Coxeter and $\theta$ as in Theorem \hyperlink{thm:A}{A}, the stalks of ${\rm DL}'_{w,\theta}$ are
\[
i_b^\ast {\rm DL}'_{w,\theta} = \begin{cases} {\rm LL}({\rm JL}(\sigma_\theta))& \text{if $b$ basic} \\ 0 & \text{otherwise,} \end{cases}
\]
where $i^b \colon {\rm Bun}_{\bfG}^b \hookrightarrow {\rm Bun}_{\bfG}$ is the locally closed substack corresponding to $b$ (cf. \cite[Thm.~I.2.7]{FarguesScholze}).
\end{conj}

This conjecture uniquely determines ${\rm DL}_{w,\theta}'$. This allows us to (conjecturally) relate it to Fargues' Hecke eigensheaf ${\rm Aut}_{\sigma_{\theta}} \in D_{\rm lis}({\rm Bun}_{\bfG},\overline\bQ_{\ell})$, which in the case of $\bfGL_n$ was constructed by Ansch\"utz--LeBras \cite[Thm. 1.2]{AnschuetzLB_21}. In our setup we do not have Hecke operators, so ${\rm DL}_{w,\theta}'$ a priori does not come equipped with the natural transformations $\eta_{(V_i)_{i \in I}}$, which define the Hecke eigensheaf property (cf. \cite[3.3]{AnschuetzLB_21}). However, just by comparing the stalks we get:

\begin{cor} Suppose Conjecture \ref{conj:DL_sheaf_on_Bun_G} holds true. Then ${\rm DL}_{w,\theta}'$ is the object of $D_{\rm lis}({\rm Bun}_{\bfG},\overline\bQ_{\ell})$ underlying the Hecke eigensheaf ${\rm Aut}_{\sigma_{\theta}}$, constructed in \cite[Thm. 1.2]{AnschuetzLB_21}.
\end{cor}




\bibliography{bib_ADLV_CC}{}
\bibliographystyle{alpha}

\end{document}